\newenvironment{myabstract}{\par\noindent
{\bf Abstract . } \small }
{\par\vskip8pt minus3pt\rm}
\newcounter{item}[section]
\newcounter{kirshr}
\newcounter{kirsha}
\newcounter{kirshb}
\newenvironment{enumroman}{\setcounter{kirshr}{1}
\begin{list}{(\roman{kirshr})}{\usecounter{kirshr}} }{\end{list}}
\newenvironment{enumarab}{\setcounter{kirshb}{1}
\begin{list}{(\arabic{kirshb})}{\usecounter{kirshb}} }{\end{list}}
\newenvironment{athm}[1]{\vskip3mm\par\noindent
{\bf #1 }. \slshape }
{\upshape\par\vskip10pt minus3pt}
\newtheorem{theorem}{Theorem}[section]
\newtheorem{lemma}[theorem]{Lemma}
\newtheorem{corollary}[theorem]{Corollary}
\newenvironment{demo}[1]{\noindent{\bf #1.}\upshape\mdseries}
{\nopagebreak{\hfill\rule{2mm}{2mm}\nopagebreak}\par\normalfont}
\theoremstyle{definition}
\newtheorem{remark}[theorem]{Remark}
\newtheorem{example}[theorem]{Example}
\newtheorem{definition}[theorem]{Definition}
\def\R{\mathbb{R}}
\def\C{{\mathfrak{C}}}
\def\Fm{{\mathfrak{Fm}}}
\def\At{{\bf At}}
\def\Nr{{\mathfrak{Nr}}}
\def\Fr{{\mathfrak{Fr}}}
\def\Sg{{\mathfrak{Sg}}}
\def\Fm{{\mathfrak{Fm}}}
\def\A{{\mathfrak{A}}}
\def\B{{\mathfrak{B}}}
\def\C{{\mathfrak{C}}}
\def\D{{\mathfrak{D}}}
\def\M{{\mathfrak{M}}}
\def\N{{\mathfrak{N}}}
\def\Sn{{\mathfrak{Sn}}}
\def\CA{{\bf CA}}
\def\SC{{\bf SC}}
\def\QEA{{\bf QEA}}
\def\Df{{\bf Df}}
\def\Lf{{\bf Lf}}
\def\PA{{\bf PA}}
\def\PEA{{\bf PEA}}
\def\K{{\bf K}}
\def\K{{\bf K}}
\def\RCA{{\bf RCA}}
\def\Rd{{\ Rd}}
\def\(R)RA{{\bf (R)RA}}
\def\RA{{\bf RA}}
\def\R{\mathbb{R}}
\def\Sc{{\bf Sc}}
\def\Id{{\bf Id}}
\def\c #1{{\cal #1}}
 \def\CA{{\sf CA}}
\def\B{{\sf B}}
\def\G{{\sf G}}
\def\w{{\sf w}}
\def\y{{\sf y}}
\def\g{{\sf g}}
\def\r{{\sf r}}
\def\K{{\sf K}}
\def\tp{{\sf tp}}
 \def\Cm{{\mathfrak{Cm}}}
\def\Nr{{\mathfrak{Nr}}}
\def\restr #1{{\restriction_{#1}}}
\def\cyl#1{{\sf c}_{#1}}
\def\diag#1#2{{\sf d}_{#1#2}}
\def\R{\sf R}
\def\Ra{{\mathfrak{Ra}}}
\def\Ca{{\mathfrak{Ca}}}
\def\set#1{\{#1\} }
\def\Ra{{\mathfrak{Ra}}}
\def\Nr{{\mathfrak{Nr}}}
\def\Tm{{\mathfrak{Tm}}}
\def\A{{\mathfrak{A}}}
\def\B{{\mathfrak{B}}}
\def\C{{\mathfrak{C}}}
\def\D{{\mathfrak{D}}}
\def\E{{\mathfrak{E}}}
\def\A{{\mathfrak{A}}}
\def\B{{\mathfrak{B}}}
\def\C{{\mathfrak{C}}}
\def\D{{\mathfrak{D}}}
\def\E{{\mathfrak{E}}}
\def\U{{\mathfrak{U}}}
\def\GG{{\mathfrak{GG}}}
\def\Bb{{\mathfrak{Bb}}}
\def\L{{\mathfrak{L}}}
\def\Rd{{\mathfrak{Rd}}}
\def\Bb{{\mathfrak{Bb}}}
\def\At{{\mathfrak{At}}}
\def\L{{\mathfrak{L}}}
\def\Bl{{\mathfrak{Bl}}}
\def\CA{{\bf CA}}
\def\RA{{\bf RA}}
\def\RCA{{\bf RCA}}
\def\G{{\bf G}}
\def\CRA{{\sf CRA}}
\def\F{{\mathfrak{F}}}
\def\At{{\sf{At}}}
\def\N{\mathbb{N}}
\def\R{\mathfrak{R}}
\def\CRA{{\sf CRA}}
\def\Cs{{\sf Cs}}
\def\RPEA{{\sf RPEA}}
\def\cyl#1{{\sf c}_{#1}}
\def\diag#1#2{{\sf d}_{#1#2}}
\def\c #1{{\cal #1}}
\def\pa{$\forall$}
\def\pe{$\exists$}
\def\ef{Ehren\-feucht--Fra\"\i ss\'e}
\def\nodes{{\sf nodes}}
\def\restr #1{{\restriction_{#1}}}
\def\Ra{{\mathfrak{Ra}}}
\def\Nr{{\mathfrak{Nr}}}
\def\Z{{\cal Z}}
\def\CA{{\bf CA}}
\def\RCA{{\bf RCA}}
\def\c#1{{\mathcal #1}}
\def\set#1{ \{#1\}}
\def\Ca{{\mathfrak Ca}}
\def\pe{$\exists$}
\def\pa{$\forall$}
\def\Cm{{\mathfrak Cm}}
\def\Sg{{\mathfrak Sg}}
\def\P{{\mathfrak P}}
\def\Rl{{\mathfrak Rl}}
\def\N{{\cal N}}
\def\At{{\sf At}}
\def\Uf{{\sf Uf}}
\def\rng{{\sf rng}}
\def\dom{{\sf dom}}
\def\Cm{{\sf Cm}}
\def\Mat{{\sf Mat}}
\def\w{{\sf w}}
\def\g{{\sf g}}
\def\y{{\sf y}}
\def\r{{\sf r}}
\def\tp{{\sf tp}}
\def\cyl#1{{\sf c}_{#1}}
\def\diag#1#2{{\sf d}_{#1#2}}
\def\RQEA{{\sf RQEA}}
\def\ws{winning strategy}
\def\ef{Ehren\-feucht--Fra\"\i ss\'e}
 \def\CA{{\sf CA}}
\def\Cs{{\sf Cs}}
\def\RCA{{\sf RCA}}
\def\edges{{\sf edges}}
\def\RA{{\sf RA}}
\def\PA{{\sf PA}}
\def\SC{{\sf SC}}
\def\PEA{\sf PEA}
\def\QEA{{\sf QEA}}
\def\TEA{{\sf TEA}}
\def\edges{{\sf edges}}
\def\y{{\sf y}}
\def\g{{\sf g}}
\def\r{{\sf r}}
\def\w{{\sf w}}
\def\Z{{\mathbb{Z}}}
\def\N{{\mathbb{N}}}
\def\EF{{\sf EF}}
\def\CPEA{\sf CPEA}
\def\U{{\mathfrak{U}}}
\def\TA{{\sf TA}}
\def\c{{\sf c}}
\def\s{{\sf s}}
\def\Ig{{\sf Ig}}
\def\d{ Dedekind-MacNeille}
\def\Id{{\sf Id}}
\def\Sc{{\sf Sc}}
\def\Df{{\sf Df}}
\def\Lf{{\sf Lf}}
\title{On Completions, neat reducts and omitting types, yet again}
\author{Tarek Sayed Ahmed and Mohammed Assem\\
Department of Mathematics, Faculty of Science,\\
Cairo University, Giza, Egypt.
  }
\begin{document}
\maketitle
\begin{myabstract} In this paper we investigate using the methodology of algebraic logic,  deep algebraic results
to prove three new omitting types theorems for finite variable fragments of first order logic.
As a sample, we show that it $T$ is an $L_n$ theory and $|T|=\lambda$, $\lambda$ a regular cardinal,
if $T$ admits elimination of quantifiers, then $T$ omits $< 2^{\lambda}$ many non isolated {\it maximal} types.
Modifying a known example,
we show in the countable case omitting $< {}2^{\omega}$ many non isolated types in $L_n$ theories
is independent
of $ZFC$.
Conversely, we show that the omitting types theorem, in its weakest form for countable finite first
order definable extensions of $L_n$ fails; single non isolated types may not be omitted.
This is proved algebraically by using a cylindric algebra in $\RCA_n\cap \Nr_n\CA_{n+k}$
that is not completely representable, that also serves as an example to show that condition of maximality, in the above mentioned
omitting types theorem cannot be omitted. We also show, using a rainbow construction for cylindric algebras,
that the omitting types theorem fails for $L_n$ even if we  consider clique guarded semantics. This is done by constructing a
an atomic $\A\in \PEA_n$ with countably many atoms (which are coloured graphs) who $\Sc$
(Pinter's) reduct is not in $S_c\Nr_n\Sc_{n+3}$, but $\A$ is elementary equivalent to a countable completely representable
(polyadic equality) algebra.
Various connections between the notions of strong representability and complete representability
are given in terms of neat embeddings.  Several examples, using rainbow constructions
and Monk-like algebras are also given to show that our results are best possible.
As a sample we show that, assuming the existence of certain finite relation algebras, that
for any $k\in \omega$, there exists $\A\in {\sf RPEA}_n\cap \Nr_n\PEA_{n+k}$
such that $\Rd_{\sf Sc}\Cm\At\A\notin S\Nr_n\Sc_{n+k+1}$. This implies that for any finite $n\geq 3$,
for any $k\geq 0$, there is an $L_n$ theory and a type $\Gamma$ such that
$\Gamma$ is realized in every $n+k+1$ smooth model, but cannot be isolated by a witness using
$n+k$ variables.
\end{myabstract}

\section{Introduction}

The 'yet again' in the title indicates the fact that the author has a lot of publications on the above topics, witness the references in \cite{1}.
This paper surveys very briefly some of these results, but it also contains many new results.
Among many other things, we fix loose ends in \cite{Sayed} by providing full proofs of statements concerning the omitting types
theorem for variants and modifications of first order logic, and formulating and proving
new ones in clique guarded semantics. We study the notion of neat embedding discussed extensively and thoroughly in \cite{Sayed}
in connection to the two related notions completions and atom canonicity. Rainbow constructions, as well as Monk-like  cylindric algebras
are sprinkled throughout, they will both be used a lot, to prove many of the new results. But let us start from the beginning.
We follow the notation of \cite{HMT1} and \cite{1}.

First, we admit the layout of this paper is rather unusual, in so far as we
include some technical results in the introduction. We chose to do so for (at least) three reasons.
One is that the proofs are very long but the ideas are simple,
so deferring them in the text would considerably increase the length of the paper,
which we tried to keep as short as possible, and even so, we do not think that we succeeded very much in that.
Second, is that they illustrate central concepts elaborated upon in the text, but in different
contexts. Last but not least: semi-technical proofs, usually adopted in survey papers, could be very useful.

So the introduction can be actually viewed as a rather broad survey in the literature, with a unifying
perspective, and the more technical parts are
endowed with more than a glimpse of proof.
In the text itself detailed proofs of all new results, of which there are quite a few,
are given.

The topic of this paper is algebraic logic in a broad sense. A crucial addition to the
collection of mathematical catalysts initiated at the beginning of the 20 century, is formal logic and its study
using mathematical machinery,
better known as  metamathematical investigations, or simply metamathematics.
Traced back to the works of Frege, Hilbert, Russell, Tarski, Godel and others;
one of the branches of pure mathematics that metamathematics  has precipitated to is algebraic logic.

On the other hand, algebraic logic has found new applications in Einstein's general theory of relativity, and the feedback
between the two subjects has transcended all previous anticipations.

Algebraic logic starts from certain special logical considerations, abstracts from them, places them in a general algebraic context
and via this generalization makes contact with other branches of mathematics.
It cannot be overemphasized that algebraic logic is more algebra than logic,
nor more logic than algebra, in this article we argue that algebraic logic has become sufficiently interesting and deep
to occupy a distinguished  place among other branches of pure mathematics. Not confined to the walls of pure mathematics, algebraic logic
has also found bridges to such apparently remote areas like general relativity, and hypercomputation.

Highlighting the connections with graph theory, model theory, set theory, finite combinatorics, topology,  this
article aspires to present topics of broad interest in a
way that is accessible to a large audience. The article is not at all only purely expository, for it contains, in addition, new ideas
and results and also new approaches to old ones.
We prove a diversity of new results, collect old ones that are apparently unrelated, presenting them all
in a unified framework.

\subsection{The repercussions of the seminal result of Monk back in 1969}

A typical Monk argument theme, is to construct a sequence of non-representable algebras based on graphs with finite
chromatic number converging to one that is based on a graph having infinite chromatic number,
hence, representable. (The limit on the level of algebras is the ultraproduct, and that on the graphs it can be a disjoint union, or an ultraproduct as well).
It follows immediately that the variety of representable algebras is not finitely axiomatizable.

Monk's seminal result proved in 1969 \cite{Monk}, showing that the class of representable cylindric algebras is not finitely axiomatizable had a
shattering effect on  algebraic logic, in many respects. In fact it changed the history of the development
of the subject, and inspired immensely fruitful research, satrtinf form Andr\'eka to Venema.

The conclusions drawn from this result, were that either the
extra non-Boolean basic operations of cylindrifiers and diagonal elements were not properly chosen, or that the notion of
representability was inappropriate; for sure it was concrete enough, but perhaps this is precisely the reason, it is far {\it too concrete.}

Research following both paths, either by changing the signature or/and altering the notion of concrete representability have been pursued
ever since, with amazing success.  Indeed there are  two conflicting but complementary facets
of such already extensive  research referred to in the literature, as 'attacking the representation problem'.
One is to delve deeply in investigating the complexity of potential axiomatizations for existing varieties
of representable algebras, the other is to try to sidestep
such wild unruly complex axiomatizations, often referred to as {\it taming methods}.

Those taming methods can either involve passing to (better behaved) expansions of the algebras considered,
or even completely change the signature  bearing in mind that the essential operations like cylindrifiers are
term definable or else change the very  notion of representability involved, as long as it remains concrete enough.

The borderlines are difficult to draw, we might not know what is {\it not} concrete enough, but we can
judge that a given representability notion is satisfactory, once we have one.

One can find well motivated appropriate notions of semantics by first locating them while giving up classical semantical prejudices.
It is hard to give a precise mathematical underpinning to such intuitions. What really counts at the end of the day
is a completeness theorem stating a natural fit between chosen intuitive concrete-enough, but perhaps not {\it excessively} concrete,
semantics and well behaved axiomatizations.
The move of altering semantics (the most famous is Henkin's move changing second order semantics)
has radical philosophical repercussions, taking us away from the conventional
Tarskian semantics captured by G\"odel-like axiomatization; the latter completeness proof is effective
but highly undecidable; and this property is inherited by finite variable fragments
of first order logic as long as we insist on Tarskian semantics. Relativized semantics, has led to many nice modal
fragments of first order logic, like the guarded,
loosely guarded, clique guarded and packed fragments of first order logic. Below we shall deal with another logic that can be witnessed 
as a finite varaible  guarded fragment
of first order logic, and also manifest nice modal behaviour like the finite base ptroperty , and decidability of its satisfibality problem.
In fact, we shall prove that the variety generated by complex algebras of Kripke frames of such logic has a decidable universal, hence
equational, theory

If one views relativized models as the natural semantics for predicate logic rather than some tinkering
devise which is the approach adopted in \cite{HMT1}, then
many well -established taboos of the field must be challenged.

In standard textbooks one learns that predicate logical validity is one unique notion specified once and for all by the usual
Tarskian (square) semantics and canonized by G\"odel's completeness theorem. Moreover, it is essentially complex,
being undecidable by Church's theorem.

Standard predicate logic has arisen historically by making several ad-hoc semantic decisions
that could have gone differently. Its not all about 'one completeness theorem' but rather about several completeness theorems
obtained by varying both the semantic and syntactical parameters.
This can be implemented from a classical relativized representability theory,
like that adopted in the monograph \cite{HMT1}, though such algebras were treated in op.cit off main stream,
and they were only brought back to the front of the scene by the work of Resek, Thompson,
Andr\'eka and last but not least Ferenczi,  or from a modal perspective, that has enriched the subject
considerably, and dissolved the non finite axiomatizability persistent results in algebraic logic initiated by Monk.
Taming such erratic behaviour of the class of representable algebras, has definitely led to nice mathematics,
including a very rich interplay between, combinatorics, graph theory,  modal and first order logic.

\subsection{Negative results?}

Indeed such supposedly 'negative' results themselves are proved using very nice mathematics,
starting from deep neat embedding theorems, to the use of Erdos' probabilistic
graphs, which Hirsch and Hodkinson used to show that the class of strongly
representable atom structure is not elementary.
The idea is to use an {\it anti Monk ultraproduct},
a sequence of good algebras (based on graphs with infinite chromatic number) converging to a bad one, namely, one that
is based on a graph that is
only $2$ colourable.

\subsubsection{The good and the bad}

Here we give a different approach to Hirsch Hodkinson's result; our first technical
innovation.

We use the notation and ideas in \cite{HHbook2}, cf. lemmas 3.6.4, 3.6.6. An important difference is that our cylindric algebras are binary generated,
and they their atom structures are the set of basic marices on relation algebras satisfying the same properties.
Let $\G$ be a graph. One can  define a family of first order structures in the signature $\G\times n$, denote it by $I(\G)$
as follows:
For all $a,b\in M$, there is a unique $p\in \G\times n$, such that
$(a,b)\in p$. If  $M\models (a,i)(x,y)\land (b,j)(y,z)\land (c,l)(x,z)$, then $| \{ i, j, l \}> 1 $, or
$ a, b, c \in \G$ and $\{ a, b, c\} $ has at least one edge
of $\G$.
For any graph $\Gamma$, let $\rho(\Gamma)$ be the atom structure defined from the class of models satisfying the above,
these are maps from $n\to M$, $M\in I(\G)$, endowed with an obvious equivalence relation,
with cylindrifiers and diagonal elements defined as Hirsch and Hodkinson define atom structures from classes of models,
and let $\M(\Gamma)$ be the complex algebra of this atom structure.

We define a relation algebra atom structure $\alpha(\G)$ of the form
$(\{1'\}\cup (\G\times n), R_{1'}, \breve{R}, R_;)$.
The only identity atom is $1'$. All atoms are self converse,
so $\breve{R}=\{(a, a): a \text { an atom }\}.$
The colour of an atom $(a,i)\in \G\times n$ is $i$. The identity $1'$ has no colour. A triple $(a,b,c)$
of atoms in $\alpha(\G)$ is consistent if
$R;(a,b,c)$ holds. Then the consistent triples are $(a,b,c)$ where

\begin{itemize}

\item one of $a,b,c$ is $1'$ and the other two are equal, or

\item none of $a,b,c$ is $1'$ and they do not all have the same colour, or

\item $a=(a', i), b=(b', i)$ and $c=(c', i)$ for some $i<n$ and
$a',b',c'\in \G$, and there exists at least one graph edge
of $G$ in $\{a', b', c'\}$.

\end{itemize}

$\alpha(\G)$ can be checked to be a relation atom structure. Note that some monochromatic triangles
are allowed namely the 'dependent' ones.
This allows the relation algebra to have an $n$ dimensional cylindric basis
and, in fact, the atom structure of $\M(\G)$ is isomorphic (as a cylindric algebra
atom structure) to the atom structure $\Mat_n$ of all $n$-dimensional basic
matrices over the relation algebra atom structure $\alpha(\G)$.

Now we conjecture that  $\alpha(\G)$ is strongly representable iff $\M(\G)$ is representable iff
$\G$ has infinite chromatic number. This will give the result that strongly representable atom structures of both relation algebras
and cylindric algebras of finite dimension $>2$ in one go, using Erdos' graphs of large
chromatic number and girth. The idea is that when $\G$ has an infinite chromatic number then the  shade of red
$\rho$ used to represent Monk and rainbow algebras in theorem in \cite{Hodkinson} and \cite{weak},
will appear in the ultrafilter extension of $\G$
as a reflexive node, if $\G$ has infinite chromatic number, and its $n$ copies,
can be used to completely represent
$\M(\G)^{\sigma}$.

We sketch an idea. Fix $\G$, and let  $\G^* $ be the ultrafilter extension of $\G$.
First define a strong bounded morphism $\Theta$
form $\M(\G)_+$ to $\rho(I(\G^*))$, as follows:
For any $x_0, x_1<n$ and $X\subseteq \G^*\times n$, define the following element
of $\M(\G^*)$:
$$X^{(x_0, x_1)}=\{[f]\in \rho(I(\G^*)): \exists p\in X[M_f\models p(f(x_0),f(x_1))]\}.$$
Let $\mu$ be an ultrafilter in $\M(\G)$. Define $\sim $ on $n$ by $i\sim j$ iff $\sf d_{ij}\in \mu$.
Let $g$ be the projection map from $n$ to $n/\sim$.
Define a $\G^*\times n$ coloured graph with domain $n/\sim$ as follows.
For each $v\in \Gamma^*\times n$
and $x_0, x_1<n$, we let
$$M_{\mu}\models v(g(x_0), g(x_1))\Longleftrightarrow  X^{(x_0, x_1)}\in \mu.$$
Hence, any ultrafilter $\mu\in \M(\G)$ defines $M_{\mu}$ which is  a $\G^*$ structure.
If $\Gamma$ has infinite chromatic number, then $\G^*$ has a reflexive node, and this can be used
to completely represent $\M(\G)^{\sigma}$, hence represent  $\M(\G)$ as follows:
To do this one tries to show  \pe\ has a \ws\ in the $\omega$ rounded atomic game on networks.

Her strategy can be implemented using the following argument.
Let $N$ be a given $\M(\G)^{\sigma}$ network. Let $z\notin N$ and let $y=x[i|z]\in {}^n(N\cup \{z\}={}^nM$.
Write $Y=\{y_0,\ldots y_{n-1}\}$.  We need to complete the labelling of edges of $M$.
We have a fixed $i\in n$. Defines $q_j$ $:j\in n\sim \{i\}$, the unique label of any two distinct elements in $Y\sim y_j$,
if the latter elements are pairwise distinct, and arbitrarily
otherwise.
Let $d\in \G^*$ be a reflexive node in the copy that does not contain any of the $q_j$s (there number is $n-1$),
and define $M\models d(t_0, t_1)$ if $z\in \{t_0, t_1\}\nsubseteq Y$.
Labelling the  other edges are like $N$.

\begin{definition}
\begin{enumarab}

\item A Monks algebra is good if $\chi(\Gamma)=\infty$

\item A Monk's algebra is bad if $\chi(\Gamma)<\infty$

\end{enumarab}
\end{definition}
It is easy to construct a good Monks algebra
as an ultraproduct (limit) of bad Monk algebras. Monk's original algebras can be viewed this way.
The converse is hard.
It took Erdos probabilistic graphs, to get a sequence of good graphs converging to a bad one.
If our conjecture is true, then we get in one go:

\begin{theorem}(Hirsch-Hodkinson) The class of strongly representable atom structures, for any signature between ${\sf Df}$ and ${\sf PEA}$
of finite dimensions $n\geq 3$ is not elementary
\end{theorem}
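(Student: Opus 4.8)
The plan is to derive everything from the conjectured equivalence stated above. Since an elementary (first-order axiomatizable) class of atom structures must be closed under ultraproducts, it suffices, in each signature between $\Df$ and $\PEA$, to exhibit a sequence of strongly representable atom structures of dimension $n$ whose ultraproduct is not strongly representable. As the atom structure of $\M(\G)$ is $\Mat_{n}\alpha(\G)$ (and, for the richer polyadic signatures, the extra operations are defined on this atom structure in the usual way), it is enough to do this on the relation side, for the structures $\alpha(\G)$, and then transfer.

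\noindent\textbf{Step 1 (the graphs).} I would invoke the Erd\H{o}s probabilistic theorem: for all $g,c$ there is a finite graph of girth $>g$ and chromatic number $>c$. Using it, pick finite graphs $G(j,i)$ with $\mathrm{girth}\,G(j,i)>i$ and $\chi(G(j,i))>j$, and set $\Gamma_{i}=\bigsqcup_{j\in\omega}G(j,i)$. Then $\chi(\Gamma_{i})=\sup_{j}\chi(G(j,i))=\infty$ for every $i$, while $\mathrm{girth}(\Gamma_{i})>i$. Fix a non-principal ultrafilter $U$ on $\omega$. For each fixed $m$, the property ``contains a cycle of length $m$'' is first order and holds for $\Gamma_{i}$ only when $i<m$; hence $\prod_{i}\Gamma_{i}/U$ contains no cycle at all, so it is a forest, and being a forest with at least one edge it is $2$-colourable: $\chi(\prod_{i}\Gamma_{i}/U)=2<\infty$.

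\noindent\textbf{Step 2 (transfer and conclusion).} The assignment $\G\mapsto\alpha(\G)$ is a fixed first-order interpretation, so $\prod_{i}\alpha(\Gamma_{i})/U\cong\alpha(\prod_{i}\Gamma_{i}/U)$, and likewise on the cylindric side $\prod_{i}\Mat_{n}\alpha(\Gamma_{i})/U\cong\Mat_{n}\alpha(\prod_{i}\Gamma_{i}/U)$, which is the atom structure of $\M(\prod_{i}\Gamma_{i}/U)$. By the conjecture, since $\chi(\Gamma_{i})=\infty$, each $\alpha(\Gamma_{i})$ is strongly representable, hence so is $\Mat_{n}\alpha(\Gamma_{i})$, i.e.\ the atom structure of $\M(\Gamma_{i})$; and then so is each of its reducts down to $\Df$, because a reduct of a strongly representable atom structure is again strongly representable. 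By the conjecture applied to $\prod_{i}\Gamma_{i}/U$, whose chromatic number is $2$, the atom structure $\alpha(\prod_{i}\Gamma_{i}/U)$ is \emph{not} strongly representable; the same colouring/Ramsey argument (see Step 3) runs in the $\Df$ signature as well, so the atom structure of $\M(\prod_{i}\Gamma_{i}/U)$ is not strongly representable in any signature between $\Df$ and $\PEA$. Thus in each such signature the strongly representable atom structures of dimension $n\ge 3$ are not closed under ultraproducts, hence not elementary.

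\noindent\textbf{Step 3 (the crux: the conjecture).} The real content, and the main obstacle, is the equivalence itself. For $\chi(\G)<\infty$: a (complete) representation of $\Cm\alpha(\G)$ would, by Ramsey's theorem, yield an infinite set of points all of whose pairwise labels carry one fixed colour $i<n$, and then the rule that a monochromatic triple needs a graph edge of $\G$ is incompatible with a fixed finite proper colouring of $\G$; so no representation exists, and the same obstruction works for the $\Df$-reduct. The hard direction is $\chi(\G)=\infty\Rightarrow\M(\G)$ completely representable. One passes to the ultrafilter extension $\G^{*}$, which contains a reflexive node — the ``shade of red'' $\rho$ — precisely because $\chi(\G)=\infty$, and uses its $n$ colour-copies to complete networks, showing \pe\ wins the $\omega$-round atomic network game on $\M(\G)^{\sigma}$ by the labelling recipe sketched above: at a move adding a new point $z$ in coordinate $i$, pick a reflexive $d\in\G^{*}$ in a copy disjoint from the $n-1$ forced labels $q_{j}$, put $M\models d(t_{0},t_{1})$ whenever $z\in\{t_{0},t_{1}\}\nsubseteq Y$, and copy $N$ on the remaining edges. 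Verifying that this strategy is well-defined, that every network it builds genuinely satisfies the cylindric, diagonal (and, in the polyadic case, substitution) constraints, and that the limit is a \emph{complete} representation — together with the routine $n$-dimensional-cylindric-basis transfer between strong representability of $\alpha(\G)$ and representability of $\M(\G)\cong\Cm(\Mat_{n}\alpha(\G))$ — is where essentially all the combinatorial work is concentrated.
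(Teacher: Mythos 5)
Your proposal takes essentially the same route as the paper: an ``anti-Monk'' ultraproduct of Monk algebras $\M(\Gamma_i)$ built from Erd\H{o}s-style graphs of infinite chromatic number but increasing girth, whose non-principal ultraproduct is based on an acyclic, hence $2$-colourable, graph, with non-elementarity then read off from failure of closure under ultraproducts. Like the paper, the argument ultimately rests on the (only sketched) equivalence between infinite chromatic number and strong representability of $\alpha(\G)$, resp.\ $\M(\G)$; your write-up merely fills in the routine details (disjoint unions, the girth/ultraproduct computation, and the transfer across signatures between ${\sf Df}$ and ${\sf PEA}$) that the paper leaves implicit.
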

\begin{proof} This is proved by constructing a sequence of Monk algebras based on probabilistic  graphs of Erdos each having infinite
chromatic number, that converge to a good one, namely a Monk algebra based on the ultraproduct of these graphs,
which is only $2$ colourable

\end{proof}

\subsection{Neat embeddings}

The notion of neat reducts and the related one of neat embeddings is important in algebraic logic for the
very simple reason that is very much related
to the notion of representability, via the so called {\it neat embedding theorem} of Henkin.
In essence, a completeness theorem,
it says that an algebra is representable if and only if it has the neat embedding property.
In symbols, ${\sf RCA}_{\alpha}=S\Nr_{\alpha}\CA_{\alpha+\omega}$ \cite{Sayedneat}.

What Monk actually proved (using a form of Ramsey's theorem) is that all $\omega$ extra dimensions are needed for representability,
$\omega$ cannot be truncated to be finite. Indeed Monk proved for $n\geq 3$ finite that $S\Nr_n\CA_{n+k}\neq \RCA_n$.
If $\A_k\in S\Nr_n\CA_{n+k}\sim \RCA_n$ then any non trivial ultraproduct of the
$\A_k$'s (relative to a non principal ultrafilter on $\omega$)
will be representable, witnessing the non- finite axiomatizability of $\RCA_n$.

The idea of a Monk algebra is not so hard.
Such algebras are finite, hence atomic, more precisely their Boolean reducts are atomic.
The atoms are given colours, and
cylindrifications and diagonals are defined by stating that monochromatic triangles
are inconsistent. If a Monk's algebra has many more atoms than colours,
it follows from Ramsey's Theorem that any representation
of the algebra must contain a monochromatic triangle, so the algebra
is not representable.
The Monk's algebras we use are Monk algebras in
essence sometimes modified by Hirsch and Hodkinson \cite{HH}, relaxing the condition of forbidding all monochromatic triangles,
allowing independent monochromatic triangles
(In both cases all polychromatic triangles are allowed.)

But there is a very obvious problem, when we start thinking of lifting Monk's result to the transfinite, for the simple reason that in this case
we have infinitely many operations, so it down right meaningless to speak about
finite axiomatizability. However, Monk managed to sidestep such difficulty by defining
the notion of a finite schema, which is finitary in a two sorted sense. One sort for the indices, and the other
for the first order situation. Monk defined such {\it varieties definable by a schema} only for infinite
dimensions based, in a precise sense, on the first countable
ordinal $\omega$; here we generalize Monk's notion of a schema to include finite dimensions.

By doing this, Monk overcame the main hurdle, and he
proved the analogous result for infinite dimensions, using his finite dimensional algebras lifted to
the infinite dimensional case. This ingenious lifting process involved
non trivial iterated applications of expansions (to finite extra dimensions) and ultraproducts to 'infinitely' stretch the dimensions
(This will be precisely elaborated upon below).

Below we use the lifting argument due to Monk to show that for infinite ordinals $\alpha$
and $k\geq 1$, the class $S\Nr_{\alpha}\CA_{\alpha+k+1}$ is not
axiomatized over $S\Nr_{\alpha}\CA_{\alpha+k}$ by a finite schema,
our lifting arguments cover all cylindric-like algebras, using Monk Like {\it finite}
algebras constructed by Hirsch and
Hodkinson. This will be proved below, cf. theorem  \ref{2.12}.
In fact we show the following, but first fixing a notation.
$\sf Sc$ denotes Pinter's substitution algebras, where we have only cylindrifiers and substitutions corresponding to
replacements. $\Rd_{\Sc}$ denotes the $\Sc$ reduct of an algebra, where all operations except those of $\Sc$
(if present) are discarded.

\begin{theorem} For $\alpha>2$, for every $k,r\in \omega$,
$k\geq 1$, there is a $\B^r\in S\Nr_{\alpha}\sf QEA_{\alpha+k}$ such that $\Rd_{sc}\B^r\notin S\Nr_{\alpha}\Sc_{\alpha+k+1}$,
but $\prod_r \B^r/F\in \RQEA_{\alpha}$ for any cofinite
ultrafilter on $\omega$.
\end{theorem}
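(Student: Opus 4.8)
The plan is to carry out the Monk--Hirsch--Hodkinson lifting argument, in the form used repeatedly in the paper, starting from the finite-dimensional case. So first I would invoke a suitable \emph{finite-dimensional} prototype: for each $k,r\in\omega$ with $k\geq 1$ there is a finite algebra $\A^r_{n}\in S\Nr_{n}\QEA_{n+k}$ (for the fixed finite $n=\alpha$ when $\alpha$ is finite, or for each finite $n$ when we are aiming at infinite $\alpha$) whose $\Sc$-reduct does not belong to $S\Nr_{n}\Sc_{n+k+1}$, yet such that $\prod_r\A^r_n/F$ is representable for any cofinite ultrafilter $F$ on $\omega$. These are exactly the Monk-like algebras of Hirsch--Hodkinson referred to in the text (built from graphs/relation-algebra atom structures, with monochromatic triangles forbidden except the ``dependent'' ones), and the non-membership in $S\Nr_n\Sc_{n+k+1}$ is witnessed by \pe\ losing a neat-embedding/\ef-style game on $\Sc_{n+k+1}$ while \pe\ wins the corresponding game on $\Sc_{n+k}$; the representability of the ultraproduct comes from the limit graph having infinite chromatic number, so the ``shade of red'' appears in its ultrafilter extension and yields a (complete) representation. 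For finite $\alpha>2$ this already finishes the theorem with $\B^r=\A^r_\alpha$.

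Next, for infinite $\alpha$, I would perform Monk's dimension-stretching. Fix $k\geq 1$. For each $r$, take the sequence $(\A^r_n)_{n<\omega}$ of finite $QEA_n$'s above (with the extra-dimension parameter $k$ fixed), and form a two-step construction: first reduct/expand each $\A^r_n$ to the relevant dimension, then take an ultraproduct over $n<\omega$ along a non-principal ultrafilter to ``infinitely stretch'' the dimension set to $\alpha$ (more precisely, one works with algebras whose dimension sets exhaust arbitrarily large finite subsets of $\alpha$, and the ultraproduct lands in dimension $\alpha$; this is the standard ``system of algebras indexed by finite subsets of $\alpha$'' device, phrased via the finite-schema machinery the paper sets up). Call the resulting algebra $\B^r\in QEA_\alpha$. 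The neat embedding $\B^r\in S\Nr_\alpha\QEA_{\alpha+k}$ is preserved because each finite approximant neatly embeds into its $+k$ expansion and $S\Nr$ of a fixed finite number of extra dimensions commutes with the relevant ultraproducts (\Los's theorem plus the fact that $\Nr$ is, up to the schema, first-order expressible with $k$ fixed). The failure $\Rd_{\Sc}\B^r\notin S\Nr_\alpha\Sc_{\alpha+k+1}$ is transferred from the finite level: if it held, then projecting back down (taking a neat reduct to a large enough finite dimension) would give membership of $\Rd_{\Sc}\A^r_n$ in $S\Nr_n\Sc_{n+k+1}$ for cofinitely many $n$, contradicting the choice of the prototype. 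Finally $\prod_r\B^r/F\in\RQEA_\alpha$ for cofinite $F$: this is the ``good limit of bad algebras'' phenomenon --- the iterated ultraproduct over $r$ and over $n$ can be rearranged (Fubini for ultraproducts) so that the $r$-ultraproduct of the finite prototypes appears, which is representable, and representability lifts through the dimension-stretching ultraproduct since $\RCA_\alpha=S\Nr_\alpha\CA_{\alpha+\omega}$ and $\RQEA_\alpha$ is likewise a neat-embedding (hence ultraproduct-closed, as a quasivariety) class.

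The step I expect to be the main obstacle is making the dimension-stretching rigorous while keeping all three properties simultaneously: one must choose the indexing of finite approximants, the neat-reduct maps between them, and the ultrafilters (the one on $\omega$ used to stretch dimensions, versus the cofinite one on $\omega$ used for the $r$-index) so that (i) the neat embedding into $+k$ extra dimensions survives, (ii) the obstruction at $+k+1$ survives, and (iii) the $r$-ultraproduct becomes representable. Properties (i) and (iii) pull in the direction of ``enough ultraproduct-closure'', while (ii) is a \emph{non}-closure statement, so the bookkeeping of which ultraproduct is taken in which coordinate (the Fubini rearrangement) is delicate; this is precisely where Monk's finite-schema formalism, extended here to finite dimensions, does the real work. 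A secondary technical point is verifying that the $\Sc$-reduct obstruction is genuinely robust under the reduct operation and under neat reducts down to finite dimension --- i.e.\ that one cannot cheaply recover the missing dimension inside $\Sc$ --- which is where the game-theoretic characterization of $S\Nr_n\Sc_{n+k+1}$ from the Hirsch--Hodkinson framework is needed rather than a bare counting argument. Modulo these points, the argument is the standard lifting, and I would present it as an application of Theorem~\ref{2.12}'s method with the relation-algebra-based Monk-like algebras of the earlier subsection playing the role of the finite prototypes.
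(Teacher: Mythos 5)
Your route is the same as the paper's own proof (finite-dimensional prototypes, Monk's lifting over the ultraproduct indexed by finite subsets of $\alpha$, and the rearrangement of the two ultraproducts for the $r$-limit; cf.\ theorem \ref{2.12} and the Hirsch algebras $\C(m,n,r)$ that follow it), and the positive halves of your sketch --- $\B^r\in S\Nr_{\alpha}\QEA_{\alpha+k}$ via $\Nr$ commuting with ultraproducts, and $\prod_{r/U}\B^r$ landing in the good class after swapping the $r$- and $\Gamma$-ultraproducts --- do match what the paper does. The genuine gap is in your transfer of the \emph{negative} property. You argue: if $\Rd_{sc}\B^r\in S\Nr_{\alpha}\Sc_{\alpha+k+1}$, then ``projecting back down'' would put $\Rd_{sc}\A^r_n$ in $S\Nr_n\Sc_{n+k+1}$ for cofinitely many $n$. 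This anti-\L o\'s inference is not available: $S\Nr_n\Sc_{n+k+1}$ is not finitely axiomatizable (that is exactly what is being proved), so membership of an ultraproduct tells you nothing about cofinitely many factors --- indeed this very theorem exhibits an ultraproduct of non-members that is a member, namely $\prod_r\B^r/F$. Moreover, the finite reduct you obtain, $\Rd_m\B^r=\prod_{\Gamma/F}\Rd_m\C^r_{\Gamma}$, is an ultraproduct of $m$-reducts of prototypes of unboundedly large dimension; it is not one of your prototypes, so knowing that it lies in $S\Nr_m\Sc_{m+k+1}$ contradicts nothing by itself.

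What the paper uses to close this gap is a structural property of the prototypes that your sketch never mentions: for every $n\geq m$ there is an element $x_n$ with ${\sf c}_ix_n\cdot {\sf c}_jx_n=x_n$ for distinct $i,j<m$ and an isomorphism $\C(m,m+k,r)\cong \Rl_{x_n}\Rd_m\C(n,n+k,r)$, uniformly in $n$. These isomorphisms cohere across the $\Gamma$-ultraproduct, so the \emph{fixed} $m$-dimensional prototype embeds into $\Rl_x\Rd_m\B^r$ for $x=(x_{|\Gamma|}:\Gamma)/F$; since membership in $S\Nr_m\K_{m+k+1}$ is inherited by relativizations to such elements $x$, the assumed membership of $\B^r$ pushes down to $\C(m,m+k,r)\in S\Nr_m\Sc_{m+k+1}$, contradicting the prototype at the fixed finite dimension $m$ --- no statement about ``cofinitely many $n$'' is ever needed or available. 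So the delicate point you flag at the end is real, but it is resolved by building the relativization property into the prototypes (Hirsch's $\C(m,n,r)$, which are moreover full neat reducts in $\Nr_m\PEA_n$ with $\prod_{r/U}\C(m,n,r)\cong\C(m,n,\prod_{r/U}r)$, rather than the graph-based Monk algebras you cite), not by any game-theoretic robustness of the $\Sc$-obstruction under reducts. Without this ingredient, or an equivalent device, your step (ii) does not go through.
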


\begin{corollary} For any $\K$ of Pinter's substitution, quasi polyadic with and without equality, cylindric algebras,
for any $\alpha>2$, for any $k\geq 1$,
the class $S\Nr_{\alpha}K_{\alpha+k+1}$ is not axiomatizable  by a finite schema over
$SNr_{\alpha}\K_{\alpha+k}$. Furthermore,  the class $\sf RK_{\alpha}$
is not axiomatizable by any finite schema over
$S\Nr_{\alpha}\K_{\alpha+k}$ for all $k<\omega$.
\end{corollary}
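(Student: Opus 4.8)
The idea is to read the Corollary as the purely axiomatic repackaging of the preceding Theorem, and to feed the algebras $\B^r$ it supplies into the familiar principle that a class cut out by a finite schema \emph{reflects} under ultraproducts. First I would pass from the signature pair $(\QEA,\Sc)$ to an arbitrary one of the listed signatures $\K$ (Pinter's substitution, quasi-polyadic with or without equality, cylindric). The only facts about $\K$ used are that the $\Sc$-operations are term-definable from the $\K$-operations, so $\Rd_{\Sc}=\Rd_{\Sc}\Rd_{\K}$, and that signature reducts commute with forming subalgebras, neat reducts, products and ultraproducts. Putting $\C^r=\Rd_{\K}\B^r$, these commutations immediately give: (i) $\C^r\in S\Nr_{\alpha}\K_{\alpha+k}$, since $\B^r\subseteq\Nr_{\alpha}\D$ with $\D\in\QEA_{\alpha+k}$ yields $\C^r\subseteq\Nr_{\alpha}\Rd_{\K}\D$ and $\Rd_{\K}\D\in\K_{\alpha+k}$; and (ii) $\C^r\notin S\Nr_{\alpha}\K_{\alpha+k+1}$, because otherwise $\Rd_{\Sc}\C^r=\Rd_{\Sc}\B^r$ would lie in $S\Nr_{\alpha}\Sc_{\alpha+k+1}$, contradicting the Theorem.

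Next I would exploit the ultraproduct clause. For a cofinite ultrafilter $F$ on $\omega$ as in the Theorem, $\prod_r\C^r/F=\Rd_{\K}\bigl(\prod_r\B^r/F\bigr)\in\Rd_{\K}\RQEA_{\alpha}\subseteq{\sf RK}_{\alpha}=S\Nr_{\alpha}\K_{\alpha+\omega}\subseteq S\Nr_{\alpha}\K_{\alpha+k+1}$: the first inclusion because a representation of a member of $\QEA_{\alpha}$ restricts to a representation of its $\K$-reduct, the equality being the Neat Embedding Theorem \cite{Sayedneat}, and the last inclusion because $\alpha+\omega>\alpha+k+1$. Now suppose for contradiction that $S\Nr_{\alpha}\K_{\alpha+k+1}=\{\A\in S\Nr_{\alpha}\K_{\alpha+k}:\A\models\Sigma\}$ for a finite schema $\Sigma$. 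When $\alpha$ is finite, $\Sigma$ instantiates to a finite conjunction $\bigwedge_{i<m}e_i$ of equations in the $\alpha$-dimensional signature; by (i) every $\C^r$ sits in the base class, by (ii) none of them satisfies $\Sigma$, yet by the displayed chain their ultraproduct does. Applying \Los's theorem to each $e_i$ and intersecting the finitely many resulting sets in $F$, one gets $\{r:\C^r\models\Sigma\}\in F$; this set is empty, contradiction. The ``furthermore'' clause is the same argument with ${\sf RK}_{\alpha}$ in the role of $S\Nr_{\alpha}\K_{\alpha+k+1}$, using ${\sf RK}_{\alpha}\subseteq S\Nr_{\alpha}\K_{\alpha+k+1}$ together with $\C^r\notin{\sf RK}_{\alpha}$, where one takes the Theorem's integer parameter to be $\max(k,1)$ so that the $\C^r$ still belong to the base $S\Nr_{\alpha}\K_{\alpha+k}$.

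The step I expect to be the real obstacle is the infinite-dimensional case. There a finite schema instantiates to \emph{infinitely} many equations, so the ``intersect finitely many members of $F$'' move collapses. The remedy is Monk's uniformization of schema instances over a homogeneous index set: failure of $\Sigma$ in $\C^r$ must already occur at one of a fixed finite block of instances (obtained by symmetry/reduct arguments on the index set), which restores the \Los\ argument. This is exactly the lifting machinery set up around the preceding Theorem and, in the generalized form covering finite dimensions, is what makes the whole argument uniform in $\alpha$. Everything else is routine: signature reducts, subalgebras, products, ultraproducts and neat reducts commute, and ${\sf RK}_{\alpha}=S\Nr_{\alpha}\K_{\alpha+\omega}$.
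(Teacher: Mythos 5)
Your proposal is correct and is essentially the paper's own argument: the corollary is obtained from the preceding theorem by exactly this reduct sandwich (the failure of $\Rd_{\Sc}\B^r$ to lie in $S\Nr_{\alpha}\Sc_{\alpha+k+1}$ excludes every signature $\K$ between $\Sc$ and $\QEA$, while $\B^r\in S\Nr_{\alpha}\QEA_{\alpha+k}$ and $\prod_r\B^r/F\in \RQEA_{\alpha}$ give membership in the base class and in the target class of the ultraproduct), combined with the \L o\'s argument on finitely many schema instances. The uniformization step you single out for infinite $\alpha$ is precisely what the paper delegates to \cite[definition~4.1.4, theorem~4.1.7]{HMT2}, justified by the dimension-homogeneity of the lifted algebras, so your route matches the intended one.
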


So much for axiomatizations.
In another direction neat embeddings in the infinite dimensional case can be tricky and unpredictable, as far as {\it definability}
is concerned. (Indeed, the theorem to be proved in a while is very closely related to the amalgamation property
{\it not holding} for any class between the representable algebras
and the abstract ones in infinite dimension).

Given $\A\subseteq \Nr_{\alpha}\B$, we call $\B$ a dilation of $\A$.
$\B$ is a {\it minimal dilation} of $\A$, if $A$, as a set, generates $\B$ (using the all the operations
of $\B$.)

{\it It turns out that there are two non isomorphic algebras that generate the same
dilation  in $\omega$ extra dimensions,
and dually minimal dilations of an algebra can be non-isomorphic.}

This was proved in \cite{Sayedneat} theorem 5.2.4, confirming three conjectures of Tarski's, but the presented proof therein is somewhat
convoluted and  it follows a route that is extremely long and winding.
It goes by contradiction, and it does not exhibit the required algebra explicitly.
Besides it depends on a strong deep result of Pigozzi's, namely,
that the variety of representable algebras does not have the amalgamation property, even if the amalgam
is sought in the class of abstract algebras.

Now  we give a direct proof, though the amalgamation property can be discerned below the surface of the proof.
What we show next, is that the variety of representable algebras  algebras fail to have $UNEP$ (unique neat embedding property),
and $NS$ (neat reducts commuting with forming subalgebras)
as defined in \cite {Sayedneat}, definitions 5.2.1, 5.2.2; furthermore
the algebra witnessing this is explicitly described as a quotient of the free algebra with any number of generators
$\geq 4$, so in fact, we are giving infinitely many examples.

In the coming proof, we chose to write $\A^{(X)}$ for the subalgebra generated by $X$; because the operation of forming subalgebras will be iterated,
and this notation becomes easier to follow;
otherwise the notation is that adopted in \cite{Sayedneat}.

But this notation is {\it only used in the next theorem}. Later, the usual notation $\Sg^{\A}X$ (of \cite{1}),
will denote the subalgebra if
$\A$ generated by $X$.

The amalgamation and the superamalgamation
properties are defined in \cite{Sayedneat}. We present our proof for the class of
quasipolyadic equality algebras $\sf QEA$. From the chain of implications in theorem 5.2.4 (which also holds in the $\QEA$ context)
and moving backwards, we get that (ii) (ii) (i) in this
theorem for $\QEA$s are also false.

This immediately gives the required of non uniqueness of dilations, as well as the non
invertibility of the neat reduct functor,
in the following sense.  {\it If we consider the neat reduct operator from $\QEA_{\alpha+\omega}$ to $\RQEA_{\alpha}$
by truncating dimensions,  and defining images of morphisms, taken as injective homomorphisms, to be restrictions,
then the next result implies that this functor is {\it has no right adjoint}.
As proved in \cite{con} the $UNEP$ is equivalent to the existence of
a right adjoint to the neat reduct
functor defined as above. }

We notice that in sharp contrasty the class of polyadic algebras have the superamalgmation property, anf for those the 
neat reduct  functor is strongly invertible. In \cite{con} this rigorous 
mathematical distinction formulated in categorial jargon  is proposed as a measure
to distinguish between the polyadic paradigm and the cylindric one 
mathematically underpinning the metaphysical 'spirit' of each.

\begin{theorem}
For infinite $\alpha$ there exists $\A\subseteq \Nr_{\alpha}\B_i$ $\B_i\in \CA_{\alpha+\omega}$, $i\in \{0,1\}$,
$\A$ generates $\B_i$ but there is no isomorphism from $\B_0$ to $\B_1$ that fixes
$\A$ pointwise; call such an isomorphism an $\A$ isomorphism.
\end{theorem}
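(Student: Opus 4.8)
The plan is to exhibit, for an infinite ordinal $\alpha$, a single abstract algebra $\A$ together with two genuinely different ways of embedding it as a neat reduct into an $\alpha+\omega$-dimensional algebra, so that $\A$ generates both dilations but no isomorphism between the dilations restricts to the identity on $\A$. The cleanest route is to build everything syntactically out of free algebras: take $\Fr_\beta$ to be the free $\QEA_{\alpha+\omega}$ (hence also $\CA_{\alpha+\omega}$) on $\beta\geq 4$ generators, and look for a congruence $I$ on $\Fr_\beta$ such that the dimension-restricted quotient $\Nr_\alpha(\Fr_\beta/I)$ has two preimages under the neat-reduct correspondence that are not linked by an $\A$-isomorphism. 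Concretely I would choose two elements $a_0,a_1$ in the free algebra that are ``symmetric'' in the $\alpha$-dimensional coordinates but distinguishable only by the extra dimensions in $\omega\setminus$ anything below $\alpha$, and arrange a congruence identifying enough so that $\A:=\Sg^{\Nr_\alpha\B_0}X$ and $\Sg^{\Nr_\alpha\B_1}X$ (for a suitable generating set $X$) are literally the same algebra, while $\B_0$ and $\B_1$ are the subalgebras of two different quotients of $\Fr_\beta$ generated by $X$.

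The key steps, in order, are: (1) fix the free $\QEA_{\alpha+\omega}$ on generators $g_i$, $i<\beta$, and single out two terms $\tau_0,\tau_1$ that become equal once all coordinates $\geq\alpha$ are ``forgotten'' (e.g.\ by composing with cylindrifications $\cyl{k}$ for $\alpha\leq k<\alpha+\omega$) but are provably distinct in the free algebra itself; (2) form the congruence $I$ generated by a carefully chosen set of equations that (a) keeps $\tau_0\neq\tau_1$ in the quotient, (b) forces $\Sg^{\Fr_\beta/I}\{g_i\}$ to be a neat reduct of a strictly larger subalgebra --- here one invokes the neat embedding theorem ${\sf RCA}_\alpha=S\Nr_\alpha\CA_{\alpha+\omega}$ in the form that free algebras on $\geq\omega$-many extra dimensions realize all representable $\alpha$-algebras as neat reducts; (3) define $\B_0$ and $\B_1$ as the subalgebras of two \emph{different} quotients of $\Fr_\beta$ obtained by adjoining, respectively, the equation $\cyl{\alpha}g_0=\tau_0$ and $\cyl{\alpha}g_0=\tau_1$, chosen so that after truncating dimensions both yield the \emph{same} $\A$; (4) show $\A$ generates $\B_i$ (by construction, since $X$ already generates $\Fr_\beta/I_i$); (5) argue there is no $\A$-isomorphism: any isomorphism $\theta\colon\B_0\to\B_1$ fixing $\A$ pointwise would have to send the $\B_0$-element $\cyl{\alpha}g_0=\tau_0$ to its unique $\B_1$-counterpart, but the defining relations force that counterpart to be $\tau_1\neq\tau_0$, contradiction --- more precisely, $\theta$ restricted to the $\alpha$-reducts is the identity, so $\theta$ commutes with all the $\alpha$-dimensional operations and with $\cyl{k}$ for $k\geq\alpha$ as well once one checks the $\QEA$ axioms linking them, pinning down $\theta$ completely and producing the contradiction.

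The main obstacle --- and the step that requires real care rather than bookkeeping --- is step (2)/(5): guaranteeing simultaneously that the two quotients collapse to the \emph{same} neat reduct $\A$ while remaining non-isomorphic over $\A$. This is exactly where Pigozzi's failure of amalgamation for $\RCA_\alpha$ lurks beneath the surface: one is essentially building an amalgamation failure by hand, and the delicate point is verifying that the congruence $I$ (plus the two competing extra equations) does not accidentally force $\tau_0=\tau_1$ in the dilations, which would make $\B_0\cong\B_1$ trivially. I would handle this by constructing a concrete representable $\A$ (a set algebra) with two essentially different ``$\omega$-dilation'' representations --- one using a rigid set of extra coordinates, another using a set with a nontrivial automorphism that moves $\tau_0$ to $\tau_1$ --- and then reading off the algebraic statement; the representation-theoretic side makes the non-isomorphism transparent, since an $\A$-isomorphism of the dilations would descend to a bijection of the extra coordinates respecting structure that visibly cannot exist. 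Transferring the final conclusion from $\QEA$ to $\CA$ is then routine via the reduct functor, exactly as the excerpt indicates with its remark about moving backwards through the chain of implications in theorem 5.2.4.
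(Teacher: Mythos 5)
You are attempting a direct construction, which is a legitimate strategy in principle, but the proposal never actually carries out the one step that constitutes the theorem: producing a single $\A$ with two minimal dilations that are not $\A$-isomorphic. In your own words the ``main obstacle'' (your step (2)/(5)) is resolved by appeal to ``a concrete representable $\A$ with two essentially different $\omega$-dilation representations'' whose non-isomorphism over $\A$ is ``transparent'' --- but no such algebra is exhibited, and exhibiting it is exactly the content of the statement. Worse, the sketch is internally inconsistent about where the free generators live. If $g_0$ (and hence the elements in your adjoined equations ${\sf c}_{\alpha}g_0=\tau_i$) belongs to $\A\subseteq\Nr_{\alpha}\B_i$, then $g_0$ is $\alpha$-dimensional, so ${\sf c}_{\alpha}g_0=g_0$ and your two equations simply impose $g_0=\tau_0$ in one quotient and $g_0=\tau_1$ in the other, which changes the neat reduct itself: the two quotients will not ``collapse to the same $\A$''. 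If, on the other hand, $g_0\notin\A$, then an $\A$-isomorphism $\theta$ is not required to fix $g_0$, so your closing contradiction ($\theta({\sf c}_{\alpha}g_0)={\sf c}_{\alpha}g_0$, forcing $\tau_0=\tau_1$) does not go through, and the claim that $\A$ generates $\B_i$ also stops being automatic. Either way the final step fails as written, and the delicate verification that the two dilations really do share the same neat reduct is nowhere addressed.

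For comparison, the paper does not construct the two dilations directly; it argues by contradiction from the assumed uniqueness of dilations. Starting from $\A=\Fr_4\QEA_{\alpha}$ with distinguished elements $r\leq s\cdot t$, and the quotient $\A_0=\D^{(X_1\cap X_2)}/L$ of the free representable algebra, one assumes every pair of minimal dilations of $\A_0$ is linked by an $\A_0$-isomorphism; this assumption is what allows the embeddings $i,j$ of $\A_0$ into $\A_1,\A_2$ to be lifted to their dilations, and the superamalgamation property of the class $\bold L=\{\A\in\QEA_{\alpha+\omega}:\A=\Sg^{\Nr_{\alpha}\A}\A\}$ then yields, after pulling back through $\Nr_{\alpha}$, an interpolant $u'$ with $r'\leq u'\leq {\sf c}_{(\Gamma)}(s'\cdot t')$ in the free representable algebra. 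The contradiction is obtained by the explicit atomic algebra $\C$ whose atoms are the sets $X_R$ indexed by equivalence relations $R$ on $\alpha$: a homomorphism $k$ into the full quasipolyadic set algebra is produced under which the atom $X_{Id}$ would have to meet both $k(u')$ and its complement, which is impossible. So the real work lies in the interpolation-failure machinery (with amalgamation ``below the surface''), precisely the work your proposal defers; to repair your route you would have to supply the concrete pair of dilations yourself, and at that point you would essentially be reproving this interpolation failure.
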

\begin{proof}
We use the known \cite{amal}: Let $\bold L=\{\A\in \sf QEA_{\alpha+\omega}: \A=\Sg^{\Nr_{\alpha}\A}\A\}.$
Then $\bold L$ has the superamalgamation property.
\begin{enumroman}
\item Let $\A=\Fr_4{\sf QEA_{\alpha}}$ with $\{x,y,z,w\}$ its free generators. Let $X_1=\{x,y\}$ and $X_2=\{x,z,w\}$.
Let $r, s$ and $t$ be defined as follows:
$$ r = {\sf c}_0(x\cdot {\sf c}_1y)\cdot {\sf c}_0(x\cdot -{\sf c}_1y),$$
$$ s = {\sf c}_0{\sf c}_1({\sf c}_1z\cdot {\sf s}^0_1{\sf c}_1z\cdot -{\sf d}_{01}) + {\sf c}_0(x\cdot -{\sf c}_1z),$$
$$ t = {\sf c}_0{\sf c}_1({\sf c}_1w\cdot {\sf s}^0_1{\sf c}_1w\cdot -{\sf d}_{01}) + {\sf c}_0(x\cdot -{\sf c}_1w),$$
where $ x, y, z, \text { and } w$ are the first four free generators
of $\A$.
Then $r\leq s\cdot t$.

\item Let $\D=\Fr_4\sf RQEA_{\alpha}$ with free generators $\{x', y', z', w'\}$.
Let  $\psi: \A\to \D$ be defined by the extension of the map $t\mapsto t'$, for $t\in \{x,y,x,w\}$.
For $i\in \A$, we denote $\psi(i)\in \D$ by $i'$.
Let $I={\Ig}^{\D^{(X_1)}}\{r'\}$ and $J={\Ig}^{\D^{(X_2)}}\{s'.t'\}$, and let
$$L=I\cap \D^{(X_1\cap X_2)}\text { and }K =J\cap \D^{(X_1\cap X_2)}.$$
Then $L=K$. Let $\A_0=\D^{(X_1\cap X_2)}/L$. Then there exists $\B_1$ and $\B_0$ minimal dilations of $\A_0$,
for which there is no isomorphism between $\B_0$ and $\B_1$ that fixes $\A$ pointwise.
\end{enumroman}
The first part is proved in \cite{amal}.
Let $\A_1=\D^{(X_1)}/I$ and $\A_2=\D^{(X_2)}/J$.
Let $i:\A_0\to \A_1, j: \A_0\to \A_2$ be the natural (injective) embeddings,
and assume for contradiction that any minimal dilations over $\A_0$ are isomorphic by an isomorphism that fixes $\A_0$ pointwise.
Now $\A_0$ embeds into $i(\A_0)\subseteq \A_1$.
The isomorphism can be lifted to $\bar{i}$. Similarly $j$ can be lifted to $\bar{j}.$
We find an amalgam in the big diagram (since $\bold L$ has $AP$), and we return to the original one using the
neat reduct functor. The uniqueness of dilations is used to show that the
isomorphisms $i$ and $j$ lift to $\bar{i}$ and $\bar{j}$.

In more detail, $i:\A_0\to \A_1$ is an embedding, which factors through $i:\A_0\to i(\A_0)$ and $Id:i(\A_0)\to \A_1$ and so is $j:\A_0\to \A_2$
which factors through $\A_0\to i(\A_0)$ and $Id: i(\A_0):\to \A_2$. $\A_1$ neatly embeds in $\A_1^+$ via $e_1:\A\to \Nr_{\alpha}\A^+$.
$i(A_0)$ is a generating subreduct of
$\Sg^{\A^+}(e_1(i(\A_0))).$  By {\it our assumption of uniqueness of dilations},
 $i$ lifts to $\bar{i}:\A_0^+\to \Sg^{\A_1^+}(e_1(i(\A_0))).$
Similarly, $j$ lifts to $\bar{j}:\A_0^+\to \Sg^{\A_2^+}(e_2(i(\A_0))).$
e can assume that $\A_0^+, \A_1^+$ and $\A_2^+$ are in $\bold L$;
indeed no generality is lost if we assume that $\A_0^+=\Sg^{\A_0^+}\A_0$, $\A_1^+=\Sg^{\A_1^+}\A_1$,
and $\A_2^+=\Sg^{\A_2^+}\A_2$. Now $\D^+$ is an amalgam of the outer diagram via $h,k$
and so $\Nr_{\alpha}\D^+$ is an amalgam  via $k\circ e_1$ and $h\circ e_2$.

Now let  $\A=\Fr_4\sf QEA_{\omega}$, and let $\bold R$ be the kernel of the map $\psi$ defined in the hypothesis.
\begin{equation*}\label{p19}
\begin{split}
M={\Ig}^{\A^{(X_2)}}[\{s\cdot t\}\cup (\bold R\cap A^{(X_2)})];
\end{split}
\end{equation*}
\begin{equation*}\label{p20}
\begin{split}
N={\Ig}^{\A^{(X_1)}}[(M\cap A^{(X_1\cap X_2)})\cup (\bold R\cap A^{(X_1)})].
\end{split}
\end{equation*}
Because we have proved that there is an amalgam,
then by exactly the same argument in \cite{amal},
there is an ideal $P$ of $\A$ such that
\begin{equation*}\label{p23}
\begin{split}
P\cap A^{(X_1)}=N,
\end{split}
\end{equation*}
and
\begin{equation*}\label{p24}
\begin{split}
P\cap A^{(X_2)}=M.
\end{split}
\end{equation*}
Thus we can infer that $s\cdot t\in P$ and hence  $r\in P$.
So  $r\in N$. But now
there exist elements
\begin{equation*}\label{p25}
\begin{split}
u\in M\cap A^{(X_1\cap X_2)}
\end{split}
\end{equation*}
and $b\in \bold R$ such that
\begin{equation*}\label{p26}
\begin{split}
r\leq u+b.
\end{split}
\end{equation*}

Since $u\in M$, we get by properties of ideals, that there is a $\Gamma\subseteq_{\omega} \alpha$ and
$c\in \bold R$ such that
$$u\leq {\sf c}_{(\Gamma)}(s\cdot t)+c.$$

Recall the definition of $\psi$; the homomorphism from $\A$ to $\D$ be such that
$\psi(i)=i'$ for $i\in \{x,y,w,z\}$.
We have   $ker \psi=\bold R$, we have $\psi(b)=\psi(c)=0$.
It follows that $$\psi(r)\leq \psi(u) \leq {\sf c}_{(\Gamma)}(\psi(s).\psi(t)).$$
Using the notation in the statement
of the theorem, let $r'=\psi(r)$, $u'=\psi(u)$, $s'=\psi(s)$ and $t'=\psi(t).$
Now we have succeeded to transfer this inequality to the realm of the representable algebras; we have shown
that $r'\leq s'.t'$ can be interpolated in the free representable algebra. Now we define an atomic algebra $\C$,
that we use to show that this interpolant cannot exist.

This part is identical to \cite{amal}.
Let $$\B = ( \wp (^{\alpha}{\alpha}), \cup, \cap, \sim, \emptyset,
{^{\alpha}{\alpha}}, {\sf c}_{i},  {\sf d}_{i,j})_{
i,j\in \alpha}$$
that is $\B$ is the  full quasi polyadic set algebra in the
space $ {^{\alpha}{\alpha}}$. Let $E$ be the set of all equivalence
relations on $\alpha$, and for each $ R \in E $ set
$$ X_R = \{ \varphi : \varphi  \in {^{\alpha}{\alpha}} \,\,\,\
\textrm{and for all} \,\,\,\  \xi, \eta < \alpha , \varphi_\xi =
\varphi_\eta \,\,\,\ \textrm{iff} \,\,\,\  \xi R \eta \}.$$
More succinctly
$$X_R=\{ \varphi \in {^{\alpha}{\alpha}}: ker \varphi=R\}.$$
Let
$$ C = \{ \bigcup_{R \in L} X_R : L \subseteq E \}. $$
$C$ is clearly closed under the formation of arbitrary unions, and
since
$$ \sim \bigcup_{R \in L} X_L =  \bigcup_{R \in E \sim L} X_R$$
for every $ L \subseteq E$, we see that $C$ is closed under the
formation of complements with respect to ${^{\alpha}{\alpha}}$. Thus
$ C $ is a Boolean subuniverse (indeed, a complete Boolean
subuniverse) of $\B$; moreover, it is obvious that
\begin{equation*}\label{p4}
\begin{split}
X_R \,\,\,\ \textrm{is an atom of} \,\,\,\ ( C, \cup, \cap, \sim,
0, {^{\alpha}{\alpha}}) \,\,\,\,\ \textrm{for each} \,\,\,\ R \in E.
\end{split}
\end{equation*}
For all $i,j\in \alpha$ we have ${\sf  d}_{ij} =\bigcup \{ X_R : (i,j)\in R \}$
and hence ${\sf d}_{ij} \in C$. Also for any finite $\Gamma$,
$$ {\sf c}_{(\Gamma)}X_R = \bigcup \{ X_S : S \in E, {}^{2}(\alpha \sim \Gamma) \cap S = {}^{2}(\alpha \sim \Gamma) \cap R \}$$
and $ R \in E$. Thus, because ${\sf c}_{(\Gamma)}$
is completely additive, $C$ is closed under the operation $ {\sf c}_{(\Gamma)}$
Notice that that $C$ is also closed under finite substitutions.
For any such  $\tau$, we have,
$${\sf s}_{\tau}X_R=\bigcup\{X_S: S\in E, \forall i, j<\alpha(iRj \Longleftrightarrow \tau(i)S\tau(j)\}.$$
The set on the right may of course be empty.
Since ${\sf s}_{\tau}$ is also completely additive, therefore, we have shown
that
\begin{equation*}\label{p5}
\begin{split}
C \,\,\,\ \textrm{is a subuniverse of} \,\,\,\ \B .
\end{split}
\end{equation*}
(even if we count in finite substitutions.) It can be shown exactly as in \cite{amal}
that there is a subset $Y$ of ${^{\alpha}{\alpha}}$ such that
\begin{equation*}\label{p6}
\begin{split}
&X_{Id} \cap f(r') \neq 0 \,\,\ \textrm{for
every} \,\,\ f \in Hom (\D, \B) \\
& \textrm{such that} \,\,\ f(x') = X_{Id} \,\,\
\textrm{and} \,\,\ f(y') = Y,
\end{split}
\end{equation*}
and also that for every $ \Gamma \subseteq_{\omega} \alpha$, there are subsets $Z, W$ of $ {^{\alpha}{\alpha}}$ such that
\begin{equation*}\label{p7}
\begin{split}
& X_{Id} \sim {\sf c}_{(\Gamma)}g(s'\cdot t') \neq 0 \,\,\
\textrm{for every} \,\,\ g\in Hom (\D, \B)\\
 & \textrm{such that}
\,\,\ g(x') = X_{Id}, g(z') = Z \,\,\ \textrm{and}
\,\,\ g(w') = W.
\end{split}
\end{equation*}
Here $Hom(\D, \B)$ stands for the set of all homomorphisms from $\D$ to $\B$.
We will show that this in incompatible with the existence of an interpolant.
This will allow us to show that the atom $X_{Id}$ intersects an element in the algebra and its complement non trivially,
which is a contradiction.
Indeed, assume that there exists $ \Gamma \subseteq_{\omega} \alpha$
and an interpolant $u' \in \D^{(x')},$  that is
$$r'\leq u'\leq {\sf c}_{(\Gamma)}(s'\cdot t').$$

Assume also that there also exist $ Y, Z, W \subseteq
{^{\alpha}{\alpha}}$ as indicated above.

Now take any $k \in Hom (\D, \B)$ such that $ k(x') = X_{Id}$,
$ k(y') = Y$, $k(z') = Z$, and $k(w') = W.$
This is possible by the freeness of $\D.$
Then using the fact that $X_{Id} \cap k(r')$ is non-empty,  we get
$$ X_{Id} \cap k(u') = k(x'\cdot u') \supseteq k(x'\cdot r')
\neq 0.$$
And using the fact that $X_{Id} \sim
{\sf c}_{(\Gamma)}k(s'\cdot t')$ is also non-empty,  we get
$$ X_{Id} \sim k(u') = k( x'\cdot -u') \supseteq
k(x'\cdot -{\sf c}_{(\Gamma)}(s'\cdot t')) \neq 0.$$

However,  it is impossible for $X_{Id}$
to intersect both $k(u')$ and its complement since
$ k(u') \in C$ and $X_{Id}$ is an atom;
to see that $k(u')$ is indeed contained in $C$ recall that
$ u' \in \D^{(x')}$, and then observe that because of
the fact that $X_{Id} \in C$ we
must  have $k[\D^{(x')}] \subseteq C.$
\end{proof}
Actually for every $\alpha>0$,
the amalgamation base of $\RQEA_{\alpha}$ coincides with algebras having $UNEP$,
and the superamalgamation base of $\RQEA_{\alpha}$ coincides with those algebras in the amalgamation base that have
$NS$ property. One implication is proved in \cite{Sayed} theorem 5.2.3,
the other is proved in \cite{univ}, but in a more general context, namely,
that of systems of varieties definable by
a schema. We also note that the above proof also shows that cylindric algebras with merry go round identities
does not have the amalgamation property,
answering a question posed by N\'emeti (personel communication).

\subsection{Other, though related, means}

Andr\'eka \cite{Andreka}  generalized some of  the results excluding axiomatizations using finitely many variables
a task done by Jonsson for relation algebras.
In a different direction and using more basic methods, avoiding totally the notion of schema, Andr\'eka invented the method of {\it splitting} \cite{Andreka}
to show that there is no universal axiomatization of
the class of representable cylindric algebras of any infinite dimension containing only finitely many variables.

This is stronger than Monk's result; furthermore, the method proved powerful enough to prove much more refined non-finite
axiomatizability results. Andr\'eka's result was apparently generalized to
quasipolyadic algebras with equality by Sain and Thompson in their seminal
paper \cite{SL}, but there is a
serious error in the proof, that was corrected by the present author, but only for countable dimensions.

Here we generalize this non finite axiomatizability result for any ordinal, possibly uncountable, completing the correction of the
erroneous proof in \cite{ST} to be corrected in a minute.
However, Andr\'eka's splitting does not work for diagonal free algebra; this can indeed be {\it  proved.}
The presence of diagonal elements are crucial in the
argument. Below, we shall give a rainbow construction that can lead to analogous results for diagonal free reducts
of both cylindric and polyadic algebras.

The idea, traced back to Jonsson for relation algebras, consists of constructing for every finite $k\in \omega$
a non-representable algebra, all of whose $k$ -generated subalgebras are representable.

Andr\'eka ingeniously transferred such an idea to cylindric algebras, and to fully implement it, she invented
the {\it nut cracker} method of splitting.
The subtle splitting technique invented by Andr\'eka can be summarized as follows.

Take a fairly simple representable algebra generated by an atom, and  break
up or {\it split} the atom into enough (finitely many)  $k$ atoms,
forming a larger algebra, that is in fact non-representable; in fact, its cylindric reduct will not be representable, due to the  incompatibility
between the number of atoms, and the number of elements in the domain of a representation. However, the ''small" subalgebras
namely, those generated by $k$ elements of such an algebra will be
representable.

This incompatibility is usually witnessed by an application of Ramsey's theorems, like in Monk's first Monk-like algebra,
but this is not always the case (as we will show below)
like  Andr\'eka' splitting. This is definitely an asset, because the proofs in this  case are much  more basic,
and often even proving stronger results.
While Monk and rainbow algebras can prove subtle results concerning non finite axiomatizability for finite
dimensions, splitting works best in the infinite dimensional case.

\subsection{Another technical innovation}

We give two instances of the splitting technique, but with only sketchy proofs.
The first answers a question of Andr\'eka's
formulated on p. 193 of \cite{Andreka} for any infinite ordinal
$\alpha$.

\begin{theorem}\label{complexity} Let $\alpha\geq \omega$. Then the variety $\RQEA_{\alpha}$
cannot be axiomatized with with a set $\Sigma$ of quantifier free
formulas containing finitely many variables. In fact, for any $k<\omega$, and any set of quantifier free formulas
$\Sigma$ axiomatizing $\RQEA_{\alpha}$,
$\Sigma$ contains a formula with more than $k$ variables in which some diagonal
element occurs. In particular, the variety $\RQEA_{\alpha}$ is not axiomatizable over its diagonal free reduct
with a set of universal formulas containing infinitely many variables.
\end{theorem}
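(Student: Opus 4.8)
The plan is to implement Andréka's splitting method in the quasipolyadic-equality setting, uniformly for all infinite $\alpha$, taking care that the single ``split atom'' phenomenon is located in a subalgebra generated by $k$ elements \emph{together with} a diagonal element, which is exactly what forces the diagonal to appear in any low-variable axiomatization. First I would fix $k<\omega$ and build a base algebra: start from a very simple representable $\QEA_\alpha$, concretely a set algebra on a suitable base set $U$ (say $U=\{0,1,\dots\}$ with the orbit structure needed so that the $\s_\tau$ and $\c_i$ act transparently), in which there is a single non-identity atom $a$ below $-\diag{0}{1}$ whose behaviour under cylindrifications is ``flat''. Then I would \emph{split} $a$ into $k$ (or $k+1$) pieces $a_0,\dots,a_{k-1}$, defining the larger structure $\A_k$ abstractly by prescribing the Boolean structure, the values of $\c_i$, the substitutions $\s_\tau$, and the diagonals on the new atoms so that the $a_i$ are ``indistinguishable'' from $a$ as far as the unary operations go, but the way they multiply under the implicit composition forced by diagonals creates a Ramsey-type obstruction: any representation would need the base set to contain a monochromatic configuration of a size incompatible with having exactly $k$ atoms below $-\diag{0}{1}$. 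One must check $\A_k\in\QEA_\alpha$ (the axioms are schema-finitary, so this is a finite verification replicated across dimensions) and that $\A_k\notin\RQEA_\alpha$; the non-representability is where the incompatibility between the atom count and the size of the domain of a representation, witnessed through the diagonal elements, is used.

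Next I would prove the positive half: every subalgebra of $\A_k$ generated by at most $k$ elements \emph{none of which "sees" the splitting through a diagonal} is representable. The point is that a $k$-generated subalgebra can only separate finitely many of the $a_i$; by a pigeonhole/merging argument one collapses the unseen $a_i$'s back together, obtaining a homomorphic (in fact embedding, after the collapse) image inside the original simple representable algebra, hence representable. This is the standard ``small subalgebras are representable'' step of splitting, and it goes through verbatim from \cite{Andreka} once the abstract definition of $\A_k$ is arranged so that the $a_i$ are symmetric under the relevant automorphisms; I would only sketch it. Combining the two halves: if $\Sigma$ were a set of quantifier-free formulas in $\le k$ variables axiomatizing $\RQEA_\alpha$ and \emph{no} formula of $\Sigma$ with more than $k$ variables used a diagonal, then every formula of $\Sigma$ (in $\le k$ variables, or in more variables but diagonal-free) would be preserved when passing to $\A_k$ from its representable $k$-generated reducts — here one uses that quantifier-free formulas are preserved under subalgebras and that a diagonal-free low-variable formula can be checked on the diagonal-free reduct, where the splitting is invisible — forcing $\A_k\in\RQEA_\alpha$, a contradiction. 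Letting $k\to\infty$ gives the full statement, including the final ``over its diagonal-free reduct'' clause, since a diagonal-free universal axiomatization with infinitely many variables would in particular contain, for each $k$, only diagonal-free formulas, contradicting what we just showed.

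The main obstacle I anticipate is \emph{uniformity in $\alpha$}: Andréka's original construction and Sain--Thompson's attempted transfer to $\QEA$ are set up for countable (or specific) dimension, and the erroneous proof in \cite{ST} presumably mishandles the bookkeeping of substitutions $\s_\tau$ for $\tau$ a non-injective finite transformation when $\alpha$ is uncountable. So the delicate part is defining $\c_i$, $\diag ij$ and especially $\s_\tau$ on the split atoms for \emph{arbitrary} infinite $\alpha$ in a way that (i) still satisfies all $\QEA$-schema axioms, (ii) keeps the $a_i$ symmetric enough for the small-subalgebra argument, and (iii) preserves the Ramsey obstruction. I would handle this by making the construction ``dimension-restricted'': let all the action happen on a fixed finite set of coordinates $\{0,1\}$ (or $\{0,\dots,m\}$ for a fixed small $m$ depending only on $k$) and have every operation indexed outside that set act trivially, so that checking the axioms reduces to a finite computation independent of $\alpha$, exactly the kind of ``finite schema'' reduction used elsewhere in this paper. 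A secondary technical point is verifying that the collapse map in the positive half is compatible with \emph{all} the $\s_\tau$, not just cylindrifications — this is the step most likely to have been botched in \cite{ST}, so I would give it in full rather than cite it.
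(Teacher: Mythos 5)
Your overall template (split an atom, show the big algebra is non-representable while its $k$-generated subalgebras are representable, and arrange the non-representability so that it is invisible to the diagonal-free reduct) is the right family of ideas, but the proposal misses the one point on which the whole theorem turns for infinite $\alpha$: in ${\sf QEA}_{\alpha}$ the signature contains \emph{infinitely many} substitutions ${\sf s}_{[i,j]}$, and Andr\'eka's ``small subalgebras are representable'' step does not go through ``verbatim'' in their presence. That step is a counting argument that depends essentially on the substitutions in the signature being finite in number; this is exactly the error in \cite{ST} that the paper is correcting, and it is not the issue you flag (compatibility of the collapse map with the ${\sf s}_\tau$). Consequently a \emph{single} splitting performed in the full ${\sf QEA}_{\alpha}$ signature, which is what you propose, cannot be justified by citing \cite{Andreka}. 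The paper's construction is structurally different: one builds a chain of algebras $\A_{k,\mu}$, each carrying only the substitutions indexed by $\mu$ (so finitely/``few'' enough that the splitting-and-counting argument applies in each reduct), with each $\A_{k,\beta}$ a subreduct of $\A_{k,\mu}$ for $\beta<\mu$, and the witnessing algebra is the directed union $\A_k=\bigcup_{\mu\in\alpha}\A_{k,\mu}$; the genuinely hard part is then showing that $k$-generated subalgebras of the \emph{limit} are still representable (the paper does this by pushing any equation valid in $\RQEA_{\alpha}$, evaluated at $k$ elements, down into some representable reduct $\Sg^{\Rd_{\mu}\A}G$). Your ``dimension-restricted'' uniformity device (confine the action to finitely many coordinates so the axiom check is finite) addresses only the verification of the QEA schema, not this representability problem, and it is incompatible with the actual constructions, where the ${\sf p}_\sigma$ act nontrivially on the split atoms.

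Two further points. First, your positive half is stated too weakly: you only claim representability of $k$-generated subalgebras whose generators ``do not see the splitting through a diagonal,'' but the concluding argument needs \emph{every} $k$-generated subalgebra to be representable (the $\le k$-variable formulas in $\Sigma$ may contain diagonals and are evaluated at arbitrary elements), and, separately, it needs a representation of the whole algebra $\A_k$ preserving all operations \emph{except} the diagonals in order to handle the diagonal-free formulas in arbitrarily many variables; in the paper this is an explicit third property of each $\A_{k,\mu}$ (an injective map into a full weak set algebra that is a homomorphism for everything but ${\sf d}_{ij}$), then one verifies $\A_k\models\Sigma^v\cup\Sigma^d$ where $\Sigma^v_\mu$ are the valid $\le k$-variable universal formulas and $\Sigma^d_\mu$ the valid diagonal-free ones, for each $\mu\in\alpha$. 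You gesture at this (``the splitting is invisible in the diagonal-free reduct'') but it has to be built into the construction, not inferred afterwards. Second, a minor but telling slip: the obstruction to representability in splitting constructions is a cardinality/counting incompatibility tied to the diagonals, not a Ramsey-style monochromatic-configuration argument; the paper stresses that avoiding Ramsey is precisely what makes splitting work where it does, so invoking a Ramsey obstruction suggests the intended algebra has not really been pinned down.
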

\begin{proof}  The theorem is proved only for $\omega$ in \cite{t} (the proof generalizes to any countable ordinal without much ado).
The proof consisted of two parts.
In the first part algebras $\A_{k,n}$  for $k, n\in \omega\sim \{0\}$, with the following properties, where constructed:
$\A_{k,n}$ is of the form $(A_{k,n}, +, \cdot ,-, {\sf c}_i, {\sf s}_{\tau}, {\sf d}_{ij})_{i,j\in \omega, \tau\in G_n}$, where $G_n$ is the symmetric group on $n$
satisfying the following:
\begin{enumarab}
\item $\Rd_{ca}\A_{k,n}\notin \RCA_{\omega}$.
\item Every $k$-generated subalgebra of $\A_{k,n}$ is representable.
\item  There is a one to one mapping $h:\A_{k,n}\to (\B(^{\omega}W), {\sf c}_i, {\sf s}_{\tau}, {\sf d}_{ij})_{i,j<\omega, \tau\in G_n}$
such that $h$ is a homomorphism with respect to all operations of $\A_{k,n}$ except for the
diagonal elements.
\end{enumarab}
Here $k$-generated means generated by $k$ elements.
In the second part  it was shown that  for $n<m$, $\A_{k,n}$ is a subreduct (subalgebra of a reduct)
of $\A_{k,m}$.
Then the directed union $\A_k=\bigcup_{n\in \omega}\A_{k,n}$, was proved to be as required when the signature is countable.

In the above construction, infinitely many finite splittings
(not just one which is done in \cite{ST}), increasing in number but always finite, are implemented
constructing infinitely many algebras, whose similarity types contain only finitely many
substitutions. This is the main novelty occurring in \cite{t} {\it  a modification of Andr\'eka's method of splitting to adapt to
the quasipolyadic equality case.} Such constructed non-representable algebras,
form a chain, and our desired algebra is their
directed union, so that it is basically an $\omega$ step by step construction not just one step construction.

{\it The error in Sain's Thompson paper is claiming that the small subalgebras
of the non-representable algebra, obtained by performing {\it only one splitting into infinitely many atoms}, are representable; this is not necessarily
true at all.}

For finite dimensions, as it happens,  this {\it is true} because the splitting is implemented relative to a {\it finite} set of substitutions
and representability  involves a combinatorial trick depending on {\it counting the number} of substitutions.
This technique no longer holds in the presence of infinitely many substitutions because  we simply cannot count them.
So the splitting have to be done relative to reducts containing only finitely many substitutions and
their desired algebra, witnessing the complexity of
axiomatizations, is the limit
of such reducts, synchronized by the reduct operator.

The idea might be simple, but the  details of implementing the idea turn out to be somewhat intricate; one
has to make sure that certain (basically) combinatorial
properties holding for all finite reducts pass to their limit.

As it turns out, the easy thing to do is to show that ``small" subalgebras of every non-representable
algebra in the chain is representable; the hard thing to do is to show that ``small" subalgebras of the non-representable
limit remain  representable. To extend the result from the least countable ordinal to arbitrary ordinals we use, as to be expected,
transfinite induction.

Now to lift this result to possibly uncountable ordinals, we proceed inductively. Fix finite $k\geq 1$.
For any ordinal $\alpha\geq \omega$, we assume that for all $\kappa\in \alpha$, there is an algebra $\A_{k,\kappa}$
such that $\Rd_{ca}\A_{k,\kappa}\in \RCA_{\alpha}$, and it has all substitution operations corresponding to
transposition from $\kappa$,  namely,
${\sf s}_{[i,j]}$ $i,j\in \kappa$, satisfying all three conditions.

We further assume that  $\beta<\mu<\alpha$, then $\A_{k, \beta}$ is a subreduct of $\A_{k, \mu}$.
The base of the induction is valid.

Now, as in the countable case, take $\A_k=\bigcup_{\mu\in \alpha}\A_{k,\mu}$, and define the operations the natural way; this limit is of the
same similarity type as ${\sf QPEA}_{\alpha}$, and is a well defined algebra, since $\alpha$ is well ordered.
We claim that it is as desired. Clearly it is not representable (for its cylindric reduct is not representable)

Let $|G|\leq k$. Then $G\subseteq \A_{k,\mu}$ for some $\mu\in \alpha$.
We show that $\Sg^{\A_k}G$ has to be representable.
Let $\tau=\sigma$  be valid in the variety
$\RQEA_{\alpha}$. We show that it  is valid in $\Sg^{\A_k}G$.
Let $v_1,\ldots v_k$ be the variables occurring in this equation, and let $b_1,\ldots b_k$ be arbitrary elements of $\Sg^{\A_k}G$.
We show that $\tau(b_1,\ldots b_k)=\sigma(b_1\ldots b_k)$. Now there are terms
$\eta_1\ldots \eta_k$ written up from elements of $G$ such that $b_1=\eta_1\ldots b_k=\eta_k$, then we need to show that
$\tau(\eta_1,\ldots \eta_k)=\sigma(\eta_1, \ldots \eta_k).$
This is another equation written up from elements of $G$, which is also valid in $\RQEA_{\alpha}$.
Let $n$ be an upper bound for the indices occurring in this equation and let $\mu>n$ be such that $G\subseteq \A_{k,\mu}$.
Then the above equation is valid in $\Sg^{\Rd_{\mu}\A}G$ since the latter is representable.
Hence the equation $\tau=\sigma$ holds in $\Sg^{\A_k}G$ at the evaluation $b_1,\ldots b_k$ of variables.

For $\mu\in \alpha$, let  $\Sigma_{\mu}^v$ be the set of universal formulas using only $\mu$
substitutions and $k$ variables valid in $\RQEA_{\omega}$,
and let $\Sigma_{\mu}^d$ be the set of universal formulas
using only $\mu$ substitutions and no diagonal elements valid in $\RQEA_{\omega}$.
By $\mu$ substitutions we understand the set
$\{{\sf s}_{[i,j]}: i,j\in \mu\}.$
Then $\A_{k,\mu}\models \Sigma_{\mu}^v\cup \Sigma_{\mu}^d$.
$\A_{k,\mu}\models \Sigma_{\mu}^v$
because the $k$ generated subalgebras of $\A_{k,\mu}$ are representable,
while $\A_{k,\mu}\models \Sigma_{\mu}^d$ because $\A_{k,{\mu}}$
has a representation that preserves all operations except
for diagonal elements.

Indeed, let $\phi\in \Sigma_{\mu}^d$, then there is a representation of $\A_{k,{\mu}}$ in which all operations
are the natural ones except for the diagonal elements.
This means that (after discarding the diagonal elements) there is a one to one homomorphism
$h:\A^d\to \P^d$ where $\A^d=(A_{k,n}, +, \cdot , {\sf c}_k, {\sf s}_{[i,j]}, {\sf s} _i^j)_{k\in \alpha, i,j\in \mu}\text { and }
\P^d=(\B(^{\alpha}W), {\sf c}_k^W, {\sf s}_{[i,j]}^W, {\sf s}_{[i|j]}^W)_{k\in \alpha, i,j\in \mu},$
for some infinite set $W$.

Now let $\P=(\B(^{\alpha}W), {\sf c}_k^W, {\sf s}_{[i,j]}^W, {\sf s}_{[i|j]}^W, {\sf d}_{kl}^W)_{k,l\in \alpha, i,j\in \mu}.$
Then we have that $\P\models \phi$ because $\phi$ is valid
and so  $\P^d\models \phi$ due to the fact that  no diagonal elements  occur in $\phi$.
Then $\A^d\models \phi$ because $\A^d$ is isomorphic to a subalgebra of $\P^d$ and $\phi$ is quantifier free. Therefore
$\A_{k,\mu}\models \phi$.
Let $$\Sigma^v=\bigcup_{\mu\in \alpha}\Sigma_{\mu}^v
\text { and }\Sigma^d=\bigcup_{\mu\in \alpha}\Sigma_{\mu}^d$$
Hence $\A_k\models \Sigma^v\cup \Sigma^d.$

For if not then there exists a quantifier free  formula $\phi(x_1,\ldots x_m)\in \Sigma^v\cup \Sigma^d$,
and $b_1,\ldots b_m$ such that $\phi[b_1,\ldots b_n]$ does not hold in $\A_k$. We have $b_1\ldots b_m\in \A_{k,\mu}$ for some $\mu\in \alpha$.
Take $\mu$ large enough $\geq i$ so that
$\phi\in \Sigma_{\mu}^v\cup \Sigma_{\mu}^d$.   Then $\A_{k,\mu}$ does not model $\phi$, a contradiction.
Now let $\Sigma$ be  a set of quantifier free formulas axiomatizing  $\RQEA_{\alpha}$, then $\A_k$ does not model $\Sigma$ since $\A_k$ is not
representable, so there exists a formula $\phi\in \Sigma$ such that
$\phi\notin \Sigma^v\cup \Sigma^d.$
Then $\phi$ contains more than $k$ variables and a diagonal constant occurs in $\phi$.

\end{proof}

The next theorem, lifts an unpublished result of Andr\'eka's to the transfinite (this result is mentioned in \cite{Andreka}
referring to an abstract. It is attributed to  Andr\'eka and Tuza for dimension $3$
and to Andr\'eka alone for higher finite dimensions. The infinite version of this result is not addressed
at all in \cite{Andreka}.
\begin{theorem} For any $\alpha\geq \omega$, the class $\sf RQEA_{\alpha}$
is not finitely axiomatizable over its substitutions free reducts,
that is, over $\sf RQA_{\alpha}$.
\end{theorem}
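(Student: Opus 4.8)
I would run Andr\'eka's splitting technique together with the Monk-style ultraproduct bookkeeping for non-finite axiomatizability, exactly in the spirit of the proof of Theorem~\ref{complexity}, but arranging matters so that the whole obstruction to representability is carried by the transposition substitutions while the substitution-free reduct stays representable. The target is: for each finite $k\geq 1$, an algebra $\A_k\in\QEA_\alpha$ such that
(i) $\Rd_{qa}\A_k\in{\sf RQA}_\alpha$, where $\Rd_{qa}$ is the reduct forgetting all substitution operations --- this reduct is a cylindric algebra, so ${\sf RQA}_\alpha=\RCA_\alpha$;
(ii) $\A_k\notin\RQEA_\alpha$; and
(iii) every subalgebra of $\A_k$ generated by at most $k$ elements lies in $\RQEA_\alpha$.

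Granting (i)--(iii), the theorem drops out by a routine argument. Since $\RQEA_\alpha$ is a variety, fix an equational base $\{e_i:i\in\omega\}$ for it in the $\QEA_\alpha$-signature, each $e_i$ in finitely many variables. If $e_i$ has at most $k$ variables then $\A_k\models e_i$, because any evaluation of its variables lands inside some $\leq k$-generated subalgebra of $\A_k$, which is in $\RQEA_\alpha$ by (iii) and hence satisfies $e_i$. Consequently, for a non-principal ultrafilter $F$ on $\omega$ the ultraproduct $\prod_k\A_k/F$ satisfies every $e_i$, so $\prod_k\A_k/F\in\RQEA_\alpha$ and in particular $\Rd_{qa}(\prod_k\A_k/F)\in{\sf RQA}_\alpha$. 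Now suppose a finite set $\Sigma$ of first-order sentences axiomatized $\RQEA_\alpha$ over ${\sf RQA}_\alpha$; since by (i) and (ii) each $\A_k$ has $\Rd_{qa}\A_k\in{\sf RQA}_\alpha$ yet $\A_k\notin\RQEA_\alpha$, we get $\A_k\not\models\bigwedge\Sigma$ for every $k$, hence $\prod_k\A_k/F\not\models\bigwedge\Sigma$ by \Los' theorem, contradicting $\prod_k\A_k/F\in\RQEA_\alpha$. (If one only wants that $\RQEA_\alpha$ has no finite \emph{equational} base over ${\sf RQA}_\alpha$, the ultraproduct is unnecessary: any finite set of $\QEA_\alpha$-equations uses at most $k$ variables for some $k$, so it holds in $\A_k$ by (iii), while $\A_k$ already witnesses (i) and (ii).)

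For the construction, the finite-dimensional case --- that for each finite $n\geq 3$ there is a $\QEA_n$ which is not representable, has representable cylindric reduct, and has all of its $\leq k$-generated subalgebras in $\RQEA_n$, obtained by splitting a suitable atom of a small representable algebra into $k$ atoms in a way compatible with the $\CA_n$-axioms but incompatible with the concrete action of the transpositions ${\sf s}_{[i,j]}$ --- is Andr\'eka's (unpublished) result, due to Andr\'eka--Tuza for $n=3$ and to Andr\'eka for larger $n$. The infinite case I would handle following the scheme of \cite{t} and of the proof of Theorem~\ref{complexity}: for $\alpha=\omega$, construct directly, for each $\mu\in\omega$, an $\omega$-dimensional algebra $\A_{k,\mu}$ carrying all cylindrifiers and diagonals but only the transpositions ${\sf s}_{[i,j]}$ with $i,j<\mu$, with $\Rd_{ca}\A_{k,\mu}\in\RCA_\omega$, not representable as a $\QEA$, and with $\leq k$-generated subalgebras in $\RQEA_\omega$; check that $\mu<\nu$ makes $\A_{k,\mu}$ a subreduct of $\A_{k,\nu}$; and set $\A_k=\bigcup_{\mu<\omega}\A_{k,\mu}$, synchronized by the reduct operator. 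Then lift to arbitrary infinite $\alpha$ by transfinite recursion on the ordinals, exactly as in the second half of the proof of Theorem~\ref{complexity}, running the same construction at stage $\alpha$ with $\alpha$-dimensional pieces $\A_{k,\mu}$, $\mu<\alpha$. Property (iii) passes to $\A_k$ since any $\leq k$ elements already lie in some $\A_{k,\mu}$; property (i) passes since $\RCA_\alpha$ is closed under directed unions; and property (ii) is inherited because the finitely many coordinates witnessing the transposition-incompatibility sit inside every $\A_{k,\mu}$ with $\mu$ past the relevant index.

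The step I expect to be the genuine obstacle is the construction forcing (i) and (ii) to hold \emph{at once}: the split must be fine enough that a Ramsey-type count rules out every $\QEA$-representation (yielding (ii)) yet placed strictly ``above'' the cylindric reduct, so that forgetting the transpositions leaves an honest square representation (yielding (i)), all while keeping $\leq k$-generated subalgebras representable ((iii)). Pinning the obstruction to exactly that spot is the delicate content of Andr\'eka's argument; the new work here is to verify that this placement is stable under the subreduct embeddings $\A_{k,\mu}\hookrightarrow\A_{k,\nu}$, hence survives in the directed union $\A_k$, and that the whole package then survives the transfinite recursion on $\alpha$ --- the familiar hazard, flagged already in the proof of Theorem~\ref{complexity}, that combinatorial properties holding for all finite reducts must be shown to pass to the limit.
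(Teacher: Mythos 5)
Your Łoś endgame is standard and fine, but the witnesses you plan to feed it are not available: the package you require --- for each finite $k$ an $\A_k\in \QEA_{\alpha}$ that is not representable, has representable substitution-free reduct, \emph{and} has all of its $\leq k$-generated subalgebras in $\RQEA_{\alpha}$ --- is established nowhere, and it is not what the cited unpublished result of Andr\'eka (and Andr\'eka--Tuza) provides; as described in this paper, that result is just the finite-dimensional form of the statement itself, not a construction with the small-subalgebra property. Nor can you get it by transplanting the machinery of Theorem~\ref{complexity}: the algebras $\A_{k,n}$ built there satisfy $\Rd_{ca}\A_{k,n}\notin \RCA_{\omega}$, i.e. their cylindric reduct is deliberately \emph{non}-representable (the splitting destroys representability at the cylindric/diagonal level, which is exactly what that theorem needs), so they violate your condition (i) outright. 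Making the obstruction to representability sit entirely in the transpositions while simultaneously keeping every $\leq k$-generated subalgebra QEA-representable is a genuinely new demand; you correctly flag it as ``the genuine obstacle,'' but that means the proof has not been reduced to anything known, and the demand is in any case stronger than the theorem requires.

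What the paper actually does needs much less. Refining \cite{ANS}, for each $n\geq 3$ one builds an $\omega$-dimensional algebra $\B_n$ whose cylindric part $\A_n$ is by construction a subalgebra of a cylindric weak set algebra with unit $V={}^{\omega}\N^{(p)}$ --- so the substitution-free reduct is representable on the nose --- expanded by abstract transpositions ${\sf p}_{\sigma}$ twisted on the permuted copies of the distinguished element $a$. The twist forces $|a|$ to be strictly larger than $n=|Z_0|$ in any representation, so $\B_n\notin \RQEA_{\omega}$; no claim about $k$-generated subalgebras is made or needed. Representability of the ultraproduct is then proved \emph{directly}: over the cofinite ultrafilter the block $Z_0$ is stretched to infinite cardinality, the cardinality incompatibility disappears, and the abstract polyadic operations coincide with the concrete ones, so $\prod_n\B_n/F\in \RQEA_{\omega}$; \Los\ then rules out any finite axiomatization over the substitution-free reducts, and the case of arbitrary $\alpha\geq\omega$ is handled by the same transfinite lifting you describe for Theorem~\ref{complexity}. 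The missing idea in your plan is precisely this: index the sequence by the size parameter of the split and prove ultraproduct representability by the cardinality-stretching argument, instead of routing through representability of $\leq k$-generated subalgebras, which is both unsupported and unnecessary here.
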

\begin{demo}{Sketch of proof}
This follows by a refinement of the result in \cite{ANS}. We only give a sketch of proof.
Let $n\geq 3$.
Let $Z_0=Z_1=n=\{0,1,2, \ldots n-1\}$ and $Z_i=\{(n-1)i-1, (n-1)i\}$ for $i>1$.
Let $p:\omega\to \omega$ be defined by $p(i)=(n-1)i$. Let
$$V={}^{\omega}\N^{(p)}=\{s\in {}^{\omega}\N: |\{i\in \omega: s_i\neq (n-1)i\}|<\omega\}.$$
$V$ will be the unit of our algebra.
Let $${\sf P}Z=Z_0\times Z_1\times Z_2\ldots \cap V.$$
Let $$t=Z_2\times Z_3\times\ldots \cap V.$$
We split $t$ into two parts, measured by the deviation from $p$:
$$X=\{s\in t: |\{i\in \omega\sim 2: s(i)\neq (n-1)i\}| \text { is even }\},$$
$$Y=\{s\in t: |\{i\in \omega\sim 2: s(i)\neq (n-1)i\}| \text { is odd }\}.$$
Then we define two relations $R, B$ on $n$, such that $domR=domB=n$ and $R\cap B=\emptyset$, for example, we set:
$$R=\{(u,v): u\in n, v=u+1(mod n)\},$$
$$B=\{(u,v): u\in n, v=u+2(mod n)\}.$$

$FT_{\omega}$ denotes the set of all finite transformations on $\omega$.
Let $\pi(\omega)=\{\tau\in FT_{\omega}: \tau \text { is a bijection }\}.$
Next we define {\it the  crucial} relation on $V,$
$$a=\{s\in {\sf P}Z: (s\upharpoonright 2\in R \text { and } s\upharpoonright \omega\sim 2\in X)\text { or }
(s\upharpoonright 2\in B\text { and }s\upharpoonright \omega\sim 2\in Y\}.$$
We also set
$$d=\{s\in {\sf P}Z: s_0=s_1\},$$
and we let $P$ be the permutated versions of $a$ and $d$, that is,
$$P=\{{\sf S}_{\tau}x: \tau \in \pi(\omega),\ \  x\in \{a,d\}\}.$$
Here, and elsewhere, ${\sf S}_{\tau}$ is the usual set-theoretic substitution operation corresponding to $\tau$.

For $W\in {}^{\omega}\rng Z^{(Z)}$, let
$${\sf P}W=\{s\in V: (\forall i\in \omega) s_i\in W_i\}.$$
Let $Eq(\omega)$ be the set of all equivalence relations on $\omega$.
For $E\in Eq(\omega)$, let $e(E)=\{s\in V: ker s=E\}$.
Let
$$T=\{{\sf P}W\cdot e(E): W\in {}^{\omega}\rng Z^{(Z)},  (\forall \delta\in \pi(\omega))W\neq Z\circ \delta, E\in Eq(\omega)\}.$$
Finally, let $$\At =P\cup T,$$
and
$$A_n=\{\bigcup X: X\subseteq \At\}.$$

We have
\begin{enumarab}
\item  $A_n$ is a subuniverse of the full cylindric weak set algebra
$$\langle \wp(V), + ,\cdot,  -, {\sf c}_i, {\sf d}_{ij}\rangle_{i,j\in \omega}.$$
Furthermore $\A_n$ is atomic and $\At \A_n=\At\sim \{0\}$.
\item  $\A_n$ can be expanded to a quasi-polyadic equality algebra $\B_n$ such that $\B_n$ is not representable.
\end{enumarab}
Like  \cite{ANS} Claim 1 undergoing the obvious changes
Like \cite{ANS} Claim 2, also undergoing the obvious changes, in particular the polyadic operations
are defined by:

Let $ \tau, \delta \in FT_{\omega}$. We say that $``\tau,
\delta$ transpose" iff $( \delta0-\delta1).(\tau \delta 0 - \tau
\delta 1)$ is negative.
Let $P'\subseteq P$ be the set of permutated versions of $a$.
Now we first define $ {\sf p}_\sigma : At \rightarrow A$ for every $ \sigma
\in FT_{\omega}$.
\[ {\sf p}_\sigma({\sf S}_\delta a )  =
\begin{cases}
{\sf S}_{\sigma \circ \delta \circ [0,1] } a& \textrm{if}~~~``
\sigma, \delta
~~~\textrm{transpose}" \\
{\sf S}_{\sigma \circ \delta} a & \textrm{otherwise}
\end{cases}\]

$$ {\sf p}_\sigma (x) = {\sf S}_\sigma x \quad \textrm{if} \quad x \in At
\sim P'.$$
Then we set:
$${\sf p}_\sigma (\sum X) = \sum \{ {\sf p}_\sigma (x) : x \in X \} \quad \textrm{for}
\quad X \subseteq At.$$
The defined operations satisfy the polyadic axioms, so that the expanded algebra $\B_n$ with the polyadic operations
is a quasi-polyadic equality algebra that is not representable.

Let $F$ be the cofinite ultrafilter over $\omega,$
then $\prod \B_n/F$ is representable.
Of course the cylindric reduct of the ultraproduct is representable.
The point is to represent the substitutions; particularly those corresponding to transpositions.
The non-representability follows from an incompatibility condition between
the cardinality of $Z_0$ which is $n$ and how the abstract substitutions are defined.
This is expressed by the fact that this definition forces $|a|$ to be strictly greater than $n.$
But when one form the ultraproduct, this 'cardinality incompatibility condition' disappears, $Z_0$ is infinitely
stretched to have cardinality $\omega$. The abstract polyadic operations  coincide with usual
concrete ones. This is the main idea of the proof of this part.
We omit the highly technical details.
\end{demo}

Correcting a serious error in the seminal paper \cite{ST}, answering a question posed by Andreka, and lifting an unpublishred result of hers 
proved for finite dimension
$>2$, witness  \cite{Andreka}, we get:

\begin{corollary} For any infinite ordinal $\alpha$, the class $\sf RPEA_{\alpha}$ is not finitely axiomatizable
over its quasi-polyadic reduct,  and not finite schema axiomatizable over its cylindric reduct.
\end{corollary}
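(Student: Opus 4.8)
\begin{demo}{Sketch of proof}
Both clauses are the polyadic equality analogues of the two results just proved --- the theorem immediately above and Theorem~\ref{complexity} --- and the plan is to rerun the two underlying constructions (the refinement of \cite{ANS}, and Andr\'eka's splitting in the form used in the proof of Theorem~\ref{complexity}) with all the infinitary polyadic substitutions in place.

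For the first clause I would construct, for each finite $n\geq 3$, an algebra $\C_n\in\PEA_{\alpha}$ such that: (i) the quasi-polyadic reduct $\Rd_{\sf qea}\C_n$ is representable; (ii) $\C_n\notin\RPEA_{\alpha}$; and (iii) $\prod_{n\in\omega}\C_n/F\in\RPEA_{\alpha}$ for every non-principal ultrafilter $F$ on $\omega$. Granting (i)--(iii): if a finite set $\Sigma$ of polyadic equality sentences satisfied, for all $\A\in\PEA_{\alpha}$, that $\A\in\RPEA_{\alpha}$ exactly when $\Rd_{\sf qea}\A\in\RQEA_{\alpha}$ and $\A\models\Sigma$, then by (i) and (ii) each $\C_n$ would falsify $\Sigma$, while, $\Sigma$ being finite, $\prod_n\C_n/F\models\Sigma$; since the quasi-polyadic reduct of $\prod_n\C_n/F$ is a reduct of the representable algebra supplied by (iii) it is representable, so $\prod_n\C_n/F\in\RPEA_{\alpha}$ --- a contradiction. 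The $\C_n$ come from a refinement of the construction of the preceding theorem in which it is the \emph{full} polyadic, rather than merely the quasi-polyadic, structure whose representability breaks down: one takes a weak quasi-polyadic equality set algebra with similarity-type parameter $|Z_0|=n$ chosen large enough that its finite-substitution reduct is representable, yet with the defining relation $a$ so arranged that accommodating \emph{all} polyadic substitutions would force $|a|>n$; closing up abstractly --- adjoining operations $\s_{\tau}$ for every $\tau\colon\alpha\to\alpha$, defined on atoms as the polyadic axioms dictate and extended additively --- produces a $\C_n$ that is not square-representable, whereas in the ultraproduct the parameter is stretched to $\omega$, the incompatibility evaporates and the abstract substitutions turn into the concrete ones, giving (iii).

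For the second clause I would run Andr\'eka's splitting as organised in the proof of Theorem~\ref{complexity}: for each finite $m$ one splits a simple representable algebra into enough atoms to obtain $\C^{(m)}\in\PEA_{\alpha}$ whose cylindric reduct is a subalgebra of a weak cylindric set algebra, and so is representable, while $\C^{(m)}\notin\RPEA_{\alpha}$, and such that every subalgebra of $\C^{(m)}$ generated by at most $k(m)$ elements is representable, with $k(m)\to\infty$; the transfinite induction of Theorem~\ref{complexity}, synchronised by the reduct operator, then lifts this from $\omega$ to an arbitrary infinite $\alpha$. Now for any finite schema $\Sigma$ over the cylindric reduct, each instance of $\Sigma$ is an equation in boundedly many variables that is valid in $\RPEA_{\alpha}$, hence holds in every small enough subalgebra and therefore in $\C^{(m)}$ once $m$ is large; since $\Rd_{\sf ca}\C^{(m)}$ is representable, this forces $\C^{(m)}\in\RPEA_{\alpha}$, a contradiction. (Replacing the cylindric reduct by the quasi-polyadic one, or ``finite schema'' by ``finitely axiomatizable'', only weakens the assertion, so those cases are subsumed.)

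The main difficulty in both clauses is the interaction with the infinitary substitutions. For the first, one must verify that the abstract extension of a weak set algebra to a full polyadic equality algebra genuinely validates the polyadic axioms --- the delicate ones being those relating $\s_{\tau}$ for $\tau$ of infinite support to cylindrifications and to one another --- and, crucially, that the obstruction to \emph{square} representability is not washed out by the new operations (this is exactly why the incompatibility parameter has to be attached to the full polyadic structure and not, as in \cite{ANS}, to the quasi-polyadic one). For the second, one must ensure that adjoining all the polyadic substitutions neither destroys representability of the ``small'' subalgebras nor inadvertently makes the whole algebra representable; since splitting acts locally on atoms this is expected, but the point requiring care is that the representations of the small subalgebras can be chosen to respect \emph{all} substitutions at once --- whereas in \cite{ST}, \cite{ANS} and \cite{t} one had only finitely many substitutions to keep track of.
\end{demo}
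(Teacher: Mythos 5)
You have misidentified what the corollary is asserting, and with it the intended proof. In the paper this corollary is not obtained from any new construction: it is a direct repackaging, for the finitary (quasi-)polyadic equality algebras, of the two theorems immediately preceding it. In the paper's (admittedly loose) usage ${\sf RPEA}_{\alpha}$ here is $\RQEA_{\alpha}$, the representable quasi-polyadic equality algebras with only finite substitutions; its ``quasi-polyadic reduct'' is the diagonal-free reduct ${\sf RQA}_{\alpha}$, and its ``cylindric reduct'' is $\RCA_{\alpha}$. The first clause is witnessed by the algebras $\A_k$ from the proof of theorem \ref{complexity}: they are not representable, their $k$-generated subalgebras are representable, and they admit one-to-one homomorphisms preserving all operations except the diagonal elements, so no axiomatization over the diagonal-free reduct in boundedly many variables (in particular no finite one) is possible. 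The second clause is the corrected Sain--Thompson statement and is exactly what the $\B_n$ of the preceding theorem deliver: representable cylindric reducts, non-representable expansions by the transposition substitutions, representable ultraproduct. You instead read ${\sf RPEA}_{\alpha}$ as the genuinely infinitary polyadic equality algebras (with ${\sf s}_{\tau}$ for every $\tau\colon\alpha\to\alpha$) and ``quasi-polyadic reduct'' as the finitary-substitution reduct, and you set out to build new algebras $\C_n$ and $\C^{(m)}$ in that signature; that is not what the statement, read in context, asserts, and under your reading it would not be a corollary of the preceding material at all.

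Moreover, even granting your reading, the sketch has genuine gaps precisely at the places you defer as ``points requiring care''. The \cite{ANS}-style construction lives on a weak unit ${}^{\omega}\N^{(p)}$, and infinitary substitutions do not act on weak units (a $\tau$ of infinite deviation moves sequences off the unit), so ``adjoining ${\sf s}_{\tau}$ for every $\tau$ and extending additively'' is not an available move, and verifying the polyadic axioms for such abstractly adjoined operations, while keeping the cardinality obstruction to representability, is the entire difficulty rather than a verification. For your second clause, the paper itself explains why splitting cannot be run ``with all the substitutions in place'': representability of the small ($k$-generated) subalgebras depends on a counting argument over the finitely many substitutions present, which is exactly why the proof of theorem \ref{complexity} performs infinitely many finite splittings relative to reducts with finitely many substitutions and takes a directed union, and why the single splitting in \cite{ST} was erroneous. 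Carrying out one splitting in a signature with infinitely (indeed uncountably) many substitution operations is the known failure mode, and your proposal supplies no way around it.
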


\subsection{Omitting types, algebraically}

The question as to whether the omitting types hold for finite first order
expansions first order logic holds was raised by the author.
Here using a deep construction  of Andr\'eka e all \cite{ANT},
showing that there are countable atomic $k$ neat reducts that are not completely
representable, we answer this question negatively, as well. A $k$ neat reduct $\A$
is an algebra that is a full $n$ neat reduct of an algebra $\B\in \CA_{n+k}$, in symbols,
$\A=\Nr_n\B$.
We show, taking the co atoms of this algebra,
that  even one single type in a countable $L_n$ theory $n\geq 3$ cannot be omitted.

Lately, it has become fashionable to study representations  that preserve infinitary meets and joins.
This is extensively discussed in \cite{Sayed}.
This has affinity with the algebraic notions of complete representations  for cylindric like algebras
and indeed atom-canonicity, a prominent persistence property studied in modal logic.

The typical question is: given an  algebra  and a set of meets, is there a representation  that carries
this set of meets to set theoretic intersections?
(assuming that our semantics is specified by set algebras, with the concrete Boolean operation of intersection
 among
its basic operations)
When the algebra in question is countable, and we have countably many meets;
this is an algebraic version of an  omitting types theorem.
When there it is only one meet consisting of co-atoms, this is complete representability.

In this paper, using the well developed machinery of algebraic logic, we solve many open problems in
that has to do with subneat reducts, and complete
representations. This paper is in fact  interdisciplinary between model theory, algebraic logic and (descriptive) set theory,
though the latter is not very much in the forefront, but can be discerned under the surface
of many of our results.

There two typical types of investigations in set theory, proper.
Both of those will be addressed in this paper, the former only briefly since it was dealt with in some detail in
\cite{Sayed}.
The first type consists of theorems demonstrating the independence of mathematical statements.
This type requires a thorough understanding of mathematics surrounding the
source of problem in question, reducing the ambient mathematical constructions to combinatorial statements about sets,
and finally using some method
(primarily forcing) to show  that such combinatorial statements
are independent.
A second type involves delineating the edges of the independence proofs, giving proof in $ZFC$ of statements
that on first sight would be suspected of being independent.
Proofs of this kind is often extremely subtle and surprising; very similar
statements are independent and it is hard to detect the underlying difference.

The first type was investigated in many publications of Sayed Ahmed,
culminating in independence result concerning the number of types omitted in a
countable language is formulated in \cite{Sayed}, theorem 3.2.8.

{\it But there is a missing part, that  is fully proved here.}

We show that when we consider countable algebras, omitting $< {}^{\omega}2$ types turns
out independent, whereas, if the types are maximal the statement is provable.
Such results will be presented in the context of omitting types in $\L_n$ theories.

{\it We find this discrepancy interesting from the purely set theoretic point of
view when maximality shifts us from the realm of independence to that of
provability from $ZFC$.}

Using the construction in Andr\'eka et all  in \cite{ANT}
we will show that the omitting types theorem fails for finite first
order definable extension of first order logic as defined by Tarski and Givant, and further pursued by others, like
Maddux \cite{Maddux} and Biro\cite{Biro}, a result mentioned in the above cited paper without a proof.

We give full proofs to three results mentioned in \cite{Sayed} that are either declared, or declared with a ketch of proof, referring to a pre-print,
concerning omitting types in uncountable theories using finitely
many variables, namely theorems 3.2.8, theorems 3.8.9, and the the statement mentioned in the last paragraph, all in \cite{Sayed}.
This is the pre print, expanded, modified and polished containing proofs of these results and much more.
The results concerning omitting types, for finite variable fragments, depend on a deep model-theoretic construction of Shelah's.

The ling list of non finite axiomatizability of the class of representable algebras triggered off by Monk \cite{Monk}, 
led to many others refined by other algebraic logicians like Maddux \cite{Maddux} and 
Biro \cite{Biro}. Maddux modified Monk's algebars to make them 
generated by a single two dimensional elements; this is not trivial, far from it, because it make the automophism
group of the algebra rigid.  Biro \cite{Biro} showed that non finitizability results cannot be side stepped by adding finitely many first order definable
operations, he used cylindric algebras
constructed from relation algebras possessing cylindric basis.

A simpler proof is given by Sayed Ahmed in \cite{basim}, using {\it only} cylindric algebras.
This is an asset, in so much as it simplifies Biro's proof considerably, but Biro's result is stronger,
because it says that this happens in languages with only one binary relation (since his cylindric algebras are constructed from
 $n$ dimensional basis of  atom structures structures of relation algebras,
and in this case the relation algebra embeds into the $\Ra$ reduct of the constructed $n$ dimensional algebra
via basic matrices.)

In fact, Biro's constructions depend on involved constructions
of Maddux \cite{Maddux} on relation algebras possessing $n$ dimensional basis.
Maddux 's results are important because they transferred incompleteness results for
finite variable fragments of first order logic from languages with countably many relation symbols each
of arity $n$, where $n$ is the number of variables used, to languages with
only one binary relation. Here $n\geq 3.$
(The result, without imposing any conditions on either the arity of relation symbols or their number,
easily follows from Monk's famous non-finite axiomatizability result for representable cylindric
algebras of dimension $>2$ \cite{Monk}, as indeed pointed out by Sayed Ahmed in \cite{basim})

Now we ask the same question for the omitting types theorem for the corresponding expansions of $L_n$ which is strictly stronger.
We will see that we also get a negative answer, when $n>2$, which is an utterly unsurprising result.
Our construction also uses relation algebras that have an
$n$ dimensional cylindric basis, and like Maddux's algebras, they are generated by one binary relation symbol. Such algebras are constructed
in \cite{ANT}. (In such cases, usually when an algebra satisfying the required is constructed, then this is getting over the hurdle.
Making it one generated uses
well established combinatorial ideas of Maddux, modified to the given context).

We will use a simple Monk-Maddux algebra and we use
a rainbow polyadic algebra, to show that the omitting types theorem fails even if we require that
the countable model omitting a single non principal type in an $L_n$ theory is only locally square, meaning
that witnesses to cylindrifiers can be obtained only on cliques of the model of size limited by $n$;
more over, under certain purely algebraic conditions concerning the existence of certain finite relation algebras,
there can be types (in countable $L_n$ theories) that are realized in every $n+k+1$
smooth model, but are not isolated in $n+k$ variables; this happen for every finite $k$.
In the first case we blow up and blur the Monk Maddux finite algebra.

\subsection{Splitting and blowing  up and blurring a Maddux algebra}

The splitting method due to Andr\'eka has several subtle sophisticated re-incarnations in the literature, \cite{Sayed}, \cite{ANT},
like the blow up and blur constructions in \cite{ANT}
\cite{IGPL}, \cite{MLQ}.
In the latter two references a finite atom structure is split twice to given non elementary equivalent algebras, one a neat reduct
the other is not; for the infinite dimensional case the atom structure has the same cardinality as the dimension; it is weak set algebra whose
base is the dimension.
Witness also \cite{HHbook}
(section on completions) in the  context of splitting atoms in finite rainbow algebras.
The next theorem is the main theorem in \cite{ANT}; it is also mentioned in \cite{Sayed}.

We decided to include it early on in the introduction, with a sketch, because in a large portion of the paper
we will have occasion to deal with the repercussions of
such subtle, deep construction, laying bare its potential consequences, from constructing Monk like algebras witnessing non finite axiomatizability
results for both relation and cylindric algebras,
to proving non atom canonicity of
varieties of subneat reducts that approximate the class of representable
algebras. Besides it is a very nice construction.

Follows is theorem 1.1 in op.cit; for undefined notions the reader is referred to \cite{ANT}.

\begin{theorem}\label{OTT}
Suppose that $n$ is a finite ordinal with $n>2$ and $k\geq 0$.
There is a countable
representable
relation algebra ${\R}$
such that
\begin{enumroman}
\item Its completion, i.e. the complex algebra of its atom structure is
not representable, so $\R$ is representable but not completely representable
\item $\R$ is generated by a single element.
\item The (countable) set $\B_n{\R}$ of all $n$ by $n$ basic matrices over $\R$
constitutes an $n$-dimensional cylindric basis.
Thus $\B_n{\R}$ is a cylindric atom structure
and the full complex algebra $\Cm(\B_n{\R})$
with universe the power set of $\B_n{\R}$
is an $n$-dimensional cylindric algebra
\item The {\it term algebra} over the atom structure
$\B_n{\R}$, which is the countable subalgebra of $\Cm(\B_n{\R})$
generated by the countable set of
$n$ by $n$ basic matrices, $\C=\Tm(B_n \R)$ for short,
is a countable representable $\CA_n$, but $\Cm(\B_n)$ is not representable.
\item Hence $\C$ is a simple, atomic representable but not completely representable $\CA_n$
\item $\C$ is generated by a single $2$ dimensional element $g$, the relation algebraic reduct
of $\C$ does not have a complete representation and is also generated by $g$ as a relation algebra, and
$\C$ is a neat reduct of some simple representable $\D\in \CA_{n+k}$
such that the relation algebraic reducts of $\C$ and $\D$
coincide.
\end{enumroman}
\end{theorem}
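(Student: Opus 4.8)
The plan is to realize the required relation algebra $\R$ by a blow-up-and-blur construction starting from a finite Maddux relation algebra, following the strategy of Andr\'eka, N\'emeti and Sayed Ahmed. First I would fix a finite symmetric integral relation algebra $\M$ (a Maddux algebra) whose atoms carry colours in such a way that (i) $\M$ has an $n$-dimensional cylindric basis, but (ii) a Ramsey-type counting obstruction guarantees that $\M$ itself — or rather a suitable ``blown-up'' version of it — cannot be completely represented. The blow-up step replaces each non-identity atom of $\M$ by infinitely many atoms of the same colour, forming an infinite atom structure $\alpha(\M)$; the consistency of triples is inherited from $\M$ via the colouring (monochromatic triples are forbidden exactly when they were forbidden in $\M$, plus the ``dependent'' triples allowed as in the Monk--Hirsch--Hodkinson modification). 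One then defines $\R$ to be the \emph{term algebra} $\Tm\,\alpha(\M)$, the subalgebra of $\Cm\,\alpha(\M)$ generated by the atoms. The ``blur'' — a finite set of non-principal ultrafilters (the blurs) together with an index blur — is what certifies that $\Tm\,\alpha(\M)$ \emph{is} representable even though $\Cm\,\alpha(\M)$ is not: the blurs supply enough ``fake atoms'' to build a representation of the term algebra on a countable base, while any representation of the complex algebra would have to respect all the (infinitely many) splittings at once and thus reproduce the original counting obstruction, contradiction. This gives (i).

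For (ii), one arranges the blow-up so that $\R$ is generated by a single element: the standard Maddux trick is to pick the blow-up and colouring so that the automorphism group is rigid and a single non-identity atom (or a single two-dimensional element) generates everything; I would quote the combinatorial lemmas of Maddux on one-generated relation algebras with $n$-dimensional bases and check they survive the blow-up. For (iii), one verifies directly that $\B_n\R$, the set of $n\times n$ basic matrices over $\R$, satisfies the cylindric-basis axioms — this is where the ``dependent monochromatic triangles are allowed'' feature of $\alpha(\M)$ is essential, exactly as in the $\alpha(\G)/\M(\G)$ discussion earlier in the paper: the allowance is precisely what is needed for the $n$-dimensional cylindric basis to exist, while still being restrictive enough to block representability of the completion. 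Then $\Cm(\B_n\R)$ is a $\CA_n$ by general nonsense.

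For (iv) and (v): the term algebra $\C = \Tm(\B_n\R)$ is representable because $\R$ is (basic matrices over a representable relation algebra with a cylindric basis yield a representable cylindric algebra — more precisely one transfers a representation of $\R$, using the blurs, to a representation of $\C$ on the matrices), while $\Cm(\B_n\R)$ is not representable because a representation of it would induce, via the $\Ra$-reduct, a representation of $\Cm\,\alpha(\M)$, which we already ruled out; $\C$ is simple since $\M$ (hence $\alpha(\M)$) is integral, and atomic by construction, so (v) is immediate from (iv). For (vi), the single-generation of $\C$ by a two-dimensional element $g$ comes from single-generation of $\R$ pushed through the basic-matrix construction (the $\Ra$-reduct of $\C$ is $\R$, up to isomorphism, by the matrix embedding, so it is also generated by $g$ and also fails to be completely representable); the neat-embedding clause $\C = \Nr_n\D$ with $\D\in\CA_{n+k}$ simple representable and $\Ra\D \cong \Ra\C$ is obtained by taking $\D$ to be the algebra of $(n+k)$-dimensional basic matrices over the \emph{same} $\R$ — here the finiteness of the extra dimensions $k$ is what keeps $\D$ representable (again via the blurs), and the coincidence of the $\Ra$-reducts is the statement that passing from $n$ to $n+k$ dimensions does not change the relation-algebra reduct when everything is built from matrices over the fixed $\R$.

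The main obstacle, and the technical heart of the argument, is the simultaneous fine-tuning of the finite Maddux seed $\M$, its colouring, and the blur: one needs the \emph{same} finite data to (a) admit an $n$-dimensional cylindric basis, (b) be one-generated with a rigid automorphism group, (c) produce, after blowing up, a term algebra that is representable via the blurs, and (d) produce a complex algebra whose representability would violate a Ramsey/pigeonhole bound relating the number of colours to the size of a base — and crucially this bound must survive tensoring up to $n+k$ dimensions for the neat-reduct clause. Verifying that a single combinatorial object threads all these needles, and that the blur really does what is claimed for the term algebra while failing for the completion, is where essentially all the work lies; the rest is bookkeeping with basic matrices and neat reducts that is by now routine in this literature (\cite{ANT}, \cite{HHbook}).
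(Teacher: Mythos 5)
Your plan is essentially the paper's own argument: the paper proves this theorem only in sketch form, by blowing up and blurring a finite Maddux relation algebra exactly as you describe (term algebra representable via the finitely many blurs/non-principal ultrafilters, complex algebra non-representable because the Maddux seed embeds into it and the counting obstruction returns, the basic matrices giving the $n$-dimensional cylindric basis, one-generation via Maddux's combinatorial techniques, and the neat-reduct clause via the same blurred structure in $n+k$ dimensions), deferring all technical details to \cite{ANT}. The only point to watch is clause (vi): $\D$ must be the blur/term-type subalgebra built over the $(n+k)$-dimensional basic matrices (so that $\C\cong \Nr_n\D$ and $\D$ is representable), not the full complex algebra of those matrices, which is again non-representable by the same argument.
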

\begin{demo}{Sketch} \cite{ANT}. Here we give an idea of the proof which is a blow up and blur construction.
For the technical details one is referred to the original paper \cite{ANT}, or to the sketch in \cite{Sayed}.
Below we will return to this proof, and discuss its modifying to solve a long standing open problem in algebraic logic.
One starts with a finite Maddux relation algebra $\bold M$, that cannot be represented on finite sets.
Then this algebra is blown up and blurred. It is blown up by splitting the atoms each to infinitely many.
It is blurred by using a finite set of blurs or colours $J$. This can be expressed by the product $\At=\omega\times \At \bold M\times J$,
which will define an infinite atom structure of a new
relation algebra. One can view such a product as a ternary matrix with $\omega$ rows, and for each fixed $n\in \omega$,  we have the rectangle
$\At \bold M\times J$.
Then two partitions are defined on $\At$, call them $P$ and $E$.
Composition is defined on this new infinite atom structure; it is induced by the composition in $\bold M$, and a ternary relation $e$
on $\omega$, that synchronizes which three rectangles sitting on the $i,j,k$ $e$ related rows compose like the original algebra $\bold M$.
This relation is definable in the first order structure $(\omega, <)$.

The first partition $P$ is used to show that $\bold M$ embeds in the complex algebra of this new atom structure, so
the latter cannot be represented, because it can only be represented on infinite sets.

The second partition divides $\At$ into $\omega$ sided finitely many rectangles, each with base $W\in J$,
and the the term algebra over $\At$, are the sets that intersect co-finitely with every member of this partition.
On the level of the term algebra $\bold M$ is blurred, so that the embedding of the small algebra into
the complex algebra via taking infinite joins, do not exist in the term algebra for only finite and cofinite joins exist
in the term algebra.

The term algebra is representable using the finite number of blurs. These correspond to non-principal ultrafilters
in the Boolean reduct, which are necessary to
represent this term algebra, for the principal ultrafilter alone would give a complete representation,
hence a representation of the complex algebra and this is impossible.
Thereby an  an atom structure that is weakly representable but not strongly representable is obtained.

This atom structure has an $n$- dimensional cylindric basis, and so the $n$
basic matrices form an atom structure that is also only weakly representable.
The resulting $n$ dimensional cylindric term algebra obtained is a $k$ neat reduct that is not completely
representable. To make the algebra  one generated one uses Maddux's combinatorial techniques,
and this entails using infinitely many ternary relations.

\end{demo}

\subsection{Rainbow constructions}

One can blur up and blow a rainbow algebra or a Monk-like algebra (both finite) to get {\it representable
term algebras} that have desirable required properties.
However, as we show in the paper splitting atoms of a rainbow algebra usually gives more
delicate and refined results; because basically one can control \pa\ moves in the the \ef\ game one devises,
that is, one has control  on the number of pebbles on the board.

The rainbow construction in algebraic logic is invented by Hirsch and Hodkinson.
The rainbow construction reduces finding subtle differences between seemingly
related notions or concepts using a very simple \ef\ forth pebble game between two players \pa\ and \pe\ on two very simple structures.
From those structures a relation or cylindric algebra can be constructed and a \ws\ for either player lifts to a \ws\ on the atom structure
of the algebra, though the number of pebbles and rounds increases in the algebra.

In the case of relation algebras, the atoms are coloured, so that the games are played on colours. For cylindric algebras,
matters are a little bit more complicated
because games are played on so-called coloured graphs, which are models of the rainbow signature coded in an
$L_{\omega_1,\omega}$ theory. The atom structure consists of finite
coloured graphs rather than colours.

Nevertheless, the essence of the two construction is very similar, because in the cylindric
algebra constructed from $A$ and $B$, the relation algebra atom structure based also on $A$ and $B$
is coded in the cylindric atom structure, but the latter has additional shades of yellow
that are used to label $n-1$ hyperedges coding the cylindric information.

The strategy for \pe\ in a rainbow game is try black, if it doesn't work try white, and finally if it doesn't work
try red. In the latter case she is kind of cornered, so it is the hardest part in the game.
She never uses green.

In the cylindric algebra case, the most difficult part
for \pe\ is to label the edge between apexes of two cones (a cone is a special coloured graph) having
a common base, when she is also forced a red.

So in both cases  the choice of a red, when she is forced one,
 is the most difficult part for \pa\ and if she succeeds in every round then she wins.

Indeed,  it is always the case that \pa\ wins on a red clique,
using his greens to force \pa\  play an inconsistent triple of reds.
In case of cylindric algebra \pa\ bombards \pe\ with cones having green tints, based on the same base.

For Monk-like algebras, this number (the number of pebbles) is a Ramsey large uncontrollable number;
though it can be sometimes controlled, in the availability  of amalgamation moves, like in the case
of the proof used to solve the famous neat embedding problem, initiated by Monk and formulated
as  problem 2.12 in \cite{HH}. Therefore, there is this 'semi rational feeling in the air' among algebraic logicians that what can be done
with a Monk-like algebra can also be done by a rainbow algebra, but there is no meta theorem to this effect;
only a lot of evidence supporting this partially rational feeling and no evidence for
its contrary.

\subsection{Splitting up and bluring a rainbow algebra}

Now we split up and blur a finite {\it rainbow} relation algebra.
emphasizing on the affinity with
the hitherto discussed blow up and blur construction of Andr\'eka et all \cite{ANT}.
We follow the notation in \cite{HHbook}, cf lemmas 17,32, 34, 35, 36.

\begin{theorem} Let $\R=\A_{K_m, K_n}$, $m>n>2$.
Let $T$ be the algebra obtained by splitting the reds. Then $T$ has exactly two blurs
$\delta$ and $\rho$. $\rho$ is a flexible non principal ultrafilter consisting of reds with distinct indices and $\delta$ is the reds with common indices.
Furthermore, $T$ is representable, but $\Cm\At T\notin  S\Ra\CA_{m+2}$, in particular, it is not representable.
\end{theorem}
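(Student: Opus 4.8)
The plan is to mimic the blow-up-and-blur technique of Andréka et al.\ (Theorem~\ref{OTT}) but applied to the finite rainbow relation algebra $\R=\A_{K_m,K_n}$ rather than a Maddux algebra. First I would recall the rainbow atom structure: atoms are the whites, the blacks, the greens $\g_i^0$, the yellows, and the reds $\r_{ij}$ indexed by pairs from the $K_n$ side. The construction is to \emph{split} each red atom $\r_{ij}$ into countably many copies $\r_{ij}^l$, $l\in\omega$, leaving all other atoms alone, and then to define composition on the new (infinite) atom structure $\At T$ by declaring a triple of reds $(\r_{ij}^{l_0},\r_{jk}^{l_1},\r_{ik}^{l_2})$ consistent exactly when the corresponding triple $(\r_{ij},\r_{jk},\r_{ik})$ was consistent in $\R$, and keeping the consistency conditions involving non-red atoms exactly as in $\R$. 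One checks this is a genuine relation algebra atom structure (associativity of composition reduces to the associativity already verified for the rainbow algebra, with the extra copies carried along harmlessly).

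Next I would identify the two blurs. Following \cite{HHbook}, lemmas~17, 32, 34--36, a blur is (roughly) a finite set of atoms that behaves, at the level of the term algebra, like a single non-principal ultrafilter witnessing a cylindrifier. Here I claim there are exactly two: $\delta$, the set of all reds with equal indices $\r_{ii}^l$ (the ``diagonal'' reds), and $\rho$, the set of all reds $\r_{ij}^l$ with $i\neq j$, which forms a \emph{flexible} non-principal ultrafilter in the sense that it can be used in many positions simultaneously. The verification that these are the only blurs, and that they do form a blur system adequate for representing the term algebra, is the place where one must be careful: one shows that $T$ (the term algebra over $\At T$, i.e.\ the finite-cofinite subalgebra of $\Cm\At T$) is representable by building an $\omega$-dimensional-like representation in which the reds are realized using the infinitely many copies, the finitely many blurs supplying the non-principal ultrafilters needed for the cylindrifier witnesses. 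This is the direct analogue of the representability half of Theorem~\ref{OTT}(iv), transported to the rainbow setting, and I would cite the relevant lemmas of \cite{HHbook} and indicate the obvious modifications.

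The heart of the theorem is the negative clause: $\Cm\At T\notin S\Ra\CA_{m+2}$, hence (a fortiori, since $\RRA=S\Ra\CA_\omega\subseteq S\Ra\CA_{m+2}$) not representable. The point is that the complex algebra, unlike the term algebra, \emph{does} contain the ``infinite joins'' that code up a copy of the original finite rainbow algebra $\R=\A_{K_m,K_n}$: the join $\bigcup_l \r_{ij}^l$ recovers the single red atom $\r_{ij}$, and so $\R$ embeds into $\Cm\At T$. Now I would invoke the known rainbow fact (Hirsch--Hodkinson) that $\A_{K_m,K_n}$ for $m>n$ is \emph{not} in $S\Ra\CA_{m+2}$ — this is exactly the rainbow relation-algebra game argument where \pa\ wins the $(m+2)$-dimensional (hyperbasis) game by bombarding \pe\ with greens $\g_i^0$, $i\le m$, forcing her onto a red clique and then, using that $n<m$, into an inconsistent triple of reds via the indices. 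Since $S\Ra\CA_{m+2}$ is closed under subalgebras, $\R\not\in S\Ra\CA_{m+2}$ forces $\Cm\At T\not\in S\Ra\CA_{m+2}$ as well. (One must check the embedding $\R\hookrightarrow\Cm\At T$ is a relation-algebra embedding, not merely Boolean — composition of the joined elements matches because consistency of red triples was \emph{defined} to be inherited from $\R$.)

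The main obstacle I anticipate is not the negative part — that is a clean consequence of the embedding plus the off-the-shelf rainbow game analysis — but rather the precise bookkeeping in the positive part: proving that \emph{exactly} $\delta$ and $\rho$ are the blurs, that they satisfy the abstract blur axioms of \cite{HHbook} (flexibility/non-principality of $\rho$, and that $\delta,\rho$ together ``cover'' the cylindrifier requirements), and that the finite-cofinite term algebra $T$ over the split atom structure is genuinely representable rather than merely a candidate. As in \cite{ANT}, once the blur system is shown adequate the representation is assembled by a step-by-step / ultrafilter argument, and the non-principal ultrafilters coming from $\delta$ and $\rho$ are what prevent the representation from being complete — which is consistent with, and indeed forces, $\Cm\At T$ being non-representable. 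So the write-up would spend most of its effort on the blur count and the representability of $T$, and dispatch the rest by citing the rainbow lemmas.
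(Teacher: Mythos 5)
Your negative half coincides with the paper's own argument: you embed $\R=\A_{K_m,K_n}$ into $\Cm\At T$ by sending each red to the (existing, because $\Cm\At T$ is complete) join of its copies, note that this is a relation-algebra and not merely a Boolean embedding because consistency of red triples in $\At T$ is defined by inheritance from $\R$, quote the rainbow game in which \pa\ wins on $\At\R$ using $m+2$ nodes so that $\R\notin S\Ra\CA_{m+2}$, and finish by closure of $S\Ra\CA_{m+2}$ under subalgebras, so $\Cm\At T\notin S\Ra\CA_{m+2}$ and a fortiori $\Cm\At T\notin\RRA$. That is exactly the proof in the paper.

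The gap is in the positive half, which you explicitly set aside as bookkeeping. You propose to check that $\delta$ and $\rho$ satisfy the complex-blur axioms of \cite{ANT} and then assemble a representation of the term algebra $T$ by a step-by-step ultrafilter argument. But those blur conditions were tailored to Maddux-type algebras, where every polychromatic triangle is consistent; in the rainbow algebra a triple of reds is consistent only when the indices match appropriately, and a green--green--red triple imposes an order condition, so with only the two candidate blurs $\delta$ (equal-index reds) and $\rho$ (distinct-index reds) the "adequacy" of the blur system is exactly what requires proof, not a citation, and it is not clear the \cite{ANT} conditions hold in their stated form. The paper avoids this entirely by a different device: it forms the graph $\Delta=n\times\omega\cup m\times\omega$, takes the algebra $\B$ obtained from the rainbow algebra on $\At\A_{K_m,\Delta}$ by deleting all reds $\r_{ij}$ with $i,j$ in different connected components of $\Delta$, observes that $\At\A_{K_m,K_m}\subseteq\At\B\subseteq\At\A_{K_m,\Delta}$, so the \ws\ of \pe\ in $\EF_{\omega}^{\omega}(K_m,K_m)$ yields representability of $\B$, and then exhibits a bounded morphism from $\At\B$ onto $T^+$, the ultrafilter atom structure of the canonical extension of $T$: equal-index reds of the distinguished copy are mapped to $\delta$, distinct-index ones to $\rho$, one copy is reserved to hit the principal ultrafilters on reds, and all other atoms are fixed. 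This gives a complete representation of $T^+$, hence a representation of $T$, and it also makes the identification of $\delta$ and $\rho$ as the only non-principal ultrafilters (the two blurs) transparent from the finite/cofinite description of $T$. Without this bounded-morphism argument, or a genuine verification that your two blurs support a direct step-by-step construction, the representability of $T$ — the heart of the "weakly but not strongly representable" conclusion — remains unproven in your write-up.
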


\begin{proof} \pe\ has a \ws\ over $\At\R$ using in $m+2$ rounds,
hence $\R\notin \RA_{m+2}$, hence is not
in $S\Ra\CA_{m+2}$.  $\Cm\At T$ is also not in the latter class
$\R$ embeds into it, by mapping ever red
to the join of its copies. Let $D=\{r_{ll}^n: n<\omega, l\in n\}$, and $R=\{r_{lm}^n, l,m\in n, l\neq m\}$.
If $X\subseteq R$, then $X\in T$ if and only if $X$ is finite or cofinite in $R$ and same for subsets of $D$.
Let $\delta=\{X\in T: X\cap D \text { is cofinite in $D$}\}$,
and $\rho=\{X\in T: X\cap R\text { is cofinite in $R$}\}$.
Then these are {\it the} non principal ultrafilters, they are the blurs and they are enough
to represent $T$ as follows. Let $\Delta$ be the graph $n\times \omega\cup m\times {\omega}$.
Let $\B$ be the full rainbow algebras over $\At\A_{K_m, \Delta}$ by
deleting all red atoms $r_{ij}$ where $i,j$ are
in different connected components of $\Delta$.

Obviously \pe\ has a \ws\ in $\EF_{\omega}^{\omega}(K_m, K_m)$, and so it has a \ws\ in
$G_{\omega}^{\omega}(\A_{K_m, K_m})$.
But $\At\A_{K_m, K_m}\subseteq \At\B\subseteq \At_{K_m, \Delta}$, and so $\B$ is representable.
Then one  defines a bounded morphism from $\At\B$ to the ultrafilters of $T$ namely $T^+$. The blurs are images of elements from
$K_m\times \{\omega\}$, by mapping the red with equal double index,
to $\delta$, for distinct indices to $\rho$.
The first copy is  reserved to define the rest of the red atoms the obvious way.
The other atoms are the same in both structures.

Let $|I|=n$ be the set of atoms and $J$ be the set of blurs. Define $\Delta$ as above $\omega$ copies of atoms, and one copy of the blurs;
they are disjoint.
This graph codes the principal ultrafilters in the first component, and the non principal ones in the second.
Let $\B$ be as defined above, just by replacing$m$ by $J$. Each atom of $\B$ (principal ultrafilter)  are mapped to the copies of the red which form
a principal ultrafilter. The elements of $J\times \{\omega\}$ are mapped to the corresponding non principal
ultrafilters, each $W$ to $\Uf^W$. This defines a bounded morphism, for $\B$ to the canonical extension of $\Tm$.
\end{proof}

This construction, as clearly indicated in the above proof,
 has a lot of affinity to the Andr\'eka N\'emeti blow up and blur construction.
The major difference, in fact the {\it only} significant difference, is that in the latter a Maddux finite algebra is used, while in the former case
a finite rainbow algebra is used.

In the latter case, we can only infer that the complex algebra is non representable,
in the former case we can know and indeed we can prove more. The reason basically is  that non-representability
of the Maddux algebra on finite sets, depends on an uncontrollable big Ramsey number (that is a function in the dimension),
while for  rainbow algebras we can control {\it when the algebra stops to be representable} by \pa\ s moves.
\pa\ forces a win by using greens, it is precisely this number, that  determines the extra dimensions in which the complex
algebras stop to be neatly embeddable
into, it is precisely the point at which it outfits the reds.

What can be done here is substitute a Maddux  finite relation algebra used by Andr\'eka and N\'emeti, by the rainbow algebra mentioned used
by Hirsch and Hodkinson and using the arguments of Andr\'eka and N\'emeti, to get a stronger result.
Or maybe we can lift Hirsch and  Hodkinsons
construction from relation algebras whose atoms are colours to cylindric algebras
whose atoms are coloured graphs, $n$ dimensional basis will not do here,
because the $n$ complex blurs are {\it not enough} in the sense of \cite{ANT}.

The relation algebra obtained by Hirsch and Hodkinson  does not have an $n$ dimensional cylindric basis, except for $n=3$,
and using this it was proved that only for the lowest value of $n$ namely $n=3$, the class $S\Nr_3\CA_k$, $k\geq 6$
is not closed under completions.
A substitute of $n$ dimensional cylindric basis that serves in this context is surjections from $n$
to coloured graphs used in rainbow construction of cylindric algebras.

We have two relation algebras, a Maddux one, and a rainbow one, that do almost the same thing  at least
they provide weakly representable
atom structures that are not strongly representable. We want an $n$ dimensional cylindric algebra.

\subsection{The desired cylindric algebra, blown up and blurred}

This is done in \cite{can} by blurring up and blowing a finite rainbow cylindric algebra namely $R(\Gamma)$
where $\Gamma=n+1$, and the greens
are  ${\sf G}=\{\g_i:i<n-1\}
\cup \{\g_0^{i}: i\in n+2\}.$

Let $\At$ be the rainbow atom structure in \cite{Hodkinson} except that we have $n+2$ greens and
$n+1$ reds.
The rainbow signature now consists of $\g_i: i<n-1$, $\g_0^i\i\in n+2$, $\r_{kl}^t: k,l\in n+1$, $t\in \omega$,
binary relations and $\y_S$ $S\subseteq \Z$,
$S$ finite and a shade of red $\rho$; the latter is outside the rainbow signature,
but it labels coloured graphs during the game, and in fact \pe\ can win the $\omega$ rounded game
and build the $n$ homogeneous model $M$ by using $\rho$ when
she is forced a red.

Then $\Tm\At$ is representable; this can be proved exactly as in \cite{Hodkinson}.
The atoms of $\Tm\At$ are coloured graphs whose edges are not labelled by
the one shade of red  $\rho$; it can also be viewed as a set algebra based on $M$
by relativizing semantics discarding assignments whose edges are labelled
by $\rho$. A coloured graph (an atom) in $\CA_{n+2, n+1}$
is one such that at least one of its edges is labelled red.
Now $\CA_{n+2, n+1}$ embeds into $\Cm\At\A$,
by taking every red graph to the join of its copies, which exists because $\Cm\At$ is complete
(these joins do not exist in the (not complete) term algebra; only joins of finite or cofinitely many reds do, hence it serves non representability.)
A copy of a red graph is one that is isomorphic to this graph, modulo removing superscripts of reds.
Another way to do this is to take every coloured graph to the interpretation of an infinite disjunct of the $MCA$ formulas
(as defined in \cite{Hodkinson}), and to be dealt with below; such formulas define coloured graphs whose edges are not labelled by the shade of red,
hence the atoms, corresponding
to its copies, in the relativized semantics; this defines an embedding,  because $\Cm\At$ is isomorphic to
the set algebra based on the same relativized semantics
using $L_{\infty,\omega}^n$ formulas in the rainbow signature. Here again $M$ is the $n$ homogeneous model constructed
in the new rainbow signature, though the construction is the
same.
But \pa\ can win a certain finite rounded game on $\CA_{n+1, n+2},$ hence it is
outside $S\Nr_n\CA_{n+4}$ and so is $\Cm\At$,  because the former is embeddable in the latter
and $S\Nr_n\CA_{n+4}$ is a variety; in particular, it is closed
under forming subalgebras.

\subsection{Multi modal logic when modalities do not commute}

Finding modal fragments of first order logic is an old problem in modal correspondence theory.
The aim is to find fragments of first order
logic that contain the standard translations of various modal logics and share their nice properties
for the same reasons.

Clique guarded semantics reflect that the 'undesirable properties' (like commutativity of cylindrifiers)
are only witnessed locally on  finite
cliques of the model or representation. However, below we show that the omitting types theorem fails
for finite variable fragments of first order logic with $n\geq 3$,
even if we ask only for `$n$ guarded models' omitting single non principle ones in countable languages. There may not be one.

We know how to build atom structures,  or indeed frames, from atomic algebras,
and conversely subalgebras of complex algebras from frames.
We are happy when we are able to preserve crucial logical properties.

But something seems to be missing. Modal logicians rarely study frames in isolation,
rather they are interested in constructing new frames from old ones using bounded morphisms, generated subframes,
disjoint unions, zig-zag products (the latter is a less familiar notion).

An algebraic logician adopts an analogous perspective but on
different (algebraic) level, via such constructions as
homomorphisms, subalgebras and direct products. So it appears that modal logicians work in a universe
that is distant from that of the algebraic logicians. In this connection,
it is absolutely natural to ask whether these universes are perhaps systematically related. And the answer is:
they are, very much so, and {\it duality theory} is devoted to studying such links.
For example a representation theorem for algebras is the dual of representing abstract state frames by what
Venema calls assignment frames.

The most prominent citizens of first order logic are the quantifiers, whose meaning is defined as follows:
$$\M\models \exists v_i\phi[s]\Longleftrightarrow \exists d\in M: \M\models \phi[s_i^d].$$
Here $s_i^d$ is the sequence which agrees with $s$ except at $i$ where its value is $d$.
There is an obvious modal view on this definition. Let $A={}^{\omega}M$
be the set of all assignments, and define a set of equivalence relations $\equiv_i$ on $A\times A$ by $s\equiv_i t$ iff $s(j)=t(j)$ for all $j\neq i$.
Moving the assignment to the front, we get a familiar ``modal" pattern:
$$\M,s\models \exists v_i \phi\Longleftrightarrow (\exists t\in A)(t\equiv_i s \land \M, t\models \phi.)$$
According to the Tarskian tradition, $A$ is the set of all evaluations of the variables into $\M$.
Here we treat elements of $t\in A$ as states or possible worlds in a certain Kripke frame $(A, \equiv_i, \ldots)$. The various logical connectives, like
$\phi\mapsto \exists x\phi$ will re-appear here as modalities whose accessibility relations (the relations $\equiv_i$) on $A$ will determine their meanings.
So we can view first order logic as one particular instance of the {\it modal logic of assignments.}
Relativized semantics can be obtained by looking at subsets of $A$.
It is harmless and sometimes even desirable to look at only some of the states and discard the rest.

Let $n$ be finite. Consider a frame $\F=(W, T_i, P_{ij}, E_{ij})_{i,j<n}$ of polyadic type.
(That is both $T_i$ and $P_{ij}$ are binary relations and $E_{ij}$ is unary).

Then we can form the complex algebra of $\F$ which we denote abusing notation slightly by $\wp(\F)$.
$\wp(\F)$ is of the same similarity type as $\PEA_n$.
If $\K$ is a class of frames,
then $\K^+=\{\A: \A\cong \wp(\F) \text { for some } \F\in \K\}.$ Let $\K$ be a class of frames,
let $\F\in \K$ and let ${\sf Term}(X)$ be the set of terms generated from
a countable set of variables $X$ (in the language of $\PEA_n$).
Let $v$ be a function from $X$ to $\wp(\F)$. We call $\M=(\F, v)$ a Kripke model over $\F$.

A truth relation $\models$ can be defined by recursion as follows:
For $s\in F$ we define
$$\M, s\models x\Longleftrightarrow   s\in v(x).$$
The booleans are as expected and the extra non boolean operations, the cylindrifiers, say, are defined as follows:
$$\M,s\models {\sf c}_i\tau\Longleftrightarrow  (\exists t\in F)sT_it\text { and } \M, t\models \tau.$$
The definition of the polyadic operations the ${\sf p}_{ij}$s is entirely analogous.
A {\it concrete} frame is of the form $(V, \equiv_i, P_{ij}, D_{ij})$ where $V\subseteq {}^{\alpha}U$ for some set $U$
and for $s,t\in V$ we have
$$s\equiv_it\Longleftrightarrow  s(j)=t(j)\ \  \forall j\neq i$$
$$s{P}_{ij}t \Longleftrightarrow s\circ [i,j]=t$$
$${D}_{ij}=\{s\in V: s_i=s_j\}.$$
$\K_{\sf cube}$ is the class of frames whose domain is of the form
$^{\alpha}U$  and $\K$ is the class of arbitrary concrete frames.

But then there is a whole landscape of
classes of frames between $\K$ and $\K_{\sf cube}$
obtained by imposing extra conditions on $V$ the domain of frames in question.

Let $L$ be such a class where $V$ is not necessarily a cartesian square.
The logic corresponding to $SPL^+$ has syntax like first order logic, and it can be viewed as
multi-modal propositional logic enriched with
constants. In particular, we can look at quantifiers $\exists v_i$
as if they are modal operators $\Diamond_i$ whose meaning is given by the relation $\equiv_i$.

The semantics in a frame with domain
$V$ are {\it relativized} to {\it the states in $V$.}
For example
$$\M,s\models \Diamond_i \phi\text { iff } (\exists t\in V)(t\equiv_i s \land \M,t\models \phi.)$$
Such logics, from which the guarded fragment arose, behave nicer than $n$ variable fragments of
ordinary first order logic
in many respects (particularly concerning decidability and completeness), witness Van Benthem's article in \cite{1}.

Below we shall show that a very interesting representation theorem proved by Ferenczi \cite{Fer}
for such Kripke frames, gives very nice
multi dimensional modal logics which has cylindrifiers viewed as diamonds
and a very natural relativized semantics.

Such logics are strong in expressive power, but exhibit very nice modal behaviour,
like the loosely guarded and packed fragments of $L_n$.
They have the finite base property and also the universal, hence equational
theory, of the class of subdirect products of complex algebras, the representable algebras for short,
are decidable \ref{Khaled}.

However, we will also show that the {\it relativization to $m$ clique guarded semantics},
when $m\geq n+3$, $n$ is the finite dimension $>2$,
negative properties persist to hold. The commutativity of cylindrifiers even when only locally witnessed
in $<n+3$ cliques, is already  harmful. Full fledged commutativity of cylindrifiers is not much worse.

\section*{On the notation}

We follow basically \cite{1} which is in conformity with the notation adopted
in the monographs \cite{HMT1}, \cite{HMT2}.
We deviate from the notation in {\it op.cit} when we feel it is compelling to do
so. For example, we write $f\upharpoonright X$, for the restriction of a function $f$ to a set $X$, rather
than the other way round.

For games we basically follow Hirsch Hodkinson's article \cite{HHbook2}, but not always.
We write $G_k(\A)$ for $G^k(\A)$, or simply $G_k$, is the $k$ rounded atomic game played on atomic networks of a cylindric algebra $\A$.
For rainbow cylindric algebras, we also deviate from the notation therein,  we find it more convenient
to write $\CA_{\G, \Gamma}$ - treating the greens $\sf G$, as a parameter that can vary -
for the rainbow algebra $R(\Gamma)$.
The latter is defined to be $\Cm(\rho(\K))$ where $\K$ is a class of models in the rainbow signature satisfying the
$L_{\omega_1,\omega}$ rainbow theory, and $\rho(\K)$ is the rainbow atom structure.

Our view is the conventional (more restrictive) one adopted in \cite{HH} and \cite{Hodkinson}, we view these models as coloured graphs,
that is complete graphs labelled by the rainbow colours.
As usual,  we interpret `an edge colored by a green say', to mean that the pair defining
the edge satisfies the corresponding green binary relation.
Our atom structures  will consist of surjections from $n$ (the dimension) to finite coloured graphs,
roughly finite coloured graphs.
When $k=\omega$, we simply write $G$.
$\PEA$ denotes polyadic equality algebras, $\PA$ denotes polyadic algebras, $\Sc$ denotes Pinter's algebras, and of course
$\CA$ denotes cylindric algebras  \cite{1}.
$\PEA_n$ denotes $\PEA$s of dimension $n$ and same for all other algebras.
The following information is folklore: $\CA$s and $\PA$s are proper reducts of $\PEA$s and $\Sc$s
are proper  reducts of all. Finally, following the usual notation again,
$\Df$ denotes diagonal free $\CA$s, and these are
proper reducts of all of the above.

For infinite dimension, $\sf QEA$ will denote the class quasipolaydic equality algebras,
these are substantially different than $\PA$s, they ahve only finite cylindrifiers and
substitutions. For an infinite ordinal $\alpha$, $\sf QEA_{\alpha}$ denotes the $\sf QEA$s
of dimension $\alpha$.

For an atomic algebra $\A$, we write $\At\A$ for its atom strcture.
For an atom structure $\At$, we write $\Cm\At$ for its complex algebra. In particular, for an atomic algebra $\A$
that is completely additive $\Cm\At\A$ is
its completion.
For neat reducts and neat embeddings we follow \cite{Sayedneat},
for complete representations and omitting types we follow \cite{Sayed}.

\section*{Layout}

After this magical long and winding tour in the introduction, there is still much more to say.
The paper is divided into two parts. In a nutshell part 1 deals with various forms of the omitting types theorems, 
and the second part, deals with neat embeddings and games played 
on rainbow atom structure, not in isolation, but ultimately linking up with omitting types.

In section 2 we study the omitting types theorem for $L_n$, proving several statements mentioned in \cite{Sayed} without proof
In the course of our investigation we propose a solution to a problem
of  Hirsch's in \cite{r} on relation algebras.
We show that the omitting types
theorem fails for any finite  first order
definable expansion of $L_n$ and we complete an independence proof (partially given in \cite{Sayed}).
We approach the  the omitting types theorem for various multi dimensional modal logics
$L_n$, including equality free ones, and cylindrfiier free
ones.
We study Vaught's theorem (on existence of atomic models for atomic theories)
in fragments of $L_n$ proving positive results after weakening
commutativity of cylindrifier. We also address the infinite dimensional case studying well behaved fragments of Keisler's logic.

In section 3, we use many rainbow constructions, to show that various classes
that consist of algebras having a neat embedding property (meaning that
they neatly embed into algebras in larger dimension in special way) are not elementary, and if such classes are varieties
then they may  not be closed under \d\ completions.
We use both our rainbow constructions, and a refined blow up and blur construction,
we show that the omitting types theorem fails even in the clique guarded semantics
(that is if we insist on commutativity of cylindrifiers only locally).

Several examples are sprinkled throughout to show that our results are best possible.
As a sample, we give an example to show that a maximality
condition in an omitting types theorem of Shelah's restricted to
$L_n$ is necessary. The example is constructed from a
relation algebra having an $\omega$ dimensional
hyperbasis.

\section*{Part One}

\section{Fixing loose ends, omitting types in $L_n$}

Here we prove several statements declared in \cite{Sayed} concerning omitting types, without proof, namely theorems 3.2.8, 3.2.9,
and the statement in the last
paragraph. We quote {\it we have an uncountable simple cylindric algebras in $\Nr_n\CA_{\omega}$
that is not completely representable.} Here we show that we indeed
have one.

But first we briefly discuss some algebraic repercussions of the above construction proving the main result in \cite{ANT}.
Based on the he above blow up and blur construction, we start by showing how such a construction is flexible;
indeed we re prove Monk's seminal result.  There are several parameters used to define the relation algebra above.
Let $l\in \omega$, $l\geq 2$, and let $\mu$ be a non-zero cardinal. Let $I$ be a finite set,
$|I|\geq 3l.$ Let
$$J=\{(X,n): X\subseteq I, |X|=l,n<\mu\}.$$
$I$ is the atoms of $\bold M$. $J$ is the set of blurs.

Pending on $l$ and $\mu$, let us call these atom structures ${\cal F}(l,\mu).$
In the example referred to above the atoms of $M$ are $I$, $J\subseteq \wp(I)$
consisting of $2$ element subsets of $I$
so it is just  ${\cal F}(2,1)$,

If $\mu\geq \omega$, then $J$ would be infinite,
and $\Uf$, the set of non principal ultrafilters corresponding to the blurs, will be a proper subset of the ultrafilters.
It is not difficult to show that if $l\geq \omega$
(and we relax the condition that $I$ be finite), then
$\Cm{\cal F}(l,\mu)$ is completely representable,
and if $l<\omega$ then $\Cm{\cal F}(l,\mu)$ is not representable.

Blurs can be witnesses as colours.
So in the former case we have infinitely many colours, so that the chromatic number
of the graph is infinite, while in the second case the chromatic number is finite.
Informally, if the blurs get arbitrarily large, then in the limit, the resulting algebra will be completely representable, and so its complex algebra
will be representable. If we take, a sequence of blurs, each finite,
we get a sequence of Monk (non-representable) algebras whose
limit is completely representable; this gets us back to Monk's seminal
results on non finite axiomatizability of the classes of representable relation algebras
and representable cylindric algebras of dimension $>2$.

Formally:
\begin{corollary}
\begin{enumarab}
\item  The classes ${\sf RRA}$ is not finitely axiomatizable.
\item  The elementary
closure of the class ${\sf CRA}$ is not finitely axiomatizable.
\end{enumarab}
\end{corollary}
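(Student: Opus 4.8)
Both clauses are instances of Monk's ultraproduct pattern, now fuelled by the family $\{{\cal F}(l,\mu)\}$ isolated above. Fix $\mu=1$ and, for each finite $l\geq 2$, choose a finite set $I_l$ with $|I_l|\geq 3l$, let $\mathbf{M}_l$ be the corresponding finite (Maddux) relation algebra with atom set $I_l$, and set $\mathcal{A}_l=\Cm{\cal F}(l,1)$, the complex algebra of the atom structure obtained by blowing up and blurring $\mathbf{M}_l$. By the facts recorded above, since $l$ is finite the algebra $\mathcal{A}_l$ is \emph{not} representable: a representation would force an incompatibility between the finitely many blurs and the size of its base, detected by Ramsey's theorem exactly as in Monk's original argument. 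Fix a non-principal ultrafilter $F$ on $\omega$ and put $\mathcal{B}=\prod_{l}\mathcal{A}_l/F$.

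The single substantive step is the claim that $\mathcal{B}$ is \emph{completely} representable. On passing to the ultraproduct the parameters of the construction get infinitely stretched: the atom set $\prod_l I_l/F$ becomes infinite and the set of blurs becomes infinite, so the cardinality incompatibility that blocked each $\mathcal{A}_l$ disappears in the limit. Equivalently, $\mathcal{B}$ falls into the regime ${\cal F}(l,\mu)$ with $l\geq\omega$, where $\Cm{\cal F}(l,\mu)$ is completely representable --- intuitively, the blurs, seen as colours, now yield a graph of infinite chromatic number. Turning this intuition into an actual complete representation of $\mathcal{B}$, built from the relaxed, infinite-blur version of the construction, is the one genuinely technical point, and it is the main obstacle; it is carried out precisely as in Monk's proof, transcribed to these atom structures (note in particular that $\Cm$ does not commute with ultraproducts, so the representation must be produced directly rather than inherited).

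Granting it, both parts follow at once. Were $\RRA$ finitely axiomatizable it would equal $\mathrm{Mod}(\sigma)$ for a single first-order sentence $\sigma$ (finitely many axioms can be conjoined); since $\mathcal{A}_l\notin\RRA$ we would have $\mathcal{A}_l\models\neg\sigma$ for every $l$, whence $\mathcal{B}=\prod_l\mathcal{A}_l/F\models\neg\sigma$ by the ultraproduct theorem, i.e.\ $\mathcal{B}\notin\RRA$ --- contradicting that $\mathcal{B}$ is representable. This proves (1). For (2), a complete representation is a fortiori a representation, so $\CRA\subseteq\RRA$; as $\RRA$ is a variety, hence elementary, the elementary closure $\mathbf{El}(\CRA)$ is contained in $\RRA$, and therefore $\mathcal{A}_l\notin\mathbf{El}(\CRA)$ for all $l$. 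On the other hand $\mathcal{B}\in\CRA\subseteq\mathbf{El}(\CRA)$, since $\mathcal{B}$ is completely representable. If $\mathbf{El}(\CRA)=\mathrm{Mod}(\sigma)$ for a single sentence $\sigma$, then $\mathcal{A}_l\models\neg\sigma$ for all $l$ gives $\mathcal{B}\models\neg\sigma$, contradicting $\mathcal{B}\in\mathbf{El}(\CRA)$; hence $\mathbf{El}(\CRA)$ is not finitely axiomatizable either. The sequence $(\mathcal{A}_l)_l$ is thus the precise counterpart of Monk's 1969 algebras: each member is non-representable for a Ramsey-theoretic reason that nonetheless evaporates under a non-trivial ultraproduct.
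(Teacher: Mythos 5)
Your proposal is correct and is essentially the paper's own argument: a Monk-style ultraproduct of the blown-up-and-blurred algebras $\Cm{\cal F}(l,1)$ (non-representable for finite $l$), with the key point that in the limit the blurs become infinite and complete representability appears, which kills finite axiomatizability of both ${\sf RRA}$ and the elementary closure of ${\sf CRA}$ via \L o\'s. The only cosmetic difference is that the paper takes the ultraproduct at the level of the atom structures ${\cal D}=\prod_F{\cal F}(i,1)$ and invokes complete representability of $\Cm{\cal D}$, whereas you take the ultraproduct of the complex algebras and note that the representation must be built on the limit directly; these amount to the same thing since the ultraproduct is atomic with atom structure ${\cal D}$.
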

\begin{demo}{Proof}
Let ${\cal D}$ be a non-
trivial ultraproduct of the atom structures ${\cal F}(i,1)$, $i\in \omega$. Then $\Cm{\cal D}$
is completely representable.
Thus $\Tm{\cal F}(i,1)$ are ${\sf RRA}$'s
without a complete representation while their ultraproduct has a complete representation.
Also the sequence of complex algebras $\Cm{\cal F}(i,1)$, $i\in \omega$
consists of algebras that are non-representable with a completely representable ultraproduct.
\end{demo}
Note that because our algebras posses $n$ dimensional cylindric basis, the result lifts easily to cylindric algebras.
Actually Monk's original proof did not go this way. Monk proved his result for relation algebras first in 1963,
and the analogous result for cylindric algebras took him 6 other years, establishing the first tie between
Ramsey's theorem and non representability result, introducing graph theory in
algebraic logic \cite{Monk}. The interplay between algebraic logic and graph theory and more generally combinatorics is now huge.
It was Maddux \cite{Maddux} who, much later though,
constructed such non representable cylindric algebras
from relation algebras possessing $n$ dimensional cylindric basis;
they were also generated by a single element.

For finite variable fragments $\L_n$ for $n\geq 3$, the situation for omitting types turns out to be drastically different than  first order logic using
all $\omega$ variables.

But first a definition.

\begin{definition}
Assume that $T\subseteq \L_n$. We say that $T$ is $n$ complete iff for all sentences $\phi\in \L_n$
we have either $T\models \phi$ or $T\models \neg \phi$. We say that $T$
is $n$ atomic iff  there is $\psi\in \L_n$ such that  for all $\eta\in \L_n$ either $T\models \psi\to \eta$
or $T\models \psi\to \neg \eta.$
\end{definition}

The next theorem \ref{finite} is proved using algebraic result in \cite{ANT} mentioned above.
\begin{theorem}\label{finite} Assume that $\L$ is a countable first order language containing a binary relation symbol. For $n>2$ and
$k\geq 0$,  there are a consistent $n$ complete
and $n$ atomic theory $T$ using only $n$ variables, and a set $\Gamma(x_1)$ using only $3$ variables
(and only one free variable) such that $\Gamma$ is realized in all models of $T$ but each $T$-witness for $T$ uses
more that $n+k$ variables
\end{theorem}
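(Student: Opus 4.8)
The plan is to convert Theorem~\ref{OTT} (the blow-up-and-blur construction of \cite{ANT}) into the required statement about $L_n$ theories by the standard Henkin-style dictionary between simple representable $\CA_n$'s, complete representations, and omitting types. First I would recall the algebra $\C = \Tm(\B_n\R)$ produced in Theorem~\ref{OTT}: it is a countable, simple, atomic, representable but \emph{not completely representable} $\CA_n$, it is a neat reduct of some simple representable $\D \in \CA_{n+k}$ with the same $\Ra$-reduct, and it is generated by a single two-dimensional element $g$. Let $X = \{x : x \in \At\C\}$ be the set of atoms of $\C$. Since $\C$ is atomic but has no complete representation, in \emph{every} representation $h : \C \to \wp(^nV)$ the set $\bigcap_{x\in\At\C} h(-x)$ is nonempty; equivalently, the "type" $\{-x : x \in \At\C\}$ is realized in every representation, while it corresponds to a nonprincipal ultrafilter because $\C$ is atomic (so $\prod \At\C = 0$ in $\C$).

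Next I would pass to logic. Fix a countable language $\L$ with a binary relation symbol $E$, and let $T$ be the $L_n$ theory whose Lindenbaum--Tarski algebra $\Fm_T$ (the $n$-dimensional cylindric algebra of $L_n$-formulas modulo $T$-provable equivalence) is isomorphic to $\C$; this is possible because $\C$ is countable, simple, and representable, so it arises as $\Fm_T$ for a consistent complete theory, and simplicity of $\C$ gives $n$-completeness of $T$ in the sense of the Definition above, while atomicity of $\C$ (existence of an atom below every nonzero element) translates, via a generator, into $n$-atomicity. The single generator $g$ being two-dimensional means $T$ is essentially axiomatized over the one binary symbol. Now let $\phi_x(x_1,\dots,x_n)$ be an $L_n$-formula representing the atom $x \in \At\C$, and set $\Gamma(x_1) = \{\neg \phi_x : x \in \At\C\}$ — more precisely one uses the fact (again from \cite{ANT}, item (vi)) that the $\Ra$-reducts of $\C$ and $\D$ coincide and that $\C$ is relation-algebra generated by a single binary element, so the atoms and hence the formulas in $\Gamma$ can be taken to involve only the binary relation and $3$ variables. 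The completeness of $T$ and the correspondence between models of $T$ and representations of $\C$ give: $\Gamma$ is realized in every model of $T$, precisely because $\C$ has no complete representation.

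The crux — and the step I expect to be the main obstacle — is the "no $T$-witness in $n+k$ variables" clause, i.e.\ that $T$ cannot be extended by a single $(n+k)$-variable formula $\psi$ that isolates $\Gamma$. Algebraically this says: there is no nonzero element $d$ in the neat reduct $\Nr_n\D$ (for any $\D \in \CA_{n+k}$ extending the situation, living in the $n+k$-variable fragment) that lies below some atom, i.e.\ $\C = \Nr_n\D$ and $\C$ is \emph{already} atomic as a neat reduct of an $(n+k)$-dimensional algebra, so a witness would have to come from a \emph{proper} extension of $\C$ inside $\D$ — but $\D$ is constructed so that $\C$ is a \emph{full} neat reduct, $\Nr_n\D = \C$, meaning no new $n$-dimensional element appears. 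Thus any $(n+k)$-variable formula, when its surplus variables are cylindrified away to produce an $n$-variable witness, lands back in $\C$ and cannot isolate the nonprincipal type. I would make this precise by: (i) showing a $T$-witness in $n+k$ variables for $\Gamma$ would yield, via the standard neat-reduct reading of the $(n+k)$-variable fragment, an element $0 \neq d \in \Nr_n\D_{n+k}$ with $d \leq \phi_x$ for no atom $x$ failing, contradicting atomicity of $\C = \Nr_n\D$ together with $\prod\At\C = 0$; and (ii) invoking item (vi) of Theorem~\ref{OTT} that $\C$ sits as a neat reduct in dimension $n+k$ with coinciding $\Ra$-reducts, so the obstruction is uniform in $k$. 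The delicate point, where I would be most careful, is the exact translation between "$T$-witness using $m$ variables" and "element of $\Nr_n\CA_m$", since this requires the Henkin--neat-reduct correspondence for \emph{finite}-variable fragments (not the full $\omega$-variable one), and one must check that cylindrifying out the extra $k$ variables of a putative witness genuinely produces an element of $\C$ that would have to be a nonzero element below no atom — i.e.\ one must rule out that the witness isolates $\Gamma$ "from outside" $\C$ while its projection to $\C$ is $0$. This is handled precisely by the fullness $\Nr_n\D = \C$ from \cite{ANT}.
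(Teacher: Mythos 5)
Your overall route is the same as the paper's: the paper proves this theorem simply by citing \cite{ANT} (and \cite{Sayed}), and your proposal is a faithful reconstruction of that argument --- take the algebra $\C=\Tm(\B_n\R)$ of Theorem \ref{OTT}, realize it as $\Fm_T$ for an $n$ complete, $n$ atomic theory $T$, read ``$\Gamma$ realized in every model'' off the failure of complete representability, and kill any $(n+k)$-variable witness by the fact that $\C$ is an atomic \emph{full} neat reduct of a $\CA_{n+k}$: in the model coming from a representation of $\D$, an $(n+k)$-variable witness would define a nonzero element of $\C$ disjoint from every atom, contradicting atomicity. That second step is exactly the argument the paper itself writes out later for the $n+k+1$ smooth variant, and your worry about a witness isolating the type ``from outside'' is correctly resolved by fullness of the neat reduct (and by the fact that refuting isolation only requires exhibiting one model, namely the one induced by the representation of $\D$).

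There is, however, one concrete gap: the theorem demands a type $\Gamma(x_1)$ with a \emph{single} free variable using only $3$ variables, whereas the type you actually define, $\{\neg\phi_x : x\in\At\C\}$, consists of formulas with $n$ free variables (the atoms of $\C$ are $n$-dimensional basic matrices, not binary elements). Your remark that one can ``take the formulas to involve only the binary relation and $3$ variables'' via item (vi) of Theorem \ref{OTT} points in the right direction --- the $\Ra$-reduct of $\C$ is one-generated by $g$ and is itself not completely representable, and relation-algebra terms in $g$ translate to $3$-variable formulas --- but this is precisely where \cite{ANT} does real work: one must manufacture from the binary generator a one-free-variable, three-variable type whose omission would force a complete representation, and check that the witness argument still goes through for \emph{that} type. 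As written, your $\Gamma$ does not satisfy the stated syntactic constraints, and the passage from the $n$-ary co-atom type to the required $\Gamma(x_1)$ is asserted rather than proved; filling it requires the specific construction in \cite{ANT} rather than a generic appeal to the coincidence of the $\Ra$-reducts.
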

\begin{demo}{Proof} \cite{ANT}, \cite{Sayed}.
\end{demo}

Next, we recall a tremendously deep result of Shelah, that we use to prove theorem 3.2.9 in \cite{Sayed}.

\begin{lemma} Suppose that $T$ is a theory,
$|T|=\lambda$, $\lambda$ regular, then there exist models $\M_i: i<{}^{\lambda}2$, each of cardinality $\lambda$,
such that if $i(1)\neq i(2)< \chi$, $\bar{a}_{i(l)}\in M_{i(l)}$, $l=1,2,$, $\tp(\bar{a}_{l(1)})=\tp(\bar{a}_{l(2)})$,
then there are $p_i\subseteq \tp(\bar{a}_{l(i)}),$ $|p_i|<\lambda$ and $p_i\vdash \tp(\bar{a}_ {l(i)})$ ($\tp(\bar{a})$ denotes the complete type realized by
the tuple $\bar{a}$).
\end{lemma}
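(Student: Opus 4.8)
This is Shelah's; the argument is an intricate tree construction, and the plan is to lay out its skeleton and indicate where the genuine work lies. (Note first that when the signature of $T$ is small the statement is trivial, since then $|\tp(\bar a)|<\lambda$ and one may take $p_i=\tp(\bar a)$; so assume the signature has size $\lambda$.) Fix a monster model $\mathfrak{C}\models T$ and a bookkeeping enumeration, in order type $\lambda$, of the tasks to be scheduled along a branch. The aim is to build a binary tree $\langle A_s:s\in{}^{<\lambda}2\rangle$ of subsets of $\mathfrak{C}$, increasing along the tree order, continuous at limit levels $<\lambda$, with $|A_s|<\lambda$ whenever $\mathrm{lh}(s)<\lambda$; for $\eta\in{}^{\lambda}2$ one then puts $\M_\eta$ equal to the Skolem hull, in a fixed Skolemisation $T^{\mathrm{Sk}}$ of $T$ (still of size $\lambda$), of $\bigcup_{\alpha<\lambda}A_{\eta\restriction\alpha}$. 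Since $\lambda$ is regular and each $A_{\eta\restriction\alpha}$ is small, $\bigcup_{\alpha<\lambda}A_{\eta\restriction\alpha}$, and hence $\M_\eta$, has size $\lambda$; working with subsets rather than with models at the approximation stage is what keeps the pieces small even though $T$ has $\lambda$ symbols.

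The heart of the matter is the splitting node $s$, of length $\alpha$: having $A_s$, we must define $A_{s^\frown 0}\supseteq A_s$ and $A_{s^\frown 1}\supseteq A_s$ so as to "mark" the new material. One chooses a type $q_s$ over $A_s$, realised in $\mathfrak{C}$, that is not generated by any subtype of size $<\lambda$ (such types exist because the signature is large and $|A_s|<\lambda$), places a realisation of $q_s$ on the $0$-side, and arranges on the $1$-side that nothing ever added above $s^\frown 1$ — nor any Skolem term over it — realises any conjugate of $q_s$ over $A_s$; this omission is consistent with the inductive invariants precisely because $A_s$ is small. The bookkeeping ensures that along every branch every task is eventually met and that every finite tuple that ever occurs in $\bigcup_\alpha A_{\eta\restriction\alpha}$ already occurs in some small $A_{\eta\restriction\beta}$.

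To read off the conclusion, take $i(1)\neq i(2)$ with $\bar a_l\in M_{i(l)}$ ($l=1,2$) and $\tp(\bar a_1)=\tp(\bar a_2)=:p$; let $\alpha$ be least with $i(1)(\alpha)\neq i(2)(\alpha)$ and $s=i(1)\restriction\alpha=i(2)\restriction\alpha$. Each $\bar a_l$ is a Skolem term applied to a tuple from some small $A_{i(l)\restriction\beta_l}$. If $p$ were not generated by any of its subtypes of size $<\lambda$, then, by the marking at $s$, the tuple underlying $\bar a_1$ would on one side of the split realise a conjugate of $q_s$ over $A_s$, whereas on the other side every conjugate of $q_s$ over $A_s$ was deliberately omitted — and this contradicts $\tp(\bar a_1)=\tp(\bar a_2)$. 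Hence $p$ is generated by some $p'\subseteq p$ with $|p'|<\lambda$, which is the assertion, the single $p'$ serving for both values of $l$ since the two complete types coincide.

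The part I expect to be hard is making the marking \emph{robust}: the element scheduled at $s$ is, in the final model, entangled with material introduced far higher up the branch, so one must carry invariants guaranteeing that the \emph{whole} type of an eventual tuple — not merely the type of the marked element over $A_s$ — inherits the small-generation/omission dichotomy, and that the successive omissions over the various $A_s$ do not become jointly inconsistent. This delicate combinatorial bookkeeping is exactly what Shelah carries out \cite{Sh}; in a full write-up I would either transcribe that argument or, since it is standard, cite it and verify only the mild adaptation needed to land each $\M_\eta$ at size exactly $\lambda=|T|$.
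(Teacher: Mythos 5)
The paper does not actually prove this lemma: its ``proof'' consists of the single citation to Shelah, Theorem 5.16 of Chapter IV of \emph{Classification Theory}. So there is no argument in the paper to compare yours against; your plan, which ultimately defers the decisive combinatorial work to the same source, is in that sense consistent with what the paper does, and its general shape (a binary tree of $2^\lambda$ branches built from approximations of size $<\lambda$, splitting nodes that realise a type on one side and omit it on the other, Skolem hulls of size $\lambda$ taken along branches) is the right flavour for Shelah's construction.

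As a standalone proof, however, your sketch has a genuine gap, and it is exactly the one you flag plus one you partly elide. First, the omission requirement at a splitting node $s$ --- that no tuple of the eventual Skolem hull above $s^\frown 1$ realises any conjugate of $q_s$ over $A_s$ --- must be shown to be jointly consistent with all the realisation and omission demands imposed at the $\lambda$-many other nodes along the branch, and to survive the Skolem closure; nothing in the plan supplies the invariant that makes this work, and without it the construction may simply be impossible to continue. Second, your read-off argument conflates types over $A_s$ with types over $\emptyset$: the lemma concerns the complete type $p=\tp(\bar a_l)$ over the empty set, and the claim that failure of $<\lambda$-generation of $p$ forces the tuple underlying $\bar a_1$ to realise, on one side of the split, a conjugate of the marked type $q_s$ \emph{over $A_s$} is asserted rather than proved --- the passage from non-local-generation over $\emptyset$ to the marked behaviour over the node sets is precisely where the work lies (and where the bookkeeping must guarantee that \emph{every} relevant type, for every eventual tuple, gets scheduled at some node below the splitting level). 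Since you explicitly propose to transcribe or cite Shelah for exactly these points, the proposal is an honest plan rather than a proof; if a self-contained argument were required, these two points would have to be supplied.
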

\begin{proof} \cite{Shelah} Theorem 5.16, Chapter IV.
\end{proof}
\begin{corollary} For any countable theory, there is a family of $< {}^{\omega}2$ countable models that overlap only on principal types
\end{corollary}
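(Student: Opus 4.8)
The plan is to read the corollary as a direct specialization of the preceding lemma of Shelah to $\lambda=\omega$. Since $T$ is countable we have $|T|\le\omega$, and $\omega$ is regular; if one wants the hypothesis $|T|=\lambda$ of the lemma to hold literally, one first pads $T$ with countably many tautologies (this alters no models), so without loss of generality $|T|=\omega$. Feeding this into the lemma yields a family $\set{\M_i:i<{}^{\omega}2}$ of countable models of $T$ with the property that whenever $i(1)\neq i(2)$, $\bar a\in M_{i(1)}$, $\bar b\in M_{i(2)}$ and $\tp(\bar a)=\tp(\bar b)$, there is $p_0\subseteq\tp(\bar a)$ with $|p_0|<\omega$ and $p_0\vdash\tp(\bar a)$.

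The only thing left is to turn the conclusion ``$p_0\vdash\tp(\bar a)$ with $p_0$ finite'' into ``the common type is principal''. Here \emph{type} must be understood as a complete type of a finite tuple (equivalently a complete $m$-type over $\emptyset$ for some finite $m$), which is precisely the setting of the lemma. So suppose a complete type $p$ is realized in two distinct members $\M_{i(1)},\M_{i(2)}$ of the family, say by $\bar a$ and $\bar b$ with $\tp(\bar a)=\tp(\bar b)=p$; the lemma gives a finite $p_0\subseteq p$ with $p_0\vdash p$, and then $\psi:=\bigwedge p_0$ is a single formula, consistent with $T$, with $T\cup\set{\psi}\vdash p$, i.e.\ $\psi$ isolates $p$. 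Hence every type that lies in the overlap of two distinct models of the family is principal, which is exactly the assertion. The cardinality wording is harmless: the lemma produces ${}^{\omega}2$ such models, the overlap property is visibly inherited by subfamilies, so one also obtains families of every size $<{}^{\omega}2$.

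I do not anticipate a genuine obstacle, since Shelah's lemma carries all the weight; what needs care is only bookkeeping. One must be explicit that ``type'' means a complete type of a finite tuple, so that the lemma applies verbatim (partial types, or types with infinitely many parameters, are neither covered nor intended). One should also record why ``axiomatized by a set of size $<\omega$'' collapses to ``principal'': it is precisely the finiteness of $p_0$ that replaces the generating \emph{set} by a generating \emph{formula}; for uncountable $\lambda$ one would get only ``$p$ is axiomatized by $<\lambda$ formulas'', which is the weaker form actually invoked in the maximal-type omitting theorem for $L_n$ announced in the abstract. Finally, it is worth flagging the intended use of the corollary: each non-isolated complete type is now realized in at most one $\M_i$, so any set of fewer than ${}^{\omega}2$ non-isolated types is realized in fewer than ${}^{\omega}2$ of the models, whence some $\M_i$ omits all of them --- this is the bridge to the omitting-types results for $L_n$.
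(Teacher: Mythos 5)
Your proposal is correct and is essentially the paper's own (implicit) argument: the corollary is stated as an immediate specialization of Shelah's lemma to $\lambda=\omega$, where ``$|p_0|<\omega$ and $p_0\vdash\tp(\bar a)$'' means the common type is isolated by the conjunction of the finite set $p_0$, hence principal. Your bookkeeping remarks (padding $T$ to size $\omega$, passing to subfamilies for the ``$<{}^{\omega}2$'' wording, and the intended use for omitting types) are all consistent with how the paper uses the corollary.
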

We give a sketch of Shelah's ideas:

\begin{theorem}(Shelah) If $T$ is a countable theory and we have $<{}^{\omega}2$
many non principal maximal types, then they can be omitted in a countable model
\end{theorem}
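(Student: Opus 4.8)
The plan is to obtain the statement as an immediate counting consequence of the preceding Lemma (Shelah's Theorem~5.16 of Chapter~IV of \cite{Shelah}), specialized to $\lambda=\omega$, equivalently of the Corollary drawn from it. Concretely, I would fix the countable theory $T$, let $P$ denote the set of its non-principal maximal (complete) types, assume $|P|<{}^{\omega}2$, and invoke the Lemma to get countable models $\M_i$, $i<{}^{\omega}2$, of $T$ with the property that whenever $i\neq j$ and a tuple $\bar a$ from $\M_i$ and a tuple $\bar b$ from $\M_j$ satisfy $\tp(\bar a)=\tp(\bar b)$, this common complete type is finitely generated over $T$, hence isolated, hence principal. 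The only small point worth spelling out at this stage is the translation ``$|p_i|<\omega$ and $p_i\vdash\tp$'' $\Rightarrow$ ``$\tp$ is principal'': conjoin the finitely many formulas of $p_i$ into a single formula, which is consistent with $T$ and isolates $\tp$ modulo $T$.

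Then I would argue by contradiction. If no countable model of $T$ omits all of $P$, then in particular each $\M_i$ realizes some $p_i\in P$; the assignment $i\mapsto p_i$ maps ${}^{\omega}2$ into a set of strictly smaller cardinality, so it is not injective, and we get $i\neq j$ with $p_i=p_j=:p$. Choosing realizations of $p$ in $\M_i$ and in $\M_j$ and using that $p$ is \emph{maximal}, so that both realizations have complete type exactly $p$, the overlap property of the family forces $p$ to be principal, contradicting $p\in P$. Hence some countable model of $T$ omits every member of $P$, which is the assertion.

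It is worth isolating where the hypotheses are genuinely load-bearing, since that is where I would concentrate the exposition. The cardinality bound $|P|<{}^{\omega}2$ is used \emph{only} in the pigeonhole step against the ${}^{\omega}2$-many models, and it is exactly there that the theorem becomes (consistently) false once $|P|={}^{\omega}2$, in keeping with the independence phenomena discussed earlier in this section; and \emph{maximality} of the types is precisely what licenses the passage from ``$p$ is realized in $\M_i$ and in $\M_j$'' to ``the two realizations have equal complete type'', which is all that the overlap property of the family can consume. I do not expect a real obstacle inside the present argument, which is essentially two lines; the genuine work sits upstream, in the preceding Lemma, namely the construction by Shelah (for arbitrary regular $\lambda$, via a careful tree-of-Henkin-constants argument) of ${}^{\omega}2$-many countable models of $T$ whose pairwise overlaps consist only of principal types. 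Everything downstream of that Lemma, including this statement and the companion example later showing that maximality cannot be dropped, is bookkeeping around it.
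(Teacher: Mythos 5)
Your proposal is correct and follows essentially the same route as the paper: invoke the family of ${}^{\omega}2$ many countable models pairwise overlapping only on principal types (the preceding Lemma/Corollary, i.e.\ Shelah's Theorem 5.16, specialized to $\lambda=\omega$), and then run a pigeonhole argument using $|P|<{}^{\omega}2$ and maximality to conclude that some model omits every type in $P$. The paper phrases the counting step via the pairwise disjoint sets $\psi(i)$ of realized types rather than a non-injective choice function $i\mapsto p_i$, but this is the same argument, and your explicit remarks on where maximality and the finite-generation-implies-principal translation enter are accurate.
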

\begin{demo}{Proof}

The idea is that one can build several models such that they overlap only on isolated types.
One can build {\it two} models so that every maximal type which is realized in both is isolated. Using the jargon of Robinson's finite forcing
implemented via games, the idea
is that one distributes this job of building the two models among experts, each has a role to play, and that all have
winning strategies. There is no difficulty in stretching the above idea to make the experts build three, four or any finite number of models
which overlap only at principal types.
With a pinch of diagonalisation we can extend the number to $\omega$.

To push it still further to $^{\omega}2$ needs uses ideas of Shelah, here it is a typical 'binary tree' construction, where at each root
reached at a finite stage
is split into two other models,  we end up with continuum many, these are not necessarily pairwise non isomorphic,
we mean here by a type overlapping in two distinct models with respect to an indexing set that has the power of the continuum,
but the map from this set to the class of models obtained is not necessarily injective.
(Otherwise, we would obtain that any countable theory has continuum many models,
which of course is entirely absurd.)

Now assume not.  Let ${\bold F}$ be the given set of non principal ultrafilters. Then for all $i<{}^{\omega}2$,
there exists $F$ such that $F$ is realized in $\B_i$. Let $\psi:{}^{\omega}2\to \wp(\bold F)$, be defined by
$\psi(i)=\{F: F \text { is realized in  }\B_i\}$.  Then for all $i<{}^{\omega}2$, $\psi(i)\neq \emptyset$.
Furthermore, for $i\neq j$, $\psi(i)\cap \psi(j)=\emptyset,$ for if $F\in \psi(i)\cap \psi(j)$ then it will be realized in
$\B_i$ and $\B_j$, and so it will be principal.  This implies that $|\bold F|={}^{\omega}2$ which is impossible.

\end{demo}

The following provides a proof  of a result actually stronger than that stated in \cite{Sayed} without a proof, namely, theorem
3.2.9. since it addresses a strictly larger class than $\Nr_n\A_{\omega}$ addressed in the above mentioned result,
namely, the class $S_c\Nr_n\CA_{\omega}.$
We will give an example shortly by showing that indeed the inclusion $\Nr_n\CA_{\omega}\subseteq S_c\Nr_n\CA_{\omega}$ is proper,
(This is not trivial, for example its relation algebra analogue is an open problem, see \cite{r}, to be also answered in a while. )

\begin{theorem}\label{uncountable} Let $\A=S_c\Nr_n\CA_{\omega}$. Assume that $|A|=\lambda$, where $\lambda$ is an uncountable
cardinal. Let  $\kappa< {}^{\lambda}2$,
and $(F_i: i<\kappa)$ be a system of non principal ultrafilters.
Then there exists
a set algebra $\C$ with base $U$ such that $|U|\leq \lambda$, $f:\A\to \C$ such that $f(a)\neq 0$ and for all $i\in \kappa$, $\bigcap_{x\in X_i} f(x)=0.$

\end{theorem}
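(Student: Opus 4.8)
The idea is to read the statement off in first--order model theory, apply the Lemma of Shelah quoted above, and then reproduce the pairwise--disjointness counting used in the sketch of Shelah's omitting types theorem. Since $\A\in S_c\Nr_n\CA_\omega\subseteq S\Nr_n\CA_\omega=\RCA_n$ (using $n+\omega=\omega$ and the neat embedding theorem), $\A$ is representable; more precisely, the hypothesis $\A\in S_c\Nr_n\CA_\omega$ is exactly what permits us to realize $\A$ as a complete subalgebra of the $n$--variable Lindenbaum algebra of an $L_n$ theory $T$ in a language of cardinality $\leq\lambda$, in such a way that (i) the models of $T$, regarded as $n$--dimensional set algebras on their universes, are precisely the base representations of $\A$ we are seeking, (ii) each ultrafilter $F_i$ corresponds to a complete (maximal) $L_n$ type $\Gamma_i$ of $T$, and (iii) the non--principality of $F_i$, i.e.\ $\prod^{\A}X_i=0$, translates into $\Gamma_i$ not being generated by fewer than $\lambda$ of its members. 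Concretely I would pass to a $\B\in\CA_\omega$ generated over $\A$, so $|B|\leq\lambda$, use that the embedding $\A\subseteq_c\Nr_n\B$ is complete to keep these $\lambda$--ary meets equal to $0$ in $\B$, and then run a Henkin construction; I would also replace $T$ by a completion consistent with the fixed element $a$, or adjoin a witnessing constant, so that $a$ is realized in every model of $T$ and hence $f(a)\neq 0$ automatically, which disturbs none of the $\Gamma_i$.

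Next, with $T$ in hand and $\lambda$ regular (as the Lemma requires), I would apply Shelah's Lemma to obtain models $\M_i$, $i<{}^{\lambda}2$, each of cardinality $\lambda$, such that any complete type realized on a tuple in two distinct of them is generated by $<\lambda$ of its members. For each $i$, restricting $\M_i$ to the $L_n$--fragment yields a representation $f_i\colon\A\to\C_i$ of $\A$ on the base $U_i=M_i$, with $|U_i|=\lambda$ and, by completeness of $T$, $f_i(a)\neq 0$.

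Now the counting step, verbatim as in the earlier sketch. Suppose no $\M_i$ omits every $\Gamma_j$. Then for each $i<{}^{\lambda}2$ the set $\psi(i)=\{\,j<\kappa:\Gamma_j\text{ is realized in }\M_i\,\}$ is non--empty. If $i\neq i'$ and $\Gamma_j\in\psi(i)\cap\psi(i')$, then, since $\Gamma_j$ is a \emph{complete} type, $\Gamma_j$ is the type of a tuple of $\M_i$ and of a tuple of $\M_{i'}$, so by Shelah's Lemma $\Gamma_j$ is $<\lambda$--generated, contradicting that it comes from the non--principal ultrafilter $F_j$. Hence $(\psi(i):i<{}^{\lambda}2)$ is a family of ${}^{\lambda}2$ pairwise disjoint non--empty subsets of $\kappa$, so $\kappa\geq{}^{\lambda}2$, contradicting $\kappa<{}^{\lambda}2$. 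Thus some $\M_{i_0}$ omits all the $\Gamma_j$; taking $U=M_{i_0}$, $\C=\C_{i_0}$ and $f=f_{i_0}$, we obtain $|U|\leq\lambda$, $f(a)\neq 0$, and $\bigcap_{x\in X_j}f(x)=0$ for every $j<\kappa$, as required.

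I expect the genuine difficulty to lie entirely in the first step: making precise the translation between the abstract algebra $\A\in S_c\Nr_n\CA_\omega$ and a first--order theory $T$ with $|T|\leq\lambda$ so that models correspond to representations and meets are preserved (this is where being a \emph{complete} subreduct of a genuinely $\omega$--dimensional algebra, not merely representable, is used), together with the bookkeeping that identifies ``non--principal ultrafilter of $\A$'' with ``type not generated by $<\lambda$ members of itself'', which is the property actually fed into the counting step. The latter identification is automatic for $\lambda=\omega$ and in general is exactly what the ``maximality'' and quantifier--elimination clauses of the introductory formulation of this result are there to provide; if one wants the theorem verbatim for every uncountable $\lambda$ rather than only regular ones, this same clause must be arranged to coexist with the regularity hypothesis of Shelah's Lemma. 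Once the dictionary is in place, Shelah's Lemma together with the counting argument complete the proof with essentially no further effort.
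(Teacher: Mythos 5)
Your proposal is correct and follows essentially the same route as the paper: pass via the complete neat embedding to a dilation $\B\in\CA_{\omega}$ of cardinality $\lambda$, view the $F_i$ as maximal types of the first order theory $T$ with $\Fm_T\cong\B$, apply Shelah's lemma and the pairwise-disjointness counting over the $2^{\lambda}$ models to find one omitting all of them, and restrict the resulting representation to $\A$. Your remarks on the regularity of $\lambda$ and on the passage from ``$<\lambda$-generated maximal type'' to ``principal'' are fair, but they concern the same points the paper itself leaves implicit, not a divergence in method.
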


\begin{proof}\label{omit} Let $\A\subseteq_c \Nr_n\B$, where $\B$ is $\omega$ dimensional, locally finite and has the same cardinality as $\A$.
This is possible by taking $\B$ to be the subalgebra of which $\A$ is a strong neat reduct generated by  $A$, and noting that we gave countably many
operations.
The $F_i$'s correspond to maximal $n$ types in the theory $T$ corresponding to $\B$, that is, the first order theory $T$ such that $\Fm_T\cong \B$.
Assume that ${\bold F}$ be the given set of non principal ultrafilters, with no model omitting them.
Then for all $i<{}^{\lambda}2$, for every representation $\B_i$ of $\B$,
there exists $F$ such that $F$ is realized in $\B_i$. Let $\psi:{}^{\lambda}2\to \wp(\bold F)$, be defined by
$\psi(i)=\{F: F \text { is realized in  }\B_i\}$.  Then for all $i<{}^{\lambda}2$, $\psi(i)\neq \emptyset$.
Furthermore, for $i\neq j$, $\psi(i)\cap \psi(j)=\emptyset,$ for if $F\in \psi(i)\cap \psi(j)$ then it will be realized in
$\B_i$ and $\B_j$, and so it will be principal. But this means that $\|\bold F|=2^{\aleph_0}$ which is not the case.
So there exists a model omitting the given set of maximal non principal types; algebraically
there exists $f:\B\to \wp(^{\omega}M)$
such that $\bigcap_{x\in F_i} f(x)=\emptyset.$
The restriction of $f$ to $\A$ defined the obvious way is as required.
\end{proof}

Now we give two metalogical readings of the last two theorems. The first is given in \cite{Sayed}, theorem 3.2.10,
but we include it, because it prepares for the other one, which is an entirely new omitting types theorems for cylindric
algebras of sentences. Cylindrifiers in such algebras can be defined because we include individual constants; the number of
these determines the dimension of the algebra in
question, this interpretation was given in  \cite{Amer}; in the context of representing
algebras of sentences as (full) neat reducts.

\begin{theorem}\label{Shelah} Let $T$ be an $\L_n$ consistent theory that admits elimination of quantifiers.
Assume that $|T|=\lambda$ is a regular cardinal.
Let $\kappa<2^{\lambda}$. Let $(\Gamma_i:i\in \kappa)$ be a set of non-principal maximal types in $T$. Then there is a model $\M$ of $T$ that omits
all the $\Gamma_i$'s
\end{theorem}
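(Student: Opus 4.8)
The strategy is to translate the model-theoretic statement into an algebraic one about the Tarski--Lindenbaum algebra of $T$, apply Theorem~\ref{uncountable}, and translate back. First I would form $\A=\Fm_T$, the cylindric algebra of dimension $n$ associated with $T$ in the usual way (equivalence classes of formulas with cylindrifiers given by existential quantification and diagonal elements by equalities $v_i=v_j$). Since $T$ is a consistent $\L_n$ theory, $\A$ is a nontrivial $\CA_n$, and since $|T|=\lambda$ we have $|A|\leq\lambda$; as $T$ is countable in its nonlogical part we may also arrange $|A|=\lambda$ if needed, but only $|A|\leq\lambda$ is used.

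The key point is to verify the hypothesis $\A\in S_c\Nr_n\CA_{\omega}$. This is exactly where elimination of quantifiers enters: because $T$ admits QE, every $\L_n$-formula is $T$-equivalent to a quantifier-free one, and so the natural map from $\A$ into the locally finite $\omega$-dimensional algebra $\B=\Fm_T^{(\omega)}$ built from the same theory but allowing all $\omega$ variables is not merely a neat embedding but a \emph{complete} embedding onto (a generating subreduct of) $\Nr_n\B$ --- QE guarantees that suprema of $\L_n$-definable sets computed in $\B$ already exist in $\A$, since such suprema are again $\L_n$-definable. Thus $\A\subseteq_c\Nr_n\B$ with $\B\in\Lf_{\omega}\subseteq\CA_{\omega}$, so $\A\in S_c\Nr_n\CA_{\omega}$, and moreover $|B|=\lambda$ (countably many operations, $\lambda$ regular hence closed under the relevant closure). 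Now the maximal non-principal types $\Gamma_i$ of $T$ correspond, via the standard dictionary, to non-principal ultrafilters $F_i$ of $\A$ (maximality of the type $\leftrightarrow$ the filter being an ultrafilter; non-principality $\leftrightarrow$ non-isolation). We have $\kappa<2^{\lambda}={}^{\lambda}2$ many of them.

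Apply Theorem~\ref{uncountable} to $\A$ with this system $(F_i:i<\kappa)$ and any nonzero $a\in A$ (e.g.\ $a=1$): it yields a set algebra $\C$ with base $U$, $|U|\leq\lambda$, and a homomorphism $f\colon\A\to\C$ with $f(1)\neq 0$ and $\bigcap_{x\in F_i}f(x)=\emptyset$ for every $i<\kappa$. A homomorphism from $\Fm_T$ into a set algebra of dimension $n$ with base $U$ is precisely (the Lindenbaum algebra incarnation of) a model $\M$ of $T$ with universe $U$; the condition $\bigcap_{x\in F_i}f(x)=\emptyset$ says exactly that no tuple of $\M$ realizes the type $\Gamma_i$. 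Hence $\M\models T$ and $\M$ omits all the $\Gamma_i$, as required.

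\textbf{Main obstacle.} The only substantive step is the verification that elimination of quantifiers yields $\A\in S_c\Nr_n\CA_{\omega}$ --- specifically that the embedding $\A\hookrightarrow\Nr_n\B$ is \emph{complete} (preserves all existing infima), not just a neat embedding. Without QE one would only get $\A\in S\Nr_n\CA_{\omega}$, which is not enough to invoke Theorem~\ref{uncountable}. The argument is that any infimum $\prod X$ existing in $\A$ is, by QE, the $T$-class of a quantifier-free $\L_n$-formula and this same class is the infimum of $X$ when computed in $\B$, because an $\omega$-dimensional element lying below all of $X$ would, upon cylindrifying out the extra coordinates (which $\Lf_{\omega}$ permits) and using QE again, already lie in $\A$ below $\prod X$; one must check the bookkeeping that these "extra coordinate" cylindrifications do not disturb membership below the elements of $X$. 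Everything else --- the type/ultrafilter dictionary, the cardinality arithmetic $2^{\lambda}={}^{\lambda}2$, and reading a $\CA_n$-homomorphism into a set algebra as a model --- is routine and standard.
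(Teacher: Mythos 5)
Your proposal is correct and follows essentially the same route as the paper: pass to $\A=\Fm_T$, use quantifier elimination to place $\A$ in $S_c\Nr_n\CA_{\omega}$ via the locally finite $\omega$-variable algebra $\B=\Fm_{T_{\omega}}$, and then invoke Theorem~\ref{uncountable} together with the standard dictionary between maximal non-isolated types and non-principal ultrafilters. The paper disposes of your ``main obstacle'' more simply: QE makes the natural map $\phi/T\mapsto\phi/T_{\omega}$ \emph{onto} $\Nr_n\B$ (a quantifier-free formula with free variables among the first $n$ variables is already an $\L_n$-formula), so $\A\in\Nr_n\CA_{\omega}\subseteq S_c\Nr_n\CA_{\omega}$ outright, and the completeness bookkeeping you worry about is immediate since every $x\in\Nr_n\B$ satisfies ${\sf c}_kx=x$ for $k\geq n$.
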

\begin{demo}{Proof} If $\A=\Fm_T$ denotes the cylindric algebra corresponding to $T$, then since $T$ admits elimination of quantifiers, then
$\A\in \Nr_n\CA_{\omega}$. This follows from the following reasoning. Let $\B=\Fm_{T_{\omega}}$ be the locally finite cylindric algebra
based on $T$ but now allowing $\omega$ many variables. Consider the map $\phi/T\mapsto \phi/T_{\omega}$.
Then this map is from $\A$ into $\Nr_n\B$. But since $T$ admits elimination of quantifiers the map is onto.
The Theorem now follows.
\end{demo}

We now give another natural omitting types theorem for certain uncountable languages.
Let $L$ be an ordinary first order language with
a list $\langle c_k\rangle$ of individual constants
of order type $\alpha$. $L$ has no operation symbols, but as usual, the
list of variables is of order type $\omega$. Denote by $Sn^{L_{\alpha}}$ the set
of all $L$ sentences, the subscript $\alpha$ indicating that we have $\alpha$ many constants Let $\alpha=n\in \omega$.
Let $T\subseteq Sn^{L_0}$ be consistent. Let $\M$  be an $\L_0$  model of $T$.
Then any $s:n\to M$ defines an expansion of $\M$ to $L_n$ which we denote by $\M{[s]}$.
For $\phi\in L_n$ let $\phi^{\M}=\{s\in M^n: \M[s]\models \phi\}$. Let
$\Gamma\subseteq Sn^{L_{n}}$.
The question we address is: Is there a model $\M$ of $T$ such that for no expansion $s:n\to M$ we have
$s\in  \bigcap_{\phi\in \Gamma}\phi^{\M}$.
Such an $\M$ omits $\Gamma$. Call $\Gamma$ principal over $T$ if there exists $\psi\in L_n$ consistent with $T$ such that
$T\models \psi\to \Gamma.$
Other wise $\Gamma$ is non principal over T.

\begin{theorem}  Let $T\subseteq Sn^{L_0}$ be consistent and assume that $\lambda$ is a regular cardinal, and $|T|=\lambda$.
Let $\kappa<2^{\lambda}$. Let $(\Gamma_i:i\in \kappa)$ be a set of non-principal maximal types in $T$.
Then there is a model $\M$ of $T$ that omits
all the $\Gamma_i$'s
That is, there exists a model $\M\models T$ such that there is no $s:n\to \M$
such that   $s\in \bigcap_{\phi\in \Gamma_i}\phi^{\M}$.
\end{theorem}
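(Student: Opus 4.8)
The plan is to follow the proof of Theorem \ref{Shelah} closely, with the $n$ individual constants $c_0,\dots,c_{n-1}$ of $L_n$ taking over the role of the $n$ variables used there, and to observe that the hypothesis ``admits elimination of quantifiers'' becomes superfluous: for algebras of \emph{sentences} the passage to a full neat reduct is automatic. First I would set up the cylindric algebras. For $m\leq\omega$ write $T_m$ for $T$ read in the language $L_m$ obtained from $L_0$ by adjoining $m$ constants $c_0,c_1,\dots$, and let $\Fm_{T_m}$ be the $\CA_m$ with universe $\{\phi/T:\phi\in Sn^{L_m}\}$, Boolean operations induced by $\vee,\wedge,\neg$, with $\cyl{k}(\phi/T)$ the class of $\exists x_k(\phi[c_k/x_k])$ for a variable $x_k$ not occurring in $\phi$, and $\diag{k}{l}$ the class of $c_k=c_l$. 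Put $\A=\Fm_{T_n}$ and $\B=\Fm_{T_\omega}$. Since every sentence mentions only finitely many constants, $\B$ is locally finite of dimension $\omega$, and $|B|\leq\lambda$; we may assume $|B|=\lambda$.

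The key step is that $\A=\Nr_n\B$, with no assumption on $T$; this is the representation of algebras of sentences as full neat reducts from \cite{Amer}, and it is exactly the place where Theorem \ref{Shelah} invoked quantifier elimination. Indeed, the natural map $\phi/T_n\mapsto\phi/T_\omega$ embeds $\A$ into $\Nr_n\B$, since $\cyl{k}$ fixes the class of any sentence not mentioning $c_k$. Conversely, let $\psi/T_\omega\in\Nr_n\B$, where besides $c_0,\dots,c_{n-1}$ the sentence $\psi$ uses only $c_n,\dots,c_{m-1}$. Applying $\cyl{n}\cyl{n+1}\cdots\cyl{m-1}$, each of which fixes $\psi/T_\omega$ by the neat-reduct condition, and computing on the syntactic side, one sees that $\psi/T_\omega$ is the class of $\exists x_n\exists x_{n+1}\cdots\exists x_{m-1}\bigl(\psi[c_n/x_n,\dots,c_{m-1}/x_{m-1}]\bigr)$, a sentence of $L_n$; hence $\psi/T_\omega$ lies in the image of $\A$. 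Thus $\A=\Nr_n\B\in\Nr_n\CA_\omega\subseteq S_c\Nr_n\CA_\omega$, and $|A|=\lambda$ with $\lambda$ regular.

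Now I would transfer the remaining data. A maximal type $\Gamma_i\subseteq Sn^{L_n}$ consistent with $T$ is precisely an ultrafilter $F_i=\{\phi/T:\phi\in\Gamma_i\}$ of $\A$, and $\Gamma_i$ is principal over $T$ (that is, $T\models\psi\to\Gamma_i$ for some $\psi$ consistent with $T$) exactly when $F_i$ is a principal ultrafilter of $\A$. So $(F_i:i<\kappa)$ is a family of $\kappa<2^{\lambda}$ non-principal ultrafilters of $\A=S_c\Nr_n\CA_\omega$. Applying Theorem \ref{uncountable}, with $\B$ in the role of the locally finite dilation, yields --- in the form produced in its proof --- a set $U$ with $|U|\leq\lambda$ and a homomorphism $f:\B\to\wp({}^{\omega}U)$ with $f(1)\neq 0$ and $\bigcap_{x\in F_i}f(x)=\emptyset$ for every $i<\kappa$.

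Finally, since $\B=\Fm_{T_\omega}$ is locally finite of dimension $\omega$, the homomorphism $f$ is induced, in the standard way, by an $L_0$-structure $\M$ with domain $U$: reading the constant $c_k$ on an assignment $s\in{}^{\omega}U$ as $s(k)$, one has $f(\phi/T)=\{s\in{}^{\omega}U:\M[s]\models\phi\}$ for all $\phi$. In particular $\M\models T$ (each $\tau\in T$ has $\tau/T=1$, so $f(\tau/T)={}^{\omega}U$), and, restricting to $Sn^{L_n}$ and projecting onto the first $n$ coordinates, $\phi^{\M}=f(\phi/T)$ for $\phi\in Sn^{L_n}$. Since $\bigcap_{x\in F_i}f(x)=\emptyset$, it follows that $\bigcap_{\phi\in\Gamma_i}\phi^{\M}=\emptyset$ for every $i$, so there is no $s:n\to\M$ with $s\in\bigcap_{\phi\in\Gamma_i}\phi^{\M}$; that is, $\M$ omits every $\Gamma_i$. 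The one genuinely new point --- and hence the main obstacle --- is the unconditional identity $\A=\Nr_n\B$: it is what lets the statement hold for an arbitrary consistent $T$, not merely for those admitting quantifier elimination, and it works because a first-order sentence mentions only finitely many constants, so the neat-reduct condition collapses all the surplus dimensions. Everything else --- the dictionary ``maximal type $=$ ultrafilter, principal over $T=$ principal ultrafilter'' and the recovery of a genuine first-order model from a cylindric set-algebra representation of the locally finite $\B$ --- is routine and already present in the proof of Theorem \ref{uncountable}.
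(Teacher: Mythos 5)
Your proposal is correct and follows essentially the same route as the paper's own proof: form the sentence algebras $\Fm_{T_n}$ and $\Fm_{T_\omega}$, use the (unconditional) identification $\Fm_{T_n}\cong\Nr_n\Fm_{T_\omega}$ from \cite{Amer} --- which is exactly why no quantifier-elimination hypothesis is needed here, since a sentence mentions only finitely many constants --- translate maximal non-principal types into non-principal ultrafilters, and finish by the preceding theorem via $\Nr_n\CA_\omega\subseteq S_c\Nr_n\CA_\omega$. Your write-up merely makes explicit the surjectivity computation and the recovery of the model $\M$, which the paper leaves implicit.
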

\begin{demo}{Proof}
Let $T\subseteq Sn^{L_0}$ be consistent. Let $\M$ be an $\L_0$  model of $T$.
For $\phi\in Sn^L$ and $k<\alpha$
let $\exists_k\phi:=\exists x\phi(c_k|x)$ where $x$ is the first variable
not occurring in $\phi$. Here $\phi(c_k|x)$ is the formula obtained from $\phi$ by
replacing all occurrences of $c_k$, if any, in $\phi$ by $x$.
Let $T$ be as indicated above, i.e, $T$ is a set of sentences in which no constants occur. Define the
equivalence relation $\equiv_{T}$ on $Sn^L$ as follows
$$\phi\equiv_{T}\psi \text { iff } T\models \phi\equiv \psi.$$
Then, as easily checked $\equiv_{T}$ is a
congruence relation on the algebra
$$\Sn=\langle Sn,\land,\lor,\neg,T,F,\exists_k, c_k=c_l\rangle_{k,l<n}$$
We let $\Sn^L/T$ denote the quotient algebra.
In this case, it is easy to see that $\Sn^L/T$ is a $\CA_n$, in fact is an $\RCA_n$.
Let $L$ be as described above. But now we denote it $L_n$,
the subscript $n$ indicating that we have $n$-many
individual constants. Now enrich $L_{n}$
with countably many constants (and nothing else) obtaining
$L_{\omega}$.
Recall that both languages, now, have a list of $\omega$ variables.
For $\kappa\in \{n, \omega\}$
let $\A_{\kappa}=\Sn^{L_{k}}/{T}$.
For $\phi\in Sn^{L_n}$, let $f(\phi/T)=\phi/{T}$.
Then, as easily checked, $f$ is an embedding  of $\A_{n}$
into $\A_{\omega}$. Moreover $f$ has the additional property that
it maps $\A_{n}$, into (and onto) the neat $n$ reduct of $\A_{\beta}$,
(i.e. the set of $\alpha$ dimensional elements of $A_{\beta}$).
In short, $\A_{n}\cong \Nr_{n}\A_{\omega}$. Now again putting $X_i=\{\phi/T: \phi\in \Gamma_i\}$ and using that the
$\Gamma_i$'s are maximal non isolated, it follows that the
$X_i's$ are non-principal ultrafilters
Since $\Nr_n\CA_{\omega}\subseteq S_c\Nr_n\CA_{\omega}$, then our result follows.
\end{demo}

\subsection{The inclusion $\Nr_n\CA_{\omega}\subset S_c\Nr_n\CA_{\omega}$ is proper}

Here we split up and blur a finite atom structure $\At$ twice, to get two algebras, denoted below by $\A$ and $\B$.
For $n$ finite $n\geq 3$, $\A\in \Nr_n\PEA_{\omega}$, while $\Rd_{Sc}\B\notin \Nr_n{\sf Sc}_n.$

A variant of the following lemma, is available in \cite{Sayedneat}, theorem 5.1.3 with a sketch of proof; it is fully
proved in \cite{MLQ}. If we require that a (representable) algebra be a neat reduct,
then quantifier elimination of the base model guarantees this, as indeed illustrated below.

However, in \cite{Sayed} different relations symbols only had distinct interpretations, meaning that they could have non-empty intersections;
here we strengthen
this condition to that they have {\it disjoint} interpretations. We need this stronger
condition to show that our constructed algebras
are atomic. A more basic construction of a similar model
was constructed in \cite{IGPL}.

Also we endow the uncountable index set of the ternary relation symbols in our signature
an abelian  group structure. We prove our result for $n=3$. The proof is the same for larger finite $n$.
For infinite dimension, which is much more harder, the reader is referred to \cite{IGPL} giving a unified proof to a more general case encompassing
all dimensions $>1$.
The idea is also splitting. A finite  atom structure of a representable algebra $\At$ is fixed in advance.
Then its atoms are
split twice.  Once, each atom is split into uncountably many, and once each into uncountably many except for one atom which
is only split is into countably many atoms. These atoms are called big atoms, which mean that they are cylindrically equivalent to their
original. (This is a general theme in splitting arguments).

The first splitting gives an algebra that is a full neat reduct of an algebra in arbitrary extra dimensions;
the second gives an algebra that is not a full neat reduct
of an algebra in just one extra dimensions, hence in any higher
dimensions. Both algebras are representable, in fact strongly representable, that is the complex algebra
of their atom structure, namely, their completions, are representable.
However, these two algebras are elementary equivalent
(indeed their atom structures are $L_{\infty, \omega}$ equivalent)
because first order logic does not witness this infinite cardinality twist.

The model on which the algebra will be based. For cylindric and polyadic algebras
$\At$ is just $\At(\wp(^33))={}^33.$

\begin{lemma} Let $V=(\At, \equiv_i, {\sf d}_{ij})_{i,j<3}$ be a finite cylindric atom structure,
such that $|\At|\geq |{}^33.|$
Let $L$ be a signature consisting of the unary relation
symbols $P_0,P_1,P_2$ and
uncountably many ternary predicate symbols.
For $u\in V$, let $\chi_u$
be the formula $\bigwedge_{u\in V}  P_{u_i}(x_i)$.
Then there exists an $L$-structure $\M$ with the following properties:
\begin{enumarab}

\item $\M$ has quantifier elimination, i.e. every $L$-formula is equivalent
in $\M$ to a boolean combination of atomic formulas.

\item The sets $P_i^{\M}$ for $i<n$ partition $M$, for any permutation $\tau$ on $3,$
$\forall x_0x_1x_2[R(x_0,x_1,x_2)\longleftrightarrow R(x_{\tau(0)},x_{\tau(1)}, x_{\tau(2)}],$

\item $\M \models \forall x_0x_1(R(x_0, x_1, x_2)\longrightarrow
\bigvee_{u\in V}\chi_u)$,
for all $R\in L$,

\item $\M\models  \forall x_0x_1x_2 (\chi_u\land R(x_0,x_1,x_2)\to \neg S(x_0,x_1,x_2))$
for all distinct ternary $R,S\in L$,
and $u\in V.$

\item For $u\in V$, $i<3,$
$\M\models \forall x_0x_1x_2
(\exists x_i\chi_u\longleftrightarrow \bigvee_{v\in V, v\equiv_iu}\chi_v),$

\item For $u\in V$ and any $L$-formula $\phi(x_0,x_1,x_2)$, if
$\M\models \exists x_0x_1x_2(\chi_u\land \phi)$ then
$\M\models
\forall x_0x_1x_2(\exists x_i\chi_u\longleftrightarrow
\exists x_i(\chi_u\land \phi))$ for all $i<3$
\end{enumarab}
\end{lemma}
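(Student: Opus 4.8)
The plan is to obtain $\M$ as a generic (``rich'') model over a carefully chosen amalgamation class of finite $L$-structures, and then to read off properties (1)--(6) from the genericity and homogeneity of the limit.

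First I would fix the class $\mathcal{K}$ of all finite $L$-structures $N$ satisfying the evident local versions of (2)--(4): the sets $P_j^N$ partition $N$; each ternary $R^N$ is invariant under all permutations of its three arguments; $R^N$ can hold of a triple only when its $P$-profile $u$ (defined by $a_j\in P^N_{u_j}$) lies in $V$; at most one ternary relation holds of any given triple; and, to handle the degenerate case uniformly, no ternary relation holds of a triple with a repeated entry. It is routine that $\mathcal{K}$ is closed under isomorphism and substructures and has the joint embedding property. The crucial point is the amalgamation property: given $N_0\hookrightarrow N_1$ and $N_0\hookrightarrow N_2$ in $\mathcal{K}$, one forms the ``free'' amalgam $N$ on the pushout set, declaring that no ternary relation holds of any triple meeting both $N_1\setminus N_0$ and $N_2\setminus N_0$, and then checks $N\in\mathcal{K}$ together with the embeddings over $N_0$. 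Since $L$ has uncountably many ternary symbols, $\mathcal{K}$ has cardinality continuum, but the generic/Fra\"\i ss\'e construction still yields a rich model $\M$ of cardinality continuum whose age is $\mathcal{K}$ and which is weakly homogeneous: every embedding of a finite substructure of $\M$ into $\M$ extends, after adjoining one more point, to another such embedding.

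Given $\M$, I would verify the six clauses in turn. Properties (2), (3), (4) are exactly the defining conditions of $\mathcal{K}$, so they hold in every member of $\mathcal{K}$ and hence, being universal, in $\M$; that each $P_j^\M$ is nonempty (indeed that every profile in $V$ is realized) is part of richness. Property (1) is the assertion that $\mathrm{Th}(\M)$ admits quantifier elimination, which follows from the amalgamation property of $\mathcal{K}$ via the standard quantifier-elimination test: given a common substructure $C$ of two models of $\mathrm{Th}(\M)$ and $\bar a\in C$, a quantifier-free $1$-type over $C$ realized in one model is consistent with $\mathcal{K}$, hence realized in the other by richness; and a single first-order formula is, once quantifier-free, already a \emph{finite} Boolean combination of atomic ones. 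Property (5) is read off from the partition: over the remaining variables $\exists x_i\chi_u$ asserts exactly that each $x_j$ ($j\neq i$) lies in $P_{u_j}$, and since $V$ is a union of its $\equiv_i$-classes and all $\equiv_i$-translates of $u$ are realized, this coincides with $\bigvee_{v\equiv_i u}\chi_v$. Property (6) expresses a strong homogeneity of $\M$ over the pieces cut out by the $\chi_u$'s: if $\chi_u\wedge\phi$ is realized at all, then (using (1) to take $\phi$ quantifier-free, and then richness) a witness can be transported to sit above any prescribed tuple on which $\exists x_i\chi_u$ holds, which is precisely the content of (6). For the exact bookkeeping here — in particular the precise delineation of $V$ and the handling of diagonal profiles required to make (6) come out correctly — I would follow \cite{MLQ}, together with the sketch in \cite{Sayedneat} (theorem 5.1.3) and the related model in \cite{IGPL}.

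The main obstacle is getting the amalgamation class exactly right, so that the ``free amalgam, no new ternary edges'' recipe is simultaneously compatible with conditions (3)--(4) (no profile outside $V$ acquires a relation, no triple acquires two) and with the degree of genericity demanded by (5)--(6) (every quantifier-free type consistent with $\mathcal{K}$ is realized, and uniformly across $\chi_u$-classes). Once the class is pinned down, Fra\"\i ss\'e's theorem and the homogeneity-implies-quantifier-elimination argument do the rest essentially mechanically.
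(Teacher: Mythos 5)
Your overall route (a generic/homogeneous model over an amalgamation class, with QE read off from homogeneity and clauses (2)--(4) built into the class) is in the spirit of what the paper does, since the paper itself offers no proof beyond the citation to \cite{MLQ}. But as written your construction provably fails clause (6), and the failure is located exactly at the point you defer to the literature. Your class $\mathcal{K}$ forbids any ternary relation on a triple with a repeated entry. In the generic limit every $P_j$ is infinite and, for every ternary $R$ and every profile $u\in V$ (in the intended reading $V={}^33$, so $V$ contains profiles with repeated values, say $u=(0,0,1)$), the formula $\chi_u\land R(x_0,x_1,x_2)$ is realized by a triple of \emph{distinct} points, since that one-point-at-a-time configuration lies in $\mathcal{K}$. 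Now apply (6) with this $u$, $\phi=R(x_0,x_1,x_2)$ and $i=2$ at a tuple $(a,a,c)$ with $a\in P_0$: the left side $\exists x_2\chi_u$ holds, but the right side requires some $b\in P_1$ with $R(a,a,b)$, which your class excludes outright. So the model you build does not satisfy (6), and hence does not prove the lemma.

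Moreover, simply dropping that clause does not repair the argument: (6) quantifies over \emph{all} $L$-formulas $\phi$, including ones with equalities. For instance, with $u_0=u_1$ and $\phi:=(x_0=x_1)$ (or $\phi:=R(x_0,x_1,x_2)\land x_0=x_1$ once degenerate triples may carry relations), the premise of (6) is realized, yet $\exists x_2(\chi_u\land\phi)$ fails at any tuple whose first two entries are distinct elements of $P_{u_0}$, while $\exists x_2\chi_u$ holds there. So no amount of "transporting a witness by richness" yields (6) for a free-amalgamation generic with infinite classes; the interaction of (6) with the diagonal/degenerate patterns inside each $\chi_u$-block is the actual mathematical content that has to be handled (by restricting how relations meet the diagonal profiles and how $\phi$ is decomposed along them, as in \cite{MLQ}), not a bookkeeping detail one can defer. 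A secondary, fixable point: your QE step also needs more care in an uncountable signature, since a quantifier-free type contains uncountably many negated atomic formulas and richness only controls finitely many of them; here one should either run a one-variable elimination directly, or do the back-and-forth between $\omega$-saturated models using the at-most-one-relation-per-triple condition to tame the negative information.
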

\begin{proof}Cf. \cite{MLQ}, with minor modifications.
\end{proof}

In the next lemma $\tau_4$ is called a $4$ witness and $\tau$ is called an approximate witness.
An approximate witness
agrees with the witness on the global level, that is in the finite atom structure. On the level of the big atoms, obtained after the splitting,
it only dominates it, possibly strictly. The $4$ witness, sees the cardinality twist in one extra dimensions, which terms in the algebra do not and cannot see.
\begin{lemma}\label{term}
\begin{enumarab}

\item For $\A\in \CA_3$ or $\A\in \SC_3$, there exist
a unary term $\tau_4(x)$ in the language of $\SC_4$ and a unary term $\tau(x)$ in the language of $\CA_3$
such that $\CA_4\models \tau_4(x)\leq \tau(x),$
and for $\A$ as above, $u\in \At={}^33$ and any $\C$ such that $\A=\Nr_3\C$, we have
$$\tau_4^{\C}(\chi_{u})=\tau^{\A}(\chi_{u})=\chi_{\tau^{\wp(^33)}(u).}$$

\item   For $\A\in \PEA_3$ or $\A\in \PA_3$, there exist a binary
term $\tau_4(x,y)$ in the language of $\SC_4$ and another  binary term $\tau(x,y)$ in the language of $\SC_3$
such that $PEA_4\models \tau_4(x,y)\leq \tau(x,y),$
and for $\A$ as above, $u,v\in \At={}^33$ and any $\C$ such that $\A=\Nr_3\C$, we have
$$\tau_4^{\C}(\chi_{u}, \chi_{v})= \tau^{\A}(\chi_{u}, \chi_{v})=\chi_{\tau^{\wp(^33)}(u,v)}.$$
(Careless about the dimension of $\C$ but doesn't matter).


\end{enumarab}
\end{lemma}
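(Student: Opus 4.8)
The plan is to write down the two terms explicitly, then establish the two assertions separately: the universal inequality in the larger variety, which is a one‑line monotonicity argument, and the evaluations at the distinguished elements $\chi_u$, which reduce to a finite equational computation once one feeds in the properties of $\M$ recorded in the previous lemma.

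For part (1) I would let $\tau$ realise, on the full set algebra $\wp({}^33)$, a fixed non‑injective self‑map $\sigma:3\to3$ at the level of atoms; concretely $\tau$ is a short composition of cylindrifiers ${\sf c}_i$ and diagonals ${\sf d}_{ij}$ (equivalently of the replacement substitutions ${\sf s}^i_j$, $i,j<3$) whose only effect on the atom $\{u\}$ is to return $\{u\circ\sigma\}$; the non‑injectivity of $u\mapsto u\circ\sigma$ is exactly what will make $\tau$ \emph{merely approximate} once the atoms are split, and is harmless here. For $\tau_4$ I would take the implementation of the same atom‑map that routes one of these operations through the fourth coordinate, built from the $\SC_4$‑identities ${\sf c}_3{\sf s}^i_3 z={\sf c}_i z$ and ${\sf s}^i_j z={\sf s}^i_3{\sf s}^3_j z$ (valid for $z$ with dimension set contained in $3$) together with a single genuine application of ${\sf c}_3$ to a properly $4$‑dimensional element — this last being the dormant $4$‑dimensional content that will later let $\tau_4^{\C}$ detect the cardinality twist on split atoms. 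One arranges $\tau_4$ so that $\tau_4(x)\le\tau(x)$ is an instance of monotonicity of all operations together with ${\sf c}_3z\ge z$; hence the inequality holds throughout $\CA_4$, indeed already in $\SC_4$. Part (2) follows the same recipe with binary terms, the polyadic transposition substitutions ${\sf s}_{[i,j]}$, and the atom‑map $(u,v)\mapsto\tau^{\wp({}^33)}(u,v)$; again the witnesses can be taken in $\SC_4$ resp.\ $\SC_3$ and the inequality holds throughout $\PEA_4$.

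For the evaluations, I would first compute $\tau^{\A}(\chi_u)$. Because $P_0,P_1,P_2$ partition $M$, the elements $\{\chi_v: v\in{}^33\}$ form, inside the subalgebra of the set algebra over $\M$ they generate, a system of $27$ disjoint elements summing to the unit, and property (5) of the previous lemma, $\M\models\forall\bar x\,({\sf c}_i\chi_v\leftrightarrow\bigvee_{w\equiv_i v}\chi_w)$, says that the cylindrifiers act on these elements exactly as the cylindrifiers of $\wp({}^33)$ act on its atoms; pushing the operations of $\tau$ through $\chi_u$ one at a time therefore yields $\tau^{\A}(\chi_u)=\chi_{\tau^{\wp({}^33)}(u)}$. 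Next, $\tau_4^{\C}(\chi_u)$: since $\chi_u\in\A=\Nr_3\C$ its dimension set is contained in $3$, so in the computation of $\tau_4^{\C}(\chi_u)$ the rerouting operation ${\sf s}^3_j$ acts trivially on $\chi_u$, the composite ${\sf c}_3{\sf s}^i_3$ collapses to ${\sf c}_i$ by the identity above, and the single genuine ${\sf c}_3$ is applied to an element that, evaluated at $\chi_u$, has already reduced to a $3$‑dimensional one. Thus $\tau_4^{\C}(\chi_u)$ rewrites, using only $\SC_4$ (resp.\ $\PEA_4$) identities and the dimension bound $\Delta\chi_u\subseteq3$, to $\tau^{\A}(\chi_u)$; in particular the value does not depend on which dilation $\C$ with $\A=\Nr_3\C$ one takes. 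Combining, $\tau_4^{\C}(\chi_u)=\tau^{\A}(\chi_u)=\chi_{\tau^{\wp({}^33)}(u)}$, as required, and the case $\A\in\SC_3$ is the same computation with diagonals eliminated in favour of replacements.

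The main obstacle is the simultaneous design constraint on $\tau_4$: it must carry genuine $4$‑dimensional content — a use of ${\sf c}_3$ on an element not lying in $\Nr_3$ of anything — so that \emph{after} the atoms of $\At$ are split it can distinguish the big atoms by cardinality, yet that content must be completely dormant on the unsplit $\chi_u$, so that the rewriting to $\tau^{\A}(\chi_u)$ goes through without invoking anything beyond the $\SC_4$/$\PEA_4$ axioms and $\Delta\chi_u\subseteq3$, and moreover $\tau_4\le\tau$ must be unconditional. Producing a single pair $(\tau,\tau_4)$ that threads all three requirements — and doing it uniformly across the cylindric, $\SC$ and polyadic signatures, where only the available substitution operators change — is where the work lies; the individual verifications are then routine, driven entirely by the axiomatisation of $\M$ from the previous lemma.
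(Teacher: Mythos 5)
There is a genuine gap: the lemma \emph{is} its witnesses, and you never produce them. The paper's proof consists of exhibiting the terms explicitly (for $\CA_3/\SC_3$: $\tau_4(x)={}_3{\sf s}(0,1)x$, the transposition substitution implemented through the spare index $3$, and $\tau(x)={\sf s}^0_1{\sf c}_1x\cdot{\sf s}^1_0{\sf c}_0x$; for $\PA_3/\PEA_3$ the binary pair $\tau_4(x,y)={\sf c}_3({\sf s}^1_3{\sf c}_3x\cdot{\sf s}^0_3{\sf c}_3y)$ and $\tau(x,y)={\sf c}_1({\sf c}_0x\cdot{\sf s}^0_1{\sf c}_1y)\cdot{\sf c}_1x\cdot{\sf c}_0y$), after which the inequality and the evaluation on the rectangular elements $\chi_u$ are routine computations of exactly the kind you sketch. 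You explicitly defer ``where the work lies'' --- the simultaneous design of $(\tau,\tau_4)$ --- so what you have is a specification of the lemma's intended use, not a proof of it.

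Moreover your guiding heuristic points in the wrong direction. You propose to build $\tau$ around a \emph{non-injective} self-map $\sigma$ of $3$, claiming non-injectivity is what makes $\tau$ merely approximate after splitting. But non-injective coordinate maps are realised exactly by the replacement substitutions ${\sf s}^i_j$, which are already terms of $\CA_3$ (and $\SC_3$); a $\tau_4$ built that way collapses to a three-dimensional term, carries no genuine fourth-dimensional content, and is useless for the application in theorem \ref{ef} (indeed one could then take $\tau_4=\tau$ and the lemma becomes vacuous). The paper's atom map is the \emph{transposition} $u\mapsto u\circ[0,1]$ --- a bijection; what makes $\tau$ only an upper bound is not non-injectivity but the fact that the transposition substitution is not term-definable in three dimensions, while its four-dimensional implementation ${}_3{\sf s}(0,1)$ is dominated by, and agrees on rectangles with, the three-variable approximation. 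For the same reason your remark that part (2) ``follows the same recipe with the transposition substitutions ${\sf s}_{[i,j]}$'' misses the point of that case: in $\PEA_3$ the transpositions are basic operations, so no unary witness can exist, which is precisely why the paper must switch to a binary, composition-like term routed through the extra variable. Your verification steps (the partition property and clause (5) of the preceding lemma driving the evaluation of $\tau^{\A}(\chi_u)$; the dimension bound $\Delta\chi_u\subseteq 3$ making the computation of $\tau_4^{\C}(\chi_u)$ independent of the dilation) are fine in outline, but they only become a proof once correct terms of the paper's kind are on the table.
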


\begin{proof}

For all reducts of polyadic algebras, these terms are given in \cite{Fm}, and \cite{MLQ}.
For cylindric algebras $\tau_4(x)={}_3 {\sf s}(0,1)x$ and $\tau(x)={\sf s}_1^0{\sf c}_1x.{\sf s}_0^1{\sf c}_0x$.
For polyadic algebras, it is a little bit more complicated because the former term above is definable.
In this case we have $\tau(x,y)=\c_1(\c_0x.\s_1^0\c_1y).\c_1x.\c_0y$, and $\tau_4(x,y)={\sf c}_3({\sf s}_3^1{\sf c}_3x.s_3^0{\sf c}_3y)$

\end{proof}

The atom structures of the above algebras are extremely close to each other.
The following result unifies (model theoretically) all the main results in \cite{Note}, \cite{MLQ}, \cite{IGPL}, \cite{Fm}.

\begin{theorem}\label{ef}
There exists an atomic $\A\in \Nr_3\sf QEA_{\omega}$
with an elementary equivalent cylindric  uncountable algebra
$\B$ which is strongly representable, and its $\sf Sc$ reduct is not in $\Nr_3\Sc_4$.
Furthermore, the latter is a complete subalgebra of the former
and  $\At\A\equiv_{\infty, \omega}\At\B.$
\end{theorem}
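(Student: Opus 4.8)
\begin{demo}{Proof}
The plan is to blow up and blur the finite atom structure $\At={}^33$ \emph{twice}, as sketched above; most of the verification is carried out in \cite{MLQ}, \cite{IGPL}, \cite{Fm}, \cite{Note}, and I only indicate the two points that do the real work. First I would fix an uncountable abelian group $(G,+)$, homogeneous under back-and-forth (say $G=\bigoplus_{\omega_1}\Z_2$), and index by $G$ the ternary relation symbols of the signature $L$ of the preceding Lemma, the group operation being used to prescribe the admissible coloured triangles so that the atom structures obtained have \emph{infinite} chromatic number. Applying the Lemma gives an $L$--structure $\M$ with quantifier elimination; let $\A\leq\wp(^3M)$ be the polyadic equality set algebra generated by the $R_g^{\M}$ ($g\in G$) and the $P_i^{\M}$ ($i<3$). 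By items (1)--(6) of the Lemma $\A$ is atomic and simple, its atoms being the coloured graphs $[u,g]=\chi_u^{\M}\cap R_g^{\M}$ together with the purely diagonal blocks $\chi_u^{\M}$. The algebra $\B$ is produced by the same recipe except that one fixed non--diagonal atom $u_0$ is split into only \emph{countably} many copies while every other atom is again split into $|G|$ copies; thus $\At\B$ agrees with $\At\A$ apart from the $u_0$--fibre, which is countable in $\B$ and of size $\aleph_1$ in $\A$. As in \cite{MLQ}, \cite{IGPL}, $\B$ is atomic and embeds in $\A$ as a complete subalgebra, and (both being subalgebras of set algebras) $\A$ and $\B$ are representable.

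To see $\A\in\Nr_3\QEA_{\omega}$, I would let $\A^{+}\leq\wp(^{\omega}M)$ be generated by the same relations, now of rank $\omega$. The inclusion $\A\subseteq\Nr_3\A^{+}$ is immediate; conversely, any $x\in\Nr_3\A^{+}$ is defined over $\M$ by a first order $L$--formula with $\cyl i x=x$ for $3\leq i<\omega$, so the quantifier elimination from the Lemma collapses $x$ to a Boolean combination of atomic $L$--formulas in $x_0,x_1,x_2$, i.e.\ to a member of $\A$. Hence $\A=\Nr_3\A^{+}$ with $\A^{+}$ locally finite of dimension $\omega$; this is the standard ``quantifier elimination $\Rightarrow$ full neat reduct'' argument of \cite{Sayedneat}, \cite{MLQ}.

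The heart of the proof is $\Rd_{\Sc}\B\notin\Nr_3\Sc_4$. Suppose, for contradiction, $\Rd_{\Sc}\B=\Nr_3\C$ with $\C\in\Sc_4$. Choose $u_0=(a,b,c)$ with $a,b,c$ distinct and put $v_0=(b,a,c)$, its $[0,1]$--transpose, still a coloured atom whose fibre in $\B$ has size $|G|=\aleph_1$. By Lemma \ref{term}(1) there are a unary $\Sc_4$--term $\tau_4={}_3{\sf s}(0,1)$ and a unary $\CA_3$--term $\tau$ with $\tau_4\leq\tau$ such that, evaluated in the dilation $\C$, $\tau_4^{\C}(\chi_{u_0})=\tau^{\B}(\chi_{u_0})=\chi_{v_0}$ and, symmetrically, $\tau_4^{\C}(\chi_{v_0})=\chi_{u_0}$. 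The key observation is that on the $3$--dimensional elements of $\C$ the term $\tau_4$ computes the polyadic transposition ${\sf s}_{[0,1]}$, which is a Boolean automorphism of $\B$ of order two; hence $\tau_4^{\C}$ restricts to a bijection of $\At\B$ interchanging the set of atoms below $\chi_{u_0}$ with the set of atoms below $\chi_{v_0}$ --- but the first is countable and the second has cardinality $\aleph_1$, a contradiction. Therefore $\Rd_{\Sc}\B\notin\Nr_3\Sc_4$, so $\B\notin\Nr_3\QEA_{\omega}$; as $\B$ is a complete subalgebra of $\A=\Nr_3\A^{+}$ this also shows $\Nr_3\QEA_{\omega}\subsetneq S_c\Nr_3\QEA_{\omega}$.

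It remains to record strong representability and elementary equivalence. The completions $\Cm\At\A$ and $\Cm\At\B$ are representable by the ``shade of red'' argument of the introduction: since the colour index set ($G$, and already any countably infinite subset of it) is infinite, the extra colour $\rho$ shows up as a reflexive node in a suitable ultrafilter extension and its copies let \pe\ win the $\omega$--rounded atomic game on networks, yielding a complete representation of the completion --- exactly as in \cite{Hodkinson}, \cite{IGPL}, \cite{Fm}; so $\At\A$ and $\At\B$ are strongly representable. Finally, since the colouring delivered by the Lemma is homogeneous and an $\aleph_0$-- and an $\aleph_1$--fibre are back-and-forth equivalent, \pe\ wins the unbounded--length \ef\ game between the cylindric reducts of $\At\A$ and $\At\B$: she answers a pebble placed in some fibre by one in the matching fibre, never exhausting either since both are infinite and only finitely many pebbles are ever in play. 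Hence $\At\A\equiv_{\infty,\omega}\At\B$, and since $\A$ and $\B$ are obtained from their atom structures by the same recipe this lifts to $\A\equiv_{\infty,\omega}\B$, in particular $\A\equiv\B$. I expect the genuine difficulty to be orchestrating all four requirements simultaneously --- the largeness and quantifier elimination that make $\A$ a \emph{full} neat reduct must be compatible with shrinking a single fibre while keeping $\B$ a complete subalgebra of $\A$ and the colour structure homogeneous enough for the \ef\ game --- together with the careful bookkeeping in Lemma \ref{term}, which pins the $4$--witness term $\tau_4$ to the genuine transposition so that it ``sees'' the $\aleph_0$ versus $\aleph_1$ cardinality twist that first order logic cannot.
\end{demo}
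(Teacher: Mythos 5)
There is a genuine gap, and it sits exactly at the step you call ``the heart of the proof.'' You shrink the fibre over one non-diagonal atom $u_0$ to countable size while leaving the fibre over its transpose $v_0=u_0\circ[0,1]$ uncountable, and then derive a contradiction from the fact that the $4$-witness term $\tau_4(x)={}_3{\sf s}(0,1)x$ computes the transposition and would have to swap a countable fibre with an uncountable one. But an algebra whose $u_0$-fibre and $v_0$-fibre have different cardinalities is simply not closed under the native substitution ${\sf s}_{[0,1]}$: in the product picture, the $u_0$-component of ${\sf s}_{[0,1]}x$ is the $v_0$-component of $x$, which ranges over the whole uncountable fibre. So your $\B$ is not a quasi-polyadic (equality) algebra at all, hence not a subalgebra --- let alone a complete subalgebra --- of $\A\in\Nr_3{\sf QEA}_\omega$, and the elementary equivalence $\B\equiv\A$ in the intended signature cannot hold. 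Conversely, if you repair this by shrinking \emph{all} permuted copies of the special fibre (which is what the paper does, precisely so that $\B\in{\sf RPEA}_3$ and $\Bl\B\subseteq_c\Bl\A$), then the transposition maps countable fibres to countable fibres and your unary-term cardinality argument evaporates: since transpositions are present (or term-definable on the neat reduct), a term that merely recomputes ${\sf s}_{[0,1]}$ cannot see the twist. This is why the paper's proof uses the \emph{binary} $4$-variable term of Lemma \ref{term}(2), $\tau_4(x,y)={\sf c}_3({\sf s}_3^1{\sf c}_3x\cdot {\sf s}_3^0{\sf c}_3y)$, together with the group structure imposed on the uncountable index set of the ternary relation symbols ($R_i\circ R_j=R_{i+j}$): if $\Rd_{sc}\B=\Nr_3\D$, then $\tau_4^{\D}(f(R_i),f(R_j))=f(R_{i+j})$ produces uncountably many distinct $3$-dimensional elements lying below $\chi_{v}$ with $v=\tau(u_1,u_2)$, all of which must belong to the deliberately countable component $\B_v$ --- that is the contradiction, and it genuinely needs a composition-like operation not available (even with transpositions) in three dimensions.

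Two further points in your write-up are asserted rather than proved and are in fact where the paper does real work. First, you build $\B$ from a \emph{second} model (a different splitting) and then claim it embeds in $\A$ as a complete subalgebra and is elementarily equivalent to $\A$; neither follows from the construction as you give it. The paper instead interprets $\A$ (quantifier-freely, one-dimensionally) in the product structure $\P=\prod_{u\in V}\A_u$ expanded by the constants $1_u$, ${\sf d}_{ij}$ and the operations ${\sf s}_{[i,j]}$, defines $\B$ by replacing the component $\A_v$ and all its permuted versions by countable \emph{complete} elementary Boolean subalgebras, and gets $\B\equiv\A$ by Feferman--Vaught and $\Bl\B\subseteq_c\Bl\A$ by the choice of complete subalgebras; the $\equiv_{\infty,\omega}$ statement for the atom structures is then obtained by the back-and-forth system (atoms below $1_{Id}$, respectively below the shrunken blocks, are cylindrically equivalent) together with the Boolean-valued/absoluteness argument, not by a generic homogeneity appeal. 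Second, strong representability of $\B$ is not obtained by a ``shade of red'' rainbow argument; in this construction $\B$ is in fact completely representable (\pe\ wins the $\omega$-rounded atomic game guided by a complete representation), which yields strong representability directly. So the overall scaffolding (the QE model, Lemma \ref{term}, splitting twice) is the right one, but the specific asymmetric splitting plus unary-transposition argument you propose cannot be made to satisfy all four requirements of the theorem simultaneously.
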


\begin{proof}  Let $\L$ and $\M$ as above. Let
$\A_{\omega}=\{\phi^M: \phi\in \L\}.$
Clearly $\A_{\omega}$ is a locally finite $\omega$-dimensional quasipolyadic  set algebra with equality.
The proof for $\CA$s; and its relatives are very similar. We add a condition to our constructed model.
We assume that the relation symbols are indexed by an uncountable set $I$.
and that  there is a group structure on $I$, such that for distinct $i\neq j\in I$,
we have $R_i\circ R_j=R_{i+j}$.
We take $\At=({}^33, \equiv_i, \equiv_{ij}, d_{ij})$, where
for $u,v\in \At$, $u\equiv_i v$ iff $u$ and $v$ agree off $i$ and $v\equiv_{ij}u$ iff $u\circ [i,j]=v$.
We denote $^33$ by $V$.

By the symmetry condition we have $\A$ is a ${\sf PEA_3}$, and
$\A\cong \Nr_3\A_{\omega}$, the isomorphism is given by
$\phi^{\M}\mapsto \phi^{\M}.$

In fact, $\A$ is not just a polyadic equality algebras, it is also closed under all first order definable
operations using extra dimensions for quantifier elimination in $\M$ guarantees that this map is onto, so that $\A$ is the full  neat reduct.
For $u\in {}V$, let $\A_u$ denote the relativisation of $\A$ to $\chi_u^{\M}$

i.e $\A_u=\{x\in A: x\leq \chi_u^{\M}\}.$ Then $\A_u$ is a Boolean algebra.
Furthermore, $\A_u$ is uncountable and atomic for every $u\in V$
because by property (iv) of the above lemma,
the sets $(\chi_u\land R(x_0,x_1,x_2)^{\M})$, for $R\in L$
are disjoint of $\A_u$. It is easy to see that $A_u$ is actually isomorphic to the finite co-finite Boolean algebra on  a set of cardinality $I$.

Define a map $f: \Bl\A\to \prod_{u\in {}V}\A_u$, by
$f(a)=\langle a\cdot \chi_u\rangle_{u\in{}V}.$
We expand the language of the Boolean algebra $\prod_{u\in V}\A_u$ by constants in
and unary operations, in such a way that
$\A$ becomes interpretable in the expanded structure.

Let $\P$ denote the
following structure for the signature of boolean algebras expanded
by constant symbols $1_u$, $u\in {}V$ and ${\sf d}_{ij}$, and unary relation symbols
${\sf s}_{[i,j]}$ for $i,j\in 3$:

\begin{enumarab}
\item The Boolean part of $\P$ is the boolean algebra $\prod_{u\in {}V}\A_u$,

\item $1_u^{\P}=f(\chi_u^{\M})=\langle 0,\cdots0,1,0,\cdots\rangle$
(with the $1$ in the $u^{th}$ place)
for each $u\in {}V$,

\item ${\sf d}_{ij}^{\P}=f({\sf d}_{ij}^{\A})$ for $i,j<\alpha$.

\item ${\sf s}_{[i,j]}^{\P}(x)= {\sf s}_{[i,j]}^{\P}\langle x_u: u\in V\rangle= \langle x_{u\circ [i,j]} : u\in V\rangle.$

\end{enumarab}

Define a map $f: \Bl\A\to \prod_{u\in {}V}\A_u$, by
$$f(a)=\langle a\cdot \chi_u\rangle_{u\in{}V}.$$

Then there are quantifier free formulas
$\eta_i(x,y)$ and $\eta_{ij}(x,y)$ such that
$\P\models \eta_i(f(a), b)$ iff $b=f(c_i^{\A}a)$ and
$\P\models \eta_{ij}(f(a), b)$ iff $b=f({\sf s}_{[i,j]}a).$
The one corresponding to cylindrifiers is exactly like the $\CA$ case, the one corresponding to substitutions in $y={\sf s}_{[i,j]}x.$
Now, like the $\CA$ case, $\A$ is interpretable in $\P$, and indeed the interpretation is one dimensional and quantifier free.

For this it is enough to show that
$f$ is one to one and that $\rng(f)$
(Range of $f$) and the $f$-images of the graphs of the cylindric algebra functions in $A$
are definable in $P$. Since the $\chi_u^M$ partition
the unit of $A$,  each $a\in A$ has a unique expression in the form
$\sum_{u\in {}^33}(a.\chi_u^M),$ and it follows that
$f$ is boolean isomorphism: $bool(A)\to \prod_{u\in {}^33}bool(A_u).$
So the $f$-images of the graphs of the boolean functions on
$A$ are trivially definable.
$f$ is bijective so $\rng(f)$ is
definable, by $x=x$. For the diagonals, $f(d_{ij}^A)$ is definable by $x=d_{ij}$.
Finally we consider cylindrifications. For $S\subseteq {}^33$, $i<3$,
let $t_S$ be the closed term
$$\sum\{1_v: v\in {}^33, v\equiv_i u\text { for some } u\in S\}.$$
Let
$$\eta_i(x,y)=\bigwedge_{S\subseteq {}^33}(\bigwedge_{u\in S} x.1_u\neq 0\land
\bigwedge_{u\in {}^33\smallsetminus S}x.1_u=0\longrightarrow y=t_S).$$
We claim that for all $a\in A$, $b\in P$, we have
$$P\models \eta_i(f(a),b)\text { iff } b=f(c_i^Aa).$$
To see this, let $f(a)=\langle a_u\rangle_{u\in {}^33}$, say.
So in $A$ we have $a=\sum_ua_u.$
Let $u$ be given; $a_u$ has the form $(\chi_i\land \phi)^M$ for some $\phi\in L^3$, so
${\sf c_i}^A(a_u)=(\exists x_i(\chi_u\land \phi))^M.$
By the above lemma if  $a_u\neq 0$, this is
$(\exists x_i\chi_u)^M$;  by property $5$,
this is $(\bigvee_{v\in {}^33, v\equiv_iu}\chi_v)^M.$
Let $S=\{u\in {}^33: a_u\neq 0\}.$
By normality and additivity of cylindrifications we have,
$${\sf c}_i^A(a)=\sum_{u\in {}^33} {\sf c}_i^Aa_u=
\sum_{u\in S}{\sf c}_i^Aa_u=\sum_{u\in S}(\sum_{v\in {}^33, v\equiv_i u}\chi_v^M)$$
$$=\sum\{\chi_v^M: v\in {}^33, v\equiv_i u\text { for some } u\in S\}.$$
So $P\models f({\sf c}_i^{A}a)=t_S$. Hence $P\models \eta_i(f(a),f(c_i^Aa)).$
Conversely, if $P\models \eta_i(f(a),b)$, we require $b=f(c_ia)$.
Now $S$ is the unique subset of $^33$ such that
$$P\models \bigwedge_{u\in S}f(a).1_u\neq 0\land \bigwedge_{u\in {}^33\smallsetminus S}
f(a).1_u=0.$$  So we obtain
$$b=t_S=f({\sf c}_i^Aa).$$
Substitutions is more direct.
We have proved that $P$ is interpretable in $A$.


For $v\in V$, let $\B_v$ be a complete countable elementary subalgebra of $\A_v$.
Then proceed like the $\CA$ case, except that we take a different product, since we have a different atom structure, with unary relations
for substitutions:
Let $u_1, u_2\in V$ and let $v=\tau(u_1,u_2)$, as given in the above lemma.
Let $J=\{u_1,u_2, {\sf s}_{[i,j]}v: i\neq  j<3\}$.
Let  $\B=((\A_{u_1}\times \A_{u_2}\times \B_{v}\times \prod_{i,j<3, i\neq j} \B_{s_{[i,j]}v}\times \prod_{u\in V\sim J} \A_u), 1_u, d_{i,j}, {\sf s}_{i,j}x)$
inheriting the same interpretation. Then by the Feferman Vaught theorem,
which says that replacing a component in a possibly infinite product by  elementary equivalent
algebra, then the resulting new product is elementary equivalent to the original one, so that $\B\equiv \P$,
hence $\B\equiv \A$. (If a structure is interpretable in another structure then any structure
elementary equivalent to the former structure is elementary equivalent to the last).
Notice to  that $\B$ is atomic, because $\P$ is, and atomicity is a first order property.

Now $\B$ is strongly representable. \pe\ has a winning strategy in all finite rounded atomic games.
But $\B$ is easily seen to be completely representable, and indeed \pe\ can indeed win the $\omega$ rounded
usual atomic game, guided by a complete representation.

In our new product we made all the permuted versions of $\B_v$ countable, so that $|\B_v|$ {\it remains} countable,
because substitutions corresponding to transpositions
are present in our signature, so if one of the permuted components is uncountable, then $\B_{v}$ would be uncountable, and we do not want that.

Note that $\B\in {\sf RPEA}_3$. The contradiction follows from the fact that had  $\Rd_{sc}\B$ been a neat reduct, say $\B=\Nr_3\D$
then the term $\tau_3$ as in the above lemma, using $4$ variables, evaluated in $\D$
will force the component $\B_v$ to be uncountable, which is not the case by construction, indeed $\tau_3^{\D}(f(R_i), f(R_j))=f(R_{i+j})$.

For $u\in S_n$. Then $\{1_u: u\in S_n\}$ forms a partition of the unit
$^nW$ of $\A$.
Then $\A_u$ and $\B_u$ are atomic boolean algebras.

We devise a pebble game similar to the rainbow pebble game, except that it is a back and forth \ef\
game, not just a forth game.
Furthermore, each player has the option to choose
an element from {\it both} structures, and not just stick to one.  Pairs of pebbles are outside the board.
\pa\ as usual starts the game by placing a pebble on an element of one of the structures. \pe\
responds by placing the other pebble on the an element on the other structure.

Between them they choose an atom $a_i$ of $\At\A$
and an atom  $b_i$ of $\At\B$, under the restriction that player \pe\
must choose from the other structure from player \pa\ at each step.
A win for \pe\ if the binary relation resulting from the choices of the two players $R=\{(a,b): a\in \At(\A), b\in \At(\B)\}$ is a partial isomorphism.

At each step, if the play so far $(\bar{a}, \bar{b})$ and \pa\ chooses an atom $a$
in one of the structures, we have one of two case.
Either $a.1_u=a$ for some $u\neq Id$
in which case
\pe chooses the same atom in the other structure.
Else $a\leq 1_{Id}$
Then \pe\ chooses a new atom below $1_{Id}$
(distinct from $a$ and all atoms played so far.)
This is possible since there finitely many atoms in
play and there are infinitely many atoms below
$1_{u}$.
This strategy makes \pe\ win, since atoms below $1_u$ are cylindrically equivalent..
Let $J$ be a back and forth system which exists.
Order $J$ by reverse inclusion, that is $f\leq g$
if $f$ extends $g$. $\leq$ is a partial order on $J$.
For $g\in J$, let $[g]=\{f\in J: f\leq g\}$. Then $\{[g]: g\in J\}$ is the base of a
topology on
$J.$ Let $\C$ be the complete
Boolean algebra of regular open subsets of $J$ with respect to the topology
defined on $J.$
Form the boolean extension $\M^{\C}.$
We want to define an isomorphism in $\M^{\C}$ of $\breve{\A}$ to
$\breve{\B}.$
Define $G$ by
$$||G(\breve{a},\breve{b})||=\{f\in {J}: f(a)=b\}$$
for $c\in \A$ and $d\in \B$.
If the right-hand side,  is not empty, that is it contains a function $f$, then let
$f_0$ be the restriction of $f$ to the substructure of $\A$ generated by $\{a\}$.
Then $f_0\in J.$ Also $$\{f\in J:  f(c)=d\}=[f_0]\in \C.$$
$G$ is therefore a $\C$-valued relation. Now let $u,v\in \M$.
Then
$$||\breve{u}=\breve{v}||=1\text { iff }u=v,$$
and
$$||\breve{u}=\breve{v}||=0\text { iff } u\neq v$$
Therefore
$$||G(\breve{a},\breve{b})\land G(\breve{a},\breve{c})||\subseteq ||\breve{b}=\breve{c}||.$$
for $a\in \A$ and $b,c\in \B.$
Therefore ``$G$ is a function." is valid.
It is one to one because its converse is also a function.
(This can be proved the same way).
Also $G$ is surjective.

One can alternatively show that $\A\equiv_{\infty\omega}\B$ using "soft model theory" as follows:
Form a boolean extension $\M^*$ of $\M$
in which the cardinalities of $\A$ and $\B$ collapse to
$\omega$.  Then $\A$ and $\B$ are still back and forth equivalent in $\M^*.$
Then $\A\equiv_{\infty\omega}\B$ in $M^*$, and hence also in $\M$
by absoluteness of $\models$.

\end{proof}

\subsection{Does it work for relation algebras?}

here we try to apply the above technique to relation algebra reducts.
Let $\A$ be as above and let $\At$ be  a finite atom structure of a representable relation algebra.
Let $k\geq 4$. A $k$ witness $\tau_k$ is $m$-ary term of $\CA_k$ with rank $m\geq 2$ such
that $\tau_k$ is not definable in the language of relation algebras (so that $k$ has to be $\geq 4$)
and for which there exists an approximate  witness $\tau$ expressible in the language of relation algebras, such that
$\CA_k\models \tau_k(x_1,\ldots x_m)\leq \tau(x_1,\ldots x_m)$
(this is an implication between two first order formulas using $k$-variables)
and for any $u_1,\ldots u_m\in \At$, we have
$$\tau_k^{\C}(\chi_{u_1},\ldots \chi_{u_m})=\chi_{\tau^{\Cm\At}(u_1,\ldots u_m)}$$
for any $\C\in \CA_{k}$ such that $\A=\Nr_3\C$.

Add the following condition (inductively) to the model $\M$, we are still assuming that the uncountable  index set
of the ternary relation symbols carry a group structure. So we further assume that in addition to
the $6$ conditions formulated above, we have:
$$\M\models \exists z_0, z_1\ldots z_4(z_0=x\land z_1=y\land z_2=t\land \bigwedge R_{i\in J}(\bar{z})\longleftrightarrow R_{\sum J})$$
Let $\tau_5$ be the corresponding term.

\begin{theorem}\label{witness} If there exists an approximate witness to $\tau_5$,
then there exists a completely representable relation algebra $\B\in S_c\Ra\CA_{\omega}$
but not in ${\sf UpUr}\Ra\CA_k$ for $k\geq 5.$
\end{theorem}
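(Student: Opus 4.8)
The plan is to run the ``double split'' (cardinality twist) behind Theorem~\ref{ef} in the relation-algebraic signature, with $\tau_5$ playing the role that the cylindric terms $\tau_3,\tau_4$ played there. First I would fix a finite atom structure $\At$ of a representable relation algebra, index the ternary relation symbols of the auxiliary language by an uncountable abelian group $(I,+)$, and construct the model $\M$ as in the construction behind Theorem~\ref{ef}: quantifier elimination together with the conditions of that construction, now augmented by the extra axiom displayed just before the statement. That axiom forces composition of the ternary symbols to copy the group, $R_i;R_j=R_{i+j}$, and it is exactly what makes $\tau_5$ --- which, being undefinable in the relation-algebra language, genuinely needs a fifth coordinate, whence ``$k\geq 5$'' --- compute composition on the blocks $\chi_u$: for every $k$-dimensional dilation $\C$ with $k\geq 5$ one has $\tau_5^{\C}(\chi_{u_1},\chi_{u_2})=\chi_{\tau^{\Cm\At}(u_1,u_2)}$, while the hypothesised approximate witness $\tau$ (a term in the relation-algebra language) only dominates this value.

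Next I would build the two algebras. From $\M$ form the locally finite $\omega$-dimensional set algebra and extract from it, by the full-neat-reduct mechanism of Theorem~\ref{ef} transported to the relational operations, a relation algebra $\A$; being a subreduct of a set algebra it is atomic and completely representable, so $\A\in S_c\Ra\CA_\omega$. Then perform the twist at the block $v=\tau^{\Cm\At}(u_1,u_2)$: in the interpreting structure $\P$ replace the Boolean component $\A_v$, together with each of its permuted copies, by a countable complete elementary subalgebra, and let $\B$ be the resulting product algebra with the inherited converse and composition. By the Feferman--Vaught theorem $\B\equiv\A$, and the back-and-forth / Boolean-extension argument of Theorem~\ref{ef} upgrades this to $\At\B\equiv_{\infty,\omega}\At\A$; in particular $\B$ stays atomic and \pe\ still has a \ws\ in the $\omega$-rounded atomic game guided by a complete representation, so $\B$ is completely representable and $\B\in S_c\Ra\CA_\omega$.

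Finally I would rule out $\B$ from ${\sf UpUr}\,\Ra\CA_k$ for $k\geq 5$. Suppose $\B$ is a complete subalgebra of $\Ra\D$ for some $\D\in\CA_k$. Then $\tau_5^{\D}(f(R_i),f(R_j))=f(R_{i+j})$, so the block at $v$ must carry the $|I|$ pairwise distinct atoms $\chi_{R_{i+j}}$ (atoms survive passage to a complete subalgebra), contradicting that this component was cut down to cardinality $\aleph_0$. The same computation disposes of any $\D\in\CA_k$ with $\B\in S_c\Ra\D$; to push the exclusion up to ${\sf UpUr}$ one argues as for the $L_{\infty,\omega}$-statement in Theorem~\ref{ef}, using that $\Ra$ commutes with ultraproducts (so $\Ra\CA_k$ is closed under them) and chasing the $\tau_5$-witness identity through an ultraroot/ultraproduct down to a $\CA_k$-dilation of (a harmless variant of) $\B$, where the same cardinality clash at $v$ reappears.

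The main obstacle is not this last cardinality count --- it is essentially free once $\tau_5$ has a witness --- but setting up the relational version honestly: unlike the cylindric case there is no $n$-dimensional cylindric basis to package the ternary data (the paper's earlier remarks on the Hirsch--Hodkinson relation algebra are relevant), so one has to check by hand that the twice-split structure really is a relation-algebra atom structure, that the product / Feferman--Vaught bookkeeping carries converse and composition correctly, and that $\B$ is literally a relation-algebra reduct of the $\omega$-dimensional dilation, so that $\tau_5^{\C}(\chi_{u_1},\chi_{u_2})=\chi_{\tau^{\Cm\At}(u_1,u_2)}$ holds on the nose rather than merely up to $\tau$. The result is of course conditional on the existence of the approximate witness $\tau$ to $\tau_5$; producing such a term unconditionally --- a relation-algebra term dominating, on all the $\chi_u$, the $5$-variable composition-computing term --- is the natural open problem this theorem isolates.
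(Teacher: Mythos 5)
Your proposal is correct in substance and follows essentially the same route as the paper: the relation-algebraic transcription of the construction behind Theorem~\ref{ef}, with $\Ra\A$ interpreted in the product structure $\P$, the twist consisting of replacing the component at $v=\tau^{\Cm\At}(u_1,\ldots,u_m)$ together with its permuted (converse) versions by countable complete elementary subalgebras, Feferman--Vaught giving $\B\equiv\Ra\A$ with $\Bl\B\subseteq_c\Bl\A$ and complete representability preserved, and the final cardinality clash obtained by evaluating $\tau_5$ (dominated by the approximate witness $\tau$) in any putative $\CA_k$-dilation, which forces the deliberately countable component below $\chi_{Id}$ to be uncountable. The only point where you go beyond the paper is the closing ${\sf UpUr}$ chase through ultraproducts and ultraroots, which the paper's own proof leaves implicit, so no discrepancy of method arises there.
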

\begin{proof}
The $\Ra$ reduct of $\A$ is a generalized reduct of $\A$, hence $\Ra\A$ is first order interpretable in $\P$, as well.
It follows that there are closed terms and  a unary relation symbol, and formula $\eta$, and $\mu$ built out of these closed terms and unary
relation symbol such that
$\P\models \eta(f(a), b, c)\text { iff }b= f(a\circ^{\Ra\A} c),$ and $\P\models \mu(f(a),b)\text { iff }b=\breve{a}$
where the composition is taken in $\Ra\A$. The former formula is built, like cylindrifiers from only closed terms, $1_u$, $u\in \At$
while converse is defined by
the unary relation symbol.

Here $\At$ defined depends on $\tau_5$, the term corresponding to the above Jonssons $Q$ operation,
we require that $\At$ is the atom structure of a finite relation algebra.

As before, for each $u\in \At$, choose any countable Boolean elementary
complete subalgebra of $\A_{u}$, $\B_{u}$ say. Assume that $\tau$ be an approximate witness.

Let $u_i: i<m$ be elements in $\At$, and let $v=\tau(u_1,\ldots u_m)=Id$.

Let $$\B=((\prod_{u_i: i<m}\A_{u_i}\times \B_{v}\times \times \B_{\breve{v}}\times \prod_{u\in {}V\smallsetminus \{u_1,\ldots u_m, v, \breve{v}\}}
\A_u), 1_u, R, Id) \equiv$$
$$(\prod_{u\in V} \A_u, 1_u, R, Id)=\P.$$

Let $\B$ be the result of applying the interpretation given above to $Q$.
Then $\B\equiv \Ra\A$ as relation  algebras, furthermore $\Bl\B$ is a complete subalgebra of $\Bl\A$.
Again $\B\in {\sf RRA}$, but it is not a full $\Ra$ reduct.

We use essentially the same argument. We force the $\tau(u_1,\ldots u_m)$
component together with its permuted versions (because we have converse) countable;
the resulting algebra will be a complete elementary subalgebra of the original one, but $\tau_k$
will force our twisted countable component to be uncountable, arriving at a contradiction.

In more detail, assume for contradiction that $\B=\Ra\D$ with $\D\in \CA_k$.
Then $\tau_k^{\D}(f(\chi_{u_1}),\ldots f(\chi_{u_n}))$, is uncountable in $\D$.
Because $\B$ is a full ${\sf RA}$ reduct,
this set is contained in $\B.$

For simplicity assume that $\tau^{\Cm\At}(u_1\ldots u_m)=Id.$
With $x_i\leq \chi_{u_i}$, let $\bar{x_i}=(0\ldots x_i,\ldots)$
with $x_i$ in the $uth$ place.
Then we have
$$\tau_k^{\D}(\bar{x_1},\ldots \bar{x_m})\leq \tau(\bar{x_1}\ldots \bar{x_m})\in \tau(f(\chi_{u_1}),\ldots f({\chi_{u_m}}))
=f(\chi_{\tau(u_1\ldots u_m)})=f(\chi_{Id}).$$

But this is a contradiction,
since  $\B_{Id}=\{x\in B: x\leq \chi_{Id}\}$ is  countable and $f$ is a Boolean isomorphism.
\end{proof}
We point out that this, modulo the existence of an approximate witness,
answers two question of Hirsch in \cite{r}.

\subsection{ Omitting types for finite first order definable operations}

Here we prove a result mentioned in \cite{ANT} without proof,
namely, that the omitting types theorem fails for any finite first order definable extension of $L_n$, first order logic restricted to the first $n$ variables,
when $n>2$. We add that our result even extends to stronger logics like ones endowed with operations of transitive closure as well \cite{Maddux}.
We recall what we mean by first order definable algebraic operations.
Such operations, as the name suggests, are built using first order formulas, one for each formula.
Later, we will show that in the corresponding logic they correspond to newly added
connectives defined by such formulas, and the arity of connectives is determined
by the number of {\it relation symbols} viewed as formula schemes in the defining formula.

\begin{definition}
Let $\Lambda$ be a first order language with countably many
relation symbols, $R_0, \ldots R_i,\ldots : i\in \omega$
each of arity $n$.
Let $\Cs_{n,t}$ denote the following class of similarity type $t$:
\begin{enumroman}
\item $t$ is an expansion of $t_{\CA_n}.$
\item  $S\Rd_{ca}\Cs_{n,t}=\Cs_n.$ In particular, every algebra in $\Cs_{n,t}$ is a boolean
field of sets with unit $^nU$ say,
that is closed under cylindrifications and contains diagonal elements.
\item For any $m$-ary operation $f$ of $t$, there exists a first order formula
$\phi$ with free variables among $\{x_0,\ldots, x_n\}$
and having exactly $m,$ $n$-ary relation symbols
$R_0, \ldots R_{m-1}$ such that,
for any set algebra ${\A}\in \Cs_{n,t}$
with base $U$, and $X_0, \ldots X_{m-1}\in {\cal A}$,
we have:
$$\langle a_0,\ldots a_{n-1}\rangle\in f(X_0,\ldots X_{m-1})$$
if and only if
$${\cal M}=\langle U, X_0,\ldots X_{n-1}\rangle\models \phi[a_0,\ldots a_{n-1}].$$
Here $\cal M$ is the first order structure in which for each $i\leq m$,
$R_i$ is interpreted as $X_i,$ and $\models$ is the usual satisfiability relation.
Note that cylindrifications and diagonal elements are so definable.
Indeed for $i,j<n$,  $\exists x_iR_0(x_0\ldots x_{n-1})$
defines $C_i$ and $x_i=x_j$ defines $D_{ij}.$
\item With $f$ and $\phi$ as above,
$f$ is said to be a first order definable operation with $\phi$ defining $f$,
or simply a first order definable
operation, and $\Cs_{n,t}$ is said to be a first order definable
expansion of $\Cs_n.$
\item $\RCA_{n,t}$ denotes the class $SP\Cs_{n,t}$, i.e. the class of all subdirect products
of algebras
in $\Cs_{n,t}.$ We also refer to
$\RCA_{n,t}$ as a first order definable expansion of $\RCA_n.$
\end{enumroman}
\end{definition}

From now on, fix a {\it finite} $t$ (that is an expansion of $t_{\CA_n})$
and fix a first order language $\Lambda$
with countably many relation symbols each of
arity $n$.
For $\omega\geq m>0$ , let $\Fm_r^{\Lambda_{n+m}}$,
or $\Fm_r^{n+m}$, denote the set of {\it restricted} formulas
built up of $n+m$  variables.
Here restricted means that the
variables occurring in atomic subformulas, appear in their natural order. For example
$R_0(x_0, x_1\ldots x_{n-1})$ is a restricted formula while
$R_0( x_1, x_0, \ldots x_{n-1})$ is not.
Since the variables in restricted formulas
appear in their natural order, we might as well dispense with them altogether,
and write $R_i$ instead of the more cumbersome
$R_i(x_0, x_1, \ldots x_{n-1})$, which we do.
The arity of $R_i$ will be clear from context.
For example, we write $\exists x_1R_5$
instead of $\exists x_1R_5(x_0, x_1,\ldots x_{n-1})$.
In light of the above the notion
of free variables needs further clarification,
since we omitted reference to free variables altogether.
A variable $x$ is free in a
formula $\phi$ if $\exists x\phi$ is not equivalent to
$\phi.$ In passing, we note that for $m=\omega$, allowing only restricted formulas is not
a restriction at all. In the presence of infinitely many variables,
any formula is equivalent to a restricted one.
In the next definition, we establish a one to one correspondence
between restricted formulas and $\CA$ terms \cite[\S 4.3]{HMT1}, more precisely:

\begin{definition}
\begin{enumroman}
\item Let $k>0$. Then by the discourse language of $\CA_k$
we understand the language consisting of
a countable list of (meta) variables,  which we denote by
$\bold x_0, \bold x_1, \ldots \bold x_i \ldots: i\in \omega$.
We use bold letters to distinguish between meta variables and variables.
We also have a binary connective $\cdot$, a unary
connective  $-$,
a set of unary function symbols
${\sf c}_i:  i<k$ and constants ${\sf d}_{ij}$, $i, j<k$.
\item Let $m>0.$ Now for any $\phi\in \Fm_r^{n+m}$,
we associate a term
$\tau(\phi)$ in the discourse language of $\CA_{n+m}$
by an easy recursion as follows:
$$\tau(R_i)={\bold x_i}, i\in \omega,$$
$$\tau(\phi\land \psi)=\tau(\phi)\cdot\tau(\psi),$$
$$\tau(\neg \phi)=-[\tau(\phi)],$$
$$\tau[(\exists x_i)\phi]={\sf c}_i(\tau(\phi),$$
$$\tau(x_i=x_j)={\sf d}_{ij}.$$
\end{enumroman}
\end{definition}
For example, if $\phi$ is $(\exists x_n)(R_0\land x_n=x_0)$ then $\tau(\phi)$ is
$c_n(\bold x_0\cdot d_{n0}).$

If $m=0$, then the above translation makes one move back and forth between valid
equivalences and equations, in the following sense.
$$\models \phi\longleftrightarrow \psi \text { if and only if }
\RCA_n\models \tau(\phi)=\tau(\psi).$$
The fact that $\RCA_n$ is not finitely axiomatizable, reflects
the metalogical fact that
validities
cannot be captured
by any finite Hilbert style axiomatization using only finitely many variables.
This non-finte axiomatizability result
cannot be circumvented when we add finitely many first order definable
operations. We now turn to defining certain abstract algebras based on neat reducts

\begin{definition}
\begin{enumroman}
\item Let $\omega\geq m>0.$ For $\A\in \CA_{n+m},$
$\Nr_n{\A}$ denotes the neat $n$-reduct of $\A$.
Let $t$ be a finite expansion of $t_{\CA_n}$
such that $\Cs_{n,t}$ is a first order definable expansion
of $\Cs_n.$
Assume further, that for any $f\in t$ there is a formula $\phi(f)$
defining $f$ and having {\it free} variables among the first $n$. Fix such $\phi(f).$
We let ${\Ra}\A$
denote the following algebra of type $t$ which is an expansion of ${\Nr}_n{\A}$:
$${\Ra}{\A}=
\langle {\Nr}_n{\A}, \tau^{\A}(\phi(f))\rangle_{f\in t\smallsetminus t_{\CA_n}}.$$

\item For $m\geq 0$,  $\K_{n,m}$ denotes the following class of type $t$:
$$\K_{n,m}=S\{{\Ra}{\A}: {\A}\in \CA_{n+m}\}.$$
\end{enumroman}
\end{definition}

In the definition of ${\Ra}\A$, the extra operations in $t$
are defined by forming the term corresponding to the beforehand
fixed formula $\phi(f)$ defining (the first order definable operation) $f$.
Note that if
$\A$ is representable then the choice of the formula
defining $f$ is immaterial, any two such equivalent formulas give
the same thing when interpreted in the algebra.

In principle, this might not be the case when $\A$
is not representable and finite dimensional. That is assume that $\A\in \Nr_n\CA_{n+k}$.
Assume that $f\in t$
is definable in set algebras via $\phi$, say, and $\psi$ is equivalent to $\phi$,  and both formulas useless that $n+k$
variables, then
$\tau^{\A}(\psi)$ may not be equal to $\tau^{\A}(\phi.)$
(A counterexample, though is not easy, but the idea is that, though the two formulas use $< n+k$ variables,
the proof of their {\it equivalence}may  needs more $> n+k$ variables.)

Note too, that because the free variables
occurring in every $\phi(f)$, are
among the first $n$,
we do have
$\tau^{\cal A}(\phi(f))$ is in $\Nr_n\A, $ so that in any event the definition of ${\Ra}{\A}$
is indeed sound.

\begin{definition}
Let $t$ be an expansion of $t_{\CA_n}$ such that $\Cs_{n,t}$ is a first order definable
expansion of $\Cs_n$. An algebra $\A$ of type $t$
is said to have the neat embedding property, if
$\A$ is a subalgebra of ${\Ra}\B$
for some ${\B}\in \CA_{\omega}.$
\end{definition}

Note that in principle $\B$ may not be unique. One can
construct $\B$ and ${\C}\in  \CA_{\omega}$ such that
$\Nr_3{\C}\cong \Nr_3{\B}$ but $\B$ and $\C$ are not isomorphic.

Actually this is quite easy to do, for any $n\geq 3$. Take $\A=\Nr_n\B$, where $\B$ is not locally finite.
Take $\C=\Sg^{\B}\A$, then $\C$ is locally finite, $\A=\Nr_n\C$ and $\B$ and $\C$ are not isomorphic.
This is the argument used to show
that $\Nr_n\CA_{\omega}=\Nr_n{\sf Lf}_{\omega}$.

In any case, what really counts at the end is the existence of at least
one such algebra
$\B$.
Note too that for such an algebra $\A$, its cylindric reduct is a cylindric algebra.

The following is proved in \cite{basim} reproving
Biro's result using only cylindric algebras, without reference to relation algebras.

\begin{theorem}
\begin{enumarab}
\item Let $t$ be an expansion of $t_{\CA_n}$ such that $\Cs_{n,t}$ is a first order definable
expansion of $\Cs_n$.
$\A$ of type $t$  is representable if and only if $\A$
has the neat embedding property.
\item Let $3\leq n\leq m\leq \omega.$
If $\K_{n,m}\supseteq \RCA_{n,t},$ then $m=\omega.$
In other words, $\K_{n,m}$ is properly contained in $\RCA_{n,t}$
when $m$ is finite.
\end{enumarab}
\end{theorem}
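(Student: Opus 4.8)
The plan is to prove the two items in tandem, deriving the second from the first together with the non–finite–axiomatizability of $\RCA_n$ in its finite–variable reformulation. For item (1), the strategy is the standard Henkin–style neat embedding argument, but transported to the first–order definable expansion. First I would observe that the cylindric reduct of any $\A$ of type $t$ with the neat embedding property lies in $S\Nr_n\CA_\omega=\RCA_n$ by Henkin's neat embedding theorem, so $\Rd_{ca}\A$ is representable; one then has to check that the extra operations of $t$ are respected by a representation. This is where the defining formulas $\phi(f)$ do the work: since $\A\subseteq \Ra\B$ for some $\B\in\CA_\omega$ and the operation $f$ in $\Ra\B$ is by construction $\tau^{\B}(\phi(f))$, and since $\B$, being in $\RCA_\omega$, is representable as a field of $\omega$-ary relations, the term $\tau^{\B}(\phi(f))$ is interpreted concretely exactly as the first–order formula $\phi(f)$ prescribes. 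Restricting a representation of $\B$ to $\Nr_n\B$ and then to $\A$, and noting that the free variables of $\phi(f)$ lie among the first $n$ so that $\tau^{\A}(\phi(f))\in\Nr_n\B$, yields a representation of $\A$ in $\Cs_{n,t}$; conversely, if $\A\in\RCA_{n,t}$ then $\A$ embeds into a product of set algebras in $\Cs_{n,t}$, each of which embeds into the $\Ra$-reduct of a full cylindric set algebra of dimension $\omega$ via the usual cylindrification-by-new-coordinates trick, and the product embeds into $\Ra$ of a single $\B\in\CA_\omega$ by taking the corresponding product of dilations; so $\A$ has the neat embedding property.

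For item (2), suppose toward a contradiction that $3\le n\le m<\omega$ and $\K_{n,m}\supseteq\RCA_{n,t}$. The key point is that $\K_{n,m}=S\{\Ra\A:\A\in\CA_{n+m}\}$ only allows $m$ extra dimensions, whereas representability of an algebra of type $t$ is equivalent, by item (1), to the neat embedding property, i.e. to membership in $S\{\Ra\A:\A\in\CA_\omega\}$. I would use the translation $\phi\mapsto\tau(\phi)$ between restricted $(n+m)$-variable formulas and $\CA_{n+m}$-terms together with the analogous translation for $\RCA_n$: the inclusion $\K_{n,m}\supseteq\RCA_{n,t}$ would say, reading off the cylindric reducts, that every equation valid in $\RCA_n$ (hence every valid $L_n$-equivalence, after translation) is provable using only $n+m$ variables — in particular $S\Nr_n\CA_{n+m}=\RCA_n$. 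But Monk's theorem (quoted in the excerpt, $S\Nr_n\CA_{n+k}\ne\RCA_n$ for every finite $k$ and every finite $n\ge 3$) says precisely that this fails for finite $m$; concretely, an algebra $\A_m\in S\Nr_n\CA_{n+m}\setminus\RCA_n$ gives, after equipping its cylindric reduct with the $t$-operations via the fixed formulas $\phi(f)$, an algebra in $\Ra$ of something in $\CA_{n+m}$ (hence in $\K_{n,m}$) which is not representable (its cylindric reduct is not), so it is not in $\RCA_{n,t}$; this contradicts $\K_{n,m}\supseteq\RCA_{n,t}$ once one checks the reverse inclusion $\RCA_{n,t}\subseteq\K_{n,m}$ forces equality and hence that $\K_{n,m}$ is a variety coinciding with $\RCA_{n,t}$ — but then its cylindric reducts would form $\RCA_n=S\Nr_n\CA_{n+m}$.

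The delicate point — and the main obstacle — is the direction $\RCA_{n,t}\subseteq\K_{n,m}$ is \emph{not} what we want; rather the content is that the hypothesized containment $\K_{n,m}\supseteq\RCA_{n,t}$ collapses the $(n+m)$-dimensional approximation to the full one, and the real work is in verifying that the $t$-operations are genuinely \emph{harmless} extra baggage — i.e. that one cannot exploit the new first–order definable connectives to encode finitely–axiomatizable information that $\RCA_n$ lacks. This is exactly the subtlety flagged in the discussion after the definition of $\Ra\A$: for a non-representable finite–dimensional $\A\in\Nr_n\CA_{n+k}$, two logically equivalent defining formulas $\phi,\psi$ for the same $f$ may have $\tau^{\A}(\phi)\ne\tau^{\A}(\psi)$ because proving their equivalence needs more than $n+k$ variables. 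So the argument must be careful to fix the defining formulas once and for all and to track the number of variables used in every equational manipulation; I would handle this by choosing, for each $f\in t\setminus t_{\CA_n}$, a single formula $\phi(f)$ and working throughout with the induced term $\tau(\phi(f))$, so that the translation $\phi\mapsto\tau(\phi)$ is literally a substitution instance and the variable count is controlled. Granting that bookkeeping, the contradiction with Monk's theorem is immediate, which is why the statement says $m$ must be $\omega$. I expect the write-up to be short modulo a careful rehearsal of the neat embedding theorem in the expanded signature and a reference to \cite{basim} for the cylindric-algebra-only proof of the Biro-type bound.
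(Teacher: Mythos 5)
The paper itself gives no proof of this theorem; it is quoted from \cite{basim}, so what follows measures your proposal against the argument that reference (and the surrounding text of the paper) clearly intends. For item (1) your route is the expected Henkin-style one, but it contains a genuine misstep: from the neat embedding property you only get $\A\subseteq \Ra\B$ with $\B\in\CA_{\omega}$, and you then assert that $\B$, ``being in $\RCA_{\omega}$'', is representable — $\CA_{\omega}\neq\RCA_{\omega}$, so this step fails as written. The standard repair (used verbatim elsewhere in this very paper) is to replace $\B$ by $\Sg^{\B}A$: since every element of $A$ is $n$-dimensional, this subalgebra is locally finite, hence representable, and since terms are absolute for subalgebras and their values on $A$ lie in $\Sg^{\B}A$, the $t$-operations of $\A$ are unchanged. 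With that fix, and noting that the representation of a locally finite algebra may be a (sub)direct product of set algebras — which $\RCA_{n,t}=SP\Cs_{n,t}$ absorbs — your item (1), including the converse via adjoining dummy coordinates, goes through.

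Item (2) is where the real gap lies. Your Monk-type witness is the right engine, but the contradiction is wired to the wrong inclusion. Taking $\A_m\in S\Nr_n\CA_{n+m}\sim\RCA_n$ with dilation $\D\in\CA_{n+m}$ and passing to $\Ra\D$ (note: you must take all of $\Ra\D$, or the $t$-subalgebra it generates, since $A_m$ itself need not be closed under the term operations) produces an element of $\K_{n,m}$ outside $\RCA_{n,t}$, because $S\Rd_{ca}\Cs_{n,t}=\Cs_n$ forces cylindric reducts of representable $t$-algebras to be in $\RCA_n$. That refutes $\K_{n,m}\subseteq\RCA_{n,t}$, not the printed hypothesis $\K_{n,m}\supseteq\RCA_{n,t}$; your bridging claims — that $\K_{n,m}\supseteq\RCA_{n,t}$ yields $S\Nr_n\CA_{n+m}=\RCA_n$ ``by reading off reducts'', and that ``the reverse inclusion forces equality'' — are non sequiturs, since from $\RCA_{n,t}\subseteq\K_{n,m}$ one only learns that reducts of representable expansions lie in $S\Nr_n\CA_{n+m}$, which holds trivially. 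Worse, the hypothesis as printed is actually satisfied for every sufficiently large finite $m$: given $\A\subseteq\Ra\B$ with $\B\in\CA_{\omega}$ (item (1)), one has $\A\subseteq\Ra\Rd_{n+m}\B$ as soon as the fixed formulas $\phi(f)$ fit into $n+m$ variables, so no argument can extract $m=\omega$ from it. The statement in the paper is evidently misprinted — its two clauses contradict one another — and the intended theorem of \cite{basim} is: if $\K_{n,m}\subseteq\RCA_{n,t}$ then $m=\omega$, equivalently $\RCA_{n,t}$ is properly contained in $\K_{n,m}$ for finite $m$. For that corrected statement your construction, together with the reduct observation above, is essentially a complete one-paragraph proof; the subtlety you flag about logically equivalent defining formulas is worth recording but is not where the work is — the work is Monk's construction of the non-representable neat reducts, and the expansion rides along via the fixed terms.
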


The condition on finiteness of models often provides us (in a certain sense) with more complex theories,
but the condition of finiteness for variable sets usually make theories simpler
(for example, algorithmically) nevertheless some desirable properties can be lost. Our next result is of such kind.
Letting $L_n$ denote first order logic with $n$ variables, we have:

\begin{theorem} Foe $n>2$, there is no finite first order definable expansion of $L_n$ that is both sound and complete
\end{theorem}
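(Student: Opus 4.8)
The plan is to translate the statement into algebra and then run a Monk-style limiting argument built on the class $\RCA_{n,t}$ and its finite-dimensional approximations $\K_{n,m}$. A finite first order definable expansion of $L_n$ is, by the definitions above, a logic whose algebraic counterpart is the class $\RCA_{n,t}=SP\,\Cs_{n,t}$ for a \emph{finite} expansion $t$ of $t_{\CA_n}$ with $\Cs_{n,t}$ first order definable over $\Cs_n$ and each new operation defined by a formula with free variables among the first $n$. Under the (extension of the) translation $\phi\mapsto\tau(\phi)$ set up above, the valid schemata of this logic correspond exactly to the equations holding in $\RCA_{n,t}$; a finite Hilbert-style calculus for the logic is sound and complete precisely when the finitely many equations obtained from its axiom schemata axiomatize $\RCA_{n,t}$ (its structural rules contribute nothing beyond the congruence and substitution rules already present in equational logic). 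So the theorem reduces to the assertion that, once $n>2$, $\RCA_{n,t}$ is not finitely axiomatizable, for any such $t$.

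To prove this I would use the two clauses of the theorem of \cite{basim}. By the first clause, $\A$ is representable iff it has the neat embedding property, so $\RCA_{n,t}=S\{\Ra\B:\B\in\CA_\omega\}$; and by the neat embedding theorem this equals $\bigcap_{m<\omega}\K_{n,m}$, where $\K_{n,m}=S\{\Ra\A:\A\in\CA_{n+m}\}$. By the second clause, $\RCA_{n,t}\subsetneq\K_{n,m}$ for every finite $m$. Two routine facts round out the picture: the $\K_{n,m}$ form a decreasing chain in $m$, and each $\K_{n,m}$ is closed under ultraproducts, since $\CA_{n+m}$ is a variety, $\Ra$ is given by fixed terms and a first order interpretation and hence commutes with ultraproducts, and $S$ applied to an ultraproduct-closed class stays ultraproduct-closed.

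Now suppose, for contradiction, that $\RCA_{n,t}$ were axiomatized by a finite set $\Sigma$ of equations. For each $m<\omega$ pick $\A_m\in\K_{n,m}\setminus\RCA_{n,t}$, which exists by properness; then each $\A_m$ falsifies some $\sigma_m\in\Sigma$. Fix a non-principal ultrafilter $F$ on $\omega$ and set $\A=\prod_{m<\omega}\A_m/F$. For each $k$ we have $\A_m\in\K_{n,k}$ whenever $m\geq k$, and $\{m:m\geq k\}\in F$, so $\A\in\K_{n,k}$; as $k$ was arbitrary, $\A\in\bigcap_k\K_{n,k}=\RCA_{n,t}$, hence $\A\models\Sigma$. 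But $\Sigma$ is finite, so there is a single $\sigma\in\Sigma$ with $\{m:\A_m\not\models\sigma\}\in F$, and then $\sigma$ fails in the ultraproduct $\A$, contradicting $\A\models\Sigma$. Therefore $\RCA_{n,t}$ is not finitely axiomatizable, and so no finite first order definable expansion of $L_n$ admits a sound and complete finite axiomatization.

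The step I expect to be the real obstacle is the bridge in the first paragraph: one must verify carefully that a sound and complete finite Hilbert calculus for the expanded logic really delivers a \emph{finite equational} presentation of $\RCA_{n,t}$ — that the $n$-variable restriction and the requirement that the defining formulas keep their free variables among the first $n$ are respected throughout, so that the translated axioms are genuine $t$-equations and $\Ra$ is well defined — while the purely algebraic half is the familiar Monk limiting argument, whose only non-formal ingredients are the two clauses of the theorem of \cite{basim} quoted above (the neat embedding characterization of representability, and the strictness $\RCA_{n,t}\subsetneq\K_{n,m}$ for finite $m$).
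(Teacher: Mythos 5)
Your overall route is the right one, and it is essentially the route the paper intends: the paper itself only cites \cite{basim}, whose proof is precisely this algebraization (via $\RCA_{n,t}$ and the neat embedding theorem for expansions) followed by a Monk-style limiting argument, and the logical bridge you flag in your first paragraph is carried out explicitly in the paper's very next proof by the schema/formula-variable translation $\chi\mapsto\tau(\chi)=1$, so that part, while it must be checked, is standard. The soft spots are in the algebraic half, not where you expect them. First, the two facts you extract from the quoted theorem are not quite what it delivers. You need, for each finite $m$, some $\A_m\in\K_{n,m}\setminus\RCA_{n,t}$; the displayed clause (as printed) asserts the opposite inclusion, and in any case the existence of such algebras is exactly the content of the Monk construction: one takes a non-representable $\A\subseteq\Nr_n\D$ with $\D\in\CA_{n+m}$ and expands it to the signature $t$ by interpreting the finitely many defining terms inside $\D$ (possible once $m$ exceeds the number of variables occurring in the defining formulas). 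Likewise $\bigcap_m\K_{n,m}=\RCA_{n,t}$ is not ``the neat embedding theorem'': clause (1) only gives $\RCA_{n,t}=\K_{n,\omega}$, and the inclusion $\bigcap_m\K_{n,m}\subseteq\K_{n,\omega}$ needs a separate dimension-lifting/ultraproduct argument of the kind used in theorem \ref{2.12}; the cited proof sidesteps it by choosing concrete Monk algebras whose ultraproduct is representable by construction, and you should either do the same or supply that lemma.

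Second, the pigeonhole step needs each $\A_m$ to falsify a member of $\Sigma$, and here your reduction is too quick. A sound and complete finite calculus gives that $\Sigma$ is a finite base for the \emph{equational theory} of $\RCA_{n,t}$, so $\mathrm{Mod}(\Sigma)$ is the variety generated by $\Cs_{n,t}$; from $\A_m\notin\RCA_{n,t}$ alone you cannot conclude $\A_m\not\models\Sigma$ unless you also prove that $\RCA_{n,t}$ is a variety (closed under homomorphic images), which is nowhere in the two clauses you quote. The cheap repair is again the Monk witnesses: their $\CA_n$-reducts are non-representable, and since $\RCA_n$ \emph{is} a variety they falsify a pure $\CA$-equation valid in $\RCA_{n,t}$; by completeness that equation is derivable from $\Sigma$, so by soundness of equational deduction $\A_m$ falsifies some member of $\Sigma$, and the rest of your ultraproduct contradiction goes through. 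So the architecture is correct and matches the cited proof, but as written it leans at the two decisive points on statements that are either misquoted or unproved; with the concrete Monk algebras substituted for the abstract appeals, the argument closes.
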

\begin{proof}\cite{basim} This was proved using Monk's algebras, so languages contained infinitely many countable relation symbols.
\end{proof}

Now using Maddux's algebras one can show that the above theorem holds
if we only have one binary relation symbol; the algebra $\A$ constructed in \cite{ANT} gives the same result but for the omitting types theorem

\begin{theorem} The same result holds when we have one ternary symbol; furthermore, the omitting types theorem fails.
\end{theorem}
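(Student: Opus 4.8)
The plan is to drive both halves of the statement from the single algebra underlying Theorems \ref{OTT} and \ref{finite}, namely the blow-up-and-blur $\A=\Tm(\B_n\R)$ of a finite Maddux relation algebra $\R$ possessing an $n$-dimensional cylindric basis. By Theorem \ref{OTT} the algebra $\A$ is a countable, simple, atomic, representable but \emph{not} completely representable $\CA_n$; it is a neat reduct $\A=\Nr_n\D$ of a simple representable $\D\in\CA_{n+k}$ whose relation-algebraic reduct coincides with that of $\A$; and $\A$ is generated by a single element. Carrying out Maddux's one-generation device as in \cite{ANT} --- which is precisely where ternary relations intervene --- one arranges that the complete first-order theory $T$ with $\Fm_T\cong\A$ (equivalently $\Fm_T\cong\D$, since $\A=\Nr_n\D$) is formulated over a signature having a single ternary relation symbol.

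\emph{Failure of the omitting types theorem.} Put $\Gamma=\{-a:a\in\At\A\}$, regarded as a set of $L_n$-formulas in one free variable over that one-ternary-symbol signature; by Theorem \ref{finite} the formulas of $\Gamma$ may even be taken to use only three variables. Since $\A$ is atomic, $\Gamma$ is non-principal over $T$: a nonzero common lower bound for $\Gamma$ would be an element of $\A$ lying below no atom. A model $M\models T$ omits $\Gamma$ precisely when the associated set representation $h\colon\A\to\wp({}^nM)$ satisfies $\bigcup_{a\in\At\A}h(a)={}^nM$, i.e.\ when $h$ is a complete representation. As $\A$ has no complete representation, no model of $T$ omits $\Gamma$. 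This refutes the omitting types theorem for $L_n$ over a single ternary relation symbol; at the same time $\A$ is the algebra witnessing that the maximality hypothesis in the Shelah-style omitting types theorem above cannot be deleted, since $\Gamma$ is not maximal.

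\emph{No sound and complete finite first-order definable expansion.} Here the strategy is to reproduce the separation $\K_{n,m}\subsetneq\RCA_{n,t}$ for finite $m$ --- the theorem of Biro re-proved in \cite{basim} and quoted just above --- but over the one-ternary-symbol signature, substituting $\R$ and $\Cm(\B_n\R)$ for the Monk algebras over countably many $n$-ary symbols used there. As in Monk's argument lifted through $n$-dimensional cylindric bases, $\Cm(\B_n\R)$ is not representable, while a Monk-style sequence of scaled blow-ups obtained by varying the number of blurs admits a representable ultraproduct; one-generation of $\R$ by a ternary element makes the entire representation theory of these algebras expressible in $L_n$ with one ternary symbol, so the $\K_{n,m}$-analogue of the construction lies in $\RCA_{n,t}\setminus\K_{n,m}$ for every finite $m$. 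A finite first-order definable expansion of $L_n$ that were both sound and complete would force a finite $m$ with $\K_{n,m}\supseteq\RCA_{n,t}$, contradicting this separation.

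\emph{Main obstacle.} The delicate ingredient is the one-generation bookkeeping: one must check that the Maddux combinatorial device making $\R$ (hence $\A$, hence the Monk-style sequence) one-generated can be carried out with the ambient signature containing only \emph{one ternary} relation symbol, and that this survives passage to the neat reduct $\Nr_n\D$ and, for the second half, the first-order definable expansion layer. Everything else --- non-principality of $\Gamma$, the representation versus complete-representation dictionary, and the non-representability of $\Cm(\B_n\R)$ together with the representable ultraproduct --- is immediate from Theorem \ref{OTT} and the standard Monk argument.
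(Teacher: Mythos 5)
Your treatment of the omitting types half is essentially the paper's own argument: the same algebra $\A\in\RCA_n\cap\Nr_n\CA_{n+k}$ from \cite{ANT}, the type $\Gamma$ of co-atoms (non-principal because $\A$ is atomic), and the observation that a model omitting $\Gamma$ yields a complete representation, which $\A$ does not have. The one step the paper does that you skip is the closure argument for the \emph{expanded} language: a model of $T$ in the signature enriched by the finitely many first-order definable connectives $\phi_c$ gives a set algebra $\B$ that must be shown (by induction on the new operations) to be closed under them, so that its $\CA_n$-reduct completely represents $\A$; this is what makes the failure apply to the finite first-order definable expansions of $L_n$, which is the actual content of the theorem, and not merely to $L_n$ itself. (Incidentally the paper's proof, following Tarski--Givant, works with a single binary symbol $E$, the "ternary" in the statement notwithstanding.)

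The second half is where there is a genuine gap. First, the separation you invoke runs the wrong way: by the neat embedding part of the Biro-type theorem quoted just above ($\A$ of type $t$ is representable iff it embeds in some $\Ra\B$ with $\B\in\CA_\omega$), every algebra in $\RCA_{n,t}$ lies in $\K_{n,m}$ for each $m$, so there is nothing in $\RCA_{n,t}\setminus\K_{n,m}$; the witnesses the argument needs are \emph{non-representable} algebras inside $\K_{n,m}$, i.e.\ one-generated Maddux-style algebras in $S\Nr_n\CA_{n+m}\setminus\RCA_n$, and neither the representable term algebra $\Tm(\B_n\R)$ (which by Theorem \ref{OTT} is itself a neat reduct) nor $\Cm(\B_n\R)$ (for which no neat embedding into finitely many extra dimensions is established --- the later refinements prove the opposite) can play that role. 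Consequently your bridge "a sound and complete finite expansion would force a finite $m$ with $\K_{n,m}\supseteq\RCA_{n,t}$" is vacuous, since that inclusion always holds. The paper's route is different: adjoin formula variables and pass to schemas as in Tarski--Givant, translate a putative finite sound and complete $\Sigma$ into a finite set of equations $\Delta=\{\tau(\chi)=1:\chi\in\Sigma\}$ in a finite expansion $t'$ of the $\CA_n$ similarity type, and conclude that $\Delta$, hence $\Sigma$, cannot be finite, the algebraic input being the one-generated Monk/Maddux algebras (non-representable, with representable ultraproduct) built from relation algebras possessing $n$-dimensional cylindric bases over a single relation symbol. You correctly isolate the one-generation bookkeeping as the crux, but as written your reduction to the $\K_{n,m}$ versus $\RCA_{n,t}$ separation would not go through.
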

\begin{proof}

Here we follow closely the formalism for relation algebras presented in \cite{Biro} which is that introduced in the Tarski Givant monograph; however,
we address $L_n$. Let $n$ be finite $n\geq 3$. We have $\L_n$ is defined in the following signature.
$E$ a binary symbol and $=$ are formulas.
If $\phi, \psi$ are formulas then so are $\phi\land \psi$, $\exists \phi$, $(\phi\land \psi)$ and
$\neg \phi$.

A model is a pair $\M=(M ,E)$
such that $E\subseteq {}M\times M$. Satisfiability is defined by recursion, the clause that
deserves recalling is the cylindrifier clause defined like first order logic.

We define an auxiliary language $\L^*$ which countably many
relation symbols $R_0, R_1\ldots R_m\ldots $ in addition to $E$, each of arity $n$.

The language $\L^{f}$ is a first order expansion of $\L_n$
if the following hold:

\begin{enumarab}
\item It is obtained from $\L_n$ by adding finitely many new symbols
each with a non-negative rank, with formation rules as above, namely, if $c$ is a
new $m$ ary symbol and $\phi_0,\ldots, \phi_{m-1}$
are formulas in $\L^{f}$, then $c(\phi_0,\ldots \phi_{m-1})$ is a formula.

\item The class of models of the expanded language is the same.

\item With every symbol $c$ of $\L^{f}$ we associate
a formula $\phi_c$ of $L^{*}$ such that
$\phi_c$ involves only $E, R_0,\ldots R_{m-1}$ where
$m$ is the rank of $c$ and the free variables of $\phi_c$
are among $v_0,\ldots  v_{m-1}$.
For $c\in \L^{f}$, the formula $\phi_c$ is defined as expected, example,
$\phi_E=: E(v_0v_1)$, $\phi_{=}=: v_0=v_1$ and
$\phi_{c_k}=\exists v_k(R_0(v_0v_2v_3)).$

\item We define
$$M\models c(\phi_0\ldots \phi_{m-1})[\bar{s}]\Longleftrightarrow
(U, \phi^M\ldots \phi_{m-1}^M)\models \phi_c[\bar{s}],$$
for every symbols different from
$E$ and $=$ and every $s\in {}^nM$.

\end{enumarab}

To deal with the proof theory of such a formalism one adjoins to $\L^f$ new meta variables symbols $
X_1, X_2, \dots $
called formula variables, thereby forming a new language $\L_{sh}$ (here these are sometimes called formula variables).
Indeed assume that $\Sigma$ is sound and complete in $\L^f$.
Now formulas are formed the same way treating the new formula
variables like $=$ and $E$.  Formulas are called {\it schemas}. An instance
of a schema $\sigma(X_1,\ldots X_n)$ is obtained from $\sigma$ by
replacing  all occurrences of the formula variables
$X_1, \ldots X_n$ in $\sigma$
by formulas in $\L^f$.

Let $C$ be the finite set of symbols different from $E$ and $=$.
Let $t'$ be a similarity type that has an operation symbol $H_c$ for any $c\in C$, and whose arity
is the rank $c$. Since $C$ is finite we may assume that
$t'$ is a finite expansion of $\CA_n$.
Let the set of variable symbols of
that algebraic language of $t'$ be $\{w_i: i\in \omega\}$.
For any schema $\theta$
define $\tau(\theta)$ by recursion, example,
$\tau(=)=1, \tau(E)=w_0$ $\tau(X_i)=w_{i+1}$
$\tau(c(\phi_0, \ldots \phi_{m-1})=H_c(\tau(\phi_0), \ldots \tau\phi_{m-1})$

Now the operations on set algebra $\A\in {\sf Cs}_3$
are interpreted by;
for any $c\in C$ $\R_0,\ldots \R_{m-1}\in \A$
$$H_c(\bar{\R})^{\A}=\{s\in ^nU:  (U, \R_0, \ldots \R_{m-1})\models \phi_c[s]\}.$$

Now define a term $\tau\theta$ of the language $t'$
for each formula schema of the expanded language, exactly as before
Let $\Delta=\{\tau(\chi)=1: \chi\in \Sigma\},$
Then $\Delta$ hence $\Sigma$ cannot be finite.

Now we show that the omitting types theorem fails for $\L$.

Let $\A\in \sf RCA_n\cap \Nr_n\CA_{n+k}$ be an algebra that is atomic but not completely representable. Exists.
We translate this algebra to an $\L$ theory, with a non principal type that cannot be
omitted. Now $A=\Fm_T$, where $T$ is an $\L_n$ theory.

Expand the language of $T$ by including syntactically $\phi_c$ as a new connective, to obtain $\L^{f}$.
Let $\Gamma$ be the non principal type of co-atoms of $\A$; that is is $\Gamma=\{\neg\phi: \phi_T\in \At\A\}$.
Assume, for contradiction, that $\M=(M, E)$ is a model omitting $\Gamma$, in the expanded language, where $\phi_c$
is interpreted semantically, as defined above, by
$$\M\models {\phi}_c(\psi_1,\ldots \psi_{m-1})[s]\Longleftrightarrow (\M, \psi_1^{\M}, \ldots \psi_{m-1}^{\M})\models \phi_c[s].$$
Let $\B$ be the corresponding set algebra, with the semantics for $\phi_c$ defined as above, so that
we have
$$[\phi_c(R_1, \ldots R_{m-1})]^{\B}=\{s\in {}^nM: (M, R_0,\ldots R_{n-1})\models \phi_c[s]\}.$$
Then $\B$ is closed under the operations (this can be proved by an easy induction exactly as above).

But the the reduct of $\B$ to the language of $\CA_n$ gives a complete representation of $\A$
which is impossible.

\end{proof}

\subsection{Omitting types in other contexts, positive and negative results}

An unpublished result of Andr\'eka and N\'emeti shows that the omitting types theorem fails for $L_2$ though
Vaught's theorem on existence of atomic models for atomic theories hold, \cite{Sayed}, \cite{HHbook}, \cite{Khaled}, \cite{Vaught}
In the next example we show that even Vaught's theorem, hence $OTT$, fails when we consider logics without equality
reflected algebraically by many reducts of polyadic algebras.

We first start with algebras that are cylindrifier free reducts of polyadic algebras. In
this case set algebras are defined exactly like polyadic set algebras by discarding
cylindrifiers. Such algebras are expansions of Pinter's algebras studied by S\'agi,
and explicitly mentioned by Hodkinson \cite{AU}
in the context of searching for algebras, where atomicity coincides with complete representability.

Assem showed that for any ordinal $\alpha>1$, and any infinite cardinal $\kappa$,
there is an atomic set algebra (having as extra Boolean operations only finite substitutions)
with $|A|=\kappa$,  that is not  completely representable.
In particular, $\A$ can be countable, and so the omitting types theorem, and for that matter Vaught's theorem fail.
This works for all dimensions, except that in
the infinite dimensional case, semantics is relativized to weak set algebras.
Do we have an analogous result, concerning failure of the omitting types theorem for fragments of $L_n$ without equality, but with quantifiers.
The answer is yes.

This theorem holds for Pinter's algebras and polyadic algebras, let $\K$ denote either class.
It suffices to show that there exists $\B\in {\sf RK}_n\cap \Nr_n\K_{n+k}$ that is not completely completely representable.
But this is not hard to show. Let $\A$ be the cylindric algebra of dimension $n\geq 3$, $n$ finite,
provided by theorem 1.1 in \cite{ANT}.
Then first we can expand $\A$ to a polyadic equality
algebra because it is a subalgebra of the complex algebra based on the atom structure of basic matrices.
This new algebra will also be in ${\sf RPEA}_n\cap \Nr_n{\sf PEA}_{n+k}$. Its reduct, whether the polyadic or the Pinter, will be as desired.

Indeed consider the $\sf PA$ case, with the other case completely analogous,
this follows from the fact that $\Nr_n\K_n\subseteq \Rd\Nr_n{\sf PEA_{n+k}}=\Nr_n\Rd{\sf PEA}_{n+k}\subseteq \Nr_n{\sf PA}_{n+k}$,
and that $\A$ is completely representable if and only if its diagonal free reduct is.
(This is proved by Hodkinson in \cite{AU}, the proof depends essentially on the fact that algebras considered
are binary generated).

Now what if we only have cylindrifiers, that is dealing with ${\sf Df}_n$, $n\geq 3$.
Let $\A$ be the cylindric algebra as in the previous paragraph. Assume that
there a type $\Gamma$,
that is realized in every representation of $\A$ but has no witness using extra $k$ variables. Let $\B=\Rd_{df}\A$.

Let $f:\B\to \C$ be a diagonal free representation of $\B$. The point is that though $\Gamma$ is realized in every {\it cylindric} representation of $\A$,
there might be a representation of its diagonal free reduct that omits $\Gamma$,
these are more, because we do not require preservation of the diagonal
elements. This case definitely needs further research, and we are tempted to think that it is not easy.

But what we are sure
of  that the ordinary omitting types theorem fails
for $L_n$ without equality (that is for ${\sf Df}_n$) for $n\geq 3$. One way, among many other, is to construct a
representable countable atomic algebra $\A\in {\sf RDf}_n$, that is not completely representable.
The diagonal free reduct of the cylindric algebra constructed in \cite{ANT} is such.
Now what about ${\sf Df_2}$? We do not know.
But if we have only {\it one} replacement then it fails.
For higher dimensions, the result follows from the following example from
\cite{AGMNS}.
We give the a sketch of the proof, the interested reader can work out the details himself
or either directly consult \cite{AGMNS}.

\begin{example}

Let $\B$ be an atomless Boolean set algebra with unit $U$, that has the following property:
For any distinct $u,v\in U$, there is $X\in B$ such that $u\in X$ and $v\in {}\sim X$.
For example $\B$ can be taken to be the Stone representation of some atomless Boolean algebra.
The cardinality of our constructed algebra will be the same as $|B|$.
Let $R=\{X\times Y: X,Y\in \B\}$
and
$A=\{\bigcup S: S\subseteq R: |S|<\omega\}.$
Then $|R|=|A|=|B|$ and
$\A$ is a subalgebra of $\wp(^2U)$.
Also the only subset of $D_{01}$ in $\A$ is the empty set.
Let $S=\{X\times \sim X: X\in B\}.$
$\bigcup S={}\sim D_{01}.$
and $\sum{}^{\A}S=U\times U.$
But
$S_0^1(X\times \sim X)=(X\cap \sim X)\times U=\emptyset.$
for every $X\in B$.
Thus $S_0^1(\sum S)=U\times U$
and
$\sum \{S_{0}^1(Z): Z\in S\}=\emptyset.$

For $n>2$, one takes $R=\{X_1\times\ldots\times X_n: X_i\in \B\}$ and the definition of $\A$ is the same. Then,
in this case, one takes $S$ to be
$X\times \sim X\times U\times\ldots\times U$
such that $X\in B$. The proof survives verbatim.
By taking $\B$ to be countable, then $\A$ can be countable, and so it violates the omitting types theorem.
\end{example}

\begin{example} We consider a very simple case, when we have only transpositions. In this case omitting types theorems
holds for countable languages and
atomic theories have atomic models. Here all substitutions corresponding to bijective maps are definable.
This class is defined by translating a finite presentation of $S_n$, the symmetric group on $n$ to equations,
and postulating in addition that
the substitution operators are Boolean endomorphisms. In this case, given an abstract algebra $\A$ satisfying these equations and
$a\in A$, non zero, and $F$ {\it any} Boolean ultrafilter containing $a$,
then the map $f:\A\to \wp(S_n)$ defined by $\{\tau\in S_n: s_{\tau}a\in F\}$
defines a Boolean endomorphism such that $f(a)\neq 0$.

\begin{enumarab}

\item Now we show that the omitting types theorem holds. We use a fairly standard Baire category argument.
Each $\eta\in S_n$  is a composition of transpositions, so that $s_{\eta}$, a composition of complete endomorphisms,
is itself complete. Therefore $\prod s_{\eta}X=0$ for all $\eta\in S_n$.
Then for all $\eta\in S_n$, $B_{\eta}=\bigcap_{x\in X} N_{s_{\eta}}x$ is nowhere dense in the Stone topology
and $B=\bigcup_{\eta\in S_n} B_{\eta}$ is of the first category (in fact, $B$ is also nowhere dense, because it is only a finite union of nowhere dense sets).

Let $F$ be an ultrafilter that contains $a$ and is outside $B$. This ultrafilter exists by the celebrated Baire category theorem,
 since the complement of $B$ is dense. (Stone spaces are compact and Hausdorff). Then for all $\eta\in S_n$,
there exists $x\in X$ such that ${s}_{\tau}x\notin F$. Let $h:\A\to \wp(S_n)$ be the usual
representation function; $h(x)=\{\eta\in S_n: { s}_{\eta}x\in F\}$.
Then clearly $\bigcap_{x\in X} h(x)=\emptyset.$

\item An further, with no restriction on cardinalities, every atomic algebra is completely
representable. Indeed, let $\B$ be an atomic transposition algebra, let $X$ be the set of atoms, and
let $c\in \B$ be non-zero. Let $S$ be the Stone space of $\B$, whose underlying set consists of all Boolean ultrafilters of
$\B$. Let $X^*$ be the set of principal ultrafilters of $\B$ (those generated by the atoms).
These are isolated points in the Stone topology, and they form a dense set in the Stone topology since $\B$ is atomic.
So we have $X^*\cap T=\emptyset$ for every nowhere dense set $T$ (since principal ultrafilters,
which are isolated points in the Stone topology,
lie outside nowhere dense sets).
Recall that for $a\in \B$, $N_a$ denotes the set of all Boolean ultrafilters containing $a$.

Now  for all $\tau\in S_n$, we have
$G_{X, \tau}=S\sim \bigcup_{x\in X}N_{s_{\tau}x}$
is nowhere dense. Let $F$ be a principal ultrafilter of $S$ containing $c$.
This is possible since $\B$ is atomic, so there is an atom $x$ below $c$; just take the
ultrafilter generated by $x$. Also $F$ lies outside the $G_{X,\tau}$'s, for all $\tau\in S_n$
Define, as we did before,  $f_c$ by $f_c(b)=\{\tau\in S_n: s_{\tau}b\in F\}$.
Then clearly for every $\tau\in S_n$ there exists an atom $x$ such that $\tau\in f_c(x)$, so that $S_n=\bigcup_{x\in \At\A} f_c(x)$
Now for each $a\in A$, let
$V_a=S_n$ and let $V$ be the disjoint union of the $V_a$'s.
Then $\prod_{a\in A} \wp(V_a)\cong \wp(V)$. Define $f:\A\to \wp(V)$ by $f(x)=g[(f_ax: a\in A)]$.
Then $f: \A\to \wp(V)$ is an embedding such that
$\bigcup_{x\in \At\A}f(x)=V$. Hence $f$ is a complete representation.
\end{enumarab}
\end{example}

\subsection{The proof of an independence result partially proved in \cite{Sayed} completed}

It is stated in \cite{Sayed}, cf. theorem 3.2.8,  without proof,
that it is possible that $covK$ many non isolated types cannot be omitted in $L_n$ countable theories
whose Lindenbaum Tarski algebras belong to
$\Nr_n\CA_{\omega}$, witness theorem 3.2.8.  (This is known for usual first order logic as our example clearly manifests.)
We show, here that this also holds for $L_n$ $n>2$; this is best estimate. By accomplishing this,
we  complete the independence proof in \cite{Sayed}.

We should mention that this cardinal has many re-incarnations in the literature, example it is the least cardinal
such that the Baire Category theorem fails for compact Hausdorff second countable spaces, and it is
also the largest cardinal for which Martin's axiom restricted to countable partially ordered set holds.

\begin{example}\label{cov} Let $n>2$. To show that $OTT(cov K)$ could be false, we adapt
an example in \cite{CF} p.242. Fix $n\geq 2$. There the example is constructed for $L_{\omega,\omega}$ to adapt
to $L_n$ some care is required.
Let $T$ be a theory such that for this given $n$, in $S^n(T)$, the Stone space of $n$ types,  the isolated points are not dense.
(In \cite{CF}, a theory $T$ is chosen which does not have a prime model. This implies that there is an $n$ such that the isolated types in
$S_n(T)$ are not dense;  {\it here we need a fixed $n$, given in advance, so not {\it any} theory without a
prime model will do, for the number witnessing its primeness could be greater than $n$}.).
It is easy to construct such theories, for any fixed $n$.
(For example the theory of random graphs the isolated types are not dense for any $n$).

Let $X$ be the space $S^0(T)$ of all complete $0$ types which are consistent with $T$. For an ordinal $\alpha$, let $X^{(\alpha)}$
be the $\alpha$-iterated Cantor-Bendixson derivative of $X$. Recall that for ordinal numbers $\alpha$ the $\alpha$
Cantor-Bendixson derivative of a topological space is defined by
transfinite induction
\begin{itemize}
\item $X^0=X$
\item $X^{\alpha+1}=[X^{\alpha}]'$
\item $X^{\beta}=\bigcap_{\alpha\in \beta} X_{\alpha}$.
\end{itemize}
The transfinite sequence of Cantor-Bendixson derivatives of $X$ must eventually stop.
The smallest ordinal $\alpha$ such that $X^{\alpha+1}=X^{\alpha}$
is called the Cantor-Bendixson  rank of $X$.The language is countable, there is some $\alpha<\omega_1$
such that $X^{(\alpha)}=X^{(\alpha+1)}$
and $X\setminus X^{\alpha}$ is countable. $X^{\alpha}$ is a perfect set and
therefore it is homeomorphic to the Cantor space $^{\omega}2$ or it is empty.
We associate a set $P_{\infty}$ of $\leq covK$ many types with $X^{\alpha}$. Assume that $X^{\alpha}$ is non-empty, since it is a
closed set in $X$,
there is some extension $T_{\infty}$ of $T$
such that in $X$
$$X^{\alpha}=\bigcap_{\sigma\in T^{\infty}}[\sigma].$$
Hence the space $S^0(T_{\infty})$ is homeomorphic to $X^{\alpha}$ and to $^{\omega}2$. Then there are $Y_{\beta} (\beta<covK)$
closed nowhere dense sets in $S^0(T_{\infty})$
such that
$$S^0(T_{\infty})=\bigcup_{\beta<covK}Y_{\beta}.$$type $p_{\beta}$ such that in $S^{0}(T_{\infty})$
$$Y_{\beta}=\bigcap_{\sigma\in p_{\beta}}[\sigma].$$
As $Y_{\beta}$ is nowhere dense $p_{\beta}$ is non principal in $T_{\infty}$. Assuming, without loss, that $T_{\infty}\subseteq p_{\beta}$ we get that
$p_{\beta}$ is non
principal in $T$.
Set
$$P_{\infty}=\{p_{\beta}:\beta<covK\}.$$
Let us consider the $0$ types in $X\setminus X^{\alpha}$. These are complete consistent extensions of $T$.
For every $T'\in X\setminus X^{\alpha}$ we shall define a set $P_{T'}$ of $\leq covK$ many $n$ types that are not omitted in $T'$.
If $T'$ is not a finite extension of $T$, set $P_{T'}=\{T'\}$. Otherwise, in $S^n(T')$ the isolated types are not dense.
Hence there is some non-empty $Y\subseteq
S^n(T')$ clopen and perfect. Now we can cover $Y$ with a family of $covK$ many closed nowhere dense sets of $n$ types.
Since $Y$ is clopen in $S^{n}(T')$, these sets are closed nowhere dense sets in $S^{n}(T')$,
so we obtain a family of $covK$ many non principal $n$ types that cannot be omitted. We may assume that
$T'\subseteq p$ for every $p\in P_T'$ and therefore every type in $P_T'$ is non principal in $T$.
Define
$$P=P_{\infty}\cup\bigcup\{P_{T'}: T'\in X\setminus X^{\alpha}\}.$$
Now $P$ is a family of non-principal types $|P|=covK$ that cannot be omitted.

Let $\A=\Fm/T$ and for $p\in P$ let $X_p=\{\phi/T:\phi\in p\}$. Then $X_p\subseteq \Nr_n\A$, and $\prod X_p=0$.
However for any $0\neq a$, there is no set algebra $\C$ with countable base
$M$ and $g:\A\to \C$ such that $g(a)\neq 0$ and $\bigcap_{x\in X_i}f(x)=\emptyset$.

But in principle, if we take the  neat $n$ reduct, representations preserving meets can exist.
We exclude this possibility by showing that
such representations necessarily lift to
all $\omega$ dimensions.

Let $\B=\Nr_n\A$. Let $a\neq 0$. Assume, seeking a contradiction, that there exists
$f:\B\to \D'$ such that $f(a)\neq 0$ and $\bigcap_{x\in X_i} f(x)=\emptyset$. We can assume that
$B$ generates $\A$ and that $\D'=\Nr_n\D$ where $\D\in \Lf_{\omega}$. Let $g=\Sg^{\A\times \D}f$. We will show
that $g$ is a one to one function with domain $\A$ that preserves the $X_i$'s which is impossible (Note that by definition $g$ is a homomorphism).
We have
$$\dom g=\dom\Sg^{\A\times \D}f=\Sg^{\A}\dom=\Sg^{\A}\Nr_{n}\A=\A.$$
By symmetry it is enough to show that $g$ is a function.  We first prove the following (*)
 $$ \text { If } (a,b)\in g\text { and }  {\sf c}_k(a,b)=(a,b)\text { for all } k\in \omega\sim n, \text { then } f(a)=b.$$
Indeed,
$$(a,b)\in \Nr_{n}\Sg^{\A\times \D}f=\Sg^{\Nr_{n}(\A\times \D)}f=\Sg^{\Nr_{n}\A\times \Nr_{n}\D}f=f.$$
Here we are using that $\A\times \D\in \Lf_{\omega}$, so that  $\Nr_{n}\Sg^{\A\times \D}f=\Sg^{\Nr_{n}(\A\times \D)}f.$
Now suppose that $(x,y), (x,z)\in g$.
Let $k\in \omega\sim n.$ Let $\Delta$ denote symmetric difference. Then
$$(0, {\sf c}_k(y\Delta z))=({\sf c}_k0, {\sf c}_k(y\Delta z))={\sf c}_k(0,y\Delta z)={\sf c}_k((x,y)\Delta(x,z))\in g.$$

Also,
$${\sf c}_k(0, {\sf c}_k(y\Delta z))=(0,{\sf c}_k(y\Delta z)).$$
Thus by (*) we have  $$f(0)={\sf c}_k(y\Delta z) \text { for any } k\in \omega\sim n.$$
Hence ${\sf c}_k(y\Delta z)=0$ and so $y=z$.
We conclude that there exists a countable $\B\in \Nr_n\CA_{\omega}$ and $(X_i:i<covK)$ such that $\prod X_i=0$ but there is no representation that
preserves the $X_i$'s. In more detail. Give any $a\in \B$, if $a$ is non zero, $\C$ is a set algebra with countable base
and $f:\B\to \C$ is a homomorphism such
that $f(a)\neq 0$, then there exists $i<covK$, such that $\bigcap_{x\in X_i} f(x)\neq \emptyset.$
Therefore $OTT$ is false in a model of $ZFC+\neg CH$.
This example finishes the independence proof mentioned in \cite{Sayed}.

\end{example}

\subsection{Vaught's theorem, in other contexts}

A classical theorem of Vaught, that is an immediate consequence of the Orey- Henkin omitting types theorem $(OTT)$,
says that every atomic complete theory has an atomic model. Algebraically, if
if $\A\in \Lf_{\omega}$, such that $\Nr_n\A$ is atomic for every $n\in \omega$ and $a\in A$ is non-zero,
then there is a set algebra  $\B$ with unit $V$, a homomorphism
from $\A\to \wp(V)$ such that $f(a)\neq 0$ and
$\bigcup\{f(x): x\in \At\Nr_n\A\}=V$ for every $n\in \omega$.
Roughly, $f$ {\it restricted} to all $n$ reducts is an atomic representation.
One applies, then,  the omitting types theorem by finding a countable model
that omits the co-atoms in every $n$ neat reduct.

When  we have an $OTT$ we have a Vaught's theorem but the converse may not be true, and it does happen.
There are contexts where Vaught theorem holds but
$OTT$ fails; indeed this is the case for first order logic restricted to the first two variables.
The point is that Vaught's conjecture is {\it a special case} of $OTT$ when the types required
to be omitted are the co-atoms.

Our first example addresses $\L_2$, first order logic restricted to two variable.
In this case, we have a Vaught's theorem regardless of cardinalities. This was proved in \cite{Khaled}; here
we give a different proof inspired by duality theory in modal logic applied to $\RCA_2$ viewed as subdirect product of complex
algebras of square Kripke frames.

\begin{theorem}$\L_2$ enjoys a Vaught's theorem, but not the omitting types theorem.
\end{theorem}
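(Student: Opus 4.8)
The plan is to prove the two halves separately, since they point in opposite directions. For the \emph{positive} half---that $\L_2$ enjoys a Vaught-style theorem for atomic theories---the natural strategy is to pass to the algebraic side and work with $\RCA_2 = S\mathfrak{Cm}\{\text{square Kripke frames}\}$, exploiting the well-known fact that $\RCA_2$ is a canonical variety with many good structural features (it is indeed the variety generated by complex algebras of frames $(W, \equiv_0, \equiv_1, D_{01})$ where the $\equiv_i$ are equivalence relations with the ``commuting'' property making $W$ essentially a square). Given a complete atomic $\L_2$ theory $T$ with Lindenbaum--Tarski algebra $\A = \Fm_T \in \RCA_2$, and an atom $\psi$ with $\A$ atomic, one wants a representation $h:\A \to \wp(V)$ that is \emph{atomic}, i.e. $\bigcup\{h(x): x \in \At\A\} = V$, and with $h(\psi)\ne 0$; this then translates back to an atomic model of $T$ realizing $\psi$. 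The key point, and the reason cardinality plays no role here (in contrast with the $\L_n$, $n\ge 3$ situation exploited in Theorem~\ref{uncountable}), is that in dimension $2$ one can build such a representation directly by a Henkin-style / duality argument: take the atoms of $\A$ as the candidate rank-$2$ relations, glue them using the diagonal atom and the single genuine substitution (the transposition $[0,1]$) to organize the points, and observe that the two cylindrifications, being completely additive in this very constrained setting, force the union of (the images of) atoms to exhaust the unit. Concretely I would follow the proof in \cite{Khaled} but replace its combinatorial core by the observation that $\Bl\A$ decomposes along the partition $\{\mathsf{d}_{01}, -\mathsf{d}_{01}\}$ and on each piece the relevant cylindrifier is a complete Boolean endomorphism composed with a substitution, so a maximal consistent ultrafilter chosen to meet every $\mathsf{c}_i$-witness set yields a complete (hence atomic) representation. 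This is exactly the Baire-category/Stone-space flavoured argument already used in the last Example of the previous subsection, but here it runs without any restriction on $|A|$ because the only substitutions present are those coming from $S_2 = \{\mathrm{id}, [0,1]\}$, a finite group, so there are only finitely many nowhere-dense ``bad'' sets to avoid.

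For the \emph{negative} half---that $\L_2$ does \emph{not} satisfy the omitting types theorem---the plan is to produce a countable, atomic, representable algebra in (the dimension-$2$ analogue of) $\RCA_2$ that is \emph{not completely representable}; taking the co-atoms of such an algebra as a single non-principal type gives an $\L_2$ theory in which that type cannot be omitted, even though it is non-isolated. This is precisely the content attributed above to the unpublished Andréka--Németi example (cited as ``An unpublished result of Andréka and Németi shows that the omitting types theorem fails for $L_2$ though Vaught's theorem $\ldots$ holds''), so I would reconstruct it: one builds an atomic $\A \in \RCA_2$ whose atom structure, while representable (as a term algebra / via finite and cofinite joins), has a completion that is not representable, so that no representation of $\A$ can send the meet of the co-atoms to $\emptyset$. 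A convenient way to do this in two dimensions is a stripped-down splitting construction---start from a small representable $\Cs_2$, split an atom into infinitely many, and observe that representability survives on the term algebra but complete representability fails because a complete representation would represent the complex algebra, which carries an embedded copy of the pre-split (non-representable-on-small-sets) structure. The metalogical translation is routine once the algebra is in hand: if $\Gamma = \{\neg\phi : \phi_T \in \At\A\}$ were omitted in some model $\M \models T$, the associated set algebra would give a complete representation of $\A$, contradiction.

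The main obstacle I anticipate is the positive half: one must be genuinely careful that the atomic representation of $\A$ constructed algebraically \emph{actually} pulls back to a model of the first-order theory $T$ realizing the prescribed formula, i.e. that the dimension-$2$ passage between $\RCA_2$ and $\L_2$ respects atomicity of all (here trivial, since $n=2$ is the top dimension) neat reducts and that the representation can be chosen with a \emph{countable} base if one insists on that---although, as emphasized, the strength of the theorem is precisely that \emph{no} cardinality hypothesis is needed, which is what makes the contrast with Theorem~\ref{uncountable} and with Shelah's theorem (Theorem~\ref{Shelah}) sharp. A secondary technical point is verifying that $\RCA_2$ really is closed under the relevant completions-on-atomic-members so that ``atomic $\Rightarrow$ completely representable'' holds in dimension $2$; this is where the proof genuinely uses $n=2$ and would fail for $n\ge 3$, and I would isolate it as a lemma (finite axiomatizability and decidability of $\RCA_2$, due to Henkin, being the classical backdrop). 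Once that lemma is available, both halves are short, and the theorem follows by combining the atomic-implies-completely-representable lemma (giving Vaught) with the explicit non-completely-representable atomic algebra (killing OTT).
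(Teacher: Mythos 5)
You correctly isolate the key lemma for the positive half (every atomic $\RCA_2$ is completely representable, with no cardinality restriction), and your overall architecture — Vaught from that lemma, the failure of OTT handled separately — matches the paper's. But your proposed proof of the lemma does not work as sketched. The paper proves it by a duality argument: $\RCA_2$ is a conjugated variety axiomatized by Sahlqvist equations, hence closed under Dedekind--MacNeille completions; the first-order correspondents of those equations hold in $\At\A$, which is therefore a bounded morphic image of a disjoint union of square frames $\F_i$, and the inverse image of that bounded morphism is a complete embedding of $\Cm\At\A$ into $\prod\Cm\F_i$, i.e.\ a complete (atomic) representation. Your Baire-category route locates the specialness of $n=2$ in the wrong place: the "bad" nowhere dense sets one must avoid to get a complete representation are not indexed by the substitutions (of which there are indeed only $|S_2|=2$) but by the cylindrifier-witness conditions, which are indexed by elements (or atoms) of $\A$ — exactly as in the transposition-algebra example you invoke, which works only because that signature has \emph{no} cylindrifiers. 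For uncountable $\A$ there are uncountably many such sets, the Baire category theorem gives nothing, and complete additivity of $\mathsf{c}_0,\mathsf{c}_1$ is no help since it holds in every dimension. Also, the structural claim that on the pieces $\mathsf{d}_{01}$, $-\mathsf{d}_{01}$ each cylindrifier is "a complete Boolean endomorphism composed with a substitution" is false already in square set algebras of dimension $2$.

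The negative half of your proposal is internally inconsistent with the positive half. You propose to refute OTT for $\L_2$ by constructing a countable atomic representable algebra in $\RCA_2$ that is not completely representable, and then taking the type of co-atoms. But the lemma you have just argued (and which is true) says precisely that no such algebra exists: every atomic $\RCA_2$ is completely representable, so the co-atom type of an atomic $\L_2$ theory is \emph{always} omitted in some (indeed an atomic) model — that is exactly why Vaught's theorem holds while OTT can still fail. The counterexample to OTT in two variables must therefore come from a non-principal type that is not the set of co-atoms of an atomic algebra (e.g.\ a type with zero meet in a non-atomic Lindenbaum--Tarski algebra), and this is the content of the unpublished Andr\'eka--N\'emeti construction which the paper cites without proof; a splitting/blow-up-and-blur argument aimed at killing complete representability of an atomic $\RCA_2$ cannot succeed in dimension $2$.
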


\begin{proof} For the first part, it suffices to show that every atomic $\RCA_2$  is completely representable.
Here we give a nice proof of Hirsch and Hodkinson's \cite{HHbook2}. Let $\A\in \RCA_2$;
the latter variety is conjugated and defined by Sahlqvist
equations, hence is closed under \d\ completions. The first order correspondents of such equations
are valid over the atom structure $\At\A$, which is a bounded
morphic image of a disjoint union of square frames $\F_i$.
By duality this induces  a bounded morphism from $\A$ into $\prod \F_i$.
which is an atomic, hence a complete representation.

The second part is an unpublished result of Andr\'eka and N\'emeti.
\end{proof}

The idea of guarded fragments proposed by Andr\'eka, van Benthem and N\'emeti.
is to look at quantification patterns. Only relativized quantification along the accessibility relation of the
Kripke frame is allowed in modal formulas; so
in the guarded fragment all quantification must be relativized to
an atomic formula. This atomic formula is called a {\it guard}.  This fragment was inspired by investigating relativized set
algebras, in which cylindrifiers are only restricted to the unit,
which may not be a Tarskian square, so cylindrifiers may not commute. These fragments were proved decidable (using mosaics)
and  have the Los Tarski theorem, and interpolation.

Next we address certain guarded fragments of $L_n$, reflected algebraically by
transposition algebras; and the guard's role  is relativize to the units of algebras; which gives a certain natural
class of algebras referred to as $\sf G_n$; the units are locally cube they are closed under (finite)
substitutions.
The representation of such algebras is proved by Ferenczi \cite{Fer} and a different proof is given by the
author and Mohamed Khaled; using games, which we outline here. The
{\it $n$ variable
polyadic logic} corresponding to such polyadic equality like
algebras show very nice modal behaviour.
Besides completeness proved by Ferenczi, it has the finite base property.
This means that if a finite theory has a model, then it has a finite one; the satisfiability problem is
thus decidable.

The technique uses combinatorial methods of Herwig extending partial isomorphism on a finite structure
to automorphisms on a larger finite structure whose size is controlled by the small structure.
Decidability is proved similar
to filtration in modal logic.

\begin{definition}[Class $\TEA_{\alpha}$]

A transposition equality algebra of dimension $\alpha$ is
an algebra$$\A=\langle A, +, \cdot, -, 0, 1, \c_i, \s^{i}_{j}, \s_{ij}, \sf d_{ij}\rangle_{i,j\in\alpha},$$
where $\c_i, \s^{i}_{j}, \s_{ij}$ are unary operations,
$\d_{ij}$ are constants, the axioms ($F_0$)-($F_9$) below are valid for every $i,j,k<\alpha$:
\begin{description}
\item[$(Fe_0)$] $\langle A, +, \cdot, -, 0, 1\rangle$ is a boolean algebra, $\s^i_i=\s_{ii}=\sf d_{ii}=Id\upharpoonright A$ and $\s_{ij}=\s_{ji}$,
\item[$(Fe_1)$] $x\leq\c_i x$,
\item[$(Fe_2)$] $\c_i(x+y)=\c_i x+\c_i y$,
\item[$(Fe_3)$] $\s^{i}_{j}\c_i x=\c_i x$,
\item[$(Fe_4)$] $\c_i\s^{i}_{j}x=\s^{i}_{j}x$, $i\not=j$,
\item[$(Fe_5)^*$] $\s^{i}_{j}\s^{k}_{m}x=\s^{k}_{m}\s^{i}_{j}x$ if $i,j\notin\{k,m\}$,
\item[$(Fe_6)$] $\s^{i}_{j}$ and $\s_{ij}$ are boolean endomorphisms,
\item[$(Fe_7)$] $\s_{ij}\s_{ij}x=x$,
\item[$(Fe_8)$] $\s_{ij}\s_{ik}x=\s_{jk}\s_{ij}x$, $i,j,k$ are distinct,
\item[$(Fe_9)$] $\s_{ij}\s^{i}_{j}x=\s^{j}_{i}x$,
\item[$(Fe_{10})$] $\s^{i}_{j}\sf d_{ij}=1$,
\item[$(Fe_{11})$] $x\cdot\sf d_{ij}\leq\s^{i}_{j}x$.
\end{description}
\end{definition}

\begin{definition}
\begin{itemize}
\item Let $\A\in \TEA_{n}$. A relativized $\A$ pre-network is a pair $N=(N_1, N_2)$
where $N_1$ is a finite set of nodes $N_2:N_1^n\to \A$ is a partial map, such that if $f\in \dom N_2$,
and $i,j<n$ then $f_{f(j)}^i\in \dom N_2$. $N$ is atomic if $\rng N\subseteq \At\A$.
We write $N$ for any of $N, N_1, N_2$ relying on context, we write $\nodes(N)$ for $N_1$ and $\edges(N)$ for $\dom(N_2)$.
$N$ is said to be a network if
\begin{enumerate}
\item for all $\bar{x}\in \edges(N)$, we have $N(\bar{x})\leq \sf d_{ij}$ iff $x_i=x_j$.
\item if $\bar{x}\equiv_i \bar{y}$, then $N(\bar{x})\cdot \c_i N (\bar{y})\neq 0.$
\end{enumerate}
\item Let $\A\in \TEA_{n}$. A relativized $\A$ pre-network is a pair $N=(N_1, N_2)$
where $N_1$ is a finite set of nodes $N_2:N_1^n\to \A$ is a partial map, such that if $f\in domN_2$,
and $\tau$ is a finite transformation then $\tau| f\in \dom N_2$. Again $N$ is atomic if $\rng N\subseteq \At\A$.
Also we write $N$ for any of $N, N_1, N_2$ relying on context, we write $nodes(N)$ for $N_1$ and $edges(N)$ for $dom(N_2)$.
$N$ is said to be a network if
\begin{enumerate}
\item for all $\bar{x}\in \edges(N)$, we have $N(\bar{x})\leq \sf d_{ij}$ iff $x_i=x_j$,
\item if $\bar{x},\bar{y}\in \edges(N)$ and $\bar{x}\equiv_i \bar{y}$, then $N(\bar{x})\cdot \c_i N (\bar{y})\neq 0$,
\item $N([i,j]|\bar{x})=\s_{[i,j]}N(\bar{x})$, for all $\bar{x}\in \edges(N)$ and all $i,j<n$.
\end{enumerate}
\end{itemize}
\end{definition}

\begin{definition}Let $\A\in \TEA_n$. We define a game denoted by
$G_{\omega}(\A)$ with $\omega$ rounds, in which the players $\forall$ (male) and $\exists$ (female)
build an infinite chain of relativized $\A$ pre-networks
$$\emptyset=N_0\subseteq N_1\subseteq \ldots.$$
In round $t$, $t<\omega$, assume that $N_t$ is the current prenetwork, the players move as follows:

\begin{enumerate}
\item $\forall$ chooses a non-zero element $a\in \A$, $\exists$
must respond with a relativized prenetwork $N_{t+1}\supseteq N_t$ containing an edge $e$ with
$N_{t+1}(e)\leq a$,
\item $\forall$ chooses an edge $\bar{x}$ of $N_t$ and an element $a\in \A$. $\exists$ must respond with a pre-network
$N_{t+1}\supseteq N_t$ such that either $N_{t+1}(\bar{x})\leq a$ or $N_{t+1}(\bar{x})\leq -a$,
\item or $\forall$ may choose an edge $\bar{x}$ of $N_t$ an index $i<n$ and $b\in \A$ with $N_t(\bar{x})\leq {\sf c}_ib$.
$\exists$ must respond with a prenetwork $N_{t+1}\supseteq N_t$ such that for some $z\in N_{t+1},$
$N_{t+1}(\bar{x}^i_z)=b$.
\end{enumerate}
$\exists$ wins if each relativized pre-network $N_0,N_1,\ldots$
played during the game is actually a relativized network.

Otherwise, $\forall$ wins. There are no draws.
\end{definition}

${\sf G_n}$ denotes the class of set algebra of dimension $n$ whose units are locally cube, that is
closed under
substitutions.

\begin{theorem} Let $\A\in \TEA_n$. Then \pe\ can win any play of $G_{\omega}(\A)$.
Hence, if $\A\in \TEA_{n}$, then $\A\in {\sf IG}_{n}$.
\end{theorem}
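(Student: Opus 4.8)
The proof is a standard step-by-step (back-and-forth) argument typical for these relativized representation theorems. The plan is to show that $\exists$ can maintain, throughout the play, the invariant that the current pre-network $N_t$ is in fact a genuine relativized network, i.e.\ that it satisfies the three network conditions (diagonal condition, the coherence condition $N(\bar x)\cdot \c_i N(\bar y)\neq 0$ whenever $\bar x\equiv_i\bar y$, and the substitution condition $N([i,j]|\bar x)=\s_{[i,j]}N(\bar x)$). First I would set up the bookkeeping: since $G_\omega(\A)$ has $\omega$ rounds, $\exists$ uses a scheduling device (a fair enumeration of all triples consisting of a pre-network position, an index, and an element of $\A$) so that every demand $\forall$ could possibly raise is eventually met; this is the usual ``dovetailing'' that makes the limit $\bigcup_t N_t$ a network satisfying the saturation conditions. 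At each finite round she only has to extend $N_t$ by finitely much data, so the whole construction stays within finite pre-networks, which is exactly what the relativized (``$n$ clique guarded'') setting permits.

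Next I would treat the three move types in turn. For the witness move (type 1), given non-zero $a\in\A$, $\exists$ picks a fresh node-tuple $\bar x$, sets $N_{t+1}(\bar x)$ to be any atom below $a$ --- here atomicity of $\A$ enters, together with the remark that $\TEA_n$ algebras arising here are atomic --- and then closes off under the finitely many substitution instances $\s_{[i,j]}$ forced by condition (3), using the algebra axioms $(Fe_6)$--$(Fe_9)$ to check consistency and $(Fe_7)$, $(Fe_8)$ to check that the assignment is well defined (independent of the order in which transpositions are applied). For the refinement move (type 2), given an edge $\bar x$ and $a\in\A$, she splits: since $N_t(\bar x)$ is an atom, either $N_t(\bar x)\leq a$ or $N_t(\bar x)\leq -a$ already, so no new data is needed --- atoms decide every element. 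For the cylindrifier move (type 3), with $N_t(\bar x)\leq \c_i b$, she introduces a new node $z$, must define $N_{t+1}(\bar x^i_z)$ to be an atom below $b$ that is ``$\equiv_i$-compatible'' with $N_t(\bar x)$, and again close off under substitutions. The existence of such an atom is the crux and rests on the cylindric-type axioms $(Fe_1)$--$(Fe_5)^*$ together with atomicity: $\c_i b \neq 0$ and $N_t(\bar x)\leq \c_i b$ force $\c_i(N_t(\bar x)\cdot \s^i_j b)$-style manipulations that produce the required witness atom below $b$; this is where one verifies that condition (2) is preserved for all newly created $\equiv_i$-pairs.

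The main obstacle I anticipate is exactly this closure step in the cylindrifier move: after adding a new node one must re-establish \emph{all} the network conditions simultaneously for every edge touching the new node, including the interaction between the substitution operators $\s_{[i,j]}$ and the cylindrifications $\c_i$ --- that is, one must check that the new labels are coherent not just with $N_t(\bar x)$ but with every previously committed edge via chains of $\equiv_k$ and $[k,l]$ moves. Ensuring this requires a careful ``locality'' observation: in a relativized (non-square, substitution-closed) network only the finitely many edges in a single clique constrain one another, so the consistency check is finite and can be discharged using the $\TEA_n$ axioms, but getting the invariant formulated tightly enough that it is genuinely inductive is the delicate part. Once $\exists$ survives every round, the union $N=\bigcup_t N_t$ is a saturated relativized network, and the standard reduct-to-set-algebra construction (the one underlying the cited representation theorem of Ferenczi, and the author--Khaled game argument) turns $N$ into a complete representation of $\A$ on a locally cube unit, giving $\A\in {\sf IG}_n$.
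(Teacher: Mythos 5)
There is a genuine gap: your strategy for \pe\ leans essentially on atomicity of $\A$, but the theorem is stated for an \emph{arbitrary} $\A\in \TEA_n$. Atomicity is nowhere assumed at this point (it only enters in the subsequent corollary on complete representability of atomic $\TEA_n$'s, which is a separate, and stronger, statement). There is no ``remark that $\TEA_n$ algebras arising here are atomic''; a set algebra in ${\sf G}_n$ over an atomless Boolean structure is a perfectly good $\TEA_n$ with no atoms at all. Consequently your handling of all three move types breaks down in general: for the witness move there need not exist an atom below $a$; for the cylindrifier move there need not exist an atom below $b$; and your disposal of the refinement move (``since $N_t(\bar x)$ is an atom, either $N_t(\bar x)\leq a$ or $N_t(\bar x)\leq -a$ already'') is simply false when labels are not atoms. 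In the game as defined the pre-networks are labelled by arbitrary elements of $\A$, and \pe\ must answer \pa's second move by a purely Boolean refinement: since the network conditions keep $N_t(\bar x)\cdot \c_i N_t(\bar y)\neq 0$ (in particular $N_t(\bar x)\neq 0$), at least one of $N_t(\bar x)\cdot a$, $N_t(\bar x)\cdot(-a)$ is non-zero and she relabels accordingly; similarly for cylindrifier moves she labels the new edge by a suitable element below $b$ (not an atom), and the verification that the three network conditions survive uses the $\TEA_n$ axioms $(Fe_1)$--$(Fe_{11})$, not atomicity. Your final step also proves too much: a play built from atoms would yield a \emph{complete} representation, which is exactly the corollary for atomic algebras and cannot be expected here; the theorem only claims $\A\in {\sf IG}_n$, and the paper gets this by defining $h(a)=\{\bar x\in \edges(N):\exists t\,(N_t(\bar x)\leq a)\}$ directly from a play in which \pa\ schedules every legal move, with the second-kind moves guaranteeing that $h$ is a Boolean homomorphism and the first-kind moves that it is injective.

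Apart from this, the overall architecture you describe (dovetail \pa's demands, maintain the network conditions as an invariant, pass to the union, read off a representation on a locally cube unit) is the right shape and matches the paper's route; it is the implementation of \pe's strategy that must be rewritten so that it works without atoms, with the atomic-network version reserved for the complete representability corollary.
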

\begin{proof}
 Let $\A\in \TEA_n$. We want to build an isomorphism from $\A$ to some $\B\in D_{n}$. Consider a play
$N_0\subseteq N_1\subseteq \ldots $ of $G_{\omega}(\A)$ in which $\exists$ plays as in the previous lemma
and $\forall$ plays every possible legal move.
The outcome of the play is essentially a relativized representation of $\A$ defined as follows.
Let $N=\bigcup_{t<\omega}\nodes(N_t)$, and $\edges(N)=\bigcup_{t<\omega}\edges(N_t)\subseteq {}^nN$.
By the definition of the networks, $\wp(\edges(N))\in D_{n}$. We make $N$ into a representation by
defining $h:\A\rightarrow\wp(\edges(N))$ as follows
$$h(a)=\{\bar{x}\in \edges(N): \exists t<\omega(\bar{x}\in N_t \& N_t(\bar{x})\leq a)\}.$$
$\forall$-moves of the second kind guarantee that for any $n$-tuple $\bar{x}$
and any $a\in\A$, for sufficiently large $t$ we have either $N_{t}(\bar{x})\leq a$ or $N_{t}(\bar{x})\leq -a$.
This ensures that $h$ preserves the boolean operations.
$\forall$-moves of the third kind ensure that the cylindrifications are respected by $h$.
Preserving diagonals follows from the definition of networks.
The first kind of $\forall$-moves tell us that $h$ is one-one.
But the construction of the game under consideration ensures that $h$ is onto, too.
In fact $\bar{h}$ is a representation from $\A$ onto $\B\in D_n$. This follows from the definition of networks.
\end{proof}

\begin{corollary} Every atomic $\TEA_n$ is completely representable
\end{corollary}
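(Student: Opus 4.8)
The plan is to deduce this corollary directly from the preceding theorem, which states that every $\A\in\TEA_n$ yields a winning strategy for \pe\ in the game $G_\omega(\A)$ and hence $\A\in{\sf IG}_n$. The only thing to add is that, when $\A$ is atomic, the representation built from \pe's strategy can be made \emph{complete}, i.e.\ it carries the supremum $1=\sum\At\A$ to the full unit of the set algebra, equivalently $\bigcup\{h(a):a\in\At\A\}=\edges(N)$.

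First I would revisit the construction of $h$ in the proof of the previous theorem: $N=\bigcup_{t<\omega}\nodes(N_t)$, $\edges(N)=\bigcup_{t<\omega}\edges(N_t)$, and $h(a)=\{\bar x\in\edges(N):\exists t<\omega\,(\bar x\in N_t\ \&\ N_t(\bar x)\le a)\}$. The key observation is that for any edge $\bar x\in\edges(N)$ there is a stage $t$ with $\bar x\in\edges(N_t)$, and $N_t(\bar x)$ is a \emph{nonzero} element of $\A$; by \pe's strategy all prenetworks in play are in fact atomic networks (this is exactly what \pe\ wins by), so $N_t(\bar x)\in\At\A$. Hence $\bar x\in h(N_t(\bar x))$ with $N_t(\bar x)$ an atom, which shows $\bigcup\{h(a):a\in\At\A\}=\edges(N)$. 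That is the complete-representation condition. I would then remark that preservation of the supremum of \emph{any} subset $X\subseteq\A$ with $\sum X$ existing follows from the same edge-labelling-by-atoms fact together with additivity already established: for each edge the atom labelling it lies below some member of $X$ as soon as $\sum X=1$ restricted to that atom, so $h$ is complete. The only mild subtlety, worth a sentence, is to make sure \pe's strategy from the previous theorem does keep \emph{atomic} prenetworks throughout — but this is built into the definition of the game outcome (\pe\ wins only if every $N_t$ is a relativized network, and one checks as in the previous proof that \pe\ in fact plays atomic ones, responding to \pa's first kind of move with an edge labelled by an atom below the chosen nonzero $a$, which exists precisely because $\A$ is atomic).

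The main obstacle, such as it is, is purely bookkeeping: confirming that the strategy described abstractly in the previous theorem (``\pe\ plays as in the previous lemma'') really does produce atomic labels at every edge it creates, rather than merely nonzero ones, so that the union of the $h(a)$ over atoms $a$ exhausts $\edges(N)$. Once that is pinned down the corollary is immediate, since $\wp(\edges(N))\in{\sf G}_n$ (equivalently $D_n$) and $h$ is an embedding onto a subalgebra with the complete-representation property. I would therefore present the proof in two short moves: (i) recall $h$ and note every edge carries an atom under \pe's strategy because $\A$ is atomic; (ii) conclude $\bigcup_{a\in\At\A}h(a)=\edges(N)=1^{\B}$, so $h$ is a complete representation.

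\begin{proof}
Let $\A\in\TEA_n$ be atomic. By the previous theorem, \pe\ has a winning strategy in $G_\omega(\A)$, and running a play in which \pa\ makes every legal move produces, as there, a representation $h:\A\to\wp(\edges(N))$ with $\wp(\edges(N))\in{\sf G}_n$, where $N=\bigcup_{t<\omega}\nodes(N_t)$ and $h(a)=\{\bar x\in\edges(N):\exists t<\omega\,(\bar x\in N_t\ \&\ N_t(\bar x)\le a)\}$. Since \pe\ wins, each $N_t$ is a relativized network; responding to \pa's move of the first kind with a nonzero $a\in\A$ she may (using that $\A$ is atomic) pick an atom $a'\le a$ and an edge labelled $a'$, and likewise her responses to the other moves refine labels to atoms, so that every edge $\bar x\in\edges(N)$ satisfies $N_t(\bar x)\in\At\A$ for the stage $t$ at which it appears. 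Hence $\bar x\in h(N_t(\bar x))$ with $N_t(\bar x)\in\At\A$, which gives
\[
\bigcup_{a\in\At\A}h(a)=\edges(N)=1^{\wp(\edges(N))}.
\]
Thus $h$ is a complete representation of $\A$.
\end{proof}
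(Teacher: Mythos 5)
There is a genuine gap, and it sits exactly where you flag the ``only mild subtlety'': the claim that \pe's strategy in the ordinary game $G_\omega(\A)$ keeps (or can be arranged to keep) every edge label atomic, and that this is ``built into the definition of the game outcome''. It is not. Winning $G_\omega(\A)$ only requires each $N_t$ to be a relativized network, and networks may carry arbitrary nonzero labels. Worse, \pe\ cannot unilaterally force atomic labels in that game: in the cylindrifier move \pa\ chooses the element $b$ and the rules require $N_{t+1}(\bar x{}^i_z)=b$ exactly, so the new edge carries whatever (possibly non-atomic) label \pa\ dictates. The moves of the second kind only guarantee that for each fixed edge and each $a\in\A$, eventually the label is $\leq a$ or $\leq -a$; that drives the labels of an edge down an ultrafilter, and in an atomic algebra an ultrafilter need not be principal, so nothing in the play forces an edge ever to satisfy $N_t(\bar x)\leq a$ for an atom $a$. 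Hence your conclusion $\bigcup_{a\in\At\A}h(a)=\edges(N)$ does not follow from the previous theorem as it stands, and the sentence ``likewise her responses to the other moves refine labels to atoms'' is precisely the unproved point.

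The paper closes this gap differently: it changes the game rather than the analysis of the old one. One plays the \emph{atomic} game (as for complete representability of cylindric algebras), in which networks are labelled by atoms and \pa\ has only the cylindrifier move, now with an atom $b\leq {\sf c}_iN(\bar x)$; \pe\ reuses her strategy from the representability theorem, discarding the responses to the other two kinds of moves, and one has to check that her responses can indeed be taken with atom labels while preserving the network conditions ($N(\bar x)\leq {\sf d}_{ij}$ iff $x_i=x_j$, $N(\bar x)\cdot{\sf c}_iN(\bar y)\neq 0$ for $\bar x\equiv_i\bar y$, and $N([i,j]|\bar x)={\sf s}_{[i,j]}N(\bar x)$ --- note ${\sf s}_{[i,j]}$ is a Boolean automorphism, so it maps atoms to atoms). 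That check is the real content of the corollary, and your proposal skips it. Your step (ii), that a representation in which every edge lies below an atom is automatically complete (preserves all existing suprema), is fine and is indeed proved separately in the paper; the defect is entirely in step (i).
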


\begin{proof} This can be easily discerned from the proof of the representability result using games.
However, a more direct proof, and indeed far easier proof,  can be obtained by playing {\it atomic} games,
like those that test complete representability in cylindric algebra
\cite{HHbook2}, where networks are labelled by atoms (as opposed to arbitrary elements), and \pa\ has only
one move, namely, the cylindrifier move, the third move above. Unlike ordinary game \pe\ does not have
the option to accept or reject. in response to \pa\ move, she has to choose a new network
whose edges are labelled appropriately by atoms according to the rules
of the games. \pe\ essentially uses his above \ws\ by discarding responses to
other moves.
\end{proof}

\begin{theorem} $\TEA_n$ has the finite algebra on finite base property, in particular if $T$ is a finite atomic theory then it has a finite
atomic model.
\end{theorem}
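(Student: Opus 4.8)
$\TEA_n$ has the finite algebra on finite base property; in particular, a finite atomic theory has a finite atomic model.

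The plan is to run the game-based representation argument of the preceding theorem, but now being careful to bound the size of everything in sight. Suppose $\A \in \TEA_n$ is finite. First I would replay the game $G_\omega(\A)$ with \pe\ using her winning strategy from the previous theorem, but rather than letting \pa\ play all legal moves ad infinitum, I would observe that once \pe\ has a winning strategy and $\A$ is finite, the relativized networks she builds only ever need finitely many distinct ``labels'' (elements of $\A$), and the only way \pa\ can force new nodes is via the cylindrifier move (move type 3). The key point is that \pe\ can be made to \emph{reuse} existing nodes whenever the current prenetwork already contains a witness for the required cylindrifier demand: since $\A$ is finite there are only finitely many possible edge-labellings $N_2 : N_1^n \to \A$ for any fixed finite node set $N_1$, so a suitably ``saturated'' finite prenetwork closes off under all of \pa's demands. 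Concretely, I would define a finite prenetwork $N$ to be \emph{complete} if for every edge $\bar x \in \edges(N)$, every $i<n$ and every $b \in A$ with $N(\bar x) \le \c_i b$, there is already $z \in \nodes(N)$ with $N(\bar x^i_z) = b$, and if it is closed under the substitution condition $N([i,j]\mid \bar x) = \s_{[i,j]} N(\bar x)$.

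Then I would show such a complete finite prenetwork exists and is a genuine network, by a fixpoint/iteration argument: start from a one-edge network witnessing a given non-zero $a$, and repeatedly apply \pe's strategy to satisfy outstanding cylindrifier and substitution demands; the number of \emph{distinct} edge-labels is at most $|A|$, and because \pe\ may always reuse an old node carrying the right label rather than create a fresh one, the node set stabilises at some finite cardinality --- one can give the crude bound $|\nodes(N)| \le n + n\cdot|A|$ or so, the point being only that it is finite and depends solely on $n$ and $|A|$. Applying the construction of $h$ from the previous theorem's proof to this \emph{finite} complete network $N$ gives a homomorphism $h : \A \to \wp(\edges(N))$ which is injective (the first kind of \pa-move, or rather the separation property, is ensured because we may run the construction once for each non-zero $a \in A$ and take a finite product --- again finite since $A$ is finite), preserves diagonals by the network condition, and respects cylindrifiers and substitutions by completeness. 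Hence $\A$ embeds into a finite member of $\sf G_n$ with finite base $\nodes(N)$, which is the finite algebra on finite base property.

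For the logical consequence, suppose $T$ is a finite atomic $n$-variable theory in the guarded ($\sf G_n$-corresponding) logic. Its Lindenbaum--Tarski algebra $\Fm_T$ is a finite (since $T$ is finite, up to provable equivalence there are finitely many formulas) atomic $\TEA_n$. By the finite base property just established, $\Fm_T$ is representable over a finite base $M$ with $|M|$ bounded in terms of $|\Fm_T|$; reading off the representation as a model, every atom of $\Fm_T$ --- i.e.\ every complete formula of $T$ --- is realized in $M$, so $M$ is a finite atomic model of $T$.

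\medskip

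The main obstacle I anticipate is the bookkeeping in the iteration that produces the complete finite network: one must argue that \pe, while satisfying a cylindrifier demand $N(\bar x)\le \c_i b$, can \emph{always} be routed through an already-present node rather than being forced to add a new one, yet without this reuse breaking the network conditions (in particular the condition $N(\bar x)\le \sf d_{ij} \iff x_i=x_j$ and the consistency condition $N(\bar x)\cdot \c_i N(\bar y)\ne 0$ when $\bar x \equiv_i \bar y$). This is exactly where the locally-cube/relativized nature of $\sf G_n$ units helps: because we are not forced to fill in a full cube of edges, the prenetwork can stay ``thin'', and the substitution axioms $(Fe_7)$--$(Fe_9)$ guarantee the needed coherence of the $\s_{[i,j]}$-images, so only finitely many genuinely new nodes are ever required. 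The decidability of the satisfiability problem for this logic follows as a corollary, exactly as filtration gives decidability in modal logic.
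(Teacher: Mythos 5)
There is a genuine gap, and it sits exactly where you park it as ``the main obstacle I anticipate'': the claim that \pe\ can always be routed through already-present nodes, so that the construction stabilises on a finite base of size roughly $n+n\cdot|A|$, is asserted but never proved, and it is precisely the whole difficulty of the theorem. Finiteness of $\A$ bounds the number of \emph{labels}, not the number of \emph{nodes}: when a cylindrifier demand $N(\bar x)\leq \c_i b$ must be met, reusing an old node $z$ requires that \emph{all} edges of the unit involving $z$ and the remaining coordinates of $\bar x$ (together with their substitution images forced by $(Fe_7)$--$(Fe_9)$ and the diagonal conditions) already carry consistent labels, and there is in general no such $z$; adding a fresh node creates new edges, hence new demands, and nothing in your argument shows this process closes off. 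That this cannot be dismissed by counting labels is shown by the square-semantics analogue, where finite representable relation and cylindric algebras exist that admit only infinite representations; relativization to locally cube units is what makes the finite base property \emph{true}, but it does not make it \emph{trivial}. The paper's proof (and the source \cite{AHN} it follows) gets around exactly this point by invoking Herwig's theorem: either indirectly, by coding $\bigwedge T$ into the loosely guarded fragment and citing its finite model property, or directly, by choosing a finite set $K$ of elements of the base realizing all isomorphism types of the small substructures $\U(s)$, applying Herwig to extend every partial isomorphism of $\R=\U\restriction K$ to an automorphism of a finite superstructure $\R^+$, and then taking as unit the orbit $H=\{sg: g\in G\}$ under the group $G$ of automorphisms fixing $Q$ pointwise, so that the automorphisms themselves supply the cylindrifier witnesses inside a finite, substitution-closed unit. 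Your fixpoint/saturation argument contains no substitute for this combinatorial input, so the central step is missing.

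A secondary problem is the logical corollary: you argue that a finite theory $T$ has a finite Lindenbaum--Tarski algebra because ``up to provable equivalence there are finitely many formulas,'' but for finite-variable logics over even a single relation symbol the Lindenbaum--Tarski algebra is typically infinite, so this reduction does not go through as stated; the paper instead derives the model-theoretic statement by translating the finite data into the loosely guarded fragment and applying the finite model property there. If you want to salvage your approach, you would need either to prove an extension-of-partial-isomorphisms lemma of Herwig type yourself, or to quote it explicitly and show how your ``complete finite prenetwork'' can be read off from the orbit construction --- at which point your proof becomes the paper's.
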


\begin{proof} Let $\A\in \TEA_n$ be finite, then $\A=\Fm_T$, where $T$ is a finite first order theory in the finite
language $L(\A)$. This is obtained by adding an $n$ ary predicate for every element of $\A$.
A model  $\M$ of $T$ is a representation of $\A$, and we know that one exists.
But $\bigwedge T$ can be coded in the loosely guarded fragment of $L_n$
(this can be proved by noting that the first order correspondents of the positive equations
axiomatizing $TA_n$ are are equivalent to
formulas in the loosely guarded fragment), hence
if $\bigwedge T$ has a model,
then it has a finite model by a result of Herwig.

A more basic proof \cite{AHN}.
Let $\F$ be any finite set of terms in the language of $\TA_n$.
Let $\A\in \TA_n$ and let $u$ be an assignment of the variables
in $\F$ to elements of $\A$.
It suffices to  construct an algebra
$\B$ with finite base and an assignment $v$ of the same variables to elements of $\B$, such that
$$\A\models \tau=\sigma[u]\Longleftrightarrow \B\models \tau=\sigma[v].$$
Let $U$ be the base of $\A$ and let $Q$ be an arbitrary finite subset
of $U$. Let $\F^*$ be the relational language containing
an $n$ predicate  for every $\tau\in \F$
Now let $\F^*$ be the first order structure with
$U$ at its base.
For any $s\in {}^nU$, define $\U(s)$ a finite substructure with domain $Q\cup \rng s$, expanded with $n$ constants.
The structure $\U(s)$ have at most $|Q|+n$ elements and their language is finite.
Hence it is possible to find  a finite $\K\subseteq U$ such that for all $s\in {}^nU$,
there is a $t\in K^n$ with $\U(t)\cong \U(s)$.

Let $\R$ be the substructure of $U$ with domain $K$.
It has a finite relational signature, so we obtain by Herwig's theorem a finite $F^*$ structure $\R^+$
such that any partial isomorphism of $\R$ is
induced by an automorphism of $\R^+$.

Let $G$ be the group of automorphisms of $\R^+$ that fix $Q$ pointwise.
And define the unit of our algebra
$H=\{sg: s\in 1: g\in G\}.$
We now obtain an algebra with universe
$\wp(H)$. Now $H$ is an arbitrary set of sequences.
We want to make it locally square
Set $\eta(x)$ to be $x=x\land \bigwedge _{\sigma:n\to n}{\sf s}_{\sigma}1$, $\tau(x.z):=x.z$, $\tau_-(x):=-x $and
$\tau_{\c_i}x:=-\c_ix$, etc.
Now  we have the formula $\eta$, and terms $\tau_1,\ldots \tau_{m-1}.$
For a tuple $\bar{a}$,
let $\eta^{\A}(\bar{a})=\{b\in A: \A\models \eta(\bar{a},b)\}$, this is closed under
$\tau_i^{\A}(\bar{a})$.

We obtain an algebra $\A_{\bar{a}}$ in the signature $f_1,\ldots f_k$ of $\TEA_n$.
Its domain is $A_{\bar{a}}$ is  $\eta^{\A}(\bar{a})$ and the function
symbols in the signature of $\TEA_n$ are interpreted as restrictions
to $A_{\bar{a}}$ of the functions defined by $\tau_i^{\A}(\bar{a}$, for every term as defined
above.   In the terminology of \cite{ANT}, $\TEA_n$ is a $\chi=(\eta, \tau_i)$  subreduct of ${\sf Crs}_n$.

Let $\psi(x)$ be a quantifier
free formula in the language of $\TEA_n$ and assume that $\A'$ falsifies
$\forall \bar{x}\psi(\bar{x})$ for some some $\A'\in \TEA_n$. Let
Let $\A\in {\sf Crs_n}$ and $\bar{a}$ an $m$ tuple of
elements of $\A$,
$\A'\subseteq \A_{\bar{a}}$. Assume that $\eta^{\A}(a)$ is not empty, otherwise there is nothing more to prove.

$$\A\models \exists \bar{y} \exists \bar{x} \land \bigwedge_{1\leq i\le {\sf length}(\bar{x}} \eta(\bar{y}, x_i)\land \neg \psi').$$
Hence there is a $\B\in {\sf Crs}_n$ with finite base
in which the same formula holds
Hence, $\forall{\bar{x}}$ fails in the  subreduct
of $\B$.
\end{proof}

We obtain \cite{AHN}:
\begin{corollary}\label{Khaled}
Let $\A\in  \TEA_n$, let $\psi(\bar{x})$ be a quantifier free formula of the signature
of $\TEA_n$ and let $\bar{a}\in \A$, with $|\bar{a}|=|x|$. Then there is a loosely guarded sentence
$\tau(\psi(a))$ such that if $M$ is a representation of $\A$,
then $\A\models \psi(\bar{a})$ iff $M\models \tau_{\A}(\psi(\bar{a)})$.
Every universal formula valid in algebras having a finite representation,
is valid in $\TEA_n$. Hence, the universal first order theory of
$\TEA_n$ is decidable.
\end{corollary}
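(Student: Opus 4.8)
The plan is to prove the three assertions in turn: the existence of the loosely guarded translation $\tau_\A(\psi(\bar a))$, the transfer of universal validity from finite-base algebras to all of $\TEA_n$, and decidability; the translation is the workhorse for the other two. Given $\A\in\TEA_n$ and $\bar a=(a_0,\dots,a_{m-1})\in A$, I would fix $n$-ary relation symbols $R_0,\dots,R_{m-1}$ and one further $n$-ary symbol $G$ meant to name the unit, and translate every $\TEA_n$-term $t$ built from $x_0,\dots,x_{m-1}$ into a first order formula $\hat t(v_0,\dots,v_{n-1})$ over $\{R_0,\dots,R_{m-1},G\}$ by recursion: $\widehat{x_i}=R_i(\bar v)$; $\widehat{s\cdot t}=\hat s\wedge\hat t$; $\widehat{-t}=G(\bar v)\wedge\neg\hat t$; $\widehat{{\sf d}_{ij}}=G(\bar v)\wedge v_i=v_j$; $\widehat{{\sf s}^{i}_{j}t}$ and $\widehat{{\sf s}_{ij}t}$ the renamings of $\hat t$ that substitute $v_j$ for $v_i$, resp.\ swap $v_i$ and $v_j$, among the displayed variables (conjoined with $G(\bar v)$ where that is not already forced); and $\widehat{{\sf c}_i t}=G(\bar v)\wedge\exists u\bigl(G(v_0,\dots,u,\dots,v_{n-1})\wedge\hat t(v_0,\dots,u,\dots,v_{n-1})\bigr)$ with $u$ in place $i$. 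Writing $G(\bar v)$ everywhere maintains the invariant $\hat t^{\,M}\subseteq G^M$ and, crucially, \emph{guards} the single existential coming from each cylindrifier, so each $\hat t$ lies in the guarded fragment, and hence so does $\widehat{t=s}:=\forall\bar v\bigl(G(\bar v)\to(\hat t\leftrightarrow\hat s)\bigr)$. A general quantifier-free $\psi(\bar a)$ is a Boolean combination of such equations, so it goes to the corresponding Boolean combination $\tau_\A(\psi(\bar a))$, a loosely guarded sentence, to which I conjoin the (guarded) local-cube axioms $\bigwedge_{\sigma:n\to n}\forall\bar v\bigl(G(\bar v)\to G(\bar v\circ\sigma)\bigr)$. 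If $M$ is any representation of $\A$, i.e.\ $\A$ embeds into $\wp(V)$ with $V\subseteq{}^nU$ locally cube, interpret $R_i$ by $a_i$ and $G$ by $V$; an easy induction on terms gives $\hat t^{\,M}=t^{\A}(\bar a)$ as subsets of $V$, so $M\models\tau_\A(\psi(\bar a))$ iff $\A\models\psi(\bar a)$, and the local-cube axioms hold since $V$ is closed under substitutions. This already yields the first and last sentences of the corollary. Conversely, from any model $N$ of $\tau_\A(\psi(\bar a))$ together with the local-cube axioms, the subalgebra of $\wp(G^N)$ generated by $R_0^N,\dots,R_{m-1}^N$ lies in ${\sf G}_n\subseteq\TEA_n$ and has the same truth value of $\psi$; this is the bridge used below.

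For the transfer-down statement, let $\Phi=\forall\bar x\,\psi(\bar x)$ fail in some $\A\in\TEA_n$, say $\A\models\neg\psi(\bar a)$. Applying the ``finite algebra on finite base'' theorem already proved above to the finite set of subterms occurring in $\psi$ yields a finite-base set algebra $\B\in{\sf G}_n\subseteq\TEA_n$ and an assignment $\bar b$ with $\B\models\neg\psi(\bar b)$; having a finite base, $\B$ has a finite representation. (Alternatively: by the representation theorem every $\TEA_n$ is in ${\sf IG}_n$, so $\A$ has a representation $M$, whence $M\models\tau_\A(\neg\psi(\bar a))\wedge(\text{local-cube axioms})$, a satisfiable loosely guarded sentence, which by Herwig's theorem has a finite model, from which $\B$ is read off as in the paragraph above.) Thus $\Phi$ already fails in an algebra with a finite representation; contrapositively, every universal formula valid in all algebras having a finite representation is valid in $\TEA_n$. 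Since the reverse containment is immediate, the two universal theories coincide.

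Decidability then follows: the universal theory of $\TEA_n$ is recursively enumerable because $\TEA_n$ is finitely axiomatized by the schemes $(Fe_0)$--$(Fe_{11})$, and it is co-recursively enumerable because, by the previous paragraph, a universal $\Phi$ fails in $\TEA_n$ precisely when it is refuted in some set algebra with finite base, and such algebras together with finite assignments can be effectively enumerated and checked. Hence the universal first order theory of $\TEA_n$ is decidable. (Equivalently, $\Phi$ is valid in $\TEA_n$ exactly when the loosely guarded sentence translating $\exists\bar x\,\neg\psi(\bar x)$, conjoined with the local-cube axioms, is unsatisfiable, and unsatisfiability in the loosely guarded fragment is decidable.)

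The two substantive external inputs sit at opposite ends of the argument: the representation theorem for $\TEA_n$, needed to pass from an abstract algebra refuting $\Phi$ to a first order model of the loosely guarded translation, and Herwig's theorem on extending partial isomorphisms, which powers the finite-base theorem (and the finite model property of the loosely guarded fragment) used to pull that model back to a finite one. The genuinely fiddly point, routine in spirit, is the translation itself: negation and cylindrification must be relativized to the guard $G$ and the local-cube closure carried along, so that the translated sentence really is (loosely) guarded and so that the round trip algebra $\to$ first order model $\to$ subalgebra of $\wp(G)$ preserves the truth value of every quantifier-free $\psi$. Keeping that invariant straight through the recursion is where the care goes, and nothing beyond the standard modal-style translation is required.
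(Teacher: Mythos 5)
Your proof is correct and follows essentially the route the paper intends (the paper gives no separate argument, citing \cite{AHN} and leaning on the preceding finite-base theorem): a guarded/loosely guarded translation of quantifier-free $\TEA_n$-formulas relativized to a predicate naming the locally cube unit, the Herwig-based finite base property to transfer universal validity down to algebras with finite representations, and decidability via the finite axiomatization plus enumeration of finite-base set algebras (equivalently, decidability of the loosely guarded fragment). Your explicit recursion for $\tau$ and the guard bookkeeping simply fill in details the paper leaves implicit.
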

We now state an easy theorem that says that an atomic representation omits any non principal type.
This is the analogue of Vaught's theorem that atomic countable theories, have countable atomic models,
countable atomic models are prime, and that a prime models omit any non family of principal type (regardless of their number).
Also it is exactly the Hirsch and  Hodkinsdon's
result that says that an atomic representation is necessarily a complete one (preserves arbitrary suprema).

\begin{theorem} Let $f:\A\to \wp(V)$ be an atomic representation of $\A\in \CA_n$.
Then for any given family $(Y_i:i\in I)$ of subsets of $\A$,  if $\prod Y_i=0$ for all $i\in I$, then we have
$\bigcap_{y\in Y_i} f(y)=\emptyset$ for all $i\in I$.
\end{theorem}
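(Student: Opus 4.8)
The plan is to argue directly from the defining feature of an \emph{atomic} representation, namely that the images of the atoms of $\A$ cover the unit: $\bigcup\{f(a):a\in\At\A\}=V$. (This is the precise sense in which ``atomic'' is being used; recall that by the Hirsch--Hodkinson theorem such a representation is automatically complete and so preserves all existing suprema and infima, but we need only the covering property.) We also use, without further comment, that a representation is in particular a Boolean embedding, so that $f(0)=\emptyset$ and $f$ commutes with finite meets.

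Fix $i\in I$ and suppose, towards a contradiction, that $\bigcap_{y\in Y_i}f(y)\neq\emptyset$; choose $\bar{s}\in\bigcap_{y\in Y_i}f(y)\subseteq V$. By the covering property there is an atom $a\in\At\A$ with $\bar{s}\in f(a)$. Then for every $y\in Y_i$ we have $\bar{s}\in f(a)\cap f(y)=f(a\cdot y)$, so $f(a\cdot y)\neq\emptyset=f(0)$ and hence $a\cdot y\neq 0$. Since $a$ is an atom and $0\neq a\cdot y\leq a$, we get $a\cdot y=a$, i.e. $a\leq y$. As $y\in Y_i$ was arbitrary, $a$ is a nonzero lower bound of $Y_i$ in $\A$, contradicting $\prod Y_i=0$. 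Therefore $\bigcap_{y\in Y_i}f(y)=\emptyset$, and since $i$ was arbitrary, the theorem follows.

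I do not expect a genuine obstacle here: the only care needed is in (i) making explicit which property of an atomic representation is invoked, and (ii) observing that finite meets and the zero element are respected by $f$. The promised metalogical consequence is then immediate: taking $\A=\Fm_T$ for a countable atomic complete $T$ and letting $Y_i$ run over the sets of congruence classes of formulas making up a family of non-principal types, $\prod Y_i=0$ for each $i$, and an atomic (hence complete) representation of $\A$ yields a model of $T$ in which, by the theorem, every such type is omitted — this is the algebraic form of Vaught's theorem that an atomic theory has an atomic (prime) model omitting every non-principal type, regardless of how many such types are prescribed.
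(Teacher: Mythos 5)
Your proof is correct and is essentially the paper's own argument presented in contrapositive form: the paper passes to the complements $Z_i=\{-y: y\in Y_i\}$, notes $\sum Z_i=1$, shows every atom lies below some $z\in Z_i$, and concludes $V=\bigcup_{x\in \At\A}f(x)\subseteq \bigcup_{z\in Z_i}f(z)$, whereas you pick a point of the putative intersection, find an atom whose image covers it, and extract a nonzero lower bound of $Y_i$. The essential ingredients — the covering property $\bigcup_{a\in \At\A}f(a)=V$ of an atomic representation and the fact that an atom meeting $y$ lies below $y$ — are identical in both, so no further comment is needed.
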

\begin{proof} Let $i\in I$. Let $Z_i=\{-y: y\in Y_i\}$. Then $\sum Z_i=1$. Let $x$ be an atom. Then $x.\sum Z_i=x\neq 0$.
Hence there exists $z\in Z_i$, such that
$x.z\neq 0$. But $x$ is an atom, hence $x.z=x$ and so $x\leq z$. We have shown that for every atom $x$, there exists $z\in Z_i$ such that $x\leq z$.
It follows immediately that
$V=\bigcup_{x\in \At\A}f(x)\leq \bigcup_{z\in Z_i} f(z)$, and so $\bigcap_{y\in Y_i} f(y)=\emptyset,$
and we are done.
\end{proof}

\subsection {The infinite dimensional case}

It was proved by the author that Vaught's theorem holds for various reducts of polyadic algebras,
including polyadic algebras, themselves, the reducts studied by Sain and cylindric polyadic algebras obtained from $\PA$ by allowing only
finite cylindrifiers, under the condition of complete additivity, which is also necessary, see both \cite{Sayed} and \cite{Sayedneat}
for an overview.

When we have diagonal the situation is problematic.
Nevertheless,  when we relativize semantics (weaken commutativity of cylindrifiers),
then we get that for the resulting cylindric-polyadic {\it equality} algebras,
atomic algebras are completely representable.
In this context, due to the presence of diagonal elements, the operations
are completely additive anyway, so this condition is superfluous.

We show that the class of atomic cylindric-polyadic algebras, introduced also by Ferenczi, like Boolean algebras,
coincides with the class of completely representable
ones. We consider the class ${\CPEA}_{\alpha}$, introduced and studied by Ferenczi,
whose similarity type is the restriction of polyadic equality type to finite cylindrifiers.

However, the axioms are obtained by weakening commutativity of cylindrifiers.
Ferenczi \cite{Fer} proved a strong representation theorem for such a class. Our next theorem proves more.
Indeed, if $\A$ is such an algebra then $\CPEA_{\alpha}$ defined by Sahlqvist equations is canonical,
hence $\A^+$ is $\sf CPEA_{\alpha}$; furthermore it is atomic; by our proof
it has a {\it complete (atomic)  representation},
hence $\A$ is representable. In more detail, we have:

\begin{theorem} For any infinite $\alpha$,
Every atomic ${\sf CPEA}_{\alpha}$ is completely representable
\end{theorem}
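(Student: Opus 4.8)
The plan is to mimic, at the infinite-dimensional level, the game-theoretic argument used just above for $\TEA_n$, while inserting the one extra ingredient needed when $\alpha$ is infinite, namely a locally-finite (or at least "finitely supported") presentation of the elements so that the $\omega$-round game with a single cylindrifier move for $\forall$ suffices. First I would recall that $\CPEA_\alpha$ is defined by Sahlqvist (positive) equations obtained from the polyadic equality axioms by weakening commutativity of cylindrifiers; consequently the variety is canonical, so for atomic $\A\in\CPEA_\alpha$ the canonical extension $\A^\sigma$ is again in $\CPEA_\alpha$, is atomic (in fact completely additive, since diagonals force complete additivity of the non-Boolean operations), and $\A$ embeds into $\A^\sigma$ as a (dense, but here we want atomic) subalgebra. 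It therefore suffices to produce a \emph{complete} representation of $\A$ itself; and since the operations are completely additive, a complete representation is exactly an atomic one in the sense of the last theorem of the excerpt.

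Next I would set up the atomic network game $G_\omega(\A)$ adapted from the $\TEA_n$ game, but now the nodes carry $\alpha$-tuples and the substitution/transposition moves range over finite transformations of $\alpha$; because cylindrifiers are only finite, a prenetwork is determined by its restriction to finitely many coordinates on each labelled tuple, so the ``locally cube, closed under finite substitutions'' units of $\sf G_\alpha$ (the weak/relativized set algebras) are the correct target. I would then show \pe\ has a winning strategy: she labels every new edge by an atom below the element $\forall$ picked, she uses the weakened-commutativity axioms $(Fe_5)$--$(Fe_9)$ to keep the substitution-transposition diagram consistent, and for the cylindrifier move $N_t(\bar x)\le \cyl i b$ she adds a fresh node $z$ with $N_{t+1}(\bar x^i_z)$ an atom below $b$, using complete additivity of $\cyl i$ in $\A^\sigma$ (equivalently atomicity plus $(Fe_2)$, $(Fe_3)$) to guarantee such an atom exists. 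Because the game has $\omega$ rounds and only finitely many coordinates are ever active on any tuple, a standard ``schedule all legal moves'' bookkeeping yields in the limit a map $h(a)=\{\bar x\in\edges(N):\exists t\,(N_t(\bar x)\le a)\}$ that is a homomorphism into $\wp(\edges(N))\in\sf G_\alpha$; the edge-creating moves make $h$ injective and the cylindrifier moves make it a \emph{complete} representation, since every edge lies below some atom.

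For the infinite-dimensional subtlety I would, as in the excerpt's ${\sf Df}$ and infinite cases, relativize the base: the unit $V$ is a set of $\alpha$-sequences with finite support over a fixed ``diagonal'' sequence, i.e.\ a weak set algebra, which is exactly what $\sf G_\alpha$ / $\CPEA_\alpha$ representability delivers (this is Ferenczi's theorem, which I may assume; here I am only strengthening ``representable'' to ``completely representable''). One then checks that the limit structure $N$ is closed under the finite cylindrifications and finite substitutions needed and that $h$ respects diagonals, which is immediate from clause~(1) of the network definition. The main obstacle, and the place where I expect to spend real effort, is verifying that \pe's responses to the cylindrifier move can always be made \emph{simultaneously} consistent with all previously imposed substitution constraints when commutativity of cylindrifiers has been weakened --- i.e.\ proving the analogue of the ``defect'' or ``patching'' lemma in this relativized, equality-endowed, infinite-dimensional setting. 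I would handle this by working inside $\A^\sigma$, exploiting that it is atomic and completely additive, and by restricting attention at each stage to the finite set of coordinates and the finite subalgebra generated by the finitely many atoms so far played, reducing each consistency check to a finite computation governed by axioms $(Fe_0)$--$(Fe_{11})$; once that lemma is in hand, the rest is the routine limit argument sketched above, after which the corollary ``atomic complete theories in this logic have atomic models / any non-principal type is omitted in an atomic representation'' follows exactly as in the last displayed theorem of the excerpt.
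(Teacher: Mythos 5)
There is a genuine gap, and it sits exactly where you flagged it. Your plan is an $\omega$-round atomic-network game in which \pe\ ``schedules all legal moves''; that bookkeeping only terminates successfully when there are countably many requirements, i.e.\ when $\A$ (and the relevant data over $\alpha$) is countable. The theorem, however, is stated with no cardinality restriction: \emph{every} atomic $\CPEA_{\alpha}$, for arbitrary infinite $\alpha$, is completely representable. Passing to $\A^{\sigma}$ does not help here (it only enlarges the algebra), and an $\omega$-game cannot enumerate uncountably many cylindrifier demands. The paper's proof is built precisely to avoid this: one takes a dilation $\B\in\CPEA_{\mathfrak{n}}$, $\mathfrak{n}$ a regular cardinal, with $\A=\Nr_{\alpha}\B$ and $A$ generating $\B$ (a minimal functional dilation whose Boolean reduct is atomic because $\A$ is), proves the substitution identity ${\sf s}_{\sigma}{\sf c}_{i}p=\sum_{j}{\sf s}_{\sigma}{\sf s}^{i}_{j}p$ for admissible $\sigma$ (this elimination of cylindrifiers in favour of sums of substitutions depends essentially on the diagonal elements), and then chooses a \emph{principal} (``perfect'') ultrafilter generated by an atom below the given nonzero $c$, using the minimal completion of $\B$. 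Because a principal ultrafilter lies outside \emph{all} nowhere dense subsets of the Stone space, it respects every one of the (possibly uncountably many) suprema $\sum{\sf s}_{\bar{\tau}}X=1$, $X=\At\A$, with no Baire-category/countability hypothesis; the representation is then read off on a base $\Gamma/\!\sim$ obtained by factoring out the equivalence induced by the diagonals, landing in ${\sf Gp}_{\alpha}$.

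The second problem is that even in the countable case your proposal does not actually discharge the crucial step: you explicitly defer the ``patching'' lemma guaranteeing that \pe's response to a cylindrifier move can be made consistent with all previously imposed substitution constraints when commutativity of cylindrifiers is weakened, offering only the hope that finite reductions inside $\A^{\sigma}$ will settle it. But that consistency is the whole mathematical content of the theorem; in the paper it is exactly what the dilation, the admissible-substitution identity, and the perfect-ultrafilter choice supply (the new witness for ${\sf c}_{i}$ is an outside index $j\in\mathfrak{n}\smallsetminus\alpha$ coming from ${\sf s}^{i}_{j}$, so it never collides with constraints already living on $\alpha$). Mimicking the $\TEA_{n}$ game is reasonable as a heuristic for countable algebras, but as written your argument neither proves the key lemma nor covers the uncountable case, so it does not establish the theorem as stated.
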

\begin{proof}

Compare with example above; the proof also uses the topological fact that principal ultrafilters lie outside
nowhere dense sets. Let $c\in A$ be non-zero. We will find a $\B\in \sf Gp_{\alpha}$ and a homomorphism
from $f:\A \to \B$ that preserves arbitrary suprema
whenever they exist and also satisfies that  $f(c)\neq 0$.
Now there exists $\B\in \sf CPEA_{\mathfrak{n}}$, $\mathfrak{n}$ a regular cardinal.
such that $\A\subseteq \Nr_{\alpha}\B$ and $A$ generates $\B$.
Note that  $|\mathfrak{n}\sim \alpha|=|\mathfrak{n}|$ and

This dilation also has Boolean reduct isomorphic to $F({}^\mathfrak{n}\alpha, \A)$, in particular, it is atomic because $\A$ is atomic.
Also cylindrifiers are defined on this minimal functional dilation exactly like above by restricting to singletons.
Let $adm$ be  the set of admissible substitutions. $\tau\in \B$ is admissible if
$\dom\tau\subseteq \alpha$ and $\rng\tau\cap \alpha=\emptyset$.
Then we have
for all $i< \mathfrak{n}$ and $\sigma\in adm$,
\begin{equation}\label{tarek1}
\begin{split}
s_{\sigma}{\sf c}_{i}p=\sum {\sf s}_{\sigma}{\sf s}_i^jp
\end{split}
\end{equation}
This uses that ${\sf c}_k=\sum {\sf s}_k^i x$, which is proved like the cylindric case; the proof depends on diagonal elements.
Let $X$ be the set of atoms of $\A$. Since $\A$ is atomic, then  $\sum^{\A} X=1$. By $\A=\Nr_{\alpha}\B$, we also have $\sum^{\B}X=1$.
Because substitutions are completely additive we have
for all $\tau\in {}^{\alpha}\mathfrak{n}$
\begin{equation}\label{tarek2}
\begin{split}
\sum {\sf s}_{\bar{\tau}}^{\B}X=1.
\end{split}
\end{equation}
Let $S$ be the Stone space of $\B$, whose underlying set consists of all boolean ultrafilters of
$\B$, and let $F$ be a principal ultrafilter chosen as before.
Let $\B'$ be the minimal completion of $\B$. Exists by complete additivity. Take the filter $G$ in $\B'$ generated by the generator of $F$
and let $F=G\cap \B$. Then $F$ is a perfect ultrafilter. Because our algebras have diagonal algebras,
we have to factor our base by a congruence relation that reflects
equality.
Define an equivalence relation on $\Gamma=\{i\in \beta:\exists j\in \alpha: {\sf c}_i{\sf d}_{ij}\in F\}$,
via $m\sim n$ iff ${\sf d}_{mn}\in F.$ Then $\Gamma\subset \alpha$ and
the desired representation is defined on a $Gp_{\alpha}$ with base
$\Gamma/\sim$. We omit the details.
\end{proof}

For cylindric algebras and quasi polyadic equality algebras ${\sf QEA_{\alpha}}$, for countable infinite dimension,
we have the following omitting types theorem,
where we also have to relativize
semantics but in different way, we do not get out of $\RCA_n$.

The unit of the algebra omitting the non principle types will be a weak space, that is, one of the form $^{\omega}U^{(p)}$
which is the set of $\omega$-ary sequences that agree cofinitely
with the fixed beforehand sequence $p\in {}^{\omega}U$. Such algebras are cylindric algebras indeed
they are representable. The theorem holds for any countable ordinal (with
the same proof.)

\begin{theorem} Let $\A\in S_c\Nr_{\omega}\CA_{\omega+\omega}$. Let $(X_i:i<\lambda)$, where $\lambda<covK$,
be a family of non principal types.
Then there is a weak set algebra that omits theses types.
In particular, if $\A$ is atomic, then $\A$ is completely representable on weak units.
\end{theorem}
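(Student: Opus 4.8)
The plan is to run the standard Orey--Henkin style argument in the algebraic setting, combining the neat embedding hypothesis with a Baire category argument. First I would pass from the given $\A \in S_c\Nr_{\omega}\CA_{\omega+\omega}$ to a suitable dilation: let $\B \in \CA_{\omega+\omega}$ be such that $\A \subseteq_c \Nr_{\omega}\B$, and by replacing $\B$ with the subalgebra generated by $A$ we may assume $\B$ is locally finite and $|\B| = |\A| \leq \omega$ (here we use that there are only countably many operations, as in the proof of Theorem~\ref{uncountable}). The subtlety is that $\A$ is a \emph{complete} subalgebra of $\Nr_{\omega}\B$, so infima computed in $\A$ are also infima in $\B$; hence each $X_i$, with $\prod^{\A}X_i = 0$, still satisfies $\prod^{\B}X_i = 0$. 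I would also need to enlarge the index set: since $\B \in \CA_{\omega+\omega}$, there are ``spare'' dimensions in $\omega+\omega \setminus \omega$ that will serve as witnesses for cylindrifiers, which is exactly what forces the representation to live on a \emph{weak} space $^{\omega}U^{(p)}$ rather than a full cube.

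Next I would set up the topological step. Work in the Stone space $S$ of the Boolean reduct of $\B$. Fix a non-zero $a \in A$ that we wish to preserve. For each $i<\lambda$, since $\prod^{\B}X_i = 0$, the set $G_i = \bigcap_{x \in X_i} N_{x}$ (where $N_x$ is the clopen set of ultrafilters containing $x$) is closed and nowhere dense in $S$. Additionally, for each spare dimension $k \in (\omega+\omega)\setminus \omega$ and each $b \in B$ with $\c_k b = b$ forcing requirements, and more importantly for each pair (element $b$, index $i$) appearing in a cylindrification constraint, one gets countably many nowhere dense ``Henkin'' sets $H_{b,i}$ expressing the failure to witness $\c_i b$; there are only countably many such sets because $\B$ is countable and locally finite. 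Since $\lambda < covK$, the union $\bigcup_{i<\lambda} G_i \cup \bigcup H_{b,i}$ does not cover $S$ (this is precisely the meaning of $covK$: fewer than $covK$ nowhere dense sets cannot cover a compact Hausdorff second countable space, equivalently do not cover the complement of the clopen set $N_a$, which is itself such a space). So there is an ultrafilter $F \ni a$ avoiding all of them.

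Then I would build the representation from $F$ in the usual step-by-step / Henkin fashion: take the base $U$ to be (a quotient of) the set of ``$F$-consistent'' finite partial maps extended using the spare dimensions as witnesses, obtaining a sequence $p \in {}^{\omega}U$ and the weak space $^{\omega}U^{(p)}$; define $h:\A \to \wp({}^{\omega}U^{(p)})$ via the standard recipe $h(x) = \{ s : \ldots s \text{ codes an ultrafilter containing } x \}$. The choice of $F$ outside the $G_i$ guarantees $\bigcap_{x\in X_i} h(x) = \emptyset$, i.e. the types $X_i$ are omitted, while $a \in F$ gives $h(a) \neq 0$. The ``in particular'' clause then follows: if $\A$ is atomic, apply this with each $X_i$ ranging over a singleton family of co-atom sets (there is only one such set, $\{-x : x \in \At\A\}$, with $\prod = 0$), so $1 = \bigcup_{x\in\At\A} h(x)$ and $h$ is a complete representation on the weak unit, by the argument of the last theorem of the preceding subsection.

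The main obstacle I expect is the bookkeeping in the Henkin construction showing that cylindrifier witnesses can genuinely be found inside the \emph{weak} space while the constructed $h$ remains a homomorphism commuting with all the $\c_i$ for $i<\omega$ --- this is where the presence of the extra $\omega$ dimensions in $\B$ (rather than just finitely many) is essential, and where one must be careful that the nowhere-density of the Henkin sets is not lost. A secondary technical point is justifying that $\A \subseteq_c \Nr_\omega\B$ really does transfer $\prod^\A X_i = 0$ to $\prod^\B X_i = 0$; this needs that the strong (complete) neat embedding is witnessed, which is built into the definition of $S_c\Nr_\omega\CA_{\omega+\omega}$, and then a routine argument that infima in $\Nr_\omega\B$ computed over a complete subalgebra agree with those in $\B$ on the relevant countable sets. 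Everything else is the familiar Baire-category packaging of omitting types, now calibrated so that $covK$ many types can be handled simultaneously.
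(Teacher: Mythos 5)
Your route is the one the paper itself relies on: the theorem is given here only with a citation to \cite{Sayed}, and the proof there is exactly this package --- pass to a dilation $\B$ with $\A\subseteq_c\Nr_{\omega}\B$ generated by $A$, check that $\prod X_i=0$ persists in $\B$, choose by a Baire category (equivalently Martin's axiom for countable orders, which is what $\lambda<covK$ buys) an ultrafilter containing a fixed non-zero $a$ and avoiding the $\lambda$ many nowhere dense sets coming from the types together with the countably many Henkin witness constraints coming from the spare coordinates in $(\omega+\omega)\setminus\omega$, and read off a homomorphism onto a weak set algebra with unit $^{\omega}U^{(p)}$; the atomic clause is the co-atom instance, exactly as you say.

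Two points in your write-up need repair, though neither changes the architecture. First, ``we may assume $\B$ is locally finite'' is false in dimension $\omega$: elements of $\A$ may have infinite dimension sets, so $\Sg^{\B}A$ need not lie in $\Lf_{\omega+\omega}$. What you actually get, and what the argument needs, is that $\Sg^{\B}A$ is dimension complemented: every element has dimension set contained in $\omega$ together with finitely many extra indices, hence misses infinitely many of the spare coordinates. It is this $\Dc$ property (not local finiteness) that makes each ${\sf c}_ix$ a supremum of substituted copies ${\sf s}^i_jx$ over spare indices $j\notin\Delta x$, which is what renders the Henkin sets nowhere dense and lets the step-by-step construction stay inside a weak space. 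Second, you silently assume $|\A|\leq\omega$; this is indeed the intended reading (the example immediately following the theorem shows the statement fails for uncountable algebras), but it must be stated, since for uncountable $\A$ the Stone space of the dilation is no longer second countable and the $covK$ covering argument collapses. With ``locally finite'' replaced by ``dimension complemented'' and countability made explicit, your proof is the standard one.
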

\begin{proof} \cite{Sayed}
\end{proof}

Now we give an example showing that the condition of countability condition cannot be omitted.
We use a lifting argument implemented via ultraproducts.

\begin{example}$\alpha$ be an infinite ordinal.
Then there exists $\B\in \Nr_{\alpha}\CA_{\alpha+\omega}$ that is atomic, uncountable and not completely
representable.
Let $\C(k)\in \Nr_k\CA_{\omega}$ be an atomic countable and not completely representable.
That such algebras exist for every finite $k\geq 3$ will be proved below, witness item 2 of theorem
\ref{Robinsexample}.

Let $I=\{\Gamma: \Gamma\subseteq \alpha,  |\Gamma|<\omega\}$.
For each $\Gamma\in I$, let $M_{\Gamma}=\{\Delta\in I: \Gamma\subseteq \Delta\}$,
and let $F$ be an ultrafilter on $I$ such that $\forall\Gamma\in I,\; M_{\Gamma}\in F$.
For each $\Gamma\in I$, let $\rho_{\Gamma}$
be a one to one function from $|\Gamma|$ onto $\Gamma.$
Let ${\C}_{\Gamma}$ be an algebra similar to $\CA_{\alpha}$ such that
$\Rd^{\rho_\Gamma}{\C}_{\Gamma}={\C}(|\Gamma|)$. In particular, ${\C}_{\Gamma}$ has an atomic Boolean reduct.
Let
$\B=\prod_{\Gamma/F\in I}\C_{\Gamma}$
We will prove that
$\B\in \Nr_\alpha\CA_{\alpha+\omega},$  $\B$ is atomic and $\B$ is not completely representable. The last two requirements are
easy. $\B$ is atomic, because it is an ultraproduct of atomic algebras.
$\B$ is not completely representable, even on weak units, because, $\Nr_n\B={\C}(n)$, and so  such a representation induces
a complete (square) representation of its $n$ neat reducts, $n\geq 3.$
For the first part, for each $\Gamma\in I$, we know that $\C(|\Gamma|+k) \in\CA_{|\Gamma|+k}$ and
$\Nr_{|\Gamma|}\C(|\Gamma|+k)\cong\C(|\Gamma|)$.
Let $\sigma_{\Gamma}$ be a one to one function
 $(|\Gamma|+\omega)\rightarrow(\alpha+\omega)$ such that $\rho_{\Gamma}\subseteq \sigma_{\Gamma}$
and $\sigma_{\Gamma}(|\Gamma|+i)=\alpha+i$ for every $i<\omega$. Let
$\A_{\Gamma}$ be an algebra similar to a
$\CA_{\alpha+\omega}$ such that
$\Rd^{\sigma_\Gamma}\A_{\Gamma}=\C(|\Gamma|+k)$.  Then $\Pi_{\Gamma/F}\A_{\Gamma}\in \CA_{\alpha+\omega}$.

We prove that $\B= \Nr_\alpha\Pi_{\Gamma/F}\A_\Gamma$.
Recall that $\B^r=\Pi_{\Gamma/F}\C^r_\Gamma$ and note
that $\C^r_{\Gamma}\subseteq A_{\Gamma}$
(the base of $\C^r_\Gamma$ is $C(|\Gamma|)$, the base of $\A_\Gamma$ is $C(|\Gamma|+k)$).
Now using the fact that neat reducts commute with forming ultraproducts, so, for each $\Gamma\in I$,
\begin{align*}
\Rd^{\rho_{\Gamma}}\C_{\Gamma}^r&=\C((|\Gamma|)\\
&\cong\Nr_{|\Gamma|}\C(|\Gamma|+k)\\
&=\Nr_{|\Gamma|}\Rd^{\sigma_{\Gamma}}\A_{\Gamma}\\
&=\Rd^{\sigma_\Gamma}\Nr_\Gamma\A_\Gamma\\
&=\Rd^{\rho_\Gamma}\Nr_\Gamma\A_\Gamma
\end{align*}
We deduce that
$\Pi_{\Gamma/F}\C^r_\Gamma\cong\Pi_{\Gamma/F}\Nr_\Gamma\A_\Gamma=\Nr_\alpha\Pi_{\Gamma/F}\A_\Gamma$.
\end{example}

Squaring the unit is also problematic as the following example taken from \cite{HH}
illustrates.

\begin{example}

There are countable atomic representable $\CA$s that are not completely representable on square units.
For $\alpha\geq \omega$, the class of square completely representable algebras is not elementary.
In particular, there is an algebra that is completely representable, but not square completely representable.
Let $\C\in SA_{\alpha}$ such that $\C\models d_{01}<1$.  Such algebras exist, for example one can take $\C$ to be
$\wp(^{\alpha}2).$ Assume that $f: \C\to \wp(^{\alpha}X)$ is a square  complete representation.
Since $\C\models d_{01}<1$, there is $s\in h(-d_{01})$ so that if $x=s_0$ and $y=s_1$, we have
$x\neq y$. For any $S\subseteq \alpha$ such that $0\in S$, set $a_S$ to be the sequence with
$ith$ coordinate is $x$, if $i\in S$ and $y$ if $i\in \alpha\sim S$.
By complete representability every $a_S$ is in $h(1)$ and so in
$h(\mu)$ for some unique atom $\mu$.

Let $S, S'\subseteq \alpha$ be distinct and assume each contains $0$.   Then there exists
$i<\alpha$ such that $i\in S$, and $i\notin S'$. So $a_S\in h(d_{01})$ and
$a_S'\in h (-d_{01}).$ Therefore atoms corresponding to different $a_S$'s are distinct.
Hence the number of atoms is equal to the number of subsets of $\alpha$ that contain $0$, so it is at least $^{|\alpha|}2$.
Now using the downward Lowenheim Skolem Tarski theorem, take an elementary substructure $\B$ of $\C$ with $|\B|\leq |\alpha|.$
Then in $\B$ we have $\B\models d_{01}<1$. But $\B$ has at most $|\alpha|$ atoms, and so $\B$ cannot be {\it square}
completely representable
though it is completely representable on a weak unit).

\end{example}

\section*{Part Two}

\section{Games and neat embeddings}

We will ultimately show that the omitting types theorem fails
for finite variable fragments of first order logic with $n\geq 3$,
even if we ask only for `$n$ guarded models' omitting single non principle ones in countable languages.
There may not be one. In $n$ guarded semantics the witness for cylindrifiers can  only found on $<n$ cliques.
We use rainbow constructions for cylindric algebras.

Our metalogical result will follow from the algebraic result to be proved below
that there is a countable $\PEA_n$ that is elementary equivalent to a completely representable one but
its $\Sc$ reduct is
not even in $S_c\Nr_n\Sc_{n+3}$.

This is the cylindric-analogue of a relation algebra result proved by Hirsch \cite{r}.
In this section we do some of the required algebra.
Later we do the logic.

We need some preparing to do:
\begin{definition}\label{subs}
Let $n$ be an ordinal. An $s$ word is a finite string of substitutions $({\sf s}_i^j)$,
a $c$ word is a finite string of cylindrifications $({\sf c}_k)$.
An $sc$ word is a finite string of substitutions and cylindrifications
Any $sc$ word $w$ induces a partial map $\hat{w}:n\to n$
by
\begin{itemize}

\item $\hat{\epsilon}=Id$

\item $\widehat{w_j^i}=\hat{w}\circ [i|j]$

\item $\widehat{w{\sf c}_i}= \hat{w}\upharpoonright(n\sim \{i\}$

\end{itemize}
\end{definition}

If $\bar a\in {}^{<n-1}n$, we write ${\sf s}_{\bar a}$, or more frequently
${\sf s}_{a_0\ldots a_{k-1}}$, where $k=|\bar a|$,
for an an arbitrary chosen $sc$ word $w$
such that $\hat{w}=\bar a.$
$w$  exists and does not
depend on $w$ by \cite[definition~5.23 ~lemma 13.29]{HHbook}.
We can, and will assume \cite[Lemma 13.29]{HHbook}
that $w=s{\sf c}_{n-1}{\sf c}_n.$
[In the notation of \cite[definition~5.23,~lemma~13.29]{HHbook},
$\widehat{s_{ijk}}$ for example is the function $n\to n$ taking $0$ to $i,$
$1$ to $j$ and $2$ to $k$, and fixing all $l\in n\setminus\set{i, j,k}$.]
The following is the $\CA$ analogue of \cite[lemma~19]{r}.
In the next definition we extend the definition of atomic cylindric networks to polyadic ones.
For diagonal free reducts, the definition is modified the obvious way. For example for $\Df$s, we only have the
first condition, for $\sf Sc$s we do not have the second,
and the fourth ${\sf s}_{[i,j]}$ and $[i,j]$ are  replaced, respectively, by
${\sf s}_i^j$ and $[i|j].$

Let $\delta$ be a map. Then $\delta[i\to d]$ is defined as follows. $\delta[i\to d](x)=\delta(x)$
if $x\neq i$ and $\delta[i\to d](i)=d$. We write $\delta_i^j$ for $\delta[i\to \delta_j]$.

\begin{definition}
From now on let $2< n<\omega.$ Let $\C$ be an atomic ${\sf PEA}_{n}$.
An \emph{atomic  network} over $\C$ is a map
$$N: {}^{n}\Delta\to \At\C$$
such that the following hold for each $i,j<n$, $\delta\in {}^{n}\Delta$
and $d\in \Delta$:
\begin{itemize}
\item $N(\delta^i_j)\leq {\sf d}_{ij}$
\item $N(\delta[i\to d])\leq {\sf c}_iN(\delta)$
\item $N(\bar{x}\circ [i,j])= {\sf s}_{[i,j]}N(\bar{x})$ for all $i,j<n$.

\end{itemize}
\end{definition}
Note than $N$ can be viewed as a hypergraph with set of nodes $\Delta$ and
each hyperedge in ${}^{\mu}\Delta$ is labelled with an atom from $\C$.
We call such hyperedges atomic hyperedges.
We write $\nodes(N)$ for $\Delta.$ We let $N$ stand for the set of nodes
as well as for the function and the network itself. Context will help.
We assume that $\nodes(N)\subseteq \N$.
Formulated for $\sf Df$s only, the next definition applies to all algebras considered.

\begin{definition}\label{def:games}
Let $2\leq n<\omega$. For any ${\sf PEA_n}$
atom structure $\alpha$ and $n\leq m\leq
\omega$, we define two-player games $G(\alpha),$ \; and
$F^m(\alpha)$,
each with $\omega$ rounds.
\begin{enumarab}
\item   $G(\alpha)$ is the usual atomic game on networks \cite{HHbook2}.

\item $F^m$ is very similar, except that \pa\ can choose from only $m>n$ pebbles,
but he can re use them.

Let $m\leq \omega$.
In a play of $F^m(\alpha)$ the two players construct a sequence of
networks $N_0, N_1,\ldots$ where $\nodes(N_i)$ is a finite subset of
$m=\set{j:j<m}$, for each $i$.  In the initial round of this game \pa\
picks any atom $a\in\alpha$ and \pe\ must play a finite network $N_0$ with
$\nodes(N_0)\subseteq  m$,
such that $N_0(\bar{d}) = a$
for some $\bar{d}\in{}^{n}\nodes(N_0)$.
In a subsequent round of a play of $F^m(\alpha)$ \pa\ can pick a
previously played network $N$ an index $\l<n$, a ``face"
$F=\langle f_0,\ldots f_{n-2} \rangle \in{}^{n-2}\nodes(N),\; k\in
m\setminus\set{f_0,\ldots f_{n-2}}$, and an atom $b\in\alpha$ such that
$b\leq {\sf c}_lN(f_0,\ldots f_i, x,\ldots f_{n-2}).$
(the choice of $x$ here is arbitrary,
as the second part of the definition of an atomic network together with the fact
that $\cyl i(\cyl i x)=\cyl ix$ ensures that the right hand side does not depend on $x$).
This move is called a \emph{cylindrifier move} and is denoted
$(N, \langle f_0, \ldots f_{n-2}\rangle, k, b, l)$ or simply $(N, F,k, b, l)$.
In order to make a legal response, \pe\ must play a
network $M\supseteq N$ such that
$M(f_0,\ldots f_{i-1}, k, f_{i+1},\ldots f_{n-2}))=b$
and $\nodes(M)=\nodes(N)\cup\set k$.

\pe\ wins $F^m(\alpha)$ if she responds with a legal move in each of the
$\omega$ rounds.  If she fails to make a legal response in any
round then \pa\ wins.
\end{enumarab}
\end{definition}

We need some more technical lemmas which are  generalizations of lemmas formulated for relation algebras
in \cite{r}.

\begin{definition}\label{def:hat}
For $m\geq 5$ and $\C\in\CA_m$, if $\A\subseteq\Nr_n(\C)$ is an
atomic cylindric algebra and $N$ is an $\A$-network with $\nodes(N)\subseteq m$, then we define
$\widehat N\in\C$ by
\[\widehat N =
 \prod_{i_0,\ldots i_{n-1}\in\nodes(N)}{\sf s}_{i_0, \ldots i_{n-1}}N(i_0\ldots i_{n-1})\]
$\widehat N\in\C$ depends
implicitly on $\C$.
\end{definition}

The next definition is for ${\sf Sc}$s, and of course to applies to all its expansion studied here.
We write $\A\subseteq_c \B$ if $\A$ is a complete subalgebra of $\B$.

\begin{lemma}\label{lem:atoms2}
Let $n<m$ and let $\A$ be an atomic $\CA_n$,
$\A\subseteq_c\Nr_n\C$
for some $\C\in\CA_m$.  For all $x\in\C\setminus\set0$ and all $i_0, \ldots i_{n-1} < m$ there is $a\in\At(\A)$ such that
${\sf s}_{i_0\ldots i_{n-1}}a\;.\; x\neq 0$.
\end{lemma}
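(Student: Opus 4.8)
\textbf{Proof plan for Lemma \ref{lem:atoms2}.}

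The plan is to exploit the fact that $\A$ is atomic together with complete additivity of the substitution operators, transferred from $\A$ up to the dilation $\C$. First I would reduce to the case where the sequence $i_0,\dots,i_{n-1}$ lists $n$ \emph{distinct} elements below $m$; indeed, given an arbitrary sequence, let $w={\sf s}_{i_0\dots i_{n-1}}$ be the $sc$-word of Definition \ref{subs} whose induced partial map $\widehat{w}$ realises the required substitution pattern, and recall (as remarked after that definition, following \cite[Lemma 13.29]{HHbook}) that $w$ can be taken of the form $s\,{\sf c}_{n-1}\cdots$ acting on a tuple of distinct indices. So it suffices to prove the claim for a genuine substitution ${\sf s}_{\tau}$ with $\tau$ injective, or even just for the "spread-out" term on distinct nodes, the general case following by composing with the appropriate cylindrifiers and using that ${\sf c}_i$ is normal and additive.

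Next, the key step: since $\A$ is atomic, $\sum^{\A}\At(\A)=1$, and because $\A\subseteq_c\Nr_n\C$ the supremum is preserved, so $\sum^{\C}\At(\A)=1$ as well — here it is crucial that the embedding is \emph{complete}, not merely a subalgebra embedding, exactly as in the proof of Theorem \ref{ef} where $\sum^{\A}X=\sum^{\B}X$ was used. Now each substitution operator ${\sf s}_{i_0\dots i_{n-1}}$ of $\C$ (being a composition of the completely additive operators ${\sf s}_i^j$ and ${\sf c}_k$, all of which are completely additive in $\CA_m$) is completely additive, hence
\[
\sum_{a\in\At(\A)}{\sf s}_{i_0\dots i_{n-1}}a \;=\; {\sf s}_{i_0\dots i_{n-1}}\Big(\sum_{a\in\At(\A)}a\Big)\;=\;{\sf s}_{i_0\dots i_{n-1}}1.
\]
For the distinct-index case this last element is $1$ (a substitution by an injection applied to the top element); when some indices coincide it is the corresponding diagonal-type element, but in every case, multiplying through by a fixed nonzero $x\in\C$ and using that $x\cdot{\sf s}_{i_0\dots i_{n-1}}1\ne 0$ (which holds because $x\le {\sf s}_{i_0\dots i_{n-1}}1$ for the "spread" term after absorbing cylindrifiers, or can be arranged by the reduction above), distributivity of meet over the join gives $\sum_{a\in\At(\A)}\big({\sf s}_{i_0\dots i_{n-1}}a\cdot x\big)=x\cdot{\sf s}_{i_0\dots i_{n-1}}1\ne 0$. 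Therefore at least one term ${\sf s}_{i_0\dots i_{n-1}}a\cdot x$ is nonzero, which is exactly what we want.

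The main obstacle I anticipate is the bookkeeping around ${\sf s}_{i_0\dots i_{n-1}}1$ when the indices are not distinct and when $x$ is not \emph{a priori} below it: one must check that the reduction in the first paragraph genuinely handles an arbitrary $x$, i.e.\ that for \emph{every} nonzero $x$ there is a choice of $sc$-word representative $w$ with $\widehat{w}$ realising the prescribed pattern for which $x\cdot \hat{w}(1)\ne 0$. This is handled by first applying the cylindrifier part of $w$: if $w=w'{\sf c}_k\cdots$ then ${\sf c}_k x\ne 0$ whenever $x\ne 0$, and ${\sf c}_k x\cdot\text{(cylindrical unit)}={\sf c}_k x$, so one peels off the cylindrifications first, reducing to the purely substitutional case where the target is a product of diagonals that $x$ can be intersected with nontrivially after a further (harmless) cylindrification — the argument here mirrors \cite[Lemma 19]{r} and its use of \cite[Lemma 13.29]{HHbook}. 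Once that reduction is in place the complete-additivity computation above closes the proof; I expect no difficulty with additivity itself since all operators in the $\CA_m$ signature relevant here are completely additive and complete subalgebras preserve the needed suprema.
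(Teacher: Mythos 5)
Your proposal is correct and takes essentially the same route as the paper: complete additivity of the word ${\sf s}_{i_0\ldots i_{n-1}}$ (a composition of completely additive ${\sf s}^i_j$'s and ${\sf c}_k$'s), preservation of $\sum\At(\A)=1$ through the complete embedding $\A\subseteq_c\Nr_n\C\subseteq\C$, and then infinite distributivity of meet over the existing supremum (the paper phrases this last step as the contradiction that $1-x$ would bound $\set{{\sf s}_{i_0\ldots i_{n-1}}a:a\in\At(\A)}$). Your only detour --- the worry that ${\sf s}_{i_0\ldots i_{n-1}}1$ might be a proper diagonal-type element when indices repeat --- is unnecessary: every operation occurring in the $sc$-word sends $1$ to $1$, so ${\sf s}_{i_0\ldots i_{n-1}}1=1$ in all cases; the paper instead just drops the cylindrifier part of the word outright, since cylindrifiers with indices $\geq n$ fix every element of $\A\subseteq\Nr_n\C$, leaving a pure string of substitutions.
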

\begin{proof}
We can assume, see definition  \ref{subs},
that ${\sf s}_{i_0,\ldots i_{n-1}}$ consists only of substitutions, since ${\sf c}_{m}\ldots {\sf c}_{m-1}\ldots
{\sf c}_nx=x$
for every $x\in \A$.We have ${\sf s}^i_j$ is a
completely additive operator (any $i, j$), hence ${\sf s}_{i_0,\ldots i_{\mu-1}}$
is too  (see definition~\ref{subs}).
So $\sum\set{{\sf s}_{i_0\ldots i_{n-1}}a:a\in\At(\A)}={\sf s}_{i_0\ldots i_{n-1}}
\sum\At(\A)={\sf s}_{i_0\ldots i_{n-1}}1=1$,
for any $i_0,\ldots i_{n-1}<n$.  Let $x\in\C\setminus\set0$.  It is impossible
that ${\sf s}_{i_0\ldots i_{n-1}}\;.\;x=0$ for all $a\in\At(\A)$ because this would
imply that $1-x$ was an upper bound for $\set{{\sf s}_{i_0\ldots i_{n-1}}a:
a\in\At(\A)}$, contradicting $\sum\set{{\sf s}_{i_0\ldots i_{n-1}}a :a\in\At(\A)}=1$.
\end{proof}

For networks $M, N$ and any set $S$, we write $M\equiv^SN$
if $N\restr S=M\restr S$, and we write $M\equiv_SN$
if the symmetric difference $\Delta(\nodes(M), \nodes(N))\subseteq S$ and
$M\equiv^{(\nodes(M)\cup\nodes(N))\setminus S}N$. We write $M\equiv_kN$ for
$M\equiv_{\set k}N$.

Recall that $F^m$ is the usual atomic game on networks, except that the nodes are $m$ and \pa\ can re use nodes. Then:

\begin{theorem}\label{thm:n}
Let $n<m$, and let $\A$ be an atomic $\CA_m.$
If $\A\in{\bf S_c}\Nr_{n}\CA_m, $
then \pe\ has a \ws\ in $F^m(\At\A)$. In particular, if $\A$ is countable and completely representable, then \pe\ has a \ws\ in $F^{\omega}(\At\A).$
In the latter case since $F^{\omega}(\At\A)$ is equivalent to the usual atomic rounded game on networks,
the converse is also true.
\end{theorem}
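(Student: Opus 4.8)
The plan is to prove the main implication by handing \pe\ an explicit strategy in $F^m(\At\A)$, and then to read off the two further clauses from standard facts about complete representations. So suppose $\A$ is atomic with $\A\subseteq_c\Nr_n\C$ for some $\C\in\CA_m$ (this is what $\A\in{\bf S_c}\Nr_n\CA_m$ unpacks to), where $n<m\le\omega$. \pe\ will only ever play networks $N$ with $\nodes(N)\subseteq m$, and her running invariant across rounds will be $\widehat N\neq 0$ in $\C$, where $\widehat N$ is the element of $\C$ of Definition~\ref{def:hat}. In the opening round \pa\ picks an atom $a\in\At\A$; \pe\ answers with the atomic network $N_0$ on the $n$ nodes $\set{0,\dots,n-1}$ induced by $a$, i.e.\ the labels $N_0(\bar i)$ are the ones forced by the three network conditions from $N_0(0,1,\dots,n-1)=a$ (atoms being chosen below the relevant substituted or diagonal elements; this routine construction is the $\CA$ form of the one used in \cite{r} and \cite{HHbook2}). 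One checks $\widehat{N_0}\neq 0$: the term of the product indexed by the identity tuple is $a$ itself, so $\widehat{N_0}\le a$, and $\widehat{N_0}\neq 0$ follows because $a$ is a nonzero element of $\Nr_n\C$ and $\A\subseteq_c\Nr_n\C$ — the remaining factors being $\ge a$, using that substitution by a permutation is a Boolean automorphism and the relevant compositions cancel.

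For a round after the first, let $N$ with $\widehat N\neq 0$ be a network that \pa\ points to, and let his cylindrifier move be $(N,F,k,b,l)$, where $F=\langle f_0,\dots,f_{n-2}\rangle$ lists $n-1$ nodes of $N$, $k\in m\setminus\rng F$, $b\in\At\A$ and $b\le{\sf c}_l N(f_0,\dots,x,\dots,f_{n-2})$ with $x$ occupying position $l$. \pe\ must build a network $M\supseteq N$ with $\nodes(M)=\nodes(N)\cup\set k$, assigning $b$ to the hyperedge $\bar d$ that agrees with $F$ off position $l$ and has $k$ in position $l$, and — so that she can keep going — with $\widehat M\neq 0$. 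She first observes, using $b\le{\sf c}_l N(\cdot)$ together with ${\sf c}_k\widehat N=\widehat N$ (valid since $k\notin\nodes(N)$, so $k$ lies outside the dimension set of each factor of $\widehat N$) and the commutation of ${\sf c}_k$ with the substitutions not moving $k$, that $\widehat N\cdot{\sf s}_{\bar d}b\neq 0$ in $\C$; this is the factor she must place on $\bar d$ by hand, as its label is prescribed. She then labels the finitely many remaining new hyperedges through $k$, one at a time in a suitable order: relative to the edges already labelled, the network conditions either pin a new label down (a forced edge, automatically compatible with the running product because of the earlier steps) or leave it free to range over the atoms below a definite element $y\in\A$, in which case it suffices to pick an atom $c\le y$ whose substituted image ${\sf s}_{(\cdot)}c$ keeps the product of the substituted labels placed so far nonzero in $\C$ — which is exactly what Lemma~\ref{lem:atoms2} supplies (applied inside $\C$, using that the $sc$-words of Definition~\ref{subs} are genuine terms). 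When every new hyperedge has been treated, $M\supseteq N$ is a legal reply with $\widehat M\neq 0$, so the invariant is preserved and \pe\ wins $F^m(\At\A)$. The one place where genuine work is needed is this edge-completion step — keeping the network conditions through $k$ mutually consistent while preserving $\widehat{(\cdot)}\neq 0$ — and it is the exact $\CA$ transcription of the relation-algebra argument of \cite{r}.

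For the ``in particular'' clause, recall that a completely representable algebra is atomic, and that from a complete representation $h:\A\to\wp({}^nU)$ the map $a\mapsto\set{s\in{}^\omega U:\ s\restr n\in h(a)}$ is a complete embedding of $\A$ into $\Nr_n\wp({}^\omega U)$ — the cylindric sets over $U$ forming a complete subalgebra of the full $\CA_\omega$ $\wp({}^\omega U)$, and $h$ preserving all suprema that exist; hence $\A\in{\bf S_c}\Nr_n\CA_\omega$, and the main part with $m=\omega$ gives \pe\ a winning strategy in $F^\omega(\At\A)$. Conversely, $F^\omega(\At\A)$ is equivalent to the usual $\omega$-rounded atomic network game $G^\omega(\At\A)$: with $\omega$ reusable pebbles \pa\ is no stronger than with fresh pebbles at each round, and \pe\ may always answer using a fresh node, so a winning strategy for \pe\ transfers either way; and for a countable atomic $\CA_n$, a winning strategy for \pe\ in $G^\omega$ on $\At\A$ produces a complete representation of $\A$ by the Hirsch--Hodkinson theorem \cite{HHbook2}. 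This yields the converse, completing the proof.
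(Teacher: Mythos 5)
Your proof is correct and follows essentially the same route as the paper's: maintain the invariant $\widehat N\neq 0$ (Definition~\ref{def:hat}) for every network \pe\ plays, use the complete embedding $\A\subseteq_c\Nr_n\C$ together with Lemma~\ref{lem:atoms2} to label new hyperedges one atom at a time while keeping the product nonzero, and handle the cylindrifier move via ${\sf c}_k\widehat N=\widehat N$ for $k\notin\nodes(N)$. The only difference is organizational — the paper first isolates the general facts (any nonzero $x\in\C$ is compatible with some network on any prescribed finite node set, and compatible networks agree on common nodes) and then applies them, whereas you inline the edge-by-edge construction inside the strategy — and your explicit treatment of the ``in particular'' clause and the converse via the Hirsch--Hodkinson characterization matches what the paper leaves implicit.
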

\begin{proof}

The proof of the first part is based on repeated use of
lemma ~\ref{lem:atoms2}. We first show:
\begin{enumerate}
\item For any $x\in\C\setminus\set0$ and any
finite set $I\subseteq m$ there is a network $N$ such that
$\nodes(N)=I$ and $x\;.\;\widehat N\neq 0$.
\item
For any networks $M, N$ if
$\widehat M\;.\;\widehat N\neq 0$ then $M\equiv^{\nodes(M)\cap\nodes(N)}N$.
\end{enumerate}
We define the edge labelling of $N$ one edge
at a time. Initially no hyperedges are labelled.  Suppose
$E\subseteq\nodes(N)\times\nodes(N)\ldots  \times\nodes(N)$ is the set of labelled hyper
edges of $N$ (initially $E=\emptyset$) and
$x\;.\;\prod_{\bar c \in E}{\sf s}_{\bar c}N(\bar c)\neq 0$.  Pick $\bar d$ such that $\bar d\not\in E$.
Then there is $a\in\At(\c A)$ such that
$x\;.\;\prod_{\bar c\in E}{\sf s}_{\bar c}N(\bar c)\;.\;{\sf s}_{\bar d}a\neq 0$.
Include the edge $\bar d$ in $E$.  Eventually, all edges will be
labelled, so we obtain a completely labelled graph $N$ with $\widehat
N\neq 0$.
it is easily checked that $N$ is a network.
For the second part, if it is not true that
$M\equiv^{\nodes(M)\cap\nodes(N)}N$ then there are is
$\bar c \in^{n-1}\nodes(M)\cap\nodes(N)$ such that $M(\bar c )\neq N(\bar c)$.
Since edges are labelled by atoms we have $M(\bar c)\cdot N(\bar c)=0,$
so $0={\sf s}_{\bar c}0={\sf s}_{\bar c}M(\bar c)\;.\; {\sf s}_{\bar c}N(\bar c)\geq \widehat M\;.\;\widehat N$.

Next, we show that:

\begin{enumerate}
\item\label{it:-i}
If $i\not\in\nodes(N)$ then ${\sf c}_i\widehat N=\widehat N$.

\item \label{it:-j} $\widehat{N Id_{-j}}\geq \widehat N$.

\item\label{it:ij} If $i\not\in\nodes(N)$ and $j\in\nodes(N)$ then
$\widehat N\neq 0 \rightarrow \widehat{N[i/j]}\neq 0$.
where $N[i/j]=N\circ [i|j]$

\item\label{it:theta} If $\theta$ is any partial, finite map $n\to n$
and if $\nodes(N)$ is a proper subset of $n$,
then $\widehat N\neq 0\rightarrow \widehat{N\theta}\neq 0$.
\end{enumerate}

The first part is easy.
The second
part is by definition of $\;\widehat{\;}$. For the third part suppose
$\widehat N\neq 0$.  Since $i\not\in\nodes(N)$, by part~\ref{it:-i},
we have ${\sf c}_i\widehat N=\widehat N$.  By cylindric algebra axioms it
follows that $\widehat N\;.\;d_{ij}\neq 0$.  From the above
there is a network $M$ where $\nodes(M)=\nodes(N)\cup\set i$ such that
$\widehat M\;.\widehat N\;.\;d_{ij}\neq 0$.  From the first part, we
have $M\supseteq N$ and $M(i, j)\leq 1'$.  It follows that $M=N[i/j]$.
Hence $\widehat{N[i/j]}\neq 0$.
For the final part
(cf. \cite[lemma~13.29]{HHbook}), since there is
$k\in n\setminus\nodes(N)$, \/ $\theta$ can be
expressed as a product $\sigma_0\sigma_1\ldots\sigma_t$ of maps such
that, for $s\leq t$, we have either $\sigma_s=Id_{-i}$ for some $i<n$
or $\sigma_s=[i/j]$ for some $i, j<n$ and where
$i\not\in\nodes(N\sigma_0\ldots\sigma_{s-1})$.
Now apply parts~\ref{it:-j} and \ref{it:ij}.

Now we prove the required. $\A\subseteq\Nr_n\C$ for some $\C\in\CA_m$ then \pe\ always
plays networks $N$ with $\nodes(N)\subseteq n$ such that
$\widehat N\neq 0$. In more detail, in the initial round, let \pa\ play $a\in \At\A$.
\pe\ plays a network $N$ with $N(0, \ldots n-1)=a$. Then $\widehat N=a\neq 0$.
At a later stage suppose \pa\ plays the cylindrifier move
$(N, \langle f_0, \ldots f_{n-2}\rangle, k, b, l)$
by picking a
previously played network $N$ and $f_i\in \nodes(N), \;l<n,  k\notin \{f_i: i<n-2\}$,
and $b\leq {\sf c}_lN(f_0,\ldots  f_{i-1}, x, f_{i+1}, \ldots f_{n-2})$.

Let $\bar a=\langle f_0\ldots f_{i-1}, k, f_{i+1}, \ldots f_{n-2}\rangle.$
Then ${\sf c}_k\widehat N\cdot {\sf s}_{\bar a}b\neq 0$.
Then by the above there is a network  $M$ such that
$\widehat{M}.\widehat{{\sf c}_kN}\cdot {\sf s}_{\bar a}b\neq 0$.

Hence
$M(f_0,\dots, f_{i-1}, k, f_{i-2}, \ldots f_{n-2})=b$, and $M$ is the required response.
\end{proof}

\begin{definition}

We construct our desired example using the rainbow construction for cylindric algebras. We follow \cite{HHbook2}, with two deviations.
Given a graph $\Gamma$, we denote $\R(\Gamma)$ by $\PEA_{\sf G, \Gamma}$, where $\sf G$ is the complete irreflexive graph consisting
of the greens. $\Gamma$ is the set of red colours. We also consider polyadic equality algebras; the polyadic
accessibility relation can be defined easily by
$[f]\equiv _{ij} [g]$ iff $f\circ [i,j]=g$.

We define a new class $\K$ consisting of coloured graphs
(which are the models of the rainbow signature satisfying the $L_{\omega_1, \omega}$ theory as formulated
in \cite{HHbook2}). Let $\N^{-{1}}$ be $\N$ with reverse order, and let $g:\N\to \N^{-1}$ be the identity map, we denote $g(n)$ by $-n$.
We assume that $0\in \N$. We take ${\sf G}=\N^{-1}$ and $\Gamma=\N$.
\begin{definition}
Let $i\in \N^{-1}$, and let $\Gamma$ be a coloured graph  consisting of $n$ nodes
$x_0,\ldots  x_{n-2}, z$. We call $\Gamma$ an $i$ - cone if $\Gamma(x_0, z)=\g^0_i$
and for every $1\leq j\leq n-2$ $\Gamma(x_j, z)=\g_j$,
and no other edge of $\Gamma$
is coloured green.
$(x_0,\ldots x_{n-2})$
is called the center of the cone, $z$ the apex of the cone
and $i$ the tint of the cone.
\end{definition}
Translating this theory from models of the rainbow signature to coloured graphs,
we get that $\K$ consists of:

\begin{enumarab}

\item $M$ is a complete graph.

\item $M$ contains no triangles (called forbidden triples)
of the following types:

\vspace{-.2in}
\begin{eqnarray}
&&\nonumber\\
(\g, \g^{'}, \g^{*}), (\g_i, \g_{i}, \w),
&&\mbox{any }i\in n-1\;  \\
(\g^j_0, \g^k_0, \w_0)&&\mbox{ any } j, k\in \N\\
\label{forb:pim}(\g^i_0, \g^j_0, \r_{kl})&&\mbox{unless } \set{(i, k), (j, l)}\mbox{ is an order-}\\
&&\mbox{ preserving partial function }\N^{-1}\to\N\nonumber\\
\label{forb:match}(\r_{ij}, \r_{j'k'}, \r_{i^*k^*})&&\mbox{unless }i=i^*,\; j=j'\mbox{ and }k'=k^*
\end{eqnarray}
and no other triple of atoms is forbidden.

\item If $a_0,\ldots   a_{n-2}\in M$ are distinct, and no edge $(a_i, a_j)$ $i<j<n$
is coloured green, then the sequence $(a_0, \ldots a_{n-2})$
is coloured a unique shade of yellow.
No other $(n-1)$ tuples are coloured shades of yellow.

\item If $D=\set{d_0,\ldots  d_{n-2}, \delta}\subseteq M$ and
$\Gamma\upharpoonright D$ is an $i$ cone with apex $\delta$, inducing the order
$d_0,\ldots  d_{n-2}$ on its base, and the tuple
$(d_0,\ldots d_{n-2})$ is coloured by a unique shade
$\y_S$ then $i\in S.$

\end{enumarab}

\end{definition}
We denote the complex algebra on this atom structure by $\PEA_{\N^{-1}, \N}$.

\begin{theorem} \pa\ has a winning strategy in
$F^{n+3}(\At\Rd_{sc}\PEA_{\N^-1,\N})$
\end{theorem}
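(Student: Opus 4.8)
The plan is to mimic the standard rainbow $\forall$-strategy (as in \cite{HHbook2} and, for the $n+3$-pebble bound, essentially the argument bounding $S_c\Nr_n\CA_{n+3}$ from outside) adapted to the game $F^{n+3}$ of Definition \ref{def:games}, where $\forall$ has only $n+3$ nodes available but may reuse them. First I would recall that by Theorem \ref{thm:n}, if $\forall$ wins $F^{m}(\At\Rd_{sc}\PEA_{\N^{-1},\N})$ then the algebra is not in $\mathbf{S}_c\Nr_n\Sc_{m}$; so a winning strategy in $F^{n+3}$ gives the desired non-neat-embeddability, which is the point of the theorem. The construction of $\forall$'s strategy is the usual ``bombardment by cones'' argument: $\forall$ uses his greens $\g^0_i$, $i\in\N^{-1}$, together with the middle greens $\g_1,\dots,\g_{n-2}$, to force $\exists$ to build, on a fixed base $(x_0,\dots,x_{n-2})$ of $n-1$ nodes, a sequence of apexes $z_0,z_1,z_2,\dots$ each forming a cone of a distinct tint $-k$ over that base. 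Because $\forall$ has $n+3$ nodes, he can keep the $n-1$ base nodes fixed and cycle the remaining $4$ nodes through the roles of two apexes $z_p,z_q$ plus two auxiliary nodes needed to set up each cone; reusing nodes is exactly what lets a finite board sustain an unbounded sequence of cones.

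Next I would analyse the edges forced between apexes. Whenever $\forall$ has produced two cones with apexes $z_p$ (tint $-k_p$) and $z_q$ (tint $-k_q$), $k_p<k_q$, the edge $(z_p,z_q)$ cannot be green (that would create a forbidden $(\g,\g',\g^\ast)$ or a $(\g_i,\g_i,\w)$ triple with the base), cannot be white for the same reason, and cannot be yellow since yellow only colours $(n-1)$-tuples with no green edges; so it must be a red $\r_{k_p' k_q'}$, and by the forbidden-triple clause (\ref{forb:pim}) involving $(\g^{i}_0,\g^{j}_0,\r_{kl})$ the red indices are constrained to match the tints, i.e. the pair $\{(k_p,k_p'),(k_q,k_q')\}$ must be order-preserving as a partial map $\N^{-1}\to\N$. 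Iterating, the indices of the reds appearing on the apex-clique are forced to be a strictly decreasing sequence in $\N$ as $\forall$ plays more cones — using that the tints $-k_p$ strictly decrease in $\N^{-1}$ while the matching must be order-preserving. Since $\N$ is well-ordered there is no infinite strictly decreasing sequence, so $\exists$ must eventually fail to find a consistent red label, i.e. some cylindrifier move of $\forall$ has no legal response. The key point is that $\forall$ only ever needs the fixed base plus a bounded number of transient nodes, so $n-1$ plus the handful needed to install each new cone and to witness the relevant cylindrifiers — which is why $n+3$ suffices (this is the same count that gives $S_c\Nr_n\Sc_{n+3}$ as the barrier, matching the abstract in the excerpt).

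Concretely, the steps in order: (1) state the reduction via Theorem \ref{thm:n} so that it is enough to exhibit $\forall$'s winning strategy in $F^{n+3}$; (2) describe $\forall$'s opening — play an atom that is a coloured graph on $n$ nodes containing one cone, with base $x_0,\dots,x_{n-2}$ and apex $z_0$ of some tint; (3) describe the inductive step — given the current network with base $B$ and apexes $z_0,\dots,z_{t}$, $\forall$ plays cylindrifier moves that force $\exists$ to add a new apex $z_{t+1}$ forming a cone of a new, lower tint over $B$, re-using nodes outside $B\cup\{z_{t+1}\}$ and the one or two apexes currently being compared; (4) carry out the edge analysis above to show the red indices on $\{z_0,z_1,\dots\}$ are forced strictly downward in $\N$; (5) conclude by well-foundedness of $\N$ that $\exists$ loses in finitely many rounds. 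The main obstacle I expect is the bookkeeping in step (3): verifying that $n+3$ pebbles really are enough to reinstall cones indefinitely while keeping all the base nodes and the two apexes under comparison alive, and checking that each intermediate network $\forall$ forces is itself a legal atomic network (all triangles permitted, yellow tuples consistent with cone tints via clause 4 of the definition of $\K$). This is the part where one must be careful about exactly which nodes are transient; the rest is a routine transcription of the known rainbow argument, so I would cite \cite{HHbook2} and \cite{r} for the analogous relation-algebra computation and only spell out the cylindric-specific triangle checks.
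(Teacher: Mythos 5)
Your overall plan is the one the paper itself follows: fix a base of $n-1$ nodes, bombard \pe\ with cones over that base whose tints $\g_0^0,\g_0^{-1},\g_0^{-2},\dots$ strictly decrease in $\N^{-1}$, note that any two apexes must be joined by a red, and win by well-foundedness of $\N$, all within $n+3$ reusable nodes. However, the step where you extract the contradiction has a genuine gap. You derive the claim that the red indices on the apex clique "are forced to be a strictly decreasing sequence in $\N$" from the green--red clause alone (the requirement that $\{(i,k),(j,l)\}$ be an order-preserving partial map $\N^{-1}\to\N$). That clause constrains each apex--apex edge \emph{separately}: it only says that the two indices of the red on that particular edge are ordered compatibly with the two tints. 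It imposes no relation between the reds on different edges, so \pe\ could, for instance, label every apex--apex edge by $\r_{10}$, or by fresh unrelated indices, and would never be driven down $\N$; no descent, hence no contradiction, follows from this clause alone. The missing ingredient is the other forbidden triple, the one on reds: $(\r_{ij},\r_{j'k'},\r_{i^*k^*})$ is inconsistent unless $i=i^*$, $j=j'$ and $k'=k^*$. When a new apex is adjoined while two previously created apexes are still on the board, the triangle of three reds forces the indices of the new reds to match the one already present; this is what glues the per-edge order-preserving pairs into a single order-preserving assignment of indices to apexes, and only then does the strict decrease of the tints force a strictly decreasing sequence in $\N$. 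This is exactly how the paper argues ("both being red, the indices must match"), and it must be inserted into your step (4).

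A consequent bookkeeping point: for the red-triangle argument to bite, \pa\ must keep at least two previously created apexes alive when he demands the next one, so your budget of "two apexes under comparison plus two auxiliary nodes" should be re-allocated; in fact no auxiliary nodes are needed at all, since the cylindrifier move itself supplies the new apex over the fixed base, and $n-1$ base nodes plus three apex slots, with reuse, fits inside $n+3$. Also, the reduction via Theorem \ref{thm:n} in your step (1) is not part of this statement --- the theorem asserts only that \pa\ has a winning strategy; the conclusion about $S_c\Nr_n\Sc_{n+3}$ is drawn afterwards in the paper --- so it can be dropped. With the red-matching clause added and the apex bookkeeping adjusted, the rest of your outline coincides with the paper's proof.
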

\begin{proof}
In the initial round \pa\ plays a graph $\Gamma$ with nodes $0,1,\ldots n-1$ such that $\Gamma(i,j)=\w$ for $i<j<n-1$
and $\Gamma(i, n-1)=\g_i$
$(i=1, \ldots n-2)$, $\Gamma(0,n-1)=\g_0^0$ and $\Gamma(0,1\ldots n-2)=\y_{B}$.
In the following move \pa\ chooses the face $(0,\ldots n-2)$ and demands a node $n$
with $\Gamma_2(i,n)=\g_i$ $(i=1,\ldots n-2)$, and $\Gamma_2(0,n)=\g_0^{-1}.$
\pe\ must choose a label for the edge $(n+1,n)$ of $\Gamma_2$. It must be a red atom $r_{mn}$. Since $-1<0$ we have $m<n$.
In the next move \pa\ plays the face $(0, \ldots n-2)$ and demands a node $n+1$, with $\Gamma_3(i,n)=\g_i$ $(i=1,\ldots n-2)$,
such that  $\Gamma_3(0,n+2)=\g_0^{-2}$.
Then $\Gamma_3(n+1,n)$ $\Gamma_3(n+1,n-1)$ both being red, the indices must match.
$\Gamma_3(n+1,n)=r_{ln}$ and $\Gamma_3(n+1, n-1)=r_{lm}$ with $l<m$.
In the next round \pa\ plays $(0,1\ldots n-2)$ and reuses the node $2$ such that $\Gamma_4(0,2)=\g_0^{-3}$.
This time we have $\Gamma_4(n,n-1)=\r_{jl}$ for some $j<l\in \N$.
Continuing in this manner leads to a decreasing sequence in $\N$.
\end{proof}

Let $k>n$ be given. We show that \pe\ has a \ws\ in the usual graph game in $k$ rounds (there is no restriction then on the size of the graphs).
We denote this game by $H_k$; that will further be strengthened below (in the sense that it becomes more difficult for \pe\.)
We define \pe\ s strategy for choosing labels for edges and $n-1$ tuples in response to \pa\ s moves.
Assume that we are at round $r+1$.

Let $M_0, M_1,\ldots M_r$ be the coloured graphs at the start of a play of $H_k(\alpha)$ just before round $r+1$.
Assume inductively that \pe\ computes a partial function $\rho_s:\N^{-1}\to \N$, for $s\leq r$, that will help her choose
the suffices of the chosen red in the critical case.  Inductively
for $s\leq r$:

\begin{enumarab}
\item  If $M_s(x,y)$ is green then $(x,y)$ belongs  \pa\ in $M_s$ (meaning he coloured it).

\item $\rho_0\subseteq \ldots \rho_r\subseteq\ldots$
\item $\dom(\rho_s)=\{i\in \Z: \exists t\leq s, x, x_0, x_1,\ldots x_{n-2}
\in \nodes(M_t)\\
\text { where the $x_i$'s form the base of a cone, $x$ is its appex and $i$ its tint }\}.$

The domain consists of the tints of cones created at an earlier stage.

\item $\rho_s$ is order preserving: if $i<j$ then $\rho_s(i)<\rho_s(j)$. The range
of $\rho_s$ is widely spaced: if $i<j\in \dom\rho_s$ then $\rho_s(i)-\rho_s(j)\geq  3^{m-r}$, where $m-r$
is the number of rounds remaining in the game.

\item For $u,v,x_0\in \nodes(M_s)$, if $M_s(u,v)=\r_{\mu,\delta}$, $M_s(x_0,u)=\g_0^i$, $M_s(x_0,v)=\g_0^j$,
where $i,j$ are tints of two cones, with base $F$ such that $x_0$ is the first element in $F$ under the induced linear order,
then $\rho_s(i)=\mu$ and $\rho_s(j)=\delta.$

\item $M_s$ is a a  coloured graph.

\item If the base of a cone $\Delta\subseteq M_s$  with tint $i$ is coloured $y_S$, then $i\in S$.

\end{enumarab}

To start with if \pa\ plays $a$ in the initial round then $\nodes(M_0)=\{0,1,\ldots n-1\}$, the
hyperedge labelling is defined by $M_0(0,1,\ldots n)=a$.

In response to a cylindrifier move for some $s\leq r$, involving a $p$ cone, $p\in \Z$,
\pe\ must extend $\rho_r$ to $\rho_{r+1}$ so that $p\in \dom(\rho_{r+1})$
and the gap between elements of its range is at least $3^{m-r-1}$. Properties (3) and (4) are easily
maintained in round $r+1$. Inductively, $\rho_r$ is order preserving and the gap between its elements is
at least $3^{m-r}$, so this can be maintained in a further round.
If \pa\ chooses a green colour, or green colour whose suffix
already belong to $\rho_r$, there would be fewer
elements to add to the domain of $\rho_{r+1}$, which makes it easy for \pe\ to define $\rho_{r+1}$.

Now assume that at round $r+1$, the current coloured graph is $M_r$ and that   \pa\ chose the graph $\Phi$, $|\Phi|=n$
with distinct nodes $F\cup \{\delta\}$, $\delta\notin M_r$, and  $F\subseteq M_r$ has size
$n-1$.  We can  view \pe\ s move as building a coloured graph $M^*$ extending $M_r$
whose nodes are those of $M_r$ together with the new node $\delta$ and whose edges are edges of $M_r$ together with edges
from $\delta$ to every node of $F$.

Now \pe\ must extend $M^*$ to a complete graph $M^+$ on the same nodes and
complete the colouring giving  a graph $M_{r+1}=M^+$ in $\K$ (the latter is the class of coloured graphs).
In particular, she has to define $M^+(\beta, \delta)$ for all nodes
$\beta\in M_r\sim F$, such that all of the above properties are maintained.

\begin{enumarab}

\item  If $\beta$ and $\delta$ are both apexes of two cones on $F$.

Assume that the tint of the cone determined by $\beta$ is $a\in \Z$, and the two cones
induce the same linear ordering on $F$. Recall that we have $\beta\notin F$, but it is in $M_r$, while $\delta$ is not in $M_r$,
and that $|F|=n-1$.
By the rules of the game  \pe\ has no choice but to pick a red colour. \pe\ uses her auxiliary
function $\rho_{r+1}$ to determine the suffices, she lets $\mu=\rho_{r+1}(p)$, $b=\rho_{r+1}(q)$
where $p$ and $q$ are the tints of the two cones based on $F$,
whose apexes are $\beta$ and $\delta$. Notice that $\mu, b\in \N$; then she sets $N_s(\beta, \delta)=\r_{\mu,b}$
maintaining property (5), and so $\delta\in \dom(\rho_{r+1})$
maintaining property (4). We check consistency to maintain property (6).

Consider a triangle of nodes $(\beta, y, \delta)$ in the graph $M_{r+1}=M^+$.
The only possible potential problem is that the edges $M^+(y,\beta)$ and $M^+(y,\delta)$ are coloured green with
distinct superscripts $p, q$ but this does not contradict
forbidden triangles of the form involving $(\g_0^p, \g_0^q, \r_{kl})$, because $\rho_{r+1}$ is constructed to be
order preserving.  Now assume that
$M_r(\beta, y)$ and $M_{r+1}(y, \delta)$ are both red (some $y\in \nodes(M_r)$).
Then \pe\ chose the red label $N_{r+1}(y,\delta)$, for $\delta$ is a new node.
We can assume that  $y$ is the apex of a $t$ cone with base $F$ in $M_r$. If not then $N_{r+1}(y, \delta)$ would be coloured
$\w$ by \pe\   and there will be no problem. All properties will be maintained.
Now $y, \beta\in M$, so by by property (5) we have $M_{r+1}(\beta,y)=\r_{\rho+1(p), \rho+1(t)}.$
But $\delta\notin M$, so by her strategy,
we have  $M_{r+1}(y,\delta)=\r_{\rho+1(t), \rho+1(q)}.$ But $M_{r+1}(\beta, \delta)=\r_{\rho+1(p), \rho+1(q)}$,
and we are done.  This is consistent triple, and so have shown that
forbidden triples of reds are avoided.

\item If this is not the case, and  for some $0<i<n-1$ there is no $f\in F$ such
that $M^*(\beta, f), M^* (f,\delta)$ are both coloured $\g_i$ or if $i=0$, they are coloured
$\g_0^l$ and $\g_0^{l'}$ for some $l$ and $l'$.
She chooses $\w_i$, for $M^+{(\beta,\delta)}$, for definiteness let it be the least such $i$.
It is clear that this also avoids all forbidden triangles (involving greens and whites).

\item Otherwise, if there is no $f\in F$,
such that $M^*(\beta, f), M*(\delta ,f)$ are coloured $\g_0^t$ and $\g_0^u$
for some $t,u$, then \pe\ defines $M^+(\beta, \delta)$ to  be $\w_0$.
Again, this avoids all forbidden triangles.
\end{enumarab}

She has not chosen green maintaining property one.  Now we turn to colouring of $n-1$ tuples,
to make sure that $M^+$ is a coloured graph maintaining property (6).

Let $\Phi$ be the graph chosen by \pa\, it has set of node $F\cup \{\delta\}$.
For each tuple $\bar{a}=a_0,\ldots a_{n-2}\in {M^+}^{n-1}$, $\bar{a}\notin M^{n-1}\cup \Phi^{n-1}$,  with no edge
$(a_i, a_j)$ coloured green (we already have all edges coloured), then  \pe\ colours $\bar{a}$ by $\y_S$, where
$$S=\{i\in A: \text { there is an $i$ cone in $M^*$ with base $\bar{a}$}\}.$$
We need to check that such labeling works, namely that last property is maintained.

Recall that $M$ is the current coloured graph, $M^*=M\cup \{\delta\}$ is built by \pa\ s move
and $M^+$ is the complete labelled graph by \pe\, whose nodes are labelled by \pe\ in response to \pa\ s moves.
We need to show that $M^+$ is labelled according to
the rules of the game, namely, that it is in $\K$.

Let us check that $(n-1)$ tuples are labelled correctly, by yellow colours.
(The  following argument is used in \cite{Hodkinson} p. 16, \cite{HH} p.844,
and \cite{HHbook2}.)
Let $D$ be a given  set of $n$ nodes of $M^+$, and suppose that $M^+\upharpoonright D$
is an $i$ cone with apex
$\delta$ and base $\{d_0,\ldots d_{m-2}\}$, and that the tuple $\bar{d}=(d_0,\ldots d_{m-2})$ is labelled $\y_S$ in $M^+$.
We need to show that $i\in S$.

If $D\subseteq M$, then inductively the graph
$M$ constructed so far is in $\K$, and therefore
$i\in S$. If $D\subseteq \Phi$ (recall that $\Phi$ the graph with $n$ nodes played by \pa\, namely $\Phi=F\cup \{\delta\}$,
then as \pa\ chose $\Phi$ in $\K$, we get also $i\in S$. If neither holds, then $D$ contains $\alpha$
and also some
$\beta\in M\sim \Phi$. \pe\ chose the colour $M^+(\alpha,\beta)$ and her strategy ensures her that it is not green.
Hence neither $\alpha$ or $\beta$ can be the apex of the cone $M^+\upharpoonright D$,
so they must both lie in the base $\bar{d}$.

This implies that
$\bar{d}$ is not yet labelled in $M^*$  ($M^*$'s underlying set is $M$ with the new node),
so \pe\ has applied her strategy to choose the colour $\y_S$ to label $\bar{d}$ in $M^+$.
But then \pe\ will have chosen $S$ containing $i$, by her \ws\ since $M^*\upharpoonright D$ is already a cone
in $N^*$.  Also \pe\ never chooses a green edge, so all green edges of $M^+$ lie in $M^*$.

We now check edge colours of triangles. The new triangles in $M^+$ are two kinds. There of the form $(\beta, \delta, f)$
for some $f\in F$ and $\beta\in M\sim F$, and those of the
form $(\beta, \beta', \delta)$ for distinct $\beta, \beta'\in M\sim F$.
The first kind is easy to check.
That leaves one (hard) case, where there are two nodes $\beta, \beta',
\in M$, \pe\ colours both $(\beta, \alpha)$ and $(\beta',
\alpha)$ red, and the old edge $(\beta, \beta')$ has already been
coloured red (earlier in the game).
If $(\beta, \beta')$ was coloured by \pe\ , that is \pe\ is their owner, then there is no problem, by \pe\ first move described above.
We show that this is what actually happened.
(This is precisely the argument used in \cite{HHbook2} starting the second half of p. 85, though the latter deals with the much more general case.)

So suppose, for a contradiction, that $(\beta, \beta')$ was coloured by
\pa\ . Since \pe\ chose red colours for $(\alpha, \beta)$
and $(\alpha, \beta')$, it must be the case that there are cones in
$N^*$ with apexes $\alpha, \beta, \beta'$ and the same base,
$F$, each inducing the same linear ordering $\bar{f} = (f_0,\ldots,
f_{m-2})$, say, on $F$. Now $\bar{f}$  must be labelled by some
yellow colour, $\y_S$, say. Since $\Phi\in \K$, so
the tint $i$ (say) of the cone from $\alpha$ to $\bar{f}$ lies in
$S$.

Suppose that $\lambda$ was the last node of $ F \cup \{ \beta,
\beta' \}$ to be created. As $ |F \cup \{
\beta, \beta' \}| = n + 1$, we see that \pe\ must have chosen
the colour of at least one edge in this : say, $( \lambda, \mu )$.
Now all edges from $\beta$ into $F$ are green,  so \pa\ is the owner of them
as well as of  $(\beta, \beta')$.

Hence $\lambda, \mu \in F$.
We can now see that it was \pe\ who chose the colour $\y_S$ of
$\bar{f}$. For $\y_S$ was chosen in the round when $F$'s last node,
i.e., $\lambda$ was created. It could only have been chosen by
\pa\ if he also picked the colour of every edge in $F$
involving $\lambda$. This is not so, as the edge $(\lambda, \mu)$
was coloured by \pe\, and lies in $F$.

As $i \in S$, it follows from \pe\ s \ws\ that at the time when $\lambda$ was added, there was already an
$i$-cone with base $\bar{f}$, and apex $x$ say.
We claim that $ F \cup \{ \alpha \}$ and $ F \cup \{ x \}$ are
isomorphic over $F$. For this, note that the only $(n - 1)$-tuples of
either $ F \cup \{ \alpha \}$ or $ F \cup \{ x \}$ with a
yellow colour are in $F$ ( since all others involve a green edge
). But this means that \pe\ could have taken $\alpha = x$ in
the current round, and not extended the graph. This is contrary to
our original assumption, and completes the proof.

This defines the colour of edges. Now for hyperedges,
for  each tuple of distinct elements
$\bar{a}=(a_0,\ldots a_{n-2})\in {}^{n-1}(\Gamma^+)$
such that $\bar{a}\notin {}^{n-1}\Gamma\cup {}^{n-1}\Delta$ and with no edge $(a_i, a)$
coloured greed in  $\Gamma^+$, \pe\ colours $\bar{a}$ by $\y_{S}$
where
$S=\{i <\omega: \text { there is a $i$ cone with base  } \bar{a}\}$.
Notice that $|S|\leq F$. This strategy works.

Then the last property is maintained, and \pe\
survives another round, and we are done.

\begin{theorem} There exists an atomic countable representable $\A\in \PEA_n$, such that
$\Rd_{sc}\A\notin S_c\Nr_n\Sc_{n+3}$ but $\A\in {\sf UpUr}S_c\Nr_n{\sf QPEA}_{\omega}$.
In particular, for any class $\K$ of Pinter's algebras, cylindric algebras and polyadic equality algebras
and $m\geq 3$, the class $S_c\Nr_n\K_m$ is not elementary.
\end{theorem}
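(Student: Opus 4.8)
The plan is to let $\A=\Tm(\At\PEA_{\N^{-1},\N})$ be the term algebra over the rainbow atom structure constructed above, i.e.\ the subalgebra of the complex algebra $\PEA_{\N^{-1},\N}$ (greens indexed by $\N^{-1}$, reds by $\N$) generated by its atoms. The rainbow colours are countable and coloured graphs are finite, so $\At\PEA_{\N^{-1},\N}$ is countable and $\A$ is a countable atomic $\PEA_n$ with $\At\A=\At\PEA_{\N^{-1},\N}$; in particular $\At(\Rd_{sc}\A)=\At(\Rd_{sc}\PEA_{\N^{-1},\N})$. Since the games $F^m$ depend only on the atom structure, the theorem proved above gives that \pa\ has a \ws\ in $F^{n+3}(\At(\Rd_{sc}\A))$. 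Now Theorem~\ref{thm:n} is stated for $\CA$, but its proof via the $\widehat N$-machinery uses only substitution and cylindrifier terms and so goes through verbatim for $\Sc$ (and $n+3\ge 5$ since $n\ge 3$); thus membership of $\Rd_{sc}\A$ in $S_c\Nr_n\Sc_{n+3}$ would hand \pe\ a \ws\ in that game. Hence $\Rd_{sc}\A\notin S_c\Nr_n\Sc_{n+3}$.

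Next I would extract from the graph-game analysis a completely representable algebra elementarily equivalent to $\A$. The long argument displayed above --- building \pe's responses graph by graph and keeping track of the order-preserving label functions $\rho_s:\N^{-1}\to\N$ whose ranges have gaps at least $3^{m-r}$ --- shows that \pe\ has a \ws\ in the $k$-rounded unrestricted graph game $H_k(\At\A)$ for \emph{every} finite $k$. Fixing a non-principal ultrafilter $F$ on $\omega$ and combining, coordinatewise, \pe's strategies in the games $H_k$ ($k<\omega$), one obtains a \ws\ for \pe\ in the $\omega$-rounded unrestricted atomic game on the atom structure of the ultrapower $\A^{\omega}/F$: after any finite number $r$ of rounds, $F$-almost-every coordinate is still engaged in a game it has not lost, so the $F$-quotient of the play is still a network. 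As $\A^{\omega}/F$ is atomic (atomicity being first-order), the standard argument (the $\CA$-analogue of the relation algebra result of Hirsch \cite{r}, cf.\ also \cite{HHbook2}) now yields a completely representable algebra $\B$ --- either $\A^{\omega}/F$ itself or a suitable countable member of its elementary class --- with $\B\equiv\A$. A complete representation embeds $\B$ completely into a product of full cylindric set algebras of dimension $n$, each of which is the (complete) neat $n$-reduct of the full set algebra of dimension $\omega$ on the same base; since neat reducts commute with products, $\B\in S_c\Nr_n{\sf QPEA}_{\omega}$. Complete representability also gives $\B\in\RPEA_n$, and since $\RPEA_n$ is a variety, hence an elementary class, $\B\equiv\A$ forces $\A\in\RPEA_n$, i.e.\ $\A$ is representable. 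Finally, by Keisler--Shelah $\A$ is an ultraroot of an ultrapower of $\B$, so $\A\in{\sf UpUr}\,S_c\Nr_n{\sf QPEA}_{\omega}$, completing the first assertion.

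For the ``in particular'', fix a finite $m\ge n+3$ and let $\K$ be any of $\Sc$, $\CA$, $\PEA$, with $\Rd_{\K}\A$ the corresponding reduct of $\A$ (namely $\Rd_{sc}\A$, $\Rd_{ca}\A$, or $\A$ itself). On the one hand $\Rd_{\K}\A\notin S_c\Nr_n\K_m$: indeed $S_c\Nr_n\K_m\subseteq S_c\Nr_n\K_{n+3}$, and if $\Rd_{\K}\A$ lay in $S_c\Nr_n\K_{n+3}$ then, applying $\Rd_{sc}$ --- which commutes with $\Nr_n$ and preserves complete subalgebras --- we would get $\Rd_{sc}\A\in S_c\Nr_n\Sc_{n+3}$, contradicting the first paragraph. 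On the other hand $\Rd_{\K}\B$ is a reduct of a completely representable algebra, hence itself completely representable, hence lies in $S_c\Nr_n\K_m$; and $\Rd_{\K}\B\equiv\Rd_{\K}\A$. Therefore $S_c\Nr_n\K_m$ is not closed under elementary equivalence, i.e.\ not elementary.

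The main obstacle is not in the bookkeeping above but in the two game-theoretic inputs about $\PEA_{\N^{-1},\N}$. \pa's \ws\ in $F^{n+3}$ is the observation that, reusing only $n+3$ nodes over $\omega$ rounds, he can bombard \pe\ with $0$-cones of successively smaller greens $\g_0^{0},\g_0^{-1},\g_0^{-2},\dots$, forcing an infinite strictly descending chain of red indices in $\N$, which is impossible --- one must check this against the lists of forbidden triples. The genuinely delicate half is \pe's side: her survival of $H_k$ for every finite $k$ rests on the construction of the functions $\rho_s$ with geometrically spaced ranges, which lets her defer the fatal descent for $k$ rounds. That is the heart of the matter; everything after it is the routine-but-careful passage through the ultrapower that turns finite-round success into a completely representable companion of $\A$.
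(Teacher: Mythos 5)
You take essentially the same route as the paper: \pa's \ws\ in $F^{n+3}$ on the rainbow atom structure together with theorem \ref{thm:n} (whose proof indeed uses only the $\Sc$ operations) rules out $S_c\Nr_n\Sc_{n+3}$, while \pe's winning strategies in all finite atomic games are passed through a non-principal ultrapower and an elementary chain to a countable completely representable algebra elementarily equivalent to $\A$, which then lies in $S_c\Nr_n{\sf QPEA}_{\omega}$ and yields representability, the ${\sf UpUr}$ claim, and non-elementarity of $S_c\Nr_n\K_m$ exactly as in the paper. The only divergence is that you take $\A$ to be the term algebra over $\At{\sf PEA}_{\N^{-1},\N}$ rather than the full complex algebra ${\sf PEA}_{\N^{-1},\N}$; since every game-theoretic input depends only on the shared atom structure this is harmless, and it is in fact the better choice, as the complex algebra is uncountable and your version actually secures the countability asserted in the statement.
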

\begin{proof}
Let $\A=\sf PEA_{\N^{-1},\N}$ and let $\beta=\At\sf PEA_{\N^{-1}, \N}$.
Then we showed that \pa\ can win the game $F^{n+3}$, on $\Rd_{sc}{\sf PEA}_{\N^{-1},\N}={\sf Sc}_{\N^{-1}, \N}$
Hence $\Rd_{sc}\sf PEA_{\N^{-1},\N}\notin S_c\Nr_n\sf Sc_{n+3}$.

For $n<\omega,$ \pe\ has a \ws\ $\sigma_n$ in $G_n(\A)$, this is proved above.
We can assume that $\sigma_n$ is deterministic.
Let $\B$ be a non-principal ultrapower of $\A$.  Then
\pe\ has a \ws\ $\sigma$ in $G(\B)$   --- essentially she uses
$\sigma_n$ in the $n$'th component of the ultraproduct so that at each
round of $G(\B)$ \pe\ is still winning in co-finitely many
components, this suffices to show she has still not lost.
Now use an elementary chain argument to construct countable elementary
subalgebras $\A=\A_0\preceq\A_1\preceq\ldots\preceq \B$.  For this,
let $\A_{i+1}$ be a countable elementary subalgebra of $\B$
containing $\A_i$ and all elements of $\B$ that $\sigma$ selects
in a play of $G_\omega(\B)$ in which \pa\ only chooses elements from
$\A_i$. Now let $\A'=\bigcup_{i<\omega}\A_i$.  This is a
countable elementary subalgebra of $\B$ and \pe\ has a \ws\ in
$H(\A')$. Hence by the elementary chain argument
there is a countable $\A'$ such that \pe\ can win the $\omega$ rounded game on its atom structure, hence $\A'\equiv \A$ but
the former is in $\Nr_n{\sf PEA}_{\omega}$  and we also have $\Rd_{sc}{\sf PEA}_{\N^{-1},\N}$ is not in
$S_c\Nr_n{\sf Sc}_{n+3}$,  hence  we are done.
\end{proof}

\begin{corollary} The class of completely
representable algebras and atom structures are not elementary, for any class $\K$ between
$\sf Df_n$ and $\sf PEA_n$.
\end{corollary}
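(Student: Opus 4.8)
The plan is to deduce the non-elementariness of the class of completely representable algebras (and of the corresponding atom structures) for any class $\K$ with $\sf Df_n\subseteq \K\subseteq \sf PEA_n$ from the theorem just proved, namely that there is an atomic countable representable $\A\in \PEA_n$ such that $\Rd_{sc}\A\notin S_c\Nr_n\Sc_{n+3}$ but $\A\in {\sf UpUr}S_c\Nr_n{\sf QPEA}_{\omega}$. The key observation tying complete representability to neat embeddings is Theorem~\ref{thm:n}: for a countable atomic algebra, being completely representable is equivalent to \pe\ having a winning strategy in $F^{\omega}$ on its atom structure, and more generally membership in $S_c\Nr_n\K_m$ gives \pe\ a winning strategy in $F^m$. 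So the strategy is: (i) exhibit a single atomic algebra that is \emph{not} completely representable but is elementarily equivalent (via an ultrapower/elementary chain argument) to one that \emph{is} completely representable, and (ii) transfer this to every signature between $\sf Df_n$ and $\sf PEA_n$ by taking reducts, using that the relevant reduct of $\PEA_{\N^{-1},\N}$ inherits the \pa\ winning strategy in $F^{n+3}$ while the whole algebra stays completely representable.

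First I would set $\A = {\sf PEA}_{\N^{-1},\N}$ and note from the preceding theorems that \pa\ wins $F^{n+3}(\At\Rd_{sc}\A)$, so $\Rd_{sc}\A\notin S_c\Nr_n\Sc_{n+3}$, hence $\Rd_{sc}\A$ is not completely representable (since a complete representation would, by Theorem~\ref{thm:n}, give \pe\ a winning strategy in $F^{\omega}$, a fortiori in $F^{n+3}$). On the other hand \pe\ has a winning strategy in each finite-round game $G_n(\A)$; passing to a non-principal ultrapower $\B$ of $\A$, \pe\ wins the $\omega$-rounded game $G(\B)$ by playing the $n$-th finite strategy in co-finitely many coordinates, and then an elementary chain argument inside $\B$ produces a \emph{countable} $\A'\equiv\A$ on whose atom structure \pe\ wins the $\omega$-rounded atomic game, so $\A'$ \emph{is} completely representable. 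Thus $\A$ (equivalently $\A'$) is elementarily equivalent to a completely representable algebra but $\A$ itself is not completely representable; hence the class of completely representable $\PEA_n$'s is not elementary, and likewise the class of completely representable atom structures (these are first-order interpretable in one another in the usual way, or one argues directly that $\At\A\equiv\At\A'$ while one atom structure carries a complete representation and the other does not).

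For the passage to an arbitrary class $\K$ with $\sf Df_n\subseteq\K\subseteq\sf PEA_n$, I would take $\Rd_\K\A$, the $\K$-reduct of ${\sf PEA}_{\N^{-1},\N}$. Since $\sf Df_n$ has fewer operations than $\sf Sc_n$, and \pa\'s winning strategy in $F^{n+3}$ uses only cylindrifier moves (it works already on the $\sf Sc$-reduct, which is a reduct of every $\K$ in the stated range), \pa\ still wins $F^{n+3}(\At\Rd_\K\A)$, so $\Rd_\K\A$ is not completely representable as a $\K_n$-algebra; while the complete representation of $\A'$ restricts to a complete representation of $\Rd_\K\A'$, and $\Rd_\K\A\equiv\Rd_\K\A'$ because elementary equivalence is preserved by taking reducts. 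This gives non-elementariness of completely representable $\K_n$-algebras, and the atom-structure statement follows the same way.

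\textbf{Main obstacle.} The delicate point is not the ultrapower/elementary-chain machinery (which is standard once Theorem~\ref{thm:n} and the finite-round winning strategies are in hand) but making sure that \pa\'s $F^{n+3}$ winning strategy genuinely descends to the $\sf Df_n$-reduct: one must check that the \emph{forbidden-triple} obstructions \pa\ exploits (forcing a strictly decreasing sequence of red indices in $\N$ via cones with decreasing green tints) are visible using only the diagonal-free cylindric structure, i.e.\ that no substitution or diagonal operation is essential to \pe\'s eventual defeat. This is where I would be most careful: the games $F^m$ above are defined for $\sf Sc$ and its expansions, so I would either re-inspect the cylindrifier-move analysis to see it is signature-free below $\sf Sc_n$, or invoke the general principle that a winning strategy for \pa\ in a reduct lifts to any expansion (the easy direction), combined with the fact that complete representability of an expansion implies complete representability of the reduct — the only slightly non-routine sub-case being $\sf Df_n$ itself, for which one uses that the rainbow atom structure already separates the relevant cones without reference to diagonals.
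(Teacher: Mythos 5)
Your overall route is the paper's: the rainbow algebra ${\sf PEA}_{\N^{-1},\N}$, \pa's win in $F^{n+3}$ on the $\Sc$-reduct giving $\Rd_{sc}\A\notin S_c\Nr_n\Sc_{n+3}$ (hence no complete representation), and the ultrapower plus elementary-chain argument producing a countable completely representable $\A'\equiv\A$; the treatment of atom structures via interpretability is also as in the paper. The genuine problem is the step you yourself flag: descending to the $\sf Df_n$-reduct. Your claim that the $\Sc$-reduct ``is a reduct of every $\K$ in the stated range'' is false for $\K={\sf Df}_n$, since $\Sc$ has the substitutions ${\sf s}^i_j$ and $\sf Df$ does not; and the two general facts you propose to invoke both point the wrong way. ``A winning strategy for \pa\ in a reduct lifts to an expansion'' would let you go from a $\sf Df$-level win up to $\Sc$, but you only have the win at the $\Sc$ level and need it (or its consequence) at the $\sf Df$ level, where \pe\ faces strictly fewer network constraints. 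Likewise ``complete representability of an expansion implies complete representability of the reduct'' gives, contrapositively, that non-CR of the \emph{reduct} implies non-CR of the expansion --- again the opposite of what you need. Re-inspecting \pa's rainbow strategy diagonal-freely is not carried out in your proposal and is not routine, so as written the $\sf Df$ case (and any $\K$ below $\Sc$ lacking the substitutions/diagonals) is a gap.

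The paper closes this gap by a different, much cheaper observation: ${\sf PEA}_{\N^{-1},\N}$ is generated by elements whose dimension sets have size $<n$ (it is binary generated), and for such algebras a complete representation of the $\sf Df$-reduct induces a complete representation of the full polyadic equality algebra (this is the point the paper attributes to Hodkinson \cite{AU} and uses again elsewhere). So if $\Rd_{df}\A$ were completely representable, $\A$ itself would be, hence $\Rd_{sc}\A\in S_c\Nr_n\Sc_{\omega}\subseteq S_c\Nr_n\Sc_{n+3}$, contradicting \pa's win in $F^{n+3}$. With that lifting argument in place of your strategy-transfer step (for intermediate $\K$ containing the $\Sc$ operations your restriction argument is fine, since a complete $\K$-representation restricts to a complete $\Sc$-representation), your proof becomes essentially the paper's.
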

\begin{proof} \pe\ can win all finite rounded games on $\At_{\N^{-1}, \N}$ hence
$\sf PEA_{\N^{-1}, \N}$, is elementary equivalent to a countable completely representable algebra;
using an ultrapower followed by an elementary chain
argument.  But $\Rd_{sc}\B\notin S_c\Nr_n\CA_{n+3}$, since \pa\ can win the game $F^{n+3}$, {\it a fortiori}, it
is not in $S_c\Nr_n\Sc_{\omega}$ (as in the proof above).  Since $\A$ is countable and atomic,
an atom structure of an algebra is interpretable in the algebra,
and an atom structure holds a completely representable algebra iff all algebras having this atom structure
are completely representable,  the required concerning atom structures follows. The $\sf Df$ reduct of
$\sf \PEA_{\N^{-1}, \N}$ is also not completely representable, for a complete representation of it, will induce a complete
representation of $\sf \PEA_{\N^{-1},\N}$, since the latter is generated by elements whose dimension set $<n$.
\end{proof}

In the next theorem we provide the algebra that is missing from the last paragraph of \cite{Sayed}. We take the opportunity
to discuss neat embedding
properties to
the notions of complete and strong representability in terms of {\it cardinality},
by a strongly representable algebra $\A$ in this context,  we mean, as expected
an  algebra such that $\Cm\At\A$ is representable. The Lyndon conditions are defined in \cite{HHbook}.
A Lyndon $n$th condition, or simple a Lyndon condition, is a first order sentence that codes that \pe\ has a \ws\
in the usual atomic game on (atomic) networks having
$n$ rounds \cite{HHbook2}.

\begin{theorem}\label{Robinsexample}
\begin{enumarab}
\item There exists countable atomic strongly representable algebras that are not completely representable.
In fact, such algebras can be chosen to fail infinitely many Lyndon conditions.

\item The classes ${\sf CRA_n}$ of completely representable algebras and ${\sf UpUr}\Nr_n\CA_{\omega}$
are not related both ways, that is they are mutually
distinct.

\item Every countable algebra in $\Nr_n\CA_{\omega}$ is completely representable, hence satisfies the Lyndon conditions.
There is an uncountable algebra in $\Nr_n\CA_{\omega}$ that is not completely representable
(The last algebra shows that the condition of maximality of Shelah's result restricted to $L_n$, see theorem \ref{Shelah} cannot be omitted,
and proves the alleged statement in the last paragraph of \cite{Sayed}).

\item Regardless of cardinalities any algebra in $\Nr_n\CA_{\omega}$ satisfies the Lyndon conditions, hence is strongly
representable.
\end{enumarab}
\end{theorem}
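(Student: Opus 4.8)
\medskip

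The plan is to prove the four items in turn, using the rainbow algebra $\sf PEA_{\N^{-1},\N}$ and its elementary‐equivalent countable completely representable relative, together with the game‐theoretic machinery of Theorem~\ref{thm:n} (the neat‐embedding/game lemma).

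\medskip

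\emph{Item (1).} First I would take $\A$ to be the countable atomic algebra $\A'$ constructed in the proof of the previous theorem, which is elementary equivalent to $\sf PEA_{\N^{-1},\N}$ but lies in $\Nr_n\sf PEA_\omega$. By item~(4) below (which I would prove independently, since it only uses the neat‐embedding direction of the game lemma), $\A'\in\Nr_n\CA_\omega$ satisfies all Lyndon conditions and hence $\Cm\At\A'$ is representable, so $\A'$ is strongly representable. But $\Rd_{sc}\A'\notin S_c\Nr_n\Sc_{n+3}$, so $\A'$ has no complete representation, since a complete representation would (by the easy direction of the game lemma, together with the fact that complete representability is preserved under passing to the $\Sc$‐reduct for binary‐generated algebras) put the reduct in $S_c\Nr_n\Sc_\omega$. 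To get failure of infinitely many Lyndon conditions one instead starts from the algebra $\B$ of Theorem~\ref{ef} (equivalently, a Monk–Maddux algebra based on graphs of finite chromatic number): since \pa\ wins the finite‐rounded game in boundedly many rounds on such a bad algebra, while a suitable good ultraproduct is completely representable, the bad algebra fails cofinitely many Lyndon conditions yet (choosing it strongly representable, e.g. one built from Erd\H{o}s graphs as in the Hirsch–Hodkinson discussion) still has representable completion.

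\medskip

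\emph{Item (2).} Here I would exhibit two witnesses. For $\sf CRA_n\not\subseteq{\sf UpUr}\Nr_n\CA_\omega$: a completely representable algebra need not have any elementary extension/reduct that is a neat reduct of an $\omega$-dimensional algebra; a concrete example is a full weak set algebra of dimension $n$ whose $\Nr_n$-status fails to be first‐order axiomatizable, or more simply any completely representable algebra elementary equivalent to one not in $S\Nr_n\CA_\omega$. For ${\sf UpUr}\Nr_n\CA_\omega\not\subseteq\sf CRA_n$: this is precisely $\A'$ from item~(1), since $\A'\equiv\sf PEA_{\N^{-1},\N}$, the latter lies in $\Nr_n\sf PEA_\omega$, so $\A'\in{\sf UpUr}\Nr_n\CA_\omega$, yet $\A'$ is not completely representable as shown above.

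\medskip

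\emph{Items (3) and (4).} For item~(4), let $\A\in\Nr_n\CA_\omega$, say $\A=\Nr_n\C$ with $\C\in\CA_\omega$; in particular $\A\subseteq_c\Nr_n\Rd_m\C$ for every finite $m>n$, so by Theorem~\ref{thm:n} \pe\ has a winning strategy in $F^m(\At\A)$ for every finite $m$, hence in particular in the $k$‑rounded atomic game $G_k(\At\A)$ for every $k$; this is exactly the statement that $\A$ satisfies every Lyndon condition, so $\Cm\At\A$ is representable and $\A$ is strongly representable, \emph{regardless of cardinality}. For item~(3), a \emph{countable} such $\A$ satisfies all Lyndon conditions, and for countable atomic algebras satisfying all Lyndon conditions \pe\ wins the $\omega$‑rounded atomic game (a routine $\omega$‑step amalgamation of the finite strategies, as in the standard argument), so $\A$ is completely representable. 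Finally, the uncountable non‐completely‐representable algebra in $\Nr_n\CA_\omega$ is the algebra $\A$ of Theorem~\ref{ef} (or Theorem~\ref{uncountable}'s ambient algebra): it is uncountable, atomic, lies in $\Nr_n\sf QEA_\omega$, hence in $\Nr_n\CA_\omega$, yet its $\Sc$‑reduct is not in $\Nr_3\Sc_4$, which blocks complete representability; this is the algebra required to show that the maximality hypothesis in Theorem~\ref{Shelah} cannot be dropped, since it furnishes an $L_n$ theory with a non‑principal (non‑maximal) type realized in every model.

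\medskip

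\emph{Main obstacle.} The delicate point is the converse bridge in item~(3): deducing complete representability of a \emph{countable} atomic algebra from the fact that it satisfies all finite Lyndon conditions. One must pass from a family of winning strategies $\sigma_k$ for the $k$‑rounded games to a single winning strategy for the $\omega$‑rounded game on networks, which requires either the ultrapower‑plus‑elementary‑chain argument already used in the proof of the preceding theorem (producing a countable elementary subalgebra on which \pe\ wins the $\omega$‑game) or an explicit back‑and‑forth bookkeeping; care is needed because the game here is the atomic game with cylindrifier moves only, and one must check that the standard equivalence between ``wins all finite games'' and ``wins the $\omega$‑game'' applies to $\PEA_n$ atom structures that are additionally neat reducts — which is exactly what Theorem~\ref{thm:n} delivers. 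The uncountable case of item~(4) is comparatively soft: it needs only the one direction ($\Nr_n\CA_\omega\subseteq$ ``satisfies Lyndon conditions''), which follows directly from Theorem~\ref{thm:n} and does not appeal to any cardinality‑sensitive step.
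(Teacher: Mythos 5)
There are several genuine gaps, and the most serious one is the bridge you use in item (3). You claim that a \emph{countable} atomic algebra satisfying all Lyndon conditions admits a \ws\ for \pe\ in the $\omega$-rounded atomic game ``by a routine $\omega$-step amalgamation of the finite strategies''. This is false, and the rainbow algebras of this very paper are counterexamples: $\PEA_{\N^{-1},\N}$ (and $\CA_{\omega,\omega}$) is countable, atomic, satisfies every Lyndon condition (these are first-order and it is elementary equivalent to a completely representable algebra), yet it is not completely representable --- that is exactly why ${\sf CRA}_n$ fails to be elementary. The ultrapower-plus-elementary-chain fix you mention in your ``main obstacle'' paragraph does not repair this: it only produces a completely representable algebra elementary equivalent to $\A$, not a complete representation of $\A$ itself. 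The paper's proof of the countable case uses the hypothesis $\A\in\Nr_n\CA_{\omega}$ in an essential, non-game-theoretic way: take a countable locally finite dilation $\B\in{\sf Lf}_{\omega}$ with $\A\subseteq_c\Nr_n\B$, note that the set $X$ of co-atoms of $\A$ satisfies $\prod^{\B}X=0$ (this is where completeness of the embedding is used), and apply the Orey--Henkin omitting types theorem to $\B$ to get a representation omitting $X$; its restriction to $\A$ is atomic, i.e.\ complete. Relatedly, your uncountable witness for item (3) does not work: the algebra of Theorem \ref{ef} has an elementary equivalent companion whose $\Sc$-reduct is not a full neat reduct, but ``not a full neat reduct'' does not block complete representability (that companion $\B$ is in fact completely representable). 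The paper instead builds a specific uncountable atomic $\B=\Nr_n\C$ from a relation algebra with $2^{\kappa}$ atoms $a_0^i$ and $\kappa$ atoms $a_j$, kills complete representability by the Erd\H{o}s--Rado theorem, and exhibits $\C\in\CA_{\omega}$ explicitly via an amalgamation class of networks.

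Items (1) and (2) also contain concrete errors. In item (1) your primary witness is internally contradictory: the countable $\A'$ of the preceding theorem lies in $\Nr_n{\sf PEA}_{\omega}$ and is completely representable; it is the \emph{other} algebra, $\PEA_{\N^{-1},\N}$, whose $\Sc$-reduct is outside $S_c\Nr_n\Sc_{n+3}$, and elementary equivalence does not transfer that property --- that is the whole point of the construction. Your fallback via ``Monk--Maddux algebras based on graphs of finite chromatic number'' is also off: finite chromatic number gives a \emph{non}-representable complex algebra, i.e.\ fails strong representability. The paper's witness is a Monk algebra $\M(\Gamma)$ over an Erd\H{o}s graph with \emph{infinite} chromatic number (forcing representability of $\Cm\At$, hence strong representability) but \emph{large finite girth} $m$, so that any $m$ nodes span a $2$-colourable subgraph and \pa\ wins the $m$-rounded game by Ramsey's theorem, killing the Lyndon conditions $\rho_k$ for $k\geq m$; a countable elementary subalgebra then finishes item (1). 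In item (2), the direction ${\sf CRA}_n\not\subseteq{\sf UpUr}\Nr_n\CA_{\omega}$ is left without a valid witness: ``any completely representable algebra elementary equivalent to one not in $S\Nr_n\CA_{\omega}$'' cannot exist, since $S\Nr_n\CA_{\omega}=\RCA_n$ is a variety, hence elementary, and every completely representable algebra lies in it. The paper's witness is the explicit weak set algebra of \cite{SL} over a vector space, generated by $y$ and the singletons $y_s$: it is completely representable (it contains all singletons), while a concrete first-order sentence valid in all full neat reducts (expressing that a certain supremum $\tau_{\alpha}(y,y)$ exists) fails in it, so it is outside ${\sf UpUr}\Nr_n\CA_{\omega}$; the converse non-inclusion is then witnessed by the uncountable algebra of item (3), not by $\A'$. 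Your item (4) is correct and is essentially the paper's argument via Theorem \ref{thm:n}.
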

\begin{proof}
\begin{enumarab}

\item(a) The first part is easy. Let $\A=\CA_{\omega, \omega}$ be the rainbow algebra defined in \cite{HH}.
Then $\A$ is not completely representable, but its $\sf Df$ reduct satisfies the Lyndon
conditions hence it is strongly representable.

(b) For the second part, we use a Monk algebra, whose atom structure, is constructed from a class of models as defined in \cite{HHbook2}.
Let $\Gamma$ be any graph with infinite chromatic number,
and large enough finite girth $m$.  Let $m$ be also large enough so that any $3$ colouring of the edges of a complete graph
of size $m$ must contain a monochromatic triangle; this $m$ exists by Ramseys's theorem, and the graph exists
by Erdos' graphs having large chromatic number
and girth. $\M(\Gamma)$ is representable if and only if $\Gamma$ has infinite chromatic number.
This is taken from \cite{HHbook2}, so it will be sketchy.
Let $\Gamma^* $ be the ultrafilter extension of $\Gamma$.
We first define a strong bounded morphism $\Theta$
form $\M(\Gamma)_+$ to $\rho(I(\Gamma^*))$, as follows:
For any $x_0,\ldots x_{n-2}<n$ and $X\subseteq \Gamma^*\times n$, define the following element
of $\M(\Gamma^*)$:
$$X^{(x_0,\ldots x_{n-2})}=\{[f]\in \rho(I(\Gamma^*)): \exists p\in X[M_f\models p(f(x_0),\ldots f(x_{n-2})]\}.$$
Let $\mu$ be an ultrafilter in $\M(\Gamma).$
Define $\sim $ on $n$ by $i\sim j$ iff $d_{ij}\in \mu.$
Let $g$ be the projection map from $n$ to $n/\sim$.

Define a $\Gamma^*\times n$ coloured graph by with domain $n/\sim$ as follows. For each $v\in \Gamma^*\times n$
and $x_0,\ldots x_{n-2}<n$, we let
$$M_{\mu}\models v(g(x_0),\ldots g(x_{n-2})\Longleftrightarrow  X^{(x_0,\ldots x_{n-2})}\in \mu.$$
One can show that $\Cm(\M(\Gamma)_+)=\M(\Gamma)^{\sigma}$ is completely representable, by providing
\pe\ with  a \ws\ in the $\omega$ rounded atomic game on networks, by identifying networks with
structures.  Indeed, let $\Theta$ be as defined above. Let $N$ be a $\M(\Gamma)^{\sigma}$ network.

Then $\theta(N)$ is an $\M(\Gamma)^{\sigma}$ network. Identify $\Theta(N)$ with a structure $N^*$
with same domain and such that for
$x_0,\ldots x_{n-1}\in N$ with $\Theta(N(x_0,\ldots x{n-1})=[f]$, say, each $i<n$ and each $p\in \Gamma^*\times n$,
we have
$$N^*\models p(x_0,\ldots x_{i-1}, x_{i+1},\ldots x_{n-1})$$
if
$M_f\models p(f(0),\ldots f(i-1), f(i+1),\ldots f(n-1))$.
Assume that \pa\ chose $x\in {}^nN^*$, $i<n$ and an atom $[f]$ with $[f]\leq c_iN^*(x)$.

We can well assume that $f(i)\neq f(j)$ else she would have chosen the same network.
Let $y=x[i|z]\in {}^nN\cup \{z\}$, and $Y=\{y_0,\ldots y_{n-1}\}.$ Define
$q_j\in \Gamma^*\times n$ as follows. If for $j<n$, $\bar{y}\sim y_j$
are pairwise distinct, let $q_j\in \Gamma^*\times n$ be the unique element satisfying

$M\models q_j (y_0\ldots y_{j-1}, y_{j+1}, y_{n-1})$.

Else we choose $q_j$ arbitrarily,
then choose a new copy and let $d$ be the reflexive node in this copy.
Define $M\models d(t_0,\ldots t_{n-2})$ whenever $t_0,\ldots t_{n-2}\in M$
are distinct and $z\in \{t_0,\ldots t_{n-2}\}\nsubseteq Y$.
This structure is as required, it describes \pe\ s response to the play of \pa\ ,
and we are done.

The converse is also true, it follows from the fact that a representation of an algebra based on a graph with finite chromatic number,
necessarily contradicts Ramsey's theorem. The idea is that any representation will force a monochromatic forbidden triangle,
a typical Monk's argument.

Then $\M(\Gamma)$, the complex algebra constructed on $\Gamma$, as defined in \cite{HHbook2}
will be representable, hence $\rho(I(\Gamma))$ will be strongly representable, but it will fail $\rho_k$ for all $k\geq m$.
$\rho_k$ is the $k$th Lyndon condition, which is a first order sentence coding that \pe\ has a \ws\ in
a standard $k$ rounded atomic game played on networks, with $\omega$ many nodes. If $\A$ is  completely representable, then
$\A$ will satisfy all Lyndon conditions, so now we know it is not completely representable.

The idea is
that \pa\ can win in the $m$ rounded atomic game coded by $\rho_m$, by forcing a forbidden monochromatic triangle.
We can assume that $m>n$ where $n$ is the dimension.
Let $N$ be an atomic network with $m$ nodes. Choose a set $X$ of  $max\{n,6\}$ nodes of $\Gamma$,
such that the colour of $N(\bar{x})$ is constant say
$r$, for every hyperedge of $X$.
For $\bar{x}\in X$, of distinct elements, let $v(\bar{x})\in \Gamma$
be such that $N(\bar{x})=r$, and let $\Delta$ be the induced subgraph with nodes $\{v{\bar{x}}: \bar{x}\in X\}$ of $\Gamma$.
Since the girth is sufficiently large, $\Delta$ is $2$ colourable and its nodes can be partitioned into two distinct sets, each independent
and monochromatic. But any $2$ colouring of the edges of a complete graph of  size $\geq 6$, has  {\it an independent monochromatic triangle.}

The algebra will fail infinitely many Lyndon conditions $\rho_k$ for $k\geq m$,
and any countable elementary subalgebra  will be as required. It will also fail
infinitely many Lyndon conditions, hence will not be completely representable,
hence will not be in $S_c\Nr_n\CA_{\omega}$, hence will not
be a full neat reduct of a $\CA_{\omega}$.

\item The algebras in \cite{SL} is completely representable but is not in ${\sf UpUr}\Nr_n\CA_{\omega}$,
as we proceed to show. Assume $\alpha\leq \omega$. Let $\F$ is field of characteristic $0$.
Let $$V=\{s\in {}^{\alpha}\F: |\{i\in \alpha: s_i\neq 0\}|<\omega\},$$
Note that $V$ is a vector space over the field $\F$.
Let $${\C}=(\wp(V),
\cup,\cap,\sim, \emptyset , V, {\sf c}_{i},{\sf d}_{ij})_{i,j\in \alpha}.$$
Then clearly $\wp(V)\in \Nr_{\alpha}\CA_{\alpha+\omega}$.
Indeed let $W={}^{\alpha+\omega}\F^{(0)}$. Then
$\psi: \wp(V)\to \Nr_{\alpha}\wp(W)$ defined via
$$X\mapsto \{s\in W: s\upharpoonright \alpha\in X\}$$
is an isomorphism from $\wp(V)$ to $\Nr_{\alpha}\wp(W)$.
We shall construct an atomic algebra $\A$, $\A\notin \Nr_{\alpha}\CA_{\alpha+1}$
Let $y$ denote the following $\alpha$-ary relation:
$$y=\{s\in V: s_0+1=\sum_{i>0} s_i\}.$$
Let $y_s$ be the singleton containing $s$, i.e. $y_s=\{s\}.$
Define
${\A}\in \CA_{\alpha}$
as follows:
${\A}=\Sg^{\C}\{y,y_s:s\in y\}.$
Then we claim that
$\A\notin \Nr_{\alpha}\CA_{\alpha+1}$
The proof is similar to that in \cite{SL}. In what follows, we code the idea of the proof in a first order sentence that also says
that $\A$ is neither an elementary nor complete subalgebra of $\wp(V)$; we also show that it is completely representable, this last cannot
be represented by a first order sentence.
Let $\At(x)$ be the first order formula asserting that $x$ is an atom.
Let $$\tau(x,y) ={\sf c}_1({\sf c}_0x\cdot {\sf s}_1^0{\sf c}_1y)\cdot {\sf c}_1x\cdot {\sf c}_0y.$$
Let $$Rc(x):=c_0x\cap c_1x=x,$$
$$\phi:=\forall x(x\neq 0\to \exists y(\At(y)\land y\leq x))\land
\forall x(\At(x) \to Rc(x)),$$
$$A(x,y):=\At(x)\land x\leq y,$$
and  $\psi (y_0,y_1)$ be the following first order formula
$$\forall z(\forall x(A(x,y_0)\to x\leq z)\to y_0\leq z)\land
\forall x(\At(x)\to \At(c_0x\cap y_0)\land \At(c_1x\cap y_0))$$
$$\to [\forall x_1\forall x_2(A(x_1,y_0)\land A(x_2,y_0)\to \tau(x_1,x_2)\leq y_1)$$
$$\land \forall z(\forall x_1 \forall x_2(A(x_1,y_0)\land A(x_2,y_0)\to
\tau(x_1,x_2)\leq z)\to y_1\leq z)].$$
Then
$$\Nr_{\alpha}\CA_{\alpha}\models \phi\to \forall y_0 \exists y_1 \psi(y_0,y_1).$$
But this formula does not hold in $\A$.
We have $\A\models \phi\text {  and not }
\A\models \forall y_0\exists y_1\psi (y_0,y_1).$
In words: we have a set $X=\{y_s: s\in V\}$ of atoms such that $\sum^{\A}X=y,$ and $\A$
models $\phi$ in the sense that below any non zero element there is a
{\it rectangular} atom, namely a singleton.
Let $Y=\{\tau(y_r,y_s), r,s\in V\}$, then
$Y\subseteq \A$, but it has {\it no supremum} in $\A$, but {\it it does have one} in any full neat reduct $\B$ containing $\A$,
and this is $\tau_{\alpha}^{\B}(y,y)$, where
$\tau_{\alpha}(x,y) = {\sf c}_{\alpha}({\sf s}_{\alpha}^1{\sf c}_{\alpha}x\cdot {\sf s}_{\alpha}^0{\sf c}_{\alpha}y).$
In $\wp(V)$ this last is $w=\{s\in {}^{\alpha}\F^{(\bold 0)}: s_0+2=s_1+2\sum_{i>1}s_i\}.$ That $w\notin \A$, is proved exactly
like as in \cite{SL}. So for $y_0=y$, there is no $y_1\in \A$ satisfying $\psi(y_0,y_1)$.
The complete representability of $\A$ follows by observing that $\A$ contains all singletons, so that
$V=\bigcup_{s\in V} \{s\}=\bigcup \At\A$.
Finally, the atom structure consisting of the singletons, and for $s,t\in V$, we have ${s}\equiv _i {t}$ if $t(j)=s(j)$ for all $i\neq j$ is as required.

The other non inclusion will be proved in the next item.

\item We show that any  countable algebra in $\A\in \Nr_n\CA_{\omega}$ is completely representable.
Let $\B\in \CA_{\omega}$ and $\A\subseteq_c \Nr_n\B.$ We can assume that
that $\B\in {\sf Lf}_{\omega}$ and is countable, if not just replace by $\Sg^{\B}A$.
Let $X$ be the set of co-atoms of $\A$ (a co-atom is the complement of an atom), then
$$\prod^{\A'}X=\prod^{\Nr_nB}X=\prod ^{\B}X=0.$$
By usual omitting types theorem for first order logic, or rather the algebraic counterpart of, one can find for
every non zero $a\in \B$ a set algebra $\C$ with a square unit $V$, and a homomorphism $f:\B\to \C$ such that $f(a)\neq 0$ and
$-0=-\bigcap f(x)=\bigcup f(-x)=V$. The restriction of $f$ to $\A'$ in the obvious way gives a complete representation.

This example shows that the maximality condition cannot be omitted from the omitting
types theorem obtained from Shelah's omitting types theorem restricted to $L_n$,
see theorem \ref{Shelah}, and theorems 3.2.8, 3.2.9 in \cite{Sayed}.
It also provides the algebra
claimed to exist in the last paragraph of \cite{Sayed}. Without the maximality condition theorem \ref{Shelah}, even restricted
to the countable case, becomes an instance of Martin's axiom restricted to countable Boolean algebras, hence will
be independent.

We show that  ${\sf Up Ur}\Nr_n\CA_{\omega}\nsubseteq {\sf CRA_n}$.
We will exhibit an {\it uncountable} neat reduct, that is, an algebra in $\Nr_n\CA_{\omega}$,
that is not completely representable.

This is very similar to the Rainbow relation algebra when we have
$\omega_1$ many green atoms and $\omega$
red atoms. It will be used to
show that the condition of countability cannot be omitted, there are uncountable full neat reducts that fail to be completely
representable. However, it is strongly representable.
Here we allow the greens to be of cardinality $2^{\kappa}$ for any
infinite cardinal $\kappa$.

As usual we specify the atoms and forbidden triples. We will show that the relation algebra has an $\omega$ complete representation, that
is a representation that witnesses cylindrifiers on arbitrary large cliques, but it not have a complete one.
Furthermore it will be the full ${\sf Ra}$ reduct of a locally finite cylindric algebra.

The atoms are $1', \; a_0^i:i<2^{\kappa}$ and $a_j:1\leq j<
\kappa$, all symmetric.  The forbidden triples of atoms are all
permutations of $(1',x, y)$ for $x \neq y$, \/$(a_j, a_j, a_j)$ for
$1\leq j<\kappa$ and $(a_0^i, a_0^{i'}, a_0^{i^*})$ for $i, i',
i^*<2^{\kappa}.$  In other words, we forbid all the monochromatic
triangles.

Write $a_0$ for $\set{a_0^i:i<2^{\kappa}}$ and $a_+$ for
$\set{a_j:1\leq j<\kappa}$. Call this atom
structure $\alpha$.

Let $\A$ be the term algebra on this atom
structure; the subalgebra of $\Cm\alpha$ generated by the atoms.  $\A$ is a dense subalgebra of the complex algebra
$\Cm\alpha$. We claim that $\A$, as a relation algebra,  has no complete representation.

Indeed, suppose $\A$ has a complete representation $M$.  Let $x, y$ be points in the
representation with $M \models a_1(x, y)$.  For each $i<\omega_1$ there is a
point $z_i \in M$ such that $M \models a_0^i(x, z_i) \wedge a_1(z_i, y)$.

Let $Z = \set{z_i:i<2^{\kappa}}$.  Within $Z$ there can be no edges labeled by
$a_0$ so each edge is labelled by one of the $\kappa$ atoms in
$a_+$.  The Erdos-Rado theorem forces the existence of three points
$z^1, z^2, z^3 \in Z$ such that $M \models a_j(a^1, z^2) \wedge a_j(z^2, z^3)
\wedge a_j(z^3, z_1)$, for some single $j<\kappa$.  This contradicts the
definition of composition in $\A$.

Let $S$ be the set of all atomic $\A$-networks $N$ with nodes
 $\omega$ such that\\ $\set{a_i: 1\leq i<\omega,\; a_i \mbox{ is the label
of an edge in }
 N}$ is finite.
Then it is straightforward to show $S$ is an amalgamation class, that is for all $M, N
\in S$ if $M \equiv_{ij} N$ then there is $L \in S$ with
$M \equiv_i L \equiv_j N.$
Hence the complex cylindric algebra $\Ca(S)\in \CA_\omega$.

Now let $X$ be the set of finite $\A$-networks $N$ with nodes
$\subseteq\omega$ such that
\begin{enumerate}
\item each edge of $N$ is either (a) an atom of
$\c A$ or (b) a cofinite subset of $a_+=\set{a_j:1\leq j<\kappa}$ or (c)
a cofinite subset of $a_0=\set{a_0^i:i<2^{\kappa}}$ and
\item $N$ is `triangle-closed', i.e. for all $l, m, n \in nodes(N)$ we
have $N(l, n) \leq N(l,m);N(m,n)$.  That means if an edge $(l,m)$ is
labeled by $1'$ then $N(l,n)= N(mn)$ and if $N(l,m), N(m,n) \leq
a_0$ then $N(l,n).a_0 = 0$ and if $N(l,m)=N(m,n) =
a_j$ (some $1\leq j<\omega$) then $N(l,n).a_j = 0$.
\end{enumerate}
For $N\in X$ let $N'\in\Ca(S)$ be defined by
\[\set{L\in S: L(m,n)\leq
N(m,n) \mbox{ for } m,n\in nodes(N)}\]
For $i,\omega$, let $N\restr{-i}$ be the subgraph of $N$ obtained by deleting the node $i$.
Then if $N\in X, \; i<\omega$ then $\cyl i N' =
(N\restr{-i})'$.
The inclusion $\cyl i N' \subseteq (N\restr{-i})'$ is clear.

Conversely, let $L \in (N\restr{-i})'$.  We seek $M \equiv_i L$ with
$M\in N'$.  This will prove that $L \in \cyl i N'$, as required.
Since $L\in S$ the set $X = \set{a_i \notin L}$ is infinite.  Let $X$
be the disjoint union of two infinite sets $Y \cup Y'$, say.  To
define the $\omega$-network $M$ we must define the labels of all edges
involving the node $i$ (other labels are given by $M\equiv_i L$).  We
define these labels by enumerating the edges and labeling them one at
a time.  So let $j \neq i < \omega$.  Suppose $j\in \nodes(N)$.  We
must choose $M(i,j) \leq N(i,j)$.  If $N(i,j)$ is an atom then of
course $M(i,j)=N(i,j)$.  Since $N$ is finite, this defines only
finitely many labels of $M$.  If $N(i,j)$ is a cofinite subset of
$a_0$ then we let $M(i,j)$ be an arbitrary atom in $N(i,j)$.  And if
$N(i,j)$ is a cofinite subset of $a_+$ then let $M(i,j)$ be an element
of $N(i,j)\cap Y$ which has not been used as the label of any edge of
$M$ which has already been chosen (possible, since at each stage only
finitely many have been chosen so far).  If $j\notin \nodes(N)$ then we
can let $M(i,j)= a_k \in Y$ some $1\leq k < \omega$ such that no edge of $M$
has already been labeled by $a_k$.  It is not hard to check that each
triangle of $M$ is consistent (we have avoided all monochromatic
triangles) and clearly $M\in N'$ and $M\equiv_i L$.  The labeling avoided all
but finitely many elements of $Y'$, so $M\in S$. So
$(N\restr{-i})' \subseteq \cyl i N'$.

Now let $X' = \set{N':N\in X} \subseteq \Ca(S)$.
Then the subalgebra of $\Ca(S)$ generated by $X'$ is obtained from
$X'$ by closing under finite unions.
Clearly all these finite unions are generated by $X'$.  We must show
that the set of finite unions of $X'$ is closed under all cylindric
operations.  Closure under unions is given.  For $N'\in X$ we have
$-N' = \bigcup_{m,n\in \nodes(N)}N_{mn}'$ where $N_{mn}$ is a network
with nodes $\set{m,n}$ and labeling $N_{mn}(m,n) = -N(m,n)$. $N_{mn}$
may not belong to $X$ but it is equivalent to a union of at most finitely many
members of $X$.  The diagonal $\diag ij \in\Ca(S)$ is equal to $N'$
where $N$ is a network with nodes $\set{i,j}$ and labeling
$N(i,j)=1'$.  Closure under cylindrification is given.
Let $\C$ be the subalgebra of $\Ca(S)$ generated by $X'$.
Then $\A = \Ra(\C)$.
Each element of $\A$ is a union of a finite number of atoms and
possibly a co-finite subset of $a_0$ and possibly a co-finite subset
of $a_+$.  Clearly $\A\subseteq\Ra(\C)$.  Conversely, each element
$z \in \Ra(\C)$ is a finite union $\bigcup_{N\in F}N'$, for some
finite subset $F$ of $X$, satisfying $\cyl i z = z$, for $i > 1$. Let $i_0,
\ldots, i_k$ be an enumeration of all the nodes, other than $0$ and
$1$, that occur as nodes of networks in $F$.  Then, $\cyl
{i_0} \ldots
\cyl {i_k}z = \bigcup_{N\in F} \cyl {i_0} \ldots
\cyl {i_k}N' = \bigcup_{N\in F} (N\restr{\set{0,1}})' \in \A$.  So $\Ra(\C)
\subseteq \A$.
$\A$ is relation algebra reduct of $\C\in\CA_\omega$ but has no
complete representation.
Let $n>2$. Let $\B=\Nr_n \C$. Then
$\B\in \Nr_n\CA_{\omega}$, is atomic, but has no complete representation.
However, this algebra is strongly representable, that is $\Cm\At\A$ is representable

Now we show that it is s $\omega$ completely representable, namely a representation that respects all finite cliques.
For $\A$ we play the usual $\omega$ rounded atomic game on atomic networks. We show that \pe\ has a \ws\ in the $\omega$ rounded game.
Assume that \pe\ survives till the $r$ round, and that the current play is in round $r+1$. \pa\ chooses a previously played
network $N_s$, the edge $x,y\in N_t$ and atoms $a,b\in A$ such that $N_t(x,y)\leq a;b.$
Then \pa\ has to choose a witness for this composition and enlarge the network such that $N_{t+1}(x,z)=a$ and
$N_t+1)(x,y)=b$, and we can assume that
$N_{t+1}$ has just one extra node. If there is a witness in $N_t$ there
is nothing to prove so assume not.

Let \pa\ play the triangle move $(N_s, i, j, k,a, b)$
in round $r+1$. \pe\ has to choose labels for the edges $\{(x, k), (k,x)\}$,
$x\in \nodes(N_s)\sim \{i,j\}$.

She chooses the labels for  the edges $(x,k)$ one at a time
and then determines the labels of the reverse edge $(k, x)$
uniquely. We give the uncountably many atoms the green colour \g,
and the countably many the colour $r$.

We have several cases:
If it is not the case that $N_s(x, i)$ and $a$ are both green,
and it is not the case that $N_s(x, j)$ and $b$ are both green, \pe\ lets $N_{s+1}(x,k)$ a new green $\g$.

If $N_s(i,j)=\r$, $N_s(x, i)=\g,$ $N_s(x, j)=\g$ and $a=b$, then \pe\ lets $N_{s+1}(x,k)$ a new $\r$.
If neither, then $N(x, i)=\g$, $a=\g$ and $N_s(x, j)=b$ or $N_s(x, i)=a$ and $N_{s}(x,j)=\g$
and $b=\g$, she lets
$N_{s+1}(x, k)$ a new $\r$.
\item Assume $\A\in \Nr_n\CA_{\omega}$ is atomic. By the above,
we can assume that it is uncountable. Let $F^m$ be as above, like the usual $\omega$ rounded game
defined on atomic networks except that \pa\ s moves are restricted to $m>n$ pebbles with the option that he can re use them.
Now \pe\ has a \ws\ in $F^m$ for all $m>n$, hence \pe\ can win all finite rounded games. This implies that $\At\A$ satisfies the
Lyndon conditions.

\end{enumarab}
\end{proof}

\begin{definition}

\begin{enumarab}
\item  An atom structure $\At$ is weakly representable if $\Tm\At$ is representable; this class is denoted by ${\sf WRCA}_n$
\item An atom structure $\At$ is Lyndon, if $\Cm\At$ satisfies the Lyndon conditions, denoted by ${\sf LCA_n}$
\item An atom structure $\At$ is first order definable if the subalgebra of $\Cm\At$
consisting of all sets of atoms that are first order definable with parameters from
$\At$ is representable.
By first order definable by parameters we mean sets of the form
$$\{a\in \At: \At\models \phi(a \bar{b})\}$$
for some formula $\phi(x, \bar{y})$ of the signature
of $\At$.

\item We denote the rainbow algebra $\sf PEA_{m+3,m+2}$ by $\A_m$.
\end{enumarab}
\end{definition}

\begin{theorem}
\begin{enumarab}
\item These classes are all elementary, but are not finitely axiomatizable.
\item ${\sf CRA}_n \subseteq {\sf LCA}_n\subseteq {\sf FOCA}_n \subseteq {\sf WSA}_n$ and the inclusions are proper
\end{enumarab}

\end{theorem}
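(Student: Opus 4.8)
The plan is to prove the two itemized claims separately, drawing on the constructions and results already assembled in the paper. For item (1), the elementarity of each of the four classes $\sf WRCA_n$, $\sf LCA_n$, $\sf FOCA_n$ and $\sf WSA_n$ (the last being the class of strongly representable atom structures, i.e.\ those whose complex algebra is representable) follows from the fact that each is closed under ultraroots and ultraproducts; for $\sf LCA_n$ this is immediate since membership is defined by an explicit set of first order sentences (the Lyndon conditions $\rho_k$, $k<\omega$), and for $\sf WSA_n$ this is precisely the Hirsch--Hodkinson result recalled as the theorem attributed to them in the introduction, reproved here via the ``good and bad Monk algebras'' conjecture-style argument. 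The non-finite axiomatizability of all four is then extracted from the Monk-algebra construction in item 1(b) of Theorem~\ref{Robinsexample}: take Erd\H os graphs $\Gamma_i$ of infinite chromatic number and girth $\to\infty$, form the Monk algebras $\M(\Gamma_i)$, each of which fails $\rho_{k}$ for all large $k$ yet whose ultraproduct (being based on a graph of infinite chromatic number) is strongly representable, hence lies in $\sf WSA_n$ and, by the argument in 1(b), is even completely representable so satisfies all Lyndon conditions; this single ultraproduct-of-non-members-with-a-member argument witnesses non-finite-axiomatizability of the whole chain at once, since each class is contained in the next.

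For item (2), the chain of inclusions ${\sf CRA}_n\subseteq{\sf LCA}_n\subseteq{\sf FOCA}_n\subseteq{\sf WSA}_n$ is the easy direction: a completely representable algebra satisfies every Lyndon condition (this is remarked explicitly in the discussion before Theorem~\ref{Robinsexample}); if $\Cm\At$ satisfies the Lyndon conditions then in particular $\Cm\At$ is representable, so the subalgebra of first order definable sets of atoms — being a subalgebra of $\Cm\At$ — is representable, giving ${\sf LCA}_n\subseteq{\sf FOCA}_n$; and ${\sf FOCA}_n\subseteq{\sf WSA}_n$ requires the (standard, and I would cite it) observation that when $\At$ is first order definable the full $\Cm\At$ is already representable because every element of $\Cm\At$ is approximated well enough by definable ones — more precisely one shows a representation of the definable-reduct lifts. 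The properness of the inclusions is where the paper's examples do the work: ${\sf CRA}_n\subsetneq{\sf LCA}_n$ is witnessed by the countable elementary subalgebra of the Monk algebra $\M(\Gamma)$ of item 1(b), which fails infinitely many $\rho_k$ (so is not in ${\sf CRA}_n$ — indeed not even completely representable) yet whose complex algebra, being $\M(\Gamma)$ with $\Gamma$ of infinite chromatic number, is representable and in fact completely representable hence Lyndon — wait, one must instead pick $\Gamma$ with \emph{large finite} girth so that $\M(\Gamma)$ itself is representable but $\rho_k$ fails for $k\ge m$; this is exactly the algebra constructed in 1(a)--(b) of Theorem~\ref{Robinsexample}. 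For ${\sf LCA}_n\subsetneq{\sf FOCA}_n$ and ${\sf FOCA}_n\subsetneq{\sf WSA}_n$ I would invoke the rainbow algebras $\A_m=\sf PEA_{m+3,m+2}$ together with the games results of Section~3: for suitable $m$, the term algebra on $\At\A_m$ is representable (so its atom structure is weakly representable, even strongly representable after taking the completion when the parameters are chosen as in the ``desired cylindric algebra'' subsection) while \pa\ wins a bounded rainbow game showing the relevant Lyndon condition fails, separating the first-order-definable level from the Lyndon level; and the first-order-definable-but-not-Lyndon separation is obtained by refining the parameters so that the definable sets of atoms generate a representable algebra but $\Cm\At$ does not satisfy the Lyndon conditions.

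The main obstacle I expect is the properness of ${\sf LCA}_n\subsetneq{\sf FOCA}_n$: one needs an atom structure whose first-order-definable sets form a representable algebra while the full complex algebra fails some Lyndon condition, and calibrating the rainbow parameters (number of greens versus reds, and the number of yellow shades coding cylindric information) so that \pa's winning strategy in the bounded game $F^{n+k}$ does not lift to the first-order-definable subalgebra — equivalently, so that \pe\ can win the Lyndon games \emph{when restricted to definable networks} — is delicate and is the crux of where the rainbow pebble-game analysis of Section~3 must be pushed further than what is written out there. I would handle it by combining the Monk-style ``good/bad'' control over chromatic number with the rainbow ``control on the number of pebbles'' remarked on in the introduction, choosing a hybrid construction; the remaining separations and all the elementarity/non-finite-axiomatizability claims are then routine given the machinery already in the paper.
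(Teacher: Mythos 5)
There is a genuine gap — in fact several of your key steps run in the wrong direction. First, your non-finite-axiomatizability argument is inverted. The Monk algebras $\M(\Gamma_i)$ you propose, with $\Gamma_i$ of infinite chromatic number, are themselves (strongly) representable, so they are \emph{members} of the largest class and cannot witness non-finite axiomatizability of ${\sf WRCA}_n$; worse, since you let the girth tend to infinity, the ultraproduct graph has no cycles at all, hence chromatic number $2$, so $\prod\M(\Gamma_i)=\M(\prod\Gamma_i)$ is \emph{not} representable — this is precisely the anti-Monk ultraproduct used to show that the strongly representable atom structures are \emph{not} elementary, the opposite of what you assert when you cite Hirsch--Hodkinson as giving elementarity of that class. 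The paper's argument goes the other way: it uses the finite rainbow algebras $\A_m=\PEA_{m+3,m+2}$, for which \pa\ wins $G_\omega(\A_m)$ (so, being finite, $\A_m$ is not representable and $\At\A_m$ is not even weakly representable, i.e.\ outside the largest class), while \pe\ wins $G_m(\A_m)$, so a non-principal ultraproduct satisfies \emph{all} Lyndon conditions and lies in ${\sf LCA}_n$, hence in every class of the chain; \L o\'s then gives non-finite axiomatizability of the whole chain at once. Relatedly, the last class in the chain must be read as the weakly representable atom structures, not the strongly representable ones: with your reading, part (1) is false by Hirsch--Hodkinson and your inclusion ${\sf FOCA}_n\subseteq{\sf SRCA}_n$ is also false — the paper itself constructs an algebra in ${\sf FOCA}_n\setminus{\sf SRCA}_n$ — whereas with the correct reading the last inclusion is the trivial observation that the term algebra is a subalgebra of the first order algebra.

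Two further points. Elementarity of ${\sf WRCA}_n$ and ${\sf FOCA}_n$ is not obtained by merely asserting closure under ultraproducts and ultraroots; the paper's proof rests on complete additivity of the term algebra: for each term $t$ the set of atoms below $t$ is first order definable with parameters in the atom structure, so the equations axiomatizing $\RCA_n$ translate into a first order theory of atom structures — this is the idea your proposal is missing. Finally, your witnesses for properness do not work: an atom structure failing infinitely many $\rho_k$ lies \emph{outside} ${\sf LCA}_n$, so the Monk algebra of Theorem \ref{Robinsexample}(1b) cannot separate ${\sf CRA}_n$ from ${\sf LCA}_n$ (the paper's witness is $\PEA_{\omega,\omega}$, which satisfies all Lyndon conditions but is not completely representable, and this same algebra shows ${\sf CRA}_n$ is not elementary); and the finite rainbow algebras $\A_m$ cannot possibly separate ${\sf LCA}_n$, ${\sf FOCA}_n$ and the weakly representable class, since for a finite atom structure $\Tm\At=\Cm\At$ coincides with the first order algebra. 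The paper in fact defers these strictness claims to the subsequent theorem, where, e.g., the Monk algebra built on $\omega$ disjoint copies of a three-element path (finite chromatic number, first order definable colouring) has representable term algebra but non-representable first order algebra.
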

\begin{proof}
\begin{enumarab}
\item We show that both ${\sf WRCA}_n$ and ${\sf FCA_n}$ are elementary.
The proof is the same (though the classes are distinct as shown below.)
In any completely  additive variety, like ${\sf RCA_n}$,
we we can tighten the connection between $\RCA_n$ and $\At \RCA_n$.
Let $\At$ be an atom structure. If $\A$ is any atomic algebra in $V$ with atom structure
$\At$, and $\A$ is any subalgebra of with the same atom structure,
then the subalgebra generated by the atoms will be isomorphic to $\Tm \At.$
Then $\At\in \At\RCA_n$ iff $\Tm \At\in \RCA_n$ for every $\At$.
But $\Tm\At$ is completely additive, hence for each term $t\in \Tm\F$, the set of atoms
below $t$ is definable in $\At$ by a first order formula with parameters in $\At$.
By translating these formulas to the equational theory of $\RCA_n$,
we get a set of first order sentences $\Sigma_V$ axiomatizing the atom structures.

To prove non finite axiomatizability. It is not hard to show  that \pa\ has a \ws\ in $G_{\omega}(\A_m)$.
As $\A_m$ is finite it is is not representable and so $\At\A_m\notin {\sf WRCA_n}$.
This holds for each $m<\omega$. But, it is also not hard to show that  \pe\ has a \ws\ in $G_m(\A_m)$, hence
for any non principal ultrafilter over $\omega,$
the ultraproduct $\prod_D\A_m$ satisfies all Lyndon conditions.
But $\prod_F \At\A_m \cong \At(\prod_F \A_m)\in \sf LCA_n$
and ${\sf LCA_n}\subseteq  {\sf WCA_n}$.
A direct  application of Los theorem gives the required.

\item For the first part, we have $\sf LCA_n$ is elementary by definition; also
if $\A$ is completely representable then \pe\ has a \ws\ in the $\omega$ rounded atomic game $G^{\omega}$, cf. \cite{HHbook2},
hence in all finite rounded games, so the first inclusion holds.
In fact, it is not difficult to show that ${\sf LCA}_n={\sf UpUrCRA}_n$.

To prove strictness, let $\A=\PEA_{\omega, \omega}$.
Then \pe\ can win the finite rounded games but the algebra is not completely representable.
Hence $\At\A\in \sf LCA_n\sim \sf CRA_n$; this algebra also witness that ${\sf CRA_n}$ is not elementary.

The second inclusion and last  inclusions are obvious. Strictness of the inclusions will be proved below.
\end{enumarab}
\end{proof}

\begin{corollary} For any class $\K$ having signature between that of $\Df$ and $\PEA$ and any $n>2$,
the class of representable algebras is
not finitely axiomatizable
\end{corollary}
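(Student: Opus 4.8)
The plan is to derive the corollary directly from the non-finite-axiomatizability result established in the previous theorem for the classes $\sf WRCA_n$, $\sf LCA_n$, $\sf FOCA_n$, $\sf WSA_n$, together with the observation that the class $\K_n$ of representable algebras of any chosen signature $\K$ between $\sf Df$ and $\sf PEA$ contains $\sf RCA_n$ as a (term-definable) reduct and is reduct-closed in the appropriate sense. First I would fix $n>2$ and recall from the displayed rainbow algebras $\A_m=\sf PEA_{m+3,m+2}$ that \pa\ has a winning strategy in $G_\omega(\A_m)$ while \pe\ has a winning strategy in $G_m(\A_m)$; hence each finite $\A_m$ is non-representable, but by {\L}o\'s's theorem any non-principal ultraproduct $\prod_D\A_m$ satisfies all Lyndon conditions and so is representable (indeed completely representable). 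Passing to the $\K$-reduct: $\Rd_\K\A_m$ is still non-representable (a representation of the reduct would, since the defining operations of $\CA_n$ — and hence of $\sf Df_n$ — are term-definable from those of $\K_n$, yield a representation of the cylindric reduct, contradicting non-representability of $\A_m$ as a $\CA_n$), while $\Rd_\K\prod_D\A_m\cong\prod_D\Rd_\K\A_m$ is representable because reducts commute with ultraproducts and complete representability is inherited by reducts generated by low-dimensional elements as noted in the excerpt.

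Next I would assemble the standard argument: suppose, for contradiction, that the variety of representable $\K_n$-algebras were axiomatized by a finite set $\Sigma$ of equations (or even first-order sentences). Each $\Rd_\K\A_m$ fails some sentence in $\Sigma$, but a non-principal ultraproduct of the $\Rd_\K\A_m$ is representable, hence models $\Sigma$; since $\Sigma$ is finite (so its conjunction is a single first-order sentence preserved under ultraproducts by {\L}o\'s), and each factor fails $\Sigma$, the ultraproduct must also fail $\Sigma$ — a contradiction. This is the classical Monk-style ultraproduct argument, now driven by the rainbow algebras rather than Monk's original combinatorial algebras, and it is exactly the mechanism already invoked in the proof of the preceding theorem when showing $\At\A_m\notin\sf WRCA_n$ for each $m$ while $\prod_F\At\A_m\in\sf LCA_n\subseteq\sf WSA_n$.

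The one point requiring slight care — and the main (minor) obstacle — is the passage between signatures: one must check that for every $\K$ with $\sf Df\subseteq\K\subseteq\PEA$, the cylindrifications (and, when present, diagonals, substitutions, transpositions) needed to reconstruct a $\CA_n$-representation are genuinely term-definable from the $\K$-operations, so that non-representability transfers downward along $\Rd_\K$, while at the same time complete representability (hence representability) transfers \emph{upward} from $\Rd_\K\prod_D\A_m$ through the ultraproduct. For $\sf Df_n$, which has the poorest signature, non-representability of the $\Df$-reduct of $\A_m$ follows because the diagonal-free reduct of the rainbow construction is itself non-representable — a fact recorded in the excerpt (the $\sf Df$-reduct of $\sf PEA_{\N^{-1},\N}$ is not completely representable because that algebra is generated by elements of dimension $<n$, and the analogous remark applies to the finite $\A_m$). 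Once this signature bookkeeping is in place, the corollary is immediate, so I would present it in two short paragraphs: one recalling the rainbow witnesses and their reducts, and one running the ultraproduct non-finite-axiomatizability argument uniformly in $\K$.
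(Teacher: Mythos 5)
Your proposal is correct and follows essentially the same route the paper intends: the finite rainbow algebras $\A_m={\sf PEA}_{m+3,m+2}$, non-representable since \pa\ wins $G_{\omega}(\A_m)$, with a non-principal ultraproduct satisfying the Lyndon conditions and hence representable, followed by the standard \L o\'s argument, together with the reduct bookkeeping (non-representability passing down to the $\sf Df$-reduct because the algebras are generated by elements of dimension set $<n$). The only quibble is the parenthetical claim that the ultraproduct is \emph{completely} representable, which is not needed and not immediate from the Lyndon conditions alone; representability suffices and is all the argument uses.
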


\subsection{When Monk-like and Rainbow algebras do the same thing; a model theoretic approach}

Here we use Model Theory, to prove that non atom canonicity of several classes consisting of subneat reducts.
We need  some fairly standard model theoretic preparations.
This proof unifies Rainbow and Monk like constructions used in \cite{Hodkinson} and
\cite{weak}, and introduces  a new algebra based on another graph.

\begin{theorem}
Let $\Theta$ be an $n$-back-and-forth system
of partial isomorphism on a structure $A$, let $\bar{a}, \bar{b} \in {}^{n}A$,
and suppose that $ \theta = ( \bar{a} \mapsto \bar{b})$ is a map in
$\Theta$. Then $ A \models \phi(\bar{a})$ iff $ A \models
\phi(\bar{b})$, for any formula $\phi$ of $L^n_{\infty \omega}$.
\end{theorem}
\begin{proof} By induction on the structure of $\phi$.
\end{proof}
Suppose that $W \subseteq {}^{n}A$ is a given non-empty set. We can
relativize quantifiers to $W$, giving a new semantics $\models_W$
for $L^n_{\infty \omega}$, which has been intensively studied in
recent times. If $\bar{a} \in W$:
\begin{itemize}
\item for atomic $\phi$, $A\models_W \phi(\bar{a})$
iff $A \models \phi(\bar{a})$

\item the boolean clauses are as expected

\item for $ i < n, A \models_W \exists x_i \phi(\bar{a})$ iff $A \models_W
\phi(\bar{a}')$ for some $ \bar{a}' \in W$ with $\bar{a}' \equiv_i
\bar{a}$.
\end{itemize}

\begin{theorem} If $W$ is $L^n_{\infty \omega}$ definable, $\Theta$ is an
 $n$-\textit{back-and-forth} system
of partial isomorphisms on $A$, $\bar{a}, \bar{b} \in W$, and $
\bar{a} \mapsto \bar{b} \in \Theta$, then $ A \models \phi(\bar{a})$
iff $ A \models \phi(\bar{b})$ for any formula $\phi$ of
$L^n_{\infty \omega}$.
\end{theorem}
\begin{proof} Assume that $W$ is definable by the $L^n_{\infty \omega}$
formula $\psi$, so that $W = \{ \bar{a} \in {}^{n}A:A\models \psi(a)\}$. We may
relativize the quantifiers of $L^n_{\infty \omega}$-formulas to
$\psi$. For each $L^n_{\infty
\omega}$-formula $\phi$ we obtain a relativized one, $\phi^\psi$, by
induction, the main clause in the definition being:
\begin{itemize}
\item $( \exists x_i \phi)^\psi = \exists x_i ( \psi \wedge
\phi^\psi)$.
\end{itemize}
 Then clearly, $ A \models_W \phi(\bar{a})$ iff $ A \models
 \phi^\psi(\bar{a})$, for all $ \bar{a} \in W$.
\end{proof}

The following theorem unifies and generalizes the main theorem in \cite{Hodkinson} and in \cite{weak}.
It shows that sometime Monk like algebras and rainbow algebras do the same thing.
We shall see that each of the constructions has its assets and liabilities.
In the rainbow case the construction can be refined by truncating the greens and reds to be finite, to give sharper results
as shown in theorem \label{smooth}.

While Monk's algebra in one go gives the required result for both relation and cylindric like algebras.
and it can be generalized to show that the class of strongly representable atom structures for
both relation and cylindric algebras is not elementary reproving a profound result
of Hirsch and Hodkinson, using Erdos probabilistic graphs, or {\it into Monk} ultraproducts.

\begin{theorem}\label{hodkinson}

\begin{enumarab}
\item There exists a polyadic equality atom structure $\At$ of dimension $n$,
such that $\Tm\At$ is representable as a $\sf PEA_n$, but not strongly representable.
In fact $\Rd_t\Cm\At$ is not representable for any signature $t$ between that of $\sf Df$ and $\sf Sc$.

\item Furthermore,
there exists an atomic  relation algebra $\R$ that that the set of all basic matrices
forms an $n$ dimensional polyadic basis
${\sf Mat}_n\At\R$ and
$\Tm{\sf Mat}_n\At\R$ is  representable as a polyadic equality algebra, while
$\Rd_{df}\Cm{\sf Mat_n}\At\R$ is not (as a diagonal free cylindric algebra of dimension $n$).
In particular, $\At\R$ is weakly but not strongly representable.
\end{enumarab}
\end{theorem}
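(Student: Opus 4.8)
The plan is to realize both items simultaneously from a single model-theoretic construction in the style of \cite{Hodkinson} and \cite{weak}: one first builds a first-order structure $M$ (a ``coloured graph'' model) carrying an $n$-back-and-forth system of partial isomorphisms, and then extracts an atom structure $\At$ from the $n$-types realized in $M$ together with the obvious cylindrifier and diagonal (and, for the polyadic case, transposition) relations. The back-and-forth system, via the two relativization lemmas just proved, will witness that $\Tm\At$ embeds into an $L^n_{\infty\omega}$ set algebra based on $M$ with a suitably relativized semantics, giving representability of the term algebra as a full $\sf PEA_n$. For the failure of strong representability one embeds a ``bad'' finite structure---a clique whose chromatic number is too small, or equivalently a monochromatic-triangle-forcing configuration---into $\Cm\At$, so that any representation of $\Cm\At$ would contradict Ramsey's theorem (the Monk mechanism) applied to the colours available on the atoms. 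The key point of the unification is that the same abstract hypothesis on $M$ (a rich enough back-and-forth system on the ``good'' part, plus a definable substructure forcing the Ramsey contradiction in the complex algebra) covers both the Monk graph of infinite chromatic number and the rainbow graph $\N^{-1},\N$; the shade of red $\rho$ appearing as a reflexive node in the ultrafilter extension, as sketched in the introduction, is exactly what makes $\Tm\At$ representable while $\Cm\At$ is not.

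For item (1) concretely: take $\At$ to be the cylindric-polyadic atom structure $\rho(\K)$ where $\K$ is the class of finite coloured graphs for an appropriate signature (either the Monk signature $\G\times n$ of the introduction or the rainbow signature), closed under the $L_{\omega_1,\omega}$ theory. Step one is to construct the $n$-homogeneous model $M$ by a standard amalgamation/forth argument, verifying that $M\models$ the universal-existential axioms and that surjections $n\to$(finite subgraphs of $M$) form exactly $\At$. Step two: show $\Tm\At$ is representable by the relativized $L^n_{\infty\omega}$ semantics on $^nM$ (discarding assignments whose edges carry the forbidden shade $\rho$), using that $M$ has an $n$-back-and-forth system so that, by the second relativization theorem above, the representation respects the cylindric operations; only finite and cofinite joins of ``red copies'' exist in $\Tm\At$, which is why it lands in $\sf RCA_n$. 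Step three: show $\Rd_t\Cm\At$ is not representable for any $t$ between $\sf Df_n$ and $\sf Sc_n$. Here one exhibits, inside $\Cm\At$, the join of all red copies of a given red graph; this produces a finite ``non-representable core'' (a Monk-style algebra with more atoms than colours, or a rainbow algebra on which \pa\ wins the relevant finite game), and one then observes that non-representability is witnessed already at the $\sf Df$ level because the offending configuration is expressible using only cylindrifications---essentially the argument at the end of the ``desired cylindric algebra'' subsection that $S\Nr_n\CA_{n+k}$ is a variety and \pa\ wins $F^{n+k}$ on the $\sf Sc$ (hence $\sf Df$) reduct.

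For item (2), the extra content is to arrange $\At=\At\R$ for a relation algebra $\R$ whose $n$-dimensional basic matrices $\Mat_n\At\R$ form an $n$-dimensional \emph{polyadic} basis (not merely a cylindric one); this mirrors the atom structure $\alpha(\G)$ of the introduction, with the consistent-triple conditions chosen so that the symmetry needed for transpositions holds. One then checks that the cylindric atom structure of $\M(\G)$ is isomorphic to $\Mat_n\At\R$, transports the representability of $\Tm\At\R$ (and of $\Tm\Mat_n\At\R$ as a $\sf PEA_n$) through this isomorphism, and re-runs the Ramsey argument on $\Cm\Mat_n\At\R$ to get that its $\sf Df$-reduct is not representable, so $\At\R$ is weakly but not strongly representable. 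I expect the main obstacle to be step three of item (1): making the ``non-representable core'' argument work uniformly for Monk and rainbow cases and, crucially, pushing the non-representability all the way down to $\sf Df_n$ rather than just $\sf Sc_n$---one must verify that the forbidden-monochromatic-triangle obstruction (respectively, \pa's winning green-cone strategy) genuinely uses no substitution or diagonal operations, only cylindrifications on the complex algebra, and that the ultraproduct/back-and-forth machinery is robust to this reduct-taking. The representability half (steps one and two) is essentially the established technique of \cite{Hodkinson}, so the novelty and the risk both lie in the uniform non-representability-of-the-completion half.
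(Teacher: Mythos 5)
Your overall architecture coincides with the paper's: both items are extracted from a single coloured-graph model $M$ (a Monk signature over a graph $\G$, or the rainbow signature), the term algebra is represented via the agreement of classical and $W$-relativized semantics supplied by the $n$-back-and-forth systems, the complex algebra is identified with the $L^n_{\infty\omega}$ set algebra over $W$, and item (2) is read off from the relation algebra atom structure $\alpha(\G)$ whose basic matrices realize $\At$. However, the step you yourself flag as the risk --- pushing non-representability of $\Cm\At$ down to the $\Df$ level --- is a genuine gap as you have set it up, and it is also where you depart from the paper. Your plan is to check that the monochromatic-triangle/green-cone obstruction ``uses no substitution or diagonal operations, only cylindrifications''; but the Monk--Ramsey argument as usually run is a $\CA$-level argument: without diagonal elements you cannot even assert that the points of the would-be monochromatic configuration are distinct, and \pa's rainbow strategy is likewise phrased on networks whose consistency conditions involve diagonals. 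The paper does not attempt this. It first gets non-representability of $\Cm\At$ as a $\CA_n$ (the chromatic-number/Ramsey argument for the Monk graph, respectively the rainbow argument), and then transfers to the diagonal-free reduct using that these algebras are generated by elements of dimension at most two, so that a representation of $\Rd_{df}\Cm\At$ would yield a (complete) representation of the full algebra and hence of $\C$ --- the binary-generation transfer of Hodkinson \cite{AU}. Without either that transfer result or a genuinely diagonal-free reworking of the obstruction, your step three does not go through, and with it the ``any $t$ between $\Df$ and $\Sc$'' claim of item (1) and the $\Rd_{df}$ claim of item (2).

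Two smaller slips touch the mechanism and should be corrected. The Monk graph $\G$ must have \emph{finite} chromatic number (in the paper: a disjoint union of cliques of size $n(n-1)/2$, or $\N$ with edges $0<|i-j|<N$); with infinite chromatic number $\M(\G)$ would be representable and the example collapses, so ``the Monk graph of infinite chromatic number'' is the wrong hypothesis. Relatedly, the reflexive node in the ultrafilter extension of $\G$ is the device behind \emph{strong} representability in the infinite-chromatic case sketched in the introduction, not what makes $\Tm\At$ representable here: representability of the term algebra comes solely from the first-order classical/relativized agreement over $W$ (your step two), while the fact that only finite and cofinite joins of red copies exist in $\Tm\At$ is what blocks the bad core from embedding into the term algebra, not what represents it.
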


\begin{proof}
$L^+$ is the rainbow signature consisting of the binary
relation symbols $\g_i :i<n-1 , \g_0^i: i< |\sf G|$, $\w, \w_i: i <n-2, \r_{jk}^i  i<\omega, j<k<|\R|)$
and the $(n-1)$ ary-relation symbols
$\y_S: S\subseteq {\sf G}$
together with a shade of red $\rho$ which is outside the rainbow signature,
but it is a binary relation, in the sense that it can label edges in coloured
graphs . Here we tale like Hodkinson $\sf G=\sf R=\omega$.
graphs. In the following theorem we shall see that by varying these parameters,
namely when $|\sf G|=n+2$ and $|\sf R|=n+1$ we get sharper results.

Let $\GG$ be the class of all coloured graphs in this rainbow signature.
Let $T_r$ denote the rainbow $L_{\omega_1,\omega}$ theory \cite{HHbook2}.
Let $\G$ by a countable disjoint union of cliques each of size $n(n-1)/2$ or $\N$ with edge relation defined by $(i,j)\in E$ iff $0<|i-j|<N$.
Let $L^+$ be the signature consisting of the binary
relation symbols $(a, i)$, for each $a \in \G \cup \{ \rho \}$ and
$ i < n$. Let $T_m$ denote the following Monk theory:

$M\models T_m$ iff
for all $a,b\in M$, there is a unique $p\in \G\cup \{\rho\}\times n$, such that
$(a,b)\in p$ and if  $M\models (a,i)(x,y)\land (b,j)(y,z)\land (c,l)(x,z)$, then $| \{ i, j, l \}> 1 $, or
$ a, b, c \in \G$ and $\{ a, b, c\} $ has at least one edge
of $\G$, or exactly one of $a, b, c$ -- say, $a$ -- is $\rho$, and $bc$ is
an edge of $\G$, or two or more of $a, b, c$ are $\rho$.

Then there is a countable coloured  $M\in \GG$ of both theories with the following
property:\\
$\bullet$ If $\triangle \subseteq \triangle' \in \GG$, $|\triangle'|
\leq n$, and $\theta : \triangle \rightarrow M$ is an embedding,
then $\theta$ extends to an embedding $\theta' : \triangle'
\rightarrow M$.

For both: Two players, $\forall$ and $\exists$, play a game to build a
labelled graph $M$. They play by choosing a chain $\Gamma_0
\subseteq \Gamma_1 \subseteq\ldots $ of finite graphs in $\GG$; the
union of
the chain will be the graph $M.$
There are $\omega$ rounds. In each round, $\forall$ and $\exists$ do
the following. Let $ \Gamma \in \GG$ be the graph constructed up to
this point in the game. $\forall$ chooses $\triangle \in \GG$ of
size $< n$, and an embedding $\theta : \triangle \rightarrow
\Gamma$. He then chooses an extension $ \triangle \subseteq
\triangle^+ \in \GG$, where $| \triangle^+ \backslash \triangle |
\leq 1$. These choices, $ (\triangle, \theta, \triangle^+)$,
constitute his move. $\exists$ must respond with an extension $
\Gamma \subseteq \Gamma^+ \in \GG$ such that $\theta $ extends to an
embedding $\theta^+ : \triangle^+ \rightarrow \Gamma^+$. Her
response ends the round.
The starting graph $\Gamma_0 \in \GG$ is arbitrary but we will take
it to be the empty graph in $\GG$.
We claim that $\exists$ never gets stuck -- she can always find a suitable
extension $\Gamma^+ \in \GG$.  Let $\Gamma \in \GG$ be the graph built at some stage, and let
$\forall$ choose the graphs $ \triangle \subseteq \triangle^+ \in
\GG$ and the embedding $\theta : \triangle \rightarrow \Gamma$.
Thus, his move is $ (\triangle, \theta, \triangle^+)$.
We now describe $\exists$'s response. If $\Gamma$ is empty, she may
simply plays $\triangle^+$, and if $\triangle = \triangle^+$, she
plays $\Gamma$. Otherwise, let $ F = rng(\theta) \subseteq \Gamma$.
(So $|F| < n$.) Since $\triangle$ and $\Gamma \upharpoonright F$ are
isomorphic labelled graphs (via $\theta$), and $\GG$ is closed under
isomorphism, we may assume with no loss of generality that $\forall$
actually played $ ( \Gamma \upharpoonright F, Id_F, \triangle^+)$,
where $\Gamma \upharpoonright F \subseteq \triangle^+ \in \GG$,
$\triangle^+ \backslash F = \{\delta\}$, and $\delta \notin \Gamma$.
We may view $\forall$'s move as building a labelled graph $ \Gamma^*
\supseteq \Gamma$, whose nodes are those of $\Gamma$ together with
$\delta$, and whose edges are the edges of $\Gamma$ together with
edges from $\delta$ to every node of $F$. The labelled graph
structure on $\Gamma^*$ is given by\\
$\bullet$ $\Gamma$ is an induced subgraph of $\Gamma^*$ (i.e., $
\Gamma \subseteq \Gamma^*$)\\
$\bullet$ $\Gamma^* \upharpoonright ( F \cup \{\delta\} ) =
\triangle^+$.
Now $ \exists$ must extend $ \Gamma^*$ to a complete
graph on the same node and complete the colouring yielding a graph
$ \Gamma^+ \in \GG$. Thus, she has to define the colour $
\Gamma^+(\beta, \delta)$ for all nodes $ \beta \in \Gamma \backslash
F$, in such a way as to meet the required conditions.  For rainbow case \pe\ plays as follows:
\begin{enumarab}

\item If there is no $f\in F$,
such that $\Gamma^*(\beta, f), \Gamma*(\delta ,f)$ are coloured $\g_0^t$ and $\g_0^u$
for some $t,u$, then \pe\ defined $\Gamma^+(\beta, \delta)$ to  be $\w_0$.

\item Otherwise, if for some $i$ with $0<i<n-1$, there is no $f\in F$
such that $\Gamma^*(\beta,f), \Gamma^*(\delta, f)$ are both coloured $\g_i$, then \pe\
defines the colour $\Gamma^+(\beta,\delta)$ to
to be $\w_i$ say the least such

\item Otherwise $\delta$ and $\beta$ are both the apexes on $F$ in $\Gamma^*$ that induce
the same linear ordering on (there are nor green edges in $F$ because
$\Delta^+\in \GG$, so it has no green triangles).
Now \pe\ has no choice but to pick a red. The colour she chooses is $\rho.$

\end{enumarab}
This defines the colour of edges. Now for hyperedges,
for  each tuple of distinct elements
$\bar{a}=(a_0,\ldots a_{n-2})\in {}^{n-1}(\Gamma^+)$
such that $\bar{a}\notin {}^{n-1}\Gamma\cup {}^{n-1}\Delta$ and with no edge $(a_i, a)$
coloured greed in  $\Gamma^+$, \pe\ colours $\bar{a}$ by $\y_{S}$
where
$S=\{i <\omega: \text { there is a $i$ cone with base  } \bar{a}\}$.
Notice that $|S|\leq F$. This strategy works.

For the Monk case \pe\ plays as follows:

Now $ \exists$ must extend $ \Gamma^*$ to a complete
graph on the same node and complete the colouring yielding a graph
$ \Gamma^+ \in \GG$. Thus, she has to define the colour $
\Gamma^+(\beta, \delta)$ for all nodes $ \beta \in \Gamma \backslash
F$, in such a way as to meet the conditions of definition 1. She
does this as follows. The set of colours of the labels in $ \{
\triangle^+(\delta, \phi) : \phi \in F \} $ has cardinality at most
$ n - 1 $. Let  $ i < n$ be a "colour" not in this set. $ \exists$
labels $(\delta, \beta) $ by $(\rho, i)$ for every $ \beta \in
\Gamma \backslash F$. This completes the definition of $ \Gamma^+$.
\\
It remains to check that this strategy works--that the conditions
from the definition of $\GG$ are met. It is
sufficient to note that

\begin{itemize}
 \item if $\phi \in F$ and $ \beta \in \Gamma \backslash F$, then
the labels in $ \Gamma^+$ on the edges of the triangle $(\beta,
\delta, \phi)$ are not all of the same colour ( by choice of $i$ )

\item if $ \beta, \gamma \in \Gamma \backslash F$, then two the
labels in $ \Gamma^+$ on the edges of the triangle $( \beta, \gamma,
\delta )$ are $( \rho, i)$.\\

\end{itemize}

Now there are only countably many
finite graphs in $ \GG$ up to isomorphism, and each of the graphs
built during the game is finite. Hence $\forall$ may arrange to play
every possible $(\triangle, \theta, \triangle^+)$ (up to
isomorphism) at some round in the game. Suppose he does this, and
let $M$ be the union of the graphs played in the game. We check that
$M$ is as required. Certainly, $M \in \GG$, since $\GG$ is clearly
closed under unions of chains. Also, let $\triangle \subseteq
\triangle' \in \GG$ with $|\triangle'| \leq n$, and $ \theta :
\triangle \rightarrow M$ be an embedding. We prove that $\theta$
extends to $\triangle'$, by induction on $d = | \triangle'
\backslash \triangle|.$ If this is $0$, there is nothing to prove.
Assume the result for smaller $d$. Choose  $ a \in \triangle'
\backslash \triangle $ and let $ \triangle^+ = \triangle'
\upharpoonright ( \triangle \cup \{ a \} ) \in \GG$. As, $
|\triangle| < n$, at some round in the game, at which the graph
built so far was $\Gamma$, say, $\forall$ would have played
$(\triangle, \theta, \triangle^+)$ (or some isomorphic triple).
Hence, if $\exists$ constructed $ \Gamma^+$ in that round, there is
an embedding $ \theta^+ : \triangle^+ \rightarrow \Gamma^+ $
extending $\theta$. As $ \Gamma^+ \subseteq M,  \theta^+$ is also an
embedding: $ \triangle^+ \rightarrow M$. Since $ |\triangle'
\backslash \triangle^+| < d, \theta^+$ extends inductively to an
embedding $\theta' : \triangle' \rightarrow M $, as required.

For the rainbow algebra, let $$W_r = \{ \bar{a} \in {}^n M : M \models ( \bigwedge_{i < j < n,
l < n} \neg \rho(x_i, x_j))(\bar{a}) \},$$
and for the Monk like algebra, $W_m$
is defined exactly the same way by discarding assignments whose edges are coloured by one of $n$ reds
$(\rho, i)$, in the former case ee are discarding assignments who have a $\rho$ labelled edge.
We denote both by $W$ relying on context.

The $n$-homogeneity built into
$M$, in all three cases by its construction implies that the set of all partial
isomorphisms of $M$ of cardinality at most $n$ forms an
$n$-back-and-forth system. But we can even go
further. We formulate our statement for the Monk algebra based on $\G$ whose underlying set
is $\N$ since this is the new case. (For the other case the reader is referred to \cite{Hodkinson} and \cite{weak}).

For any permutation $\chi$ of $\omega \cup \{\rho\}$, $\Theta^\chi$
is the set of partial one-to-one maps from $M$ to $M$ of size at
most $n$ that are $\chi$-isomorphisms on their domains. We write
$\Theta$ for $\Theta^{Id_{\omega \cup \{\rho\}}}$.
For any permutation $\chi$ of $\omega \cup \{\rho\}$, $\Theta^\chi$
is an $n$-back-and-forth system on $M$.
These notions are defined for the rainbow case completely analogously.

We may regard any non-empty labelled graph equally as an
$L^+$-structure, in the obvious way. The $n$-homogeneity built into
$M$ by its construction would suggest that the set of all partial
isomorphisms of $M$ of cardinality at most $n$ forms an
$n$-back-and-forth system. This is indeed true, but we can go
further.

Let $\chi$ be a permutation of the set $\omega \cup \{ \rho\}$. Let
$ \Gamma, \triangle \in \GG$ have the same size, and let $ \theta :
\Gamma \rightarrow \triangle$ be a bijection. We say that $\theta$
is a $\chi$-\textit{isomorphism} from $\Gamma$ to $\triangle$ if for
each distinct $ x, y \in \Gamma$,
\begin{itemize}
\item If $\Gamma ( x, y) = (a, j)$ with $a\in \N$, then there exist unique $l\in \N$ and $r$ with $0\leq r<N$ such that
$a=Nl+r$.
\begin{equation*}
\triangle( \theta(x),\theta(y)) =
\begin{cases}
( N\chi(i)+r, j), & \hbox{if $\chi(i) \neq \rho$} \\
(\rho, j),  & \hbox{otherwise.} \end{cases}
\end{equation*}
\end{itemize}

\begin{itemize}
\item If $\Gamma ( x, y) = ( \rho, j)$, then
\begin{equation*}
\triangle( \theta(x),\theta(y)) \in
\begin{cases}
\{( N\chi(\rho)+s, j): 0\leq s < N \}, & \hbox{if $\chi(\rho) \neq \rho$} \\
\{(\rho, j)\},  & \hbox{otherwise.} \end{cases}
\end{equation*}
\end{itemize}

We now have for any permutation $\chi$ of $\omega \cup \{\rho\}$, $\Theta^\chi$
is the set of partial one-to-one maps from $M$ to $M$ of size at
most $n$ that are $\chi$-isomorphisms on their domains. We write
$\Theta$ for $\Theta^{Id_{\omega \cup \{\rho\}}}$.

For any any permutation $\chi$ of $\omega \cup \{\rho\}$, $\Theta^\chi$
is an $n$-back-and-forth system on $M$.

Clearly, $\Theta^\chi$ is closed under restrictions. We check the
``forth" property. Let $\theta \in \Theta^\chi$ have size $t < n$.
Enumerate $\dom(\theta)$, $\rng(\theta).$ respectively as $ \{ a_0,
\ldots, a_{t-1} \}$, $ \{ b_0,\ldots b_{t-1} \}$, with $\theta(a_i)
= b_i$ for $i < t$. Let $a_t \in M$ be arbitrary, let $b_t \notin M$
be a new element, and define a complete labelled graph $\triangle
\supseteq M \upharpoonright \{ b_0,\ldots, b_{t-1} \}$ with nodes
$\{ b_0,\ldots, b_{t} \}$ as follows.\\

Choose distinct "nodes"$ e_s < N$ for each $s < t$, such that no
$(e_s, j)$ labels any edge in $M \upharpoonright \{ b_0,\dots,
b_{t-1} \}$. This is possible because $N \geq n(n-1)/2$, which
bounds the number of edges in $\triangle$. We can now define the
colour of edges $(b_s, b_t)$ of $\triangle$ for $s = 0,\ldots, t-1$.

\begin{itemize}
\item If $M ( a_s, a_t) = ( Ni+r, j)$, for some $i\in \N$ and $0\leq r<N$, then
\begin{equation*}
\triangle( b_s, b_t) =
\begin{cases}
( N\chi(i)+r, j), & \hbox{if $\chi(i) \neq \rho$} \\
\{(\rho, j)\},  & \hbox{otherwise.} \end{cases}
\end{equation*}
\end{itemize}

\begin{itemize}
\item If $M ( a_s, a_t) = ( \rho, j)$, then assuming that $e_s=Ni+r,$ $i\in \N$ and $0\leq r<N$,
\begin{equation*}
\triangle( b_s, b_t) =
\begin{cases}
( N\chi(\rho)+r, j), & \hbox{if $\chi(\rho) \neq \rho$} \\
\{(\rho, j)\},  & \hbox{otherwise.} \end{cases}
\end{equation*}
\end{itemize}

This completes the definition of $\triangle$. It is easy to check
that $\triangle \in \GG$. Hence, there is a graph embedding $ \phi : \triangle \rightarrow M$
extending the map $ Id_{\{ b_0,\ldots b_{t-1} \}}$. Note that
$\phi(b_t) \notin \rng(\theta)$. So the map $\theta^+ = \theta \cup
\{(a_t, \phi(b_t))\}$ is injective, and it is easily seen to be a
$\chi$-isomorphism in $\Theta^\chi$ and defined on $a_t$.
The converse,``back" property is similarly proved ( or by symmetry,
using the fact that the inverse of maps in $\Theta$ are
$\chi^{-1}$-isomorphisms).
We can also derive a connection between classical and
relativized semantics in $M$, over the set $W$:\\

Recall that $W$ is simply the set of tuples $\bar{a}$ in ${}^nM$ such that the
edges between the elements of $\bar{a}$ don't have a label involving
$\rho$. Their labels are all of the form $(Ni+r, j)$. We can replace $\rho$-labels by suitable $(a, j)$-labels
within an $n$-back-and-forth system. Thus, we may arrange that the
system maps a tuple $\bar{b} \in {}^n M \backslash W$ to a tuple
$\bar{c} \in W$ and this will preserve any formula
containing no relation symbols $(a, j)$ that are ``moved" by the
system. Indeed we show that the
classical and $W$-relativized semantics agree.
$M \models_W \varphi(\bar{a})$ iff $M \models \varphi(\bar{a})$, for
all $\bar{a} \in W$ and all $L^n$-formulas $\varphi$.

The proof is by induction on $\varphi$. If $\varphi$ is atomic, the
result is clear; and the boolean cases are simple.
Let $i < n$ and consider $\exists x_i \varphi$. If $M \models_W
\exists x_i \varphi(\bar{a})$, then there is $\bar{b} \in W$ with
$\bar{b} =_i \bar{a}$ and $M \models_W \varphi(\bar{b})$.
Inductively, $M \models \varphi(\bar{b})$, so clearly, $M \models_W
\exists x_i \varphi(\bar{a})$.
For the (more interesting) converse, suppose that $M \models_W
\exists x_i \varphi(\bar{a})$. Then there is $ \bar{b} \in {}^n M$
with $\bar{b} =_i \bar{a}$ and $M \models \varphi(\bar{b})$. Take
$L_{\varphi, \bar{b}}$ to be any finite subsignature of $L$
containing all the symbols from $L$ that occur in $\varphi$ or as a
label in $M \upharpoonright \rng(\bar{b})$. (Here we use the fact
that $\varphi$ is first-order. The result may fail for infinitary
formulas with infinite signature.) Choose a permutation $\chi$ of
$\omega \cup \{\rho\}$ fixing any $i'$ such that some $(i'N+r, j)$
occurs in $L_{\varphi, \bar{b}}$ for some $r<N$, and moving $\rho$.
Let $\theta = Id_{\{a_m : m \neq i\}}$. Take any distinct $l, m \in
n \setminus \{i\}$. If $M(a_l, a_m) = (i'N+r, j)$, then $M( b_l,
b_m) = (i'N+r, j)$ because $ \bar{a} = _i \bar{b}$, so $(i'N+r, j)
\in L_{\varphi, \bar{b}}$ by definition of $L_{\varphi, \bar{b}}$.
So, $\chi(i') = i'$ by definition of $\chi$. Also, $M(a_l, a_m) \neq
( \rho, j)$(any $j$) because $\bar{a} \in W$. It now follows that
$\theta$ is a $\chi$-isomorphism on its domain, so that $ \theta \in
\Theta^\chi$.
Extend $\theta $ to $\theta' \in \Theta^\chi$ defined on $b_i$,
using the ``forth" property of $ \Theta^\chi$. Let $
\bar{c} = \theta'(\bar{b})$. Now by choice of of $\chi$, no labels
on edges of the subgraph of $M$ with domain $\rng(\bar{c})$ involve
$\rho$. Hence, $\bar{c} \in W$.
Moreover, each map in $ \Theta^\chi$ is evidently a partial
isomorphism of the reduct of $M$ to the signature $L_{\varphi,
\bar{b}}$. Now $\varphi$ is an $L_{\varphi, \bar{b}}$-formula. Hence
by Theorem 6 applied to $L_{\varphi, \bar{b}}$, and lemma 11, we
have $M \models \varphi(\bar{a})$ iff $M \models \varphi(\bar{c})$.
So $M \models \varphi(\bar{c})$. Inductively, $M \models_W
\varphi(\bar{c})$. Since $ \bar{c} =_i \bar{a}$, we have $M
\models_W \exists x_i \varphi(\bar{a})$ by definition of the
relativized semantics. This completes the induction.

Let $L$ be $L^+$ without the red labels.
The logics $L^n, L^n_{\infty \omega}$ are taken in this
signature.

For an $L^n_{\infty \omega}$-formula $\varphi $,  define
$\varphi^W$ to be the set $\{ \bar{a} \in W : M \models_W \varphi
(\bar{a}) \}$.

Then the set algebra $\A$ (actually  in all three cases) is taken to be the relativized set algebra with domain
$$\{\varphi^W : \varphi \,\ \textrm {a first-order} \;\ L^n-
\textrm{formula} \}$$  and unit $W$, endowed with the algebraic
operations ${\sf d}_{ij}, {\sf c}_i, $ etc., in the standard way, and of course formulas are taken in the suitable
signature.
The completion of $\A$ is the algebra $\C$ with universe $\{\phi^W: \phi\in L_{\infty,\omega}^n\}$
with operations defined as for $\A$, namely, usual cylindrifiers and diagonal elements,
reflecting existential quantifiers, polyadic operations and equality.
Indeed, $\A$ is a representable (countable) atomic polyadic algebra of dimension $n$

Let $\cal S$ be the polyadic set algebra with domain  $\wp ({}^{n} M )$ and
unit $ {}^{n} M $. Then the map $h : \A
\longrightarrow S$ given by $h:\varphi ^W \longmapsto \{ \bar{a}\in
{}^{n} M: M \models \varphi (\bar{a})\}$ can be checked to be well -
defined and one-one. It clearly respects the polyadic operations. So it is a representation of $\A.$
A formula $ \alpha$  of  $L^n$ is said to be $MCA$
('maximal conjunction of atomic formulas') if (i) $M \models \exists
x_0\ldots x_{n-1} \alpha $ and (ii) $\alpha$ is of the form
$$\bigwedge_{i \neq j < n} \alpha_{ij}(x_i, x_j),$$
where for each $i,j,\alpha_{ij}$ is either $x_i=x_i$ or $R(x_i,x_j)$
for some binary relation symbol $R$ of $L$.

Let $\varphi$ be any $L^n_{\infty\omega}$-formula, and $\alpha$ any
$MCA$-formula. If $\varphi^W \cap \alpha^W \neq \emptyset $, then
$\alpha^W \subseteq \varphi^W $.
Indeed, take $\bar{a} \in  \varphi^W \cap \alpha^W$. Let $\bar{a} \in
\alpha^W$ be arbitrary. Clearly, the map $( \bar{a} \mapsto
\bar{b})$ is in $\Theta$. Also, $W$ is
$L^n_{\infty\omega}$-definable in $M$, since we have
$$ W = \{
\bar{a} \in {}^n M : M \models (\bigwedge_{i < j< n} (x_i = x_j \vee
\bigvee_{R \in L} R(x_i, x_j)))(\bar{a})\}.$$
We have $M \models_W \varphi (\bar{a})$
 iff $M \models_W \varphi (\bar{b})$. Since $M \models_W \varphi (\bar{a})$, we have
$M \models_W \varphi (\bar{b})$. Since $\bar{b} $ was arbitrary, we
see that $\alpha^W \subseteq \varphi^W$.
Let $$F = \{ \alpha^W : \alpha \,\ \textrm{an $MCA$},
L^n-\textrm{formula}\} \subseteq \A.$$
Evidently, $W = \bigcup F$. We claim that
$\A$ is an atomic algebra, with $F$ as its set of atoms.
First, we show that any non-empty element $\varphi^W$ of $\A$ contains an
element of $F$. Take $\bar{a} \in W$ with $M \models_W \varphi
(\bar{a})$. Since $\bar{a} \in W$, there is an $MCA$-formula $\alpha$
such that $M \models_W \alpha(\bar{a})$. Then $\alpha^W
\subseteq \varphi^W $. By definition, if $\alpha$ is an $MCA$ formula
then $ \alpha^W$ is non-empty. If $ \varphi$ is
an $L^n$-formula and $\emptyset \neq \varphi^W \subseteq \alpha^W $,
then $\varphi^W = \alpha^W$. It follows that each $\alpha^W$ (for
$MCA$ $\alpha$) is an atom of $\A$.

Define $\C$ to be the complex algebra over $\At\A$, the atom structure of $\A$.
Then $\cal C$ is the completion of $\A$. The domain of $\cal C$ is $\wp(\At\A)$. The diagonal ${\sf d}_{ij}$ is interpreted
as the set of all $S\in \At\A$ with $a_i=a_j$ for some $\bar{a}\in S$.
The cylindrification ${\sf c}_i$ is interpreted by ${\sf c}_iX=\{S\in \At\A: S\subseteq c_i^{\A}(S')\text { for some } S'\in X\}$, for $X\subseteq \At\A$.
Finally ${\sf p}_{ij}X=\{S\in \At\A: S\subseteq {\sf p}_{ij}^{\A}(S')\text { for some } S'\in X\}.$
Let $\cal D$ be the relativized set algebra with domain $\{\phi^W: \phi\text { an $L_{\infty\omega}^n$ formula }\}$,  unit $W$
and operations defined like those of $\cal A$.

${\cal C}\cong \cal D$, via the map $X\mapsto \bigcup X$.

The map is clearly injective. It is surjective, since
$$\phi^W=\bigcup\{\alpha^W: \alpha \text { an $MCA$-formula}, \alpha^W\subseteq \phi^W\}$$
for any $L_{\infty\omega}^n$ formula $\phi$.
Preservation of the Boolean operations and diagonals is clear.
We check cylindrifications. We require that for any $X\subseteq \At\A$,
we have
$\bigcup {\sf c}_i^{\cal C}X={\sf c}_i^{\cal D}(\bigcup X)$ that is
$$\bigcup \{S\in At\A: S\subseteq {\sf c}_i^{\A}S'\text { for some $S'\in X$ }\}=$$
$$\{\bar{a}\in W: \bar{a}\equiv_i \bar{a'} \text { for some } \bar{a'}\in \bigcup X\}.$$
Let $\bar{a}\in S\subseteq c_iS'$, where $S'\in X$. So there is $\bar{a'}\equiv_i \bar{a}$ with $\bar{a'}\in S'$, and so $\bar{a'}\in \bigcup X$.

Conversely, let $\bar{a}\in W$ with $\bar{a}\equiv_i \bar{a'}$ for some $\bar{a'}\in \bigcup X$.
Let $S\in At\A$, $S'\in X$ with $\bar{a}\in S$ and $\bar{a'}\in S'$.
Choose $MCA$ formulas $\alpha$ and $\alpha'$ with $S=\alpha^W$ and $S'=\alpha'^{W}$.
then $\bar{a}\in \alpha^{W}\cap (\exists x_i\alpha')^W$ so $\alpha^W\subseteq (\exists x_i\alpha')^W$, or $S\subseteq c_i^{\A}(S')$.
The required now follows. We leave the checking of substitutions to the reader.

We recall that for the rainbow signature, a formula $ \alpha$  of  $L^n$ is said to be $MCA$
('maximal conjunction of atomic formulas') if (i) $M \models \exists
x_0\ldots x_{n-1} \alpha $ and (ii) $\alpha$ is of the form
$$\bigwedge_{i \neq j < n} \alpha_{ij}(x_i, x_j) \land y_S(x_0,\ldots x_{n-1}),$$
where for each $i,j,\alpha_{ij}$ is either $x_i=x_i$ or $R(x_i,x_j)$
for some binary relation symbol $R$ of the rainbow signature.

A formula $\alpha$  is also an $MCA$ says that the set it defines in ${}^n M$
is nonempty, and that if $M \models \alpha (\bar{a})$ then the graph
$M \upharpoonright \rng (\bar{a})$ is determined up to isomorphism
and has no edge whose label is of the form $\rho$, in the rainbow case, and $(\rho, i)$, $i<n$ for the other case.
Now we have for any permutation $\chi$ of $\omega \cup \{\rho\}$, $\Theta^\chi$
is an $n$-back-and-forth system on $M$.
Hence, any two
tuples satisfying $\alpha$ are isomorphic and one is mapped to the
other by the $n$-back-and-forth system $\Theta$ of partial isomorphisms from $M$ to $M$; they are the {\it same} coloured graph.
No $L^n_{\infty \omega}$- formula can distinguish them. So $\alpha$
defines an atom of $\A$ --- it is literally indivisible. Since the
$MCA$ - formulas clearly 'cover' $W$, the atoms defined by them are
dense in $\A$. This is the informal idea proved rigourously.
So $\A$ again is atomic.

In the rainbow case there is a one to one correspondence between $MCA$ formulas and
$n$ coloured graphs whose edges are not labelled by the shade of red
$\rho$, so that in particular, those coloured graphs are the atoms of the algebra.
In the other cases, as previously mentioned,  $MCA$ formulas also define the atoms.

To prove item (2) we use Monk's algebras. Any of the two will do just as well:

We show that their atom structure consists of the $n$
basic matrices of a relation algebra.
In more detail, we define a relation algebra atom structure $\alpha(\G)$ of the form
$(\{1'\}\cup (\G\times n), R_{1'}, \breve{R}, R_;)$.
The only identity atom is $1'$. All atoms are self converse,
so $\breve{R}=\{(a, a): a \text { an atom }\}.$
The colour of an atom $(a,i)\in \G\times n$ is $i$. The identity $1'$ has no colour. A triple $(a,b,c)$
of atoms in $\alpha(\G)$ is consistent if
$R;(a,b,c)$ holds. Then the consistent triples are $(a,b,c)$ where

\begin{itemize}

\item one of $a,b,c$ is $1'$ and the other two are equal, or

\item none of $a,b,c$ is $1'$ and they do not all have the same colour, or

\item $a=(a', i), b=(b', i)$ and $c=(c', i)$ for some $i<n$ and
$a',b',c'\in \G$, and there exists at least one graph edge
of $G$ in $\{a', b', c'\}$.

\end{itemize}

$\alpha(\G)$ can be checked to be a relation atom structure. It is exactly the same as that used by Hirsch and Hodkinson in \cite{HHbook}, except
that we use $n$ colours, instead of just $3$, so that it a Monk algebra not a rainbow one. However, some monochromatic triangles
are allowed namely the 'dependent' ones.
This allows the relation algebra to have an $n$ dimensional cylindric basis
and, in fact, the atom structure of of $M(\G)$ is isomorphic (as a cylindric algebra
atom structure) to the atom structure $\Mat_n$ of all $n$-dimensional basic
matrices over the relation algebra atom structure $\alpha(\G)$.

We have in all cases a labelled graph  defined $M$ as a model of a first order theory in the Monk case
and of $L_{\omega_1, \omega}$ in the rainbow case. W
e have also relativized $^nM$ to  $W\subseteq {}^nM$ by deleting assignments
whose edges involve reds, and we defined the term algebra $\A$ as the atomic relativized set algebra
with unit $W$. In the case of  Monk's algebra we defined the relation algebra $\sf R$, such that
$\At\A\cong {\sf Mat}_n{\sf R}$. And in all cases the complex algebra $\C$
of the atom structure is isomorphic to the set algebra
$\{\phi^M: \phi\in L_{\infty, \omega}^n\}$.
Now the $\sf Df$ reduct of this algebra cannot be representable for else this induces a complete representation
of $\Rd_{df}\A$, hence a complete representation of $\A$, which in turn induces a representation of $\C$.

\end{proof}

We refer the reader to the notion of $m$ smooth representation \cite{HHbook}, which is a local form of representation where roughly cylindrifiers have witnesses only
on $<m$ cliques. Though formulated for relation algebras,
the definition lifts easily to cylindric algebras. (Several notions of
other relativized representations will be recalled below,
in the context of omitting types in clique guarded semantics, \ref{blurs}.)

\begin{theorem}\label{smooth} For every $n\geq 3$ there exists a
countable atomic $\sf PEA_n$, such that the $\CA$ reduct of
its completion  does not have an $n+4$ smooth representation, in particular, it is not  representable. Furthermore, its
${\sf Df}$ reduct is not representable.
\end{theorem}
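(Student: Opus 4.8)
The plan is to use a rainbow construction for $\sf PEA_n$, but with the greens and reds \emph{truncated} to finite sets, so that the number of rounds in which \pa\ can force a win in the relevant game is controlled by this finite data. Concretely, I would take the rainbow polyadic equality algebra $\A=\sf PEA_{\sf G,\Gamma}$ where $|\sf G|=n+4$ (say ${\sf G}=\{\g_i:i<n-1\}\cup\{\g_0^i:i<n+4\}$) and $|\Gamma|=n+3$ reds, plus a shade of red $\rho$ outside the signature used by \pe\ to label edges when she is forced a red, as in the construction giving $\CA_{n+2,n+1}$ earlier in the paper (subsection ``The desired cylindric algebra, blown up and blurred'') and as in \cite{Hodkinson}. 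The atom structure $\At$ consists of surjections from $n$ onto finite coloured graphs whose edges are not labelled $\rho$; this is a countable atom structure and $\Tm\At$ is representable as a $\sf PEA_n$ by the same $n$-homogeneous model argument used in Theorem \ref{hodkinson} and in \cite{Hodkinson} (one relativizes ${}^nM$ to the set $W$ of tuples whose edges avoid $\rho$, and checks that $\Tm\At$ embeds into the resulting relativized set algebra). So the completion $\Cm\At$ exists and into it $\CA_{n+4,n+3}$ (the subalgebra of atoms having at least one $\rho$-free representative blown up to include $\rho$-labelled edges) embeds by sending each red coloured graph to the join of its copies — these joins exist in $\Cm\At$ but not in $\Tm\At$, exactly as in the cited constructions.

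Next I would carry out the game-theoretic heart. Using the technique of the theorem ``\pa\ has a winning strategy in $F^{n+3}(\At\Rd_{sc}\PEA_{\N^{-1},\N})$'' above, but now with the \emph{finite} parameters $|\sf G|=n+4$, $|\Gamma|=n+3$, one shows that \pa\ has a winning strategy in the usual graph game on $\CA_{n+4,n+3}$ played with $n+4$ nodes in finitely many rounds: he bombards \pe\ with cones of green tints $\g_0^0,\g_0^{-1},\ldots$ based on a common base, forcing her to label the apex-apex edges with reds whose indices must strictly decrease along the linear order induced on $\N$ by the tints; since only $n+3$ reds are available, she runs out after finitely many rounds and is forced into an inconsistent (forbidden) triple of reds. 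This translates, by the standard equivalence between the graph game and the game $F^{m}$ (and its ``smooth'' variant, see \cite{HHbook}), into: \pa\ wins $F^{n+4}(\At\Rd_{\sf Sc}\CA_{n+4,n+3})$, hence $\Rd_{\sf Sc}\CA_{n+4,n+3}\notin S_c\Nr_n\Sc_{n+4}$. Because $\CA_{n+4,n+3}$ embeds (as a complete subalgebra, after passing to $\Cm\At$) into $\Rd_{ca}\Cm\At$, and $S_c\Nr_n\CA_{n+4}$ is closed under subalgebras of this kind (being essentially a variety / closed under $S_c$), the $\CA$ reduct of $\Cm\At$ is not in $S_c\Nr_n\CA_{n+4}$; \emph{a fortiori} it has no $n+4$ smooth representation, since an $n+4$ smooth representation of $\Cm\At$ would, by the smoothness-to-neat-embedding implication recalled in \cite{HHbook}, put $\Rd_{ca}\Cm\At$ into $S_c\Nr_n\CA_{n+4}$, contradiction. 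In particular $\Cm\At$ is not representable.

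For the last clause, that the $\sf Df$ reduct of $\Cm\At$ is not representable, I would argue exactly as in the corollary ``The class of completely representable algebras $\ldots$ are not elementary'' and in Theorem \ref{hodkinson}: the rainbow atom structure is \emph{binary generated} in the sense that $\Tm\At$ (hence $\Cm\At$) is generated by elements whose dimension set has size $<n$, so a representation of $\Rd_{df}\Cm\At$ would lift to a representation of $\Cm\At$ as a $\sf PEA_n$ (cylindrifiers, diagonals and polyadic operations all being recoverable from the diagonal-free data together with the generating elements), which we have just shown is impossible.

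\textbf{Main obstacle.} The genuinely delicate point is verifying that \pe\ \emph{still} has a winning strategy in every \emph{finite}-round atomic game $G_r(\At)$ (so that $\Tm\At$ is representable and the algebra is a legitimate counterexample), while \pa\ wins the $n+4$-node smooth game on the completion after finitely many rounds. With $|\sf G|$ and $|\Gamma|$ both finite, one must check carefully that the bookkeeping of \pe's strategy for coloured-graph games — maintaining the order-preserving, ``widely spaced'' auxiliary function $\rho_s:\N^{-1}\to\N$ from the strategy described in the body of the paper — goes through with only $n+4$ tints and $n+3$ reds available in each bounded play, and that the count $n+4$ (rather than $n+3$ or $n+5$) is exactly the threshold at which \pa's cone-bombardment becomes a win; pinning down this constant, i.e.\ that the obstruction appears in dimension $n+4$ and no earlier, is where the careful combinatorial analysis of the rainbow game (the analogue of the ``$\CA_{n+1,n+2}$ is outside $S\Nr_n\CA_{n+4}$'' computation) has to be done.
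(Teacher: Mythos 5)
Your skeleton takes a genuinely different route from the paper, and its central step has a real gap. The paper proves the theorem \emph{directly} on the putative relativized representation: it builds the rainbow model with only $n+2$ greens and $n+1$ reds, forms the term algebra and its completion $\C=\{\phi^W:\phi\in L^n_{\infty,\omega}\}$, and then assumes an $n+4$ smooth (flat) representation $g:\C\to\wp(V)$ exists. Inside the representing model a base labelled $\y_{n+2}$ supports $n+3$ cones whose apexes $c_t$ all coexist as \emph{points of the model}; every $n$-tuple among the relevant points lies in a clique of size at most $n+4$, so the clique-guarded semantics applies to them, and a pigeonhole over the finitely many reds $\r^i_{jk}$ forces an inconsistent red triangle. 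The neat-embedding consequence $\C\notin S\Nr_n\CA_{n+4}$ is then \emph{deduced from} the theorem in Corollary \ref{can}, i.e.\ in the opposite direction to your plan. Your plan — embed a finite rainbow algebra in $\Cm\At$, let \pa\ win the $(n+4)$-pebble game $F^{n+4}$ on it, then convert a hypothetical $n+4$ smooth representation of $\Cm\At$ into a neat embedding to reach a contradiction — is a legitimate alternative skeleton as far as the second half goes: the smooth-to-neat-embedding implication you invoke is exactly the machinery of Corollary \ref{can}, and since the embedded rainbow algebra is finite, $S_c$ and $S$ coincide for it, so a \pa\ win in $F^{n+4}$ would indeed exclude $\Cm\At$ from $S\Nr_n\CA_{n+4}$ and hence preclude $n+4$ smooth representations. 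Your treatment of the $\sf Df$ reduct agrees with the paper's.

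The gap is the unproved claim that \pa\ wins $F^{n+4}$ on $\PEA_{n+4,n+3}$ (equivalently its $\CA$ or $\Sc$ reduct) with your parameters of $n+4$ greens and $n+3$ reds. In $F^{n+4}$ \pa\ has only $n+4$ reusable nodes, of which $n-1$ are tied up in the base of his cones, so at any moment at most about five apexes sit on the board and at most $\binom{5}{2}$ red edges are live; since you have given \pe\ $n+3\geq 6$ red indices and the forbidden triples only penalize index clashes, she can label every red clique that ever actually appears injectively and consistently, and — unlike $\CA_{\Z,\N}$ or $\CA_{\N^{-1},\N}$ — your finite reds carry no order for \pa\ to exploit node reuse by forcing a strictly decreasing sequence of indices. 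The cone-bombardment strategy does win the \emph{unbounded-node} game once the number of greens exceeds the number of reds (this is how the paper later handles $\A_{\alpha+2,\alpha}$), but that only shows non-representability, not exclusion from $S\Nr_n\CA_{n+4}$; the node-limited win is precisely the calibration you flagged as your ``main obstacle,'' and it is not a technicality — the paper's direct semantic argument exists to sidestep it, because in an actual $n+4$ smooth representation all $n+3$ apexes exist simultaneously in the model, so the pigeonhole runs over points of the model rather than pebbles on a board. To repair your proof you must either establish the $F^{n+4}$ win for suitably recalibrated (much smaller) red parameters, or revert to arguing directly on the putative relativized representation as the paper does.
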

\begin{proof}
Here we closely follow \cite{Hodkinson}; but our reds and greens are finite,
so we obtain a stronger result. We take $|{\sf G}|=n+2$, and ${\sf R}=n=1$.
Let $L^+$ be the rainbow signature consisting of the binary
relation symbols $\g_i :i<n-1 , \g_0^i: i< n+2, \w, \w_i: i <n-2, \r_{jk}^i  (i<n+1, j<k<n)$
and the $(n-1)$ ary-relation symbols
$\y_S: S\subseteq n+2)$, together with a shade of red $\rho$ that is outside the rainbow
signature but is a binary relation in the sense that it can label edges
of coloured graphs. Let $\GG$ be the class of corresponding rainbow coloured graphs.
By the same methods as above, there is a countable model  $M\in \GG$ with the following
property:\\
$\bullet$ If $\triangle \subseteq \triangle' \in \GG$, $|\triangle'|
\leq n$, and $\theta : \triangle \rightarrow M$ is an embedding,
then $\theta$ extends to an embedding $\theta' : \triangle'
\rightarrow M$. Now let $W = \{ \bar{a} \in {}^n M : M \models ( \bigwedge_{i < j < n,
l < n} \neg \rho(x_i, x_j))(\bar{a}) \}$. Then $\A$ with universe $\{\phi^W: \phi\in L_n\}$ and operations defined the usual way,
is representable, and its
completion, the complex algebra over the  above rainbow atom structure, $\C$ has universe $\{\phi^W: \phi\in L_{\infty, \omega}^n\}$.

We show that $\C$ is as desired. Assume, for contradiction, that $g:\C\to \wp(V)$ induces a relativized $n+4$ flat representation.
Then $V\subseteq {}^nN$ and we can assume that
$g$ is injective because $\C$ is simple. First there are $b_0,\ldots b_{n-1}\in N$ such $\bar{b}\in h(\y_{n+2}(x_0,\ldots x_{n-1}))^W$.
This tuple will be the base of finitely many cones, that will be used to force an inconsistent triple of reds.
This is because $\y_{n+2}(\bar{x})^W\neq \emptyset$.  For any $t<n+3$, there is a $c_t\in N$, such
that $\bar{b}_t=(b_0,\ldots b_{n-2},\ldots c_t)$ lies in $h(\g_0^t(x_0, x_{n-1})^W$ and in $h(\g_i(x_i, x_{n-1})^W$ for each $i$ with
$1\leq i\leq n-2$. The $c_t$'s are the apexes of the cones with base $\y_{n+2}$.
Take the formula
$$\phi_t=\y_{n+2}(x_0,\ldots ,x_{n-2})\to \exists x_{n-1}(\g_0^t(x_0, x_{n-1}))\land \bigwedge_{1\leq i\leq n-2}\g_i(x_i, x_{n-1})),$$
then $\phi_t^{W}=W$. Pick $c_t$ and $\bar{b_t}$ as above, and define for each $s<t<n+3,$ $\bar{c_{st}}$ to be
$(c_s,b_1,\ldots  b_{n-2}, c_t)\in {}^nN.$
Then $\bar{c}_{st}\notin h((x_0, x_{n-1})^W$. Let $\mu$ be the formula
$$x_0=x_{n-1}\lor \w_0(x_0, x_{n-1})\lor \bigvee \g(x_0, x_{n-1}),$$
the latter formula is a first order formula consisting of the disjunction of  the (finitely many ) greens.
For $j<k<n$, let $R_{jk}$ be the $L_{\infty\omega}^n$-formula $\bigvee_{i<\omega}\r_{jk}^i(x_0, x_{n-1})$.
Then
$\bar{c}_{st}\notin h(\mu^W)$, now for each $s<t< n+3$, there are $j<k<n$ with $c_{st}\in h(R_{jk})^W.$
By the pigeon- hole principle, there are $s<t< n+3$ and $j<k<n$
with $\bar{c}_{0s}, \bar{c}_{0t}\in h(R_{jk}^W)$. We have also $\bar{c}_{st}\in h(R_{j',k'}^W)$
for some $j', k'$ then the sequence $(c_0, c_s,\ldots,  b_2,\ldots b_{n-2},\ldots, c_t)\in h(\chi^W)$
where
$$\chi=(\exists_1R_{jk})(\land (\exists x_{n-1}(x_{n-1}=x_1\land \exists x_1 R_{jk})\land (\exists x_0(x_0=x_1)\land \exists x_1R_{j'k})),$$
so $\chi^W\neq \emptyset$. Let $\bar{a}\in \chi ^W$. Then $M\models _W  R_{jk}(a_0,a_{n-1})\land R_{jk}(a_0,a_1)\land R_{j'k'}(a_1, a_{n-1})$.
Hence there are
$i$, $i'$ and $i''<\omega$ such that
$$M\models _W\r_{jk}^{i}(a_0,a_{n-1})\land \r_{jk}^{i'}(a_0,a_1)\land \r_{j'k'}^{i''}(a_1, a_{n-1}).$$
But this triangle is inconsistent. Note that this inconsistent red was forced by an $n+4$ red clique
labelling edges between apexes of the same cone, with base labelled by $\y_{n+2}$.

For the last part, if its $\sf Df$ reduct is representable, then $\Rd_{df}\A$ will be completely representable,
hence $\A$ itself will be completely representable because it is generated by elements whose dimension set
$<n$, which is a contradiction.
\end{proof}
\begin{corollary}\label{can} We have $\C\notin S\Nr_n\CA_{n+4}$. In particular for any $k\geq 4$,  the variety
$S\Nr_n\CA_{n+k}$ is not atom canonical.
\end{corollary}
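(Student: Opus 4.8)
The plan is to deduce Corollary \ref{can} directly from Theorem \ref{smooth} together with the standard relationship between $n+k$ dimensional dilations and $m$ smooth (equivalently $m$ flat) relativized representations. First I would recall that for an atomic algebra $\A$ with completion $\C=\Cm\At\A$, if $\C\in S\Nr_n\CA_{n+k}$ for $k\geq 4$, then $\C$ has an $n+k$ dimensional hyperbasis, and from an $n+k$ dimensional hyperbasis one builds an $n+k$ smooth (hence $n+4$ smooth, since $k\geq 4$) relativized representation of $\C$; this is the $\CA$ analogue of the relation algebra results in \cite{HHbook}, and the definitions lift verbatim as remarked just before Theorem \ref{smooth}. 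So the contrapositive of Theorem \ref{smooth}, namely that $\C$ has no $n+4$ smooth representation, immediately forces $\C\notin S\Nr_n\CA_{n+4}$, and \emph{a fortiori} $\C\notin S\Nr_n\CA_{n+k}$ for every $k\geq 4$ since these classes decrease with $k$.

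Next I would package this into a non atom canonicity statement. Recall that a variety $V$ of $\CA_n$'s is atom canonical if it is closed under completions of its atomic members, equivalently if $\At\A\in\At V$ implies $\Cm\At\A\in V$ for every atomic $\A$. The algebra $\A$ produced in Theorem \ref{smooth} is a countable atomic $\sf PEA_n$ that is representable, so its $\CA$ reduct $\Rd_{ca}\A$ lies in $\RCA_n\subseteq S\Nr_n\CA_{n+k}$ for all $k$; in particular $\Rd_{ca}\A\in S\Nr_n\CA_{n+k}$ and this class, being of the form $S\Nr_n\CA_m$, is a variety (closed under $S$, $P$ and $H$, the last because $\Nr_n$ and $S\Nr_n$ behave well under homomorphic images in this setting). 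Its completion is $\C=\Cm\At\A$, which by the first paragraph is \emph{not} in $S\Nr_n\CA_{n+4}\supseteq S\Nr_n\CA_{n+k}$. Hence $S\Nr_n\CA_{n+k}$ contains an atomic algebra whose completion it omits, so it is not atom canonical, for every $k\geq 4$.

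The main obstacle, and the only genuinely non-routine point, is justifying the first paragraph: that membership in $S\Nr_n\CA_{n+4}$ yields an $n+4$ smooth (flat) relativized representation. In the relation algebra literature this is the equivalence between $S\Ra\CA_{n+4}$ (or having an $n+4$ dimensional hyperbasis) and admitting an $n+4$ smooth representation, proved in \cite{HHbook}; for cylindric algebras one has the parallel chain linking $S\Nr_n\CA_{m}$, the existence of an $m$ dimensional hyperbasis, and $m$ smoothness. I would cite \cite{HHbook} for this equivalence and note that the translation to the $\CA$ context is routine (and is exactly the framework in which Theorem \ref{smooth} was phrased, since there we already spoke of $n+4$ flat representations coming from $n+4$ dimensional dilations). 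With that in hand the corollary is a two-line deduction, as indicated; no further calculation is needed.

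\begin{proof}
By Theorem \ref{smooth} there is a countable atomic $\A\in \sf PEA_n$ whose $\CA$ reduct is representable, so that $\Rd_{ca}\A\in \RCA_n\subseteq S\Nr_n\CA_{n+4}$, yet the $\CA$ reduct of its completion $\C=\Cm\At\A$ has no $n+4$ smooth relativized representation. If $\C\in S\Nr_n\CA_{n+4}$, then $\C$ would possess an $n+4$ dimensional hyperbasis, and hence (the $\CA$ analogue of the corresponding result for relation algebras in \cite{HHbook}) an $n+4$ smooth representation, a contradiction. Thus $\C\notin S\Nr_n\CA_{n+4}$, and since $S\Nr_n\CA_{n+k}\subseteq S\Nr_n\CA_{n+4}$ for every $k\geq 4$, also $\C\notin S\Nr_n\CA_{n+k}$. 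As $\Rd_{ca}\A$ is an atomic member of the variety $S\Nr_n\CA_{n+k}$ whose completion $\C$ lies outside this variety, the variety $S\Nr_n\CA_{n+k}$ is not atom canonical, for any $k\geq 4$.
\end{proof}
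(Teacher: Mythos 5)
Your proposal is correct and follows essentially the same route as the paper: assume $\C\in S\Nr_n\CA_{n+4}$, derive an $n+4$ dimensional hyperbasis and hence an $n+4$ smooth relativized representation contradicting Theorem \ref{smooth}, then get non atom canonicity from $\A\in\RCA_n=\bigcap_k S\Nr_n\CA_{n+k}$ while $\C=\Cm\At\A\notin S\Nr_n\CA_{n+4}\supseteq S\Nr_n\CA_{n+k}$. The only step you delegate to \cite{HHbook} is the one the paper actually writes out, namely passing to canonical extensions ($\C^+\subseteq_c\Nr_n\D^+$ with $\D^+$ atomic) so that the hyperbasis can be built from atoms of the dilation before extracting the smooth representation.
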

\begin{proof}
The first part. Assume, for contradiction,  that $\C\in S\Nr_n\CA_{n+4}$; let $\C\subseteq \Nr_n\D$.
Then $\C^+\in S_c\Nr_n\D^+$, and $\D^+$ is of course atomic. We show that $\C^+$ has an $n+4$ dimensional hyperbasis,
then that it has an $n+4$ smooth representation, which contradicts the previous
theorem. Here again hyperbasis are defined for cylindric algebras by a straightforward lifting from the relation algebra case.

First note that for every $n\leq l\leq m$, $\Nr_l\D^+$ is atomic.
Indeed, if $x$ is an atom in $\D^+$, and and $n\leq l<m$,
then ${\sf c}_{l}\ldots {\sf c}_{m-l+1}x$ is an atom in $\Nr_l\D^+$,
so if $c\neq 0$ in the latter, then there exists $0\neq a\in \At\D^+$,
such that $a\leq c$, and so ${\sf c}_{l}\ldots {\sf c}_{m-1+1}a\leq c_{l}\ldots {\sf c}_{m-1+1}c=c$.

Let $\Lambda=\bigcup_{k<n+3}\At\Nr_k\D^+$, and let $\lambda\in \Lambda$.
In this proof we follow closely section 13.4 in \cite{HHbook}. The details are omitted because they are identical
to the corresponding ones in op.cit.
For each atom $x$ of $\D$, define $N_x$, easily checked to be an $m$ dimensional   $\Lambda$ hypernetwork, as follows.
Let $\bar{a}\in {}^{n+4}n+4$ Then if $|a|=n$,  $N_x(a)$ is the unique atom $r\in \At\D$ such that $x\leq {\sf s}_{\bar{a}}r$.
Here substitutions are defined as above.
If $n\neq |\bar{a}| <n+3$, $N_x(\bar{a})$ the unique atom $r\in \Nr_{|a|}\D$ such that $x\leq s_{\bar{a}}r.$
$\Nr_{|a|}\D$ is easily checked to be atomic, so this is well defined.

Otherwise, put  $N_x(a)=\lambda$.
Then $N_x$ as an $n+4$ dimensional $\Lambda$ hyper-network, for each such chosen $x$ and $\{N_x: x\in \At\C\}$
is an $n+4$ dimensional $\Lambda$ hyperbasis.
Then viewing those as a saturated set of mosaics, one can can construct a complete
$n+4$ smooth representation of $M$ of $\C$.
Alternatively, one can use a standard step by step argument.Let $L(A)$ denotes the signature that contains
an $n$ ary predicate for every $a\in A$. For $\phi\in L(A)_{\omega,\infty}^n$,
let $\phi^{M}=\{\bar{a}\in C^{n+3}(M):\M\models \phi(\bar{a})\}$,
and let $\D$ be the algebra with universe $\{\phi^{M}: \phi\in L(C)_{\infty, \omega}^n\}$ with usual
Boolean operations, cylindrifiers and diagonal elements, cf. theorem 13.20 i
n \cite{HHbook}. The polyadic operations are defined
by swapping variables.

This is a $\sf PEA_n$; here semantics is defined as expected in the clique guarded fragment of first order logic.
Define $\D_0$ be the algebra consisting of those $\phi^{M}$ where $\phi$ comes from $L(C)_n$.
Assume that $M$ is $n$ square, then certainly $\D^0$ is a subalgebra of the $\sf Crs_n$
with domain $\wp(C^n(M))$ so $\D\in {\sf Crs_n}$. The unit $C^n(M)$ of $\D$ is symmetric,
closed under substitutions, so
$\D\in \sf G_n$, then $\D^0(M)\in \sf G_n$. If $M$ is $n$ flat we
have that cylindrifiers commute by definition,
hence we are done.

Now assume that
$M$ is infinitary $n$ flat. Then $\D\in \CA_n$ is the same. We show that $\D$ is atomic.
Let $\phi^C$ be a non zero element.
Choose $\bar{a}\in \phi^C$, and consider the infinitary $n$ type
$\{\psi\in L_{\infty}: M\models_C \psi(\bar{a})\}.$ Let $\tau$ be its conjunct.
Then $\tau\in L_{\infty}$, and $\tau$ is an atom, as required

Now the neat embedding is defined by $\theta(r)=r(\bar{x})$.
Preservation of operations is straightforward.  We show that $\theta$ is injective.
Let $r\in A$ be non-zero. But $M$ is a relativized representation, so there $\bar{a}\in M$
with $r(\bar{a})$ hence $\bar{a}$ is a clique in $M$,
and so $M\models r(\bar{x})(\bar{a})$ and $\bar{a}\in \theta(r)$. and we are done.

Finally, we check that it is a complete embedding under the assumption that
$M$ is a complete relativized representation.
$\A$ is atomic. Let $\phi\in L_{\infty}$ be such that $\phi^C\neq 0$.
Let $\bar{a}\in \phi^C$. Since
$M$ is complete and $a\in C^n(M$) there is $\alpha\in \At\A$, such
that $M\models \alpha(\bar{a})$, then $\theta(\alpha).\phi^C\neq 0.$
and we are done.

Then $\D_0$ is also a $\sf PEA_n$ and $\A$ embeds into the $n$ neat reduct of both.
If $M$ is complete, then the embedding is also complete.

For the second part $\A$ as above is in $\RCA_n=\bigcap _{k\in \omega}S\Nr_n\CA_{n+k}$,
but $\C$, its completion, is not in $S\Nr_n\CA_{n+k}$ for all $k\geq 4$,
since $S\Nr_n\CA_{n+k}\subseteq S\Nr_n\CA_{n+4}$ for each such
$k$.
\end{proof}

In theorem \ref{hodkinson} we used two Monk like algebras and one rainbow algebra.
We saw in the following theorem that the rainbow algebra is more flexible.

As mentioned in the introduction, the difference between a Monk like algebra and a rainbow algebra is that in the former
the number of pebbles used is determined by a Ramsey uncontrollable large number, while in rainbow construction the number of pebbles used
are basically the reds and the greens, and these can be controlled. As we have seen, in fact a \ws\ for \pa\
excludes neat embedability into extra dimensions
determined by the number of pebbles used  \pa\ if he has a \ws\ .
In Monk like algebras, it can happen that we have control the number of
pebbles when we are lucky enough to use amalgamation moves (see the next section). But when we don't, often, but not always,
rainbow constructions can offer
solace. (Witness Hirsch Hodkinson's result of the non finite axiomatizability of $\sf RA_{n+1}$ over $\RA_n$, when $n\geq 4$,
see \cite{HHbook} theorem 17.18.).

\subsection{Lifting Monk's algebras to the transfinite; the famous 2.12}

Now we give a solution to the famous problem 2.12
lifting it to infinite dimensions, using a lifting
argument due to Monk. A weaker solution of this problem is credited to Pigozzi in \cite{HMT1},
but we have not seen a published prove. A stronger result obtained jointly with Robin Hirsch proves this result
for many cylindric-like algebras including various diagonal free reducts of
cylindric and polyadic equality algebras.
This  construction  will
be outlined below, and slightly refined.
In the next theorem when we say 'finite schema axiomatizable' for a class of algebras of finite dimension,
we mean not finitely  axiomatizable.

\begin{theorem}\label{2.12}
For $\alpha>2$ (infinite included), and $k\in \omega$ and $r\in \omega$,
there is an $\A^r\in \Nr_{\alpha}{\sf CA}_{\alpha+k}\sim S\Nr_{\alpha}{\sf CA}_{\alpha+k+1},$
such that $\prod \A^r/F\in S\Nr_{\alpha}\CA_{\alpha+k+1}$.
In particular,  the variety
$S\Nr_{\alpha}\CA_{\alpha+k+1}$ is not axiomatizable by a finite schema over
$S\Nr_{\alpha}\CA_{\alpha+k}$. Furthermore, the variety $\RCA_{\alpha}$
is not finite schema axiomatizable over $S\Nr_{\alpha}\CA_{\alpha+k}$ for all finite$k$.
\end{theorem}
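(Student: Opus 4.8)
The plan is to lift the finite dimensional Monk--Hirsch--Hodkinson construction to the transfinite by a \emph{dimension stretching ultraproduct}, precisely in the spirit of Monk's original passage from finite to infinite dimensions. For $\alpha<\omega$ the assertion is the finite dimensional statement (a ``finite schema'' in finite dimension being just a finite set of equations), which is exactly the Monk--Hirsch--Hodkinson theorem recalled above; so assume $\alpha\geq\omega$. The input I will use is: for every finite $m\geq 3$ and every $k\geq 1$ there are finite $\CA_m$'s $\A^r_m$ $(r\in\omega)$ --- the Monk-like algebras of Hirsch and Hodkinson, built by colouring the atoms of a finite algebra so that only the ``independent'' monochromatic triangles survive and with far more atoms than colours, so that a Ramsey argument blocks representability --- satisfying $\A^r_m\in\Nr_m\CA_{m+k}\setminus S\Nr_m\CA_{m+k+1}$ and $\prod_r\A^r_m/F\in\RCA_m$ for every non-principal ultrafilter $F$ on $\omega$; moreover, carried out uniformly in the dimension, these algebras sit in a neat reduct tower, so that for $n\leq m$ a copy of $\A^r_n$ embeds into $\Nr_n\A^r_m$.

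Fix $k$ and $r$, and let $\alpha\geq\omega$. Let $I=\{\Gamma\subseteq\alpha:3\leq|\Gamma|<\omega\}$, for $\Gamma\in I$ fix a bijection $\rho_\Gamma:|\Gamma|\to\Gamma$, and let $U$ be an ultrafilter on $I$ with every cone $\{\Delta\in I:\Gamma\subseteq\Delta\}$ in $U$. Writing $\A^r_{|\Gamma|}=\Nr_{|\Gamma|}\B^r_\Gamma$ with $\B^r_\Gamma\in\CA_{|\Gamma|+k}$, let $\C^r_\Gamma$ (resp. $\D^r_\Gamma$) be the algebra similar to $\CA_\alpha$ (resp. $\CA_{\alpha+k}$) obtained from $\A^r_{|\Gamma|}$ (resp. $\B^r_\Gamma$) by re-indexing its operations along $\rho_\Gamma$ and setting $\cyl i=\mathrm{Id}$, $\diag ij=0$ on the indices outside $\Gamma$. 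Put $\A^r=\prod_{\Gamma\in I}\C^r_\Gamma/U$ and $\E^r=\prod_{\Gamma\in I}\D^r_\Gamma/U$. Since each fixed index of $\alpha$ belongs to all sufficiently large $\Gamma$, the dummy cylindrifications vanish in the ultraproduct, so $\A^r\in\CA_\alpha$ and $\E^r\in\CA_{\alpha+k}$; and as neat reducts commute with ultraproducts and $\Nr_\alpha\D^r_\Gamma\cong\C^r_\Gamma$ for all $\Gamma$, we get $\A^r\cong\Nr_\alpha\E^r\in\Nr_\alpha\CA_{\alpha+k}$.

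The crucial point, $\A^r\notin S\Nr_\alpha\CA_{\alpha+k+1}$, I will obtain by descent to a fixed finite dimension $n_0$ (say $n_0=3$). If $\A^r\subseteq\Nr_\alpha\F$ with $\F\in\CA_{\alpha+k+1}$, then $\Nr_{n_0}\A^r\subseteq\Nr_{n_0}\Nr_\alpha\F=\Nr_{n_0}\F\in S\Nr_{n_0}\CA_{n_0+k+1}$. But $\Nr_{n_0}\A^r=\prod_\Gamma\Nr_{n_0}\C^r_\Gamma/U$, and for every $\Gamma$ in the cone above $n_0$ the finite algebra $\A^r_{n_0}$ embeds into $\Nr_{n_0}\C^r_\Gamma$ by the coherence of the tower; since $\A^r_{n_0}$ is finite, $\prod_\Gamma\A^r_{n_0}/U\cong\A^r_{n_0}$ embeds into $\Nr_{n_0}\A^r$, whence $\A^r_{n_0}\in S\Nr_{n_0}\CA_{n_0+k+1}$, contradicting the base case. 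For the ultraproduct over $r$, rewrite $\prod_r\A^r/F=\prod_r(\prod_\Gamma\C^r_\Gamma/U)/F$ as one ultraproduct over $\omega\times I$ and re-group: for each fixed $\Gamma$, $\prod_r\C^r_\Gamma/F$ is the re-indexed copy of $\prod_r\A^r_{|\Gamma|}/F\in\RCA_{|\Gamma|}$, and the dimension stretching ultraproduct of these along (the image of) $U$ lies in $\RCA_\alpha$ by the argument used for the infinite dimensional neat reduct example recalled earlier (each finite neat reduct is a representable $\CA$ of that dimension, $\RCA$ of a fixed finite dimension is closed under ultraproducts, and the extra-dimension witnesses are carried along). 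Hence $\prod_r\A^r/F\in\RCA_\alpha=S\Nr_\alpha\CA_{\alpha+\omega}\subseteq S\Nr_\alpha\CA_{\alpha+k+1}$.

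Finally I deduce the non-axiomatizability statements. By Monk's two-sorted analysis of systems of varieties, the class of $\CA_\alpha$'s obeying a fixed finite schema $\Sigma$ is elementary (coded by a single sentence over the two-sorted structure whose second sort is the index set $\alpha$), hence both it and its complement are closed under the relevant ultraproducts. If $S\Nr_\alpha\CA_{\alpha+k+1}$ --- respectively $\RCA_\alpha$ --- equalled $S\Nr_\alpha\CA_{\alpha+k}\cap\mathrm{Mod}(\Sigma)$ for some finite schema $\Sigma$, then each $\A^r$, lying in $\Nr_\alpha\CA_{\alpha+k}\subseteq S\Nr_\alpha\CA_{\alpha+k}$ but outside $S\Nr_\alpha\CA_{\alpha+k+1}$ (hence outside $\RCA_\alpha$), would fail $\Sigma$; by closure of the complement of $\mathrm{Mod}(\Sigma)$ under ultraproducts, $\prod_r\A^r/F$ would also fail $\Sigma$ and so be outside $S\Nr_\alpha\CA_{\alpha+k+1}$, contradicting the previous paragraph. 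The ``for all finite $k$'' clause for $\RCA_\alpha$ is immediate, since $\RCA_\alpha\subseteq S\Nr_\alpha\CA_{\alpha+k+1}$ for every $k$. The main obstacle is the base case and its coherence: one must run the Hirsch--Hodkinson colouring/Ramsey construction uniformly in the dimension so that the finite algebras $\A^r_m$ form a neat reduct tower while each is simultaneously a \emph{full} neat reduct $k$ dimensions up and \emph{not} a subneat reduct $k+1$ dimensions up, with representable ultraproducts; granting that, the remainder is bookkeeping with reindexing and iterated ultraproducts.
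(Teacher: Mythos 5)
There is a fatal gap, and it lies exactly where you park the difficulty: the ``neat reduct tower'' coherence you postulate for the base algebras is not merely unproved, it is impossible. If $\A^r_n$ embedded into $\Nr_n\A^r_m$ for some $m>n$, then since $\A^r_m\in\Nr_m\CA_{m+k}$ we would get $\Nr_n\A^r_m=\Nr_n\D$ for some $\D\in\CA_{m+k}$, hence $\A^r_n\in S\Nr_n\CA_{m+k}\subseteq S\Nr_n\CA_{n+k+1}$ (the classes $S\Nr_n\CA_\beta$ decrease as $\beta$ grows, and $m+k\geq n+k+1$), flatly contradicting the base-case requirement $\A^r_n\notin S\Nr_n\CA_{n+k+1}$. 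So the ``uniform in the dimension'' Ramsey construction you ask for cannot exist, and your descent step (embedding $\A^r_{n_0}$ into $\Nr_{n_0}$ of the ultraproduct and quoting the base case) collapses. The actual proof replaces the neat-reduct coherence by a \emph{relativization} coherence: for the algebras $\C(m,n,r)=\Ca(H_m^{n+1}(\A(n,r),\omega))$ (which, incidentally, are infinite, not finite) one has an element $x_n\in\C(n,n+k,r)$ with ${\sf c}_ix_n\cdot{\sf c}_jx_n=x_n$ for distinct $i,j<m$ and an isomorphism $\C(m,m+k,r)\cong\Rl_{x_n}\Rd_m\C(n,n+k,r)$. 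The descent then runs: if the lifted algebra $\B^r$ sat in $\Nr_\alpha\C$ with $\C\in\CA_{\alpha+k+1}$, one relativizes the $m$-reduct of $\B^r$ to the ultraproduct $x$ of the $x_{|\Gamma|}$'s; the key point (which uses the identity ${\sf c}_ix\cdot{\sf c}_jx=x$) is that $\Rl_x\Rd_m\B^r\in S\Nr_m\CA_{m+k+1}$, while the ultraproduct of the relativization isomorphisms embeds $\C(m,m+k,r)$ into $\Rl_x\Rd_m\B^r$, giving the contradiction with the finite-dimensional result. This relativization lemma is precisely the ingredient your proposal is missing and cannot be replaced by a neat-embedding tower.

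Two secondary points. First, the base construction does not give $\prod_r\A^r_m/F\in\RCA_m$; what is available (and what the lifting actually transfers) is $\prod_r\C(m,m+k,r)/F\in S\Nr_m\CA_{m+k+1}$, which suffices for the non-axiomatizability of $S\Nr_\alpha\CA_{\alpha+k+1}$ over $S\Nr_\alpha\CA_{\alpha+k}$. Second, your closing argument that the model class of a finite schema and its complement are ``closed under the relevant ultraproducts'' because the schema is a single two-sorted sentence is too quick: in the ultraproduct the index sort is ultrapowered as well, so one needs the schema-specific argument of \cite[Theorem 4.1.7]{HMT2} (which the paper invokes) rather than plain elementarity.
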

\begin{proof}
Assume that $3\leq m\leq n$, and let
$$\C(m,n,r)=\Ca(H_m^{n+1}(\A(n,r),  \omega)),$$
be as defined in \cite{HHbook}, definition 15.3.
Here $\A(n,r)$ is a finite Monk-like relation algebra \cite{HHbook2}, definition 15.2.
which has an $n$ dimensional hyperbasis $H_m^{n+1}(\A(n,r), \omega)$
consisting of all hypernetworks.
The hyperedges are labelled by $\omega$; it is an infinite cylindric algebra of dimension $m$, and it is a neat reduct.
In fact we have:
\begin{enumarab}
\item For any $r$ and $3\leq m\leq n<\omega$, we
have $\C(m,n,r)\in \Nr_m{\sf PEA}_n$.

\item $\C(m,n,r)\notin S\Nr_n\CA_{n+m+1}$ but $\prod_r \C(m,n,r)/F\in S\Nr_n\CA_{m+n+1}$ for any non principal ultrafilter
on $\omega$.
\item  For $m<n$ and $k\geq 1$, there exists $x_n\in \C(n,n+k,r)$ such that $\C(m,m+k,r)\cong \Rl_{x}C(n, n+k, r).$
An analogous result holds for quasi-polyadic equality algebras.
\end{enumarab}
Let us check this properties:
\begin{enumarab}
\item  $H_n^{n+1}(\A(n,r), \omega)$ is a wide $n$ dimensional $\omega$ symmetric hyperbases, so $\Ca H\in {\sf PEA}_n.$
But $H_m^{n+1}(\A(n,r),\omega)=H|_m^{n+1}$.
Thus
$$\C_r=\Ca(H_m^{n+1}(\A(n,r), \omega))=\Ca(H|_m^{n+1})\cong \Nr_m\Ca H$$

\item The next item is proved in \cite{HHbook} theorem 15.6.

\item Let $m<n$, let $$x_n=\{f\in F(n,n+k,r): m\leq j<n\to \exists i<m f(i,j)=Id\}.$$
Then $x_n\in \C(n,n+k,r)$ and ${\sf c}_ix_n\cdot {\sf c}_jx_n=x_n$ for distinct $i, j<m$.
Furthermore
\[{I_n:\C}(m,m+k,r)\cong \Rl_{x_n}\Rd_m {\C}(n,n+k, r).\]
via
\[ I_n(S)=\{f\in F(n, n+k, r): f\upharpoonright m\times m\in S, \forall j(m\leq j<n\to  \exists i<m\; f(i,j)=Id)\}.\]
\end{enumarab}
Now we use a lifting argument that is a generalization of Monk's argument in
\cite{HMT2}.

Let $k\in \omega$. Let $\alpha$ be an infinite ordinal.
Then $S\Nr_{\alpha}\K_{\alpha+k+1}\subset S\Nr_{\alpha}\K_{\alpha+k}.$
Let $r\in \omega$.
Let $I=\{\Gamma: \Gamma\subseteq \alpha,  |\Gamma|<\omega\}$.
For each $\Gamma\in I$, let $M_{\Gamma}=\{\Delta\in I: \Gamma\subseteq \Delta\}$,
and let $F$ be an ultrafilter on $I$ such that $\forall\Gamma\in I,\; M_{\Gamma}\in F$.
For each $\Gamma\in I$, let $\rho_{\Gamma}$
be a one to one function from $|\Gamma|$ onto $\Gamma.$

Let ${\C}_{\Gamma}^r$ be an algebra similar to $\CA_{\alpha}$ such that
\[\Rd^{\rho_\Gamma}{\C}_{\Gamma}^r={\C}(|\Gamma|, |\Gamma|+k,r).\]
Let
\[\B^r=\prod_{\Gamma/F\in I}\C_{\Gamma}^r.\]
We will prove that
\begin{enumerate}
\item\label{en:1} $\B^r\in S\Nr_\alpha\CA_{\alpha+k}$ and
\item\label{en:2} $\B^r\not\in S\Nr_\alpha\CA_{\alpha+k+1}$.  \end{enumerate}


For the first part, for each $\Gamma\in I$ we know that $\C(|\Gamma|+k, |\Gamma|+k, r) \in\K_{|\Gamma|+k}$ and
$\Nr_{|\Gamma|}\C(|\Gamma|+k, |\Gamma|+k, r)\cong\C(|\Gamma|, |\Gamma|+k, r)$.
Let $\sigma_{\Gamma}$ be a one to one function
 $(|\Gamma|+k)\rightarrow(\alpha+k)$ such that $\rho_{\Gamma}\subseteq \sigma_{\Gamma}$
and $\sigma_{\Gamma}(|\Gamma|+i)=\alpha+i$ for every $i<k$. Let $\A_{\Gamma}$ be an algebra similar to a
$\CA_{\alpha+k}$ such that
$\Rd^{\sigma_\Gamma}\A_{\Gamma}=\C(|\Gamma|+k, |\Gamma|+k, r)$.
Then, clearly
 $\Pi_{\Gamma/F}\A_{\Gamma}\in \CA_{\alpha+k}$.

We prove that $\B^r\subseteq \Nr_\alpha\Pi_{\Gamma/F}\A_\Gamma$.  Recall that $\B^r=\Pi_{\Gamma/F}\C^r_\Gamma$ and note
that $\C^r_{\Gamma}\subseteq A_{\Gamma}$
(the base of $\C^r_\Gamma$ is $C(|\Gamma|, |\Gamma|+k, r)$, the base of $A_\Gamma$ is $C(|\Gamma|+k, |\Gamma|+k, r)$).
So, for each $\Gamma\in I$,
\begin{align*}
\Rd^{\rho_{\Gamma}}\C_{\Gamma}^r&=\C((|\Gamma|, |\Gamma|+k, r)\\
&\cong\Nr_{|\Gamma|}\C(|\Gamma|+k, |\Gamma|+k, r)\\
&=\Nr_{|\Gamma|}\Rd^{\sigma_{\Gamma}}\A_{\Gamma}\\
&=\Rd^{\sigma_\Gamma}\Nr_\Gamma\A_\Gamma\\
&=\Rd^{\rho_\Gamma}\Nr_\Gamma\A_\Gamma
\end{align*}
$\Rd^{\rho_\Gamma}\A_\Gamma \in \K_{|\Gamma|}$, for each $\Gamma\in I$  then $\Pi_{\Gamma/F}\A_\Gamma\in \K_\alpha$.
Thus (using a standard Los argument) we have:
$\Pi_{\Gamma/F}\C^r_\Gamma\cong\Pi_{\Gamma/F}\Nr_\Gamma\A_\Gamma=\Nr_\alpha\Pi_{\Gamma/F}\A_\Gamma$,
proving \eqref{en:1}.

Now we prove \eqref{en:2}.
For this assume, seeking a contradiction, that $\B^r\in S\Nr_{\alpha}\CA_{\alpha+k+1}$,
$\B^r\subseteq \Nr_{\alpha}\C$, where  $\C\in \CA_{\alpha+k+1}$.
Let $3\leq m<\omega$ and  $\lambda:m+k+1\rightarrow \alpha +k+1$ be the function defined by $\lambda(i)=i$ for $i<m$
and $\lambda(m+i)=\alpha+i$ for $i<k+1$.
Then $\Rd^\lambda(\C)\in \CA_{m+k+1}$ and $\Rd_m\B^r\subseteq \Nr_m\Rd^\lambda(\C)$.
For each $\Gamma\in I$,\/  let $I_{|\Gamma|}$ be an isomorphism
\[{\C}(m,m+k,r)\cong \Rl_{x_{|\Gamma|}}\Rd_m {\C}(|\Gamma|, |\Gamma+k|,r).\]
Let $x=(x_{|\Gamma|}:\Gamma)/F$ and let $\iota( b)=(I_{|\Gamma|}b: \Gamma)/F$ for  $b\in \C(m,m+k,r)$.
Then $\iota$ is an isomorphism from $\C(m, m+k,r)$ into $\Rl_x\Rd_m\B^r$.
Then $\Rl_x\Rd_{m}\B^r\in S\Nr_m\CA_{m+k+1}$.
It follows that  $\C (m,m+k,r)\in S\Nr_{m}\CA_{m+k+1}$ which is a contradiction and we are done.
Now we prove the third part of the theorem, putting the superscript $r$ to use.
Recall that $\B^r=\Pi_{\Gamma/F}\C^r_\Gamma$, where $\C^r_\Gamma$ has the type of $\CA.$
and $\Rd^{\rho_\Gamma}\C^r_\Gamma=\C(|\Gamma|, |\Gamma|+k, r)$.
We know that $\Pi_{r/U}\Rd^{\rho_\Gamma}\C^r_\Gamma=\Pi_{r/U}\C(|\Gamma|, |\Gamma|+k, r) \subseteq \Nr_{|\Gamma|}\A_\Gamma$,
for some $\A_\Gamma\in\CA_{|\Gamma|+k+1}$.

Let $\lambda_\Gamma:|\Gamma|+k+1\rightarrow\alpha+k+1$
extend $\rho_\Gamma:|\Gamma|\rightarrow \Gamma \; (\subseteq\alpha)$ and satisfy
\[\lambda_\Gamma(|\Gamma|+i)=\alpha+i\]
for $i<k+1$.  Let $\F_\Gamma$ be a $\CA_{\alpha+k+1}$ type algebra such that $\Rd^{\lambda_\Gamma}\F_\Gamma=\c A_\Gamma$.
As before, $\Pi_{\Gamma/F}\F_\Gamma\in\CA_{\alpha+k+1}$.  And
\begin{align*}
\Pi_{r/U}\B^r&=\Pi_{r/U}\Pi_{\Gamma/F}\C^r_\Gamma\\
&\cong \Pi_{\Gamma/F}\Pi_{r/U}\C^r_\Gamma\\
&\subseteq \Pi_{\Gamma/F}\Nr_{|\Gamma|}\c A_\Gamma\\
&=\Pi_{\Gamma/F}\Nr_{|\Gamma|}\Rd^{\lambda_\Gamma}\F_\Gamma\\
&\subseteq\Nr_\alpha\Pi_{\Gamma/F}\F_\Gamma,
\end{align*}
proving the lemma.
\end{proof}

See \cite[definition~4.1.4]{HMT2} for the precise definition of finitely schema axiomatisability
and see \cite[theorem~4.1.7]{HMT2} to see how non finite schema axiomatisability follows from the above.

The next part describes a construction of Hirsch
that covers other algebras like Pinter's substitution algebras and quasipolyadic algebras
(it also covers the above two case.) We will only give a sketch to the somewhat intricate construction, the details can be found in \cite{t}.

We note that extending such non finite axiomatisability results
to diagonal free algebras is really hard.) These algebras constructed by Hirsch, are like the above algebras,
they are based on a relation algebra construction
see \cite[section~15.2]{HHbook},
modified here so that the elements become $n$-dimensional rather than two dimensional.
Still, although they are $n$-dimensional, all of their elements are generated by two dimensional elements.

\begin{definition}\label{def:cmnr}
Define a function $\kappa:\omega\times\omega\rightarrow\omega$
by $\kappa(x, 0)=0$ (all $x<\omega$) and $\kappa(x, y+1)=1+x\times\kappa(x, y))$ (all $x, y<\omega$).
For $n, r<\omega$ let
\[\psi(n, r)=
\kappa((n-1)r, (n-1)r)+1\]
and let $\psi(n, J)=\omega$, for any infinite linear order $J$.  [All of this is simply to ensure that $\psi(n, r)$ is sufficiently big compared to $n, r$ for the proof of non-embeddability to work.]
For any  $n<\omega$ and any linear order $J$, let
\[Bin(n, J)=\set{Id}\cup\set{a^k(i, j):i< n-1,\;j\in J,\;k<\psi(n, J)}\]
 For $i<n-1,\;j\in J,\; k<\psi(n, J)$
 let
\begin{align*} a(i,\_)&=\set{a^k(i, j):j\in J,\; k<\psi(n, J))},\\
a(\_, j)&=\set{a^k(i, j):i<n-1,\; k<\psi(n, J)},\\
a^k&=\set{a^k(i, j):i<n-1, \;j\in J},\\
a(i, j)&=\set{a^k(i, j):k<\psi(n, J)},\\
a(\_, >j)&=\bigcup_{j< j'\in J}a(\_, j')\\
a(\_, \leq j)&=\bigcup_{j'\leq j\in J}a(\_, j')\mbox{ and }\\
a&=\bigcup_{i<n-1}a(i, \_).
\end{align*}
Let $3\leq m\leq n<\omega$ and let $J$ be any linear order.
Let $F(m, n, J)$ be the set of all  functions $f:m\times m\to Bin(n, J)$ such that for all $x, y, z<m$ we have $f(x, x)=Id,\;f(x, y)=f(y, x)$, and $(f(x, y), f(y, z), f(x, z))\not\in Forb$, where $Forb$ is the following set of triples
 \[ \begin{array}{c}
 \set{(Id, b, c):b\neq c\in Bin(n, J)}\\
 \cup \\
 \set{(a^k(i, j), a^{k'}(i,j), a^{k^*}(i, j')): k, k', k^*< \psi(n, r), \;i<n-1, \; j'\leq j\in J}.
 \end{array}\]
For any $f, g\in F(m, n,  J)$ and $x<m$ we write $f\equiv_xg$ if for all $y, z\in m\setminus\set x$ we have $f(y, z)=g(y, z)$,  and for $\tau:m\to m$ we write $(f\tau)$ for the function defined by
\begin{equation}\label{eq:ftau}(f\tau)(x, y)=f(\tau(x), \tau(y)).\end{equation}
Clearly $(f\tau)\in F(m, n, J)$.  For the next couple of sections we will consider cases where $J$ is a finite linear order, i.e. some finite ordinal $r<\omega$.
\begin{definition}\label{def:cmnr}
The base of $\C(m, n, r)$ is the power set of $F(m, n, r)$ and the operators are
\begin{itemize}
\item  the boolean operators $+, -$ are union and set complement,
\item the diagonal $\diag xy=\set{f\in F(m, n, r):f(x, y)=Id}$,
\item  the cylindrifier $\cyl x(X)=\set{f\in F(m, n, r): \exists g\in X\; f\equiv_xg }$ and
\item the polyadic ${\sf s}_\tau(X)=\set{f\in F(m, n, r): f\tau \in X}$,
\end{itemize}
 for $x, y<m,\;  X\subseteq F(m, n, r)$ and  $\tau:m\to m$.
\end{definition}

Let $x, y<m$ and let $b\in Bin(n, r)$.  Define
\begin{equation}\label{eq:bxy}b_{x,y}=\set{f\in  F(m, n, r): f(x, y)= b}\;\;\in\C(m, n, r)\end{equation}
\end{definition}
Observe,
for any $x, y, z<m$ and  $\lambda, \mu, \rho\in Bin(n, r)$, that
\begin{equation}\label{eq:xyz}(\lambda, \mu, \rho)\in Forb \iff \lambda_{x, y}\cdot\mu_{y, z}\cdot\rho_{x, z}=0,\end{equation}
in particular we will use the case $(x, y, z)=(0, 1, 2)$, later.

\begin{lemma}\label{lem:qp}
For $3\leq m,\; 2\leq n$ and $r<\omega$ the algebra $\C(m, n, r)$ satisfies all of the axioms defining
 ${\sf PEA}_m$
except, perhaps, the commutativity of cylindrifiers $\cyl x\cyl y(X)=\cyl y\cyl x(X)$.
\end{lemma}\begin{proof} Routine.
\end{proof}
\begin{lemma}\label{lem:Nr}
If  $3\leq m\leq m'$ then $\C(m, n, r)\cong\Nr_m\C(m', n, r)$.
\end{lemma}
\begin{proof}
The isomorphism maps $X\subseteq F(m, n, r)$ to $\set{f\in F(m', n, r): f\restr{m\times m}\in X}$.
\end{proof}
\begin{lemma}\label{lem:pea}
For $3\leq m\leq n$ and  $r<\omega$, \/ $\C(m, n, r)\in{\sf PEA}_m$.
\end{lemma}
\begin{proof}  The proof available in \cite{t} is easy, so we include it for the reader's convenience.
If $r=0$ then $Bin(n, r)=\set{Id}$ so $\C(m,n,0)$ is the trivial algebra hence $\C(m, n, 0)\in {\sf PEA}_m$.  Now assume $r>0$.
In view of lemma~\ref{lem:qp} we only have to check the commutativity of cylindrifiers: $\cyl  x\cyl yX=\cyl y\cyl xX$, for $x, y<m$.
This equation is trivial if $x=y$ so assume not.  By additivity, it suffices to check the case where $X$ is an atom, $X=\set f$ for some $f\in F(m, n, r)$, that is we must show that $g\in\cyl x\cyl y \set f\iff g\in\cyl y\cyl x \set f$.  Thus given $g\equiv_{xy} f$, it suffices to find $h\in  F(m,n,r)$
such that $f\equiv_x h\equiv_y g$.  If there is $z<m,\; z\neq x, y$ and $f(y, z)=Id$ then the required $h$ is $g[y/z]$, or if
$g(z, x)=Id$ the required $h$ is $f[x/z]$.
Suppose there is no such $z$, so for each $z<m,\; z\neq x, y$ we have $f(y, z), g(x, z)\in a$.  Let $h:m\times m\to Bin(n, r)$ be identical to $f$ on pairs not involving $x$,  be identical to $g$ on pairs not involving $y$ (this is well-defined, since $f\equiv_{xy}g$) and  let $h(x, y)=h(y, x)=a^0(i, 0)$, where $i$ is the least number below $n-1$ such that it is not the case that there is $z\neq x, y<m$ and $f(y, z), g(x, z)\in a(i,\_)$.  Since $m\leq n$ and  there are only $m-2$ possible values of $z$ in $m\setminus\set{x, y}$ and $n-1$ possible values of $i$, such an $i$ must exist.  This defines $h$, it is now easy to check that $h\in F(m, n, r)$.

\end{proof}

\begin{theorem}
Let $3\leq m<n<\omega$ and $r<\omega$, then  $\Rd_\Sc\C(m, n, r)\not\in S\Nr_m\Sc_{n+1}$.
Furthermore, if $J$ be a countable linear order containing an infinite ascending sequence,
then $\C(m, n, J)\in \Nr_m{\sf PEA_{n+1}}$.
\end{theorem}
\begin{proof} \cite{t}
\end{proof}
Let ${\sf OPEA}_{\alpha}$ denote the class of quasi-polyadic equality algebras of dimension
$\alpha$, and ${\sf SC}_{\alpha}$ that of Pinter's substitution algebras of dimension $\alpha$.
Since $\Pi_{r/U}\C(m, n, r) \cong\C(m, n, \Pi_{r/U} r)$ and $\Pi_{r/U}r$ contains an infinite ascending sequence,
this proves that the new algebra satisfies the hypothesis of theorem \ref{2.12}.
Indeed for $m<n$, let in the previous case, set
 $$x_n=\{f\in F(n,n+k,r): m\leq j<n\to \exists i<m f(i,j)=Id\}.$$
Then $x_n\in \C(n,n+k,r)$ and ${\sf c}_ix_n\cdot {\sf c}_jx_n=x_n$ for distinct $i, j<m$.
Furthermore
\[{I_n:\C}(m,m+k,r)\cong \Rl_{x_n}\Rd_m {\C}(n,n+k, r).\]
via
\[ I_n(S)=\{f\in F(n, n+k, r): f\upharpoonright m\times m\in S, \forall j(m\leq j<n\to  \exists i<m\; f(i,j)=Id)\}.\]
But in this case we have, using the notation adopted
in the proof of theorem \ref{2.12}, that
$\B^r\subseteq \Nr_\alpha\Pi_{\Gamma/F}\A_\Gamma$  where $\Pi_{\Gamma/F}\A_\Gamma\in {\sf QPEA}_{\alpha+k}$,
so that $\B^r\in S\Nr_{\alpha}{\sf QPEA}_{\alpha+k}$, but  $\Rd_{sc}\B^r\notin S\Nr_{\alpha}{\sf SC}_{\alpha+k+1}$.
This shows that, for any infinite ordinal $\alpha$, and any $k\geq 1$, the variety  $S\Nr_{\alpha}\K_{\alpha+k+1}$ is not finitely
axiomatizable by a finite schema over $S\Nr_{\alpha}\K_{\alpha+k}$ for any $\K$ between
Pinter's substitution algebras and Halmos' quasi-polyadic equality
algebras.

\subsection{When splitting fails, do rainbows work?}

Here we address non finite variable axiomatizations of diagonal free reducts of cylindric and polyadic algebras.
Whether finite variable axiomatizations for such representable algebras exist for finite dimension $>2$ is a long standing open
problem in algebraic logic. We do not know its origin but it was formulated
officially in \cite{ST} published
in 1990, so it dates back at least to this history. In the diagonal free context the splitting technique of Andr\'eka's
does not work, in fact {\it it can be proved } that it does not work. But maybe rainbow do, as we proceed to show.

We start by a general well known result.

\begin{theorem}\label{d} Let $V$ be a discriminator variety.
Assume that there are simple algebras $\A$ and $\B$ such that $\A\in V$ and $\B\notin V$,
and for any equation involving $n$ variables,
$\A$ falsifies $e$ if and only if $\B$ falsifies $e$.
Then $V$ is not axiomatizable by any set of prenex universal sentences that uses only $n$ variables.
\end{theorem}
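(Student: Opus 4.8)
The plan is to argue by contradiction: suppose $V$ is axiomatized by a set $\Sigma$ of prenex universal sentences, each using at most $n$ variables, and derive a contradiction from the hypothesis that $\A\in V$, $\B\notin V$, and $\A$, $\B$ agree on all $n$-variable equations. Since $\B\notin V$, there is a sentence $\sigma\in\Sigma$ that fails in $\B$. Being prenex universal with $\le n$ variables, $\sigma$ has the form $\forall x_1\dots\forall x_n\,\varphi(x_1,\dots,x_n)$ with $\varphi$ quantifier-free (we may pad to exactly $n$ variables). So $\B$ falsifies $\sigma$ means there is an assignment $\bar b$ in $\B$ with $\B\not\models\varphi[\bar b]$. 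First I would put $\varphi$ into conjunctive normal form over the atomic formulas; in a discriminator variety the atomic formulas are equations, and crucially the discriminator term allows one to \emph{collapse} any quantifier-free formula, and indeed any finite conjunction of its instances, into a single equation in the same variables. This is the standard fact that in a discriminator variety every universal formula is, over simple algebras, equivalent to an equation (using the ternary discriminator to turn $s=t$ into a term, disjunctions into products/joins via the discriminator, and negations similarly), and the equivalence does not increase the variable count.

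The key steps, in order, are: (1) reduce $\sigma$ to a single $n$-variable equation $e:\ p(x_1,\dots,x_n)=q(x_1,\dots,x_n)$ with the property that, \emph{for simple algebras in and around $V$}, satisfying $\sigma$ is equivalent to satisfying $e$ — here I use that $\B$ and $\A$ are simple and that the discriminator variety machinery is available; (2) observe that since $\B\not\models\sigma$ and $\B$ is simple, $\B$ falsifies $e$; (3) invoke the hypothesis: since $\A$ and $\B$ falsify exactly the same $n$-variable equations, $\A$ also falsifies $e$; (4) run the equivalence backwards on the simple algebra $\A$ to conclude $\A\not\models\sigma$, contradicting $\A\in V$ (as $\sigma\in\Sigma$ and $\Sigma$ axiomatizes $V$). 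This contradiction shows no such $\Sigma$ exists, which is exactly the statement.

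The main obstacle — and the step deserving the most care — is step (1): making precise the passage from a prenex universal $n$-variable sentence to an $n$-variable equation that is equivalent \emph{over the relevant simple algebras}. One must be careful that: the discriminator term itself introduces no new variables (it is a fixed ternary term $t(x,y,z)$ in the language, so substituting already-present variables into it is harmless); the conjunction, disjunction and negation of equations all translate to single equations without extra variables, using that on a simple algebra the discriminator realizes the switching/normal-transform operations; and the matrix $\varphi$, though it may be a large Boolean combination, still mentions only $x_1,\dots,x_n$. A clean way to organize this is: on a simple algebra $\mathfrak C$ in the language, for quantifier-free $\psi(\bar x)$ there is a term $r_\psi(\bar x)$ (built with the discriminator) such that $\mathfrak C\models\psi[\bar c]$ iff $\mathfrak C\models r_\psi(\bar c)=1$ (top element / unit), and $r_\psi$ uses only the variables of $\psi$. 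Then $\sigma=\forall\bar x\,\varphi$ holds in $\mathfrak C$ iff $\mathfrak C\models \forall\bar x\,(r_\varphi(\bar x)=1)$, i.e. iff the equation $r_\varphi(\bar x)\cdot 1 = 1$ — equivalently $r_\varphi(\bar x)=1$ read as an identity — holds in $\mathfrak C$. Applying this uniformly to $\mathfrak C=\A$ and $\mathfrak C=\B$ with $e:\ r_\varphi(\bar x)=1$ gives steps (2)–(4). Once step (1) is pinned down, the rest is a short formal chase; I expect no further difficulty, since the agreement hypothesis on $n$-variable equations is exactly tailored to be applied to $e$.
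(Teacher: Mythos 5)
Your proposal is correct and follows essentially the same route as the paper: the key ingredient in both is that in a discriminator variety every prenex universal formula is equivalent, over simple (hence subdirectly irreducible) algebras, to an equation in the same variables, after which the agreement of $\A$ and $\B$ on $n$-variable equations immediately transfers to $n$-variable universal sentences and rules out any such axiomatization of $V$. Your contradiction-style write-up with the explicit term $r_\varphi$ is just a more detailed rendering of the paper's two-line argument.
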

\begin{proof} If $\Sigma$ is any $n$ variable equational theory then $\A$ and $\B$ either both validate
$\Sigma$ or neither do. Since one algebra is in $V$ while the other is not, it follows that $\Sigma$ does not axiomatize $V$.
If $\A$ and $\B$ are simple, then they
are subdirectly irreducible. In a discriminator variety every universal prenex formula is equivalent in subdirectly irreducible members to an
equation using the same number of variables. Hence the desired.
\end{proof}

For every $n\geq 3$, two  finite rainbow polyadic algebras are constructed, one will be representable, the diagonal free reduct
of the other will not be representable. All colours are the same
except that one has more red graphs than the other (a red graph is a coloured graph such that at least one edge is coloured red).
In the usual atomic game on graphs using his excess of greens, \pa\ wins.
This prohibits the first algebra to be representable. In the second case the reds are more, and \pe\ can win the
$\omega$ rounded game (in finitely many rounds) on coloured graphs, hence the algebra will be representable.
Furthermore, {\it we conjecture} that $n$ variable (diagonal free) equations cannot distinguish the two algebras.

Let $\kappa$ be a finite number $>n$.

Let $\alpha=3.2^n$ and $\beta=(\alpha+1)(\alpha+2)/2.$

\begin{definition}
The colours we use:
\begin{itemize}

\item greens: $\g_i$ ($1\leq i<n-2)\cup \{\g^0_i: i\leq \alpha+2\}$,
\item whites : $\w_i, i <n$
\item yellow : $\y$
\item reds:  $\r_{i}$, $i\in \kappa$

\item shades of yellow : $\y_S: S\subseteq \alpha+2$

And coloured graphs are:
\begin{definition}
\begin{enumarab}

\item $M$ is a complete graph.

\item $M$ contains no triangles (called forbidden triples)
of the following types:

\vspace{-.2in}
\begin{eqnarray}
&&\nonumber\\
(\g, \g^{'}, \g^{*}), (\g_i, \g_{i}, \w),
&&\mbox{any }i\in n-1\;  \\
(\g^j_0, \g^k_0, \w_0)&&\mbox{ any } j, k\in \alpha+2\\
\label{forb:pim}(\g^i_0, \g^j_0, \r_{kl})&&\\
\label{forb:match}(\r_{i}, \r_{i}, \r_{j})&&
\end{eqnarray}
and no other triple of atoms is forbidden.

\item If $a_0,\ldots   a_{n-2}\in M$ are distinct, and no edge $(a_i, a_j)$ $i<j<n$
is coloured green, then the sequence $(a_0, \ldots a_{n-2})$
is coloured a unique shade of yellow.
No other $(n-1)$ tuples are coloured shades of yellow.

\item If $D=\set{d_0,\ldots  d_{n-2}, \delta}\subseteq M$ and
$\Gamma\upharpoonright D$ is an $i$ cone with apex $\delta$, inducing the order
$d_0,\ldots  d_{n-2}$ on its base, and the tuple
$(d_0,\ldots d_{n-2})$ is coloured by a unique shade
$y_S$ then $i\in S.$

\end{enumarab}
\end{definition}

\end{itemize}
\end{definition}
More concretely, our algebras are the polyadic equality algebras
$\A=\A_{\alpha+2,\beta}$ and $\B=\A_{\alpha+2,\alpha}$, here $\alpha+2$ is the number of greens

\begin{theorem} \pa\ has a \ws\  for $\B$  in $\alpha+2$ rounds; hence $\Rd_{df}\B\notin {\sf RDf_n}$
\end{theorem}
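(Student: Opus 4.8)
The plan is to run the standard rainbow pebble game argument for cylindric-like algebras. I would set up the usual atomic graph game $G_{\alpha+2}(\At\B)$ (equivalently the game $F^{n+3}$-style game played on coloured graphs described earlier in the excerpt), and exhibit an explicit winning strategy for \pa\ that forces \pe\ into a position where she must label an edge by a red atom with an illegal index pattern, i.e.\ produces a forbidden red triple as in \eqref{forb:match}. The key point is that $\B=\A_{\alpha+2,\alpha}$ has $\alpha+2$ greens but only $\alpha$ reds, so \pa\ has a genuine "excess" of greens over reds, which is exactly what he needs to win in $\alpha+2$ rounds.

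First I would describe \pa's opening move: he plays a coloured graph on $n$ nodes $0,1,\dots,n-1$ with $\Gamma(i,j)=\w$ for $i<j<n-1$, with $\Gamma(i,n-1)=\g_i$ for $1\le i\le n-2$, with $\Gamma(0,n-1)=\g_0^0$, and with $(0,1,\dots,n-2)$ coloured by some shade of yellow $\y_B$ (chosen so that later cone tints will lie in $B$). Then, exactly as in the proof that \pa\ wins $F^{n+3}(\At\Rd_{sc}\PEA_{\N^{-1},\N})$ given above, \pa\ repeatedly reuses the face $(0,1,\dots,n-2)$ and at round $t$ demands a new apex node with $\g_i$ edges $(1\le i\le n-2)$ to the base and green edge $\g_0^{t}$ from node $0$. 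Since two such cones share the base $(0,\dots,n-2)$, and neither of the two new green-tinted edges from node $0$ is allowed, \pe\ is forced by the forbidden triple \eqref{forb:pim} (the $(\g^i_0,\g^j_0,\r_{kl})$ clause) to colour the edge between consecutive apexes by a red atom $\r_{k}$, and the index must strictly decrease (or be constrained) each round because there is no order-preserving partial function available to reconcile the green superscripts once the supply of greens is exhausted relative to the reds. After $\alpha+2$ rounds \pa\ has created $\alpha+2$ apex nodes and forced a clique of reds among them, but with only $\alpha$ red colours available the pigeonhole principle yields a monochromatic red triple $(\r_i,\r_i,\r_j)$, which is forbidden by \eqref{forb:match}; so \pe\ cannot maintain a consistent coloured graph and loses.

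Having a \ws\ for \pa\ in $G_{\alpha+2}(\At\B)$, I would then invoke the standard consequence (the $\sf Df$-analogue of the connection between games and neat embeddings used throughout the excerpt, e.g.\ Theorem~\ref{thm:n} and its diagonal-free reformulation): a \ws\ for \pa\ in the $(\alpha+2)$-round atomic game on $\At\B$ implies $\Rd_{df}\B\notin S_c\Nr_n\Df_{n+\alpha+2}$, and \emph{a fortiori} $\Rd_{df}\B$ is not completely representable, hence — since $\B$ is finite, so atomic, and a finite atomic $\sf Df_n$ is representable iff completely representable — $\Rd_{df}\B\notin{\sf RDf}_n$. (Here one uses that $\B$, being generated by elements of dimension $<n$, has $\Rd_{df}\B$ representable iff $\B$ is representable iff $\Rd_{df}\B$ is completely representable, the same observation used in Corollary just after Theorem~\ref{hodkinson}.) I would conclude by noting this is the $\B$ half of the pair; the companion algebra $\A=\A_{\alpha+2,\beta}$, with its larger red palette, is handled separately (\pe\ wins the $\omega$-rounded game, so $\A$ is representable), so that the two algebras together witness the non-finite-variable axiomatizability statement via Theorem~\ref{d}.

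The main obstacle I anticipate is the precise bookkeeping in \pa's strategy: one must verify that the green superscripts $\g_0^0,\g_0^1,\dots$ together with the order structure on the $\alpha+2$ green indices genuinely preclude \pe\ from escaping into whites or into a consistent red labelling before round $\alpha+2$ — in other words, that the "order-preserving partial function" escape clause in \eqref{forb:pim} really does run out exactly when the greens are used up, forcing reds, and that the forced reds accumulate into a clique large enough ($\ge 3$ nodes with constrained indices) to trigger \eqref{forb:match} by pigeonhole given only $\alpha$ red colours. Getting the counting $\alpha = 3\cdot 2^n$ to interact correctly with the cone/yellow-shade constraints (so that \pe's shade-of-yellow choices cannot help her) is the delicate part; the rest is the routine rainbow machinery already deployed several times in the excerpt.
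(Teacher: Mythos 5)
Your proposal is correct and follows essentially the same route as the paper: the same opening $0$-cone on the base $(0,1,\ldots,n-2)$, the same repeated cylindrifier moves demanding $t$-cones with tints $\g_0^t$, $t<\alpha+2$, on that base, forcing \pe\ to build a red clique among the apexes which she cannot label consistently with only $\alpha$ reds (the forbidden pattern $(\r_i,\r_i,\r_j)$ plus pigeonhole), and the same transfer to the diagonal-free reduct via the fact that $\B$ is finite (hence atomic) and generated by elements whose dimension sets are $<n$. The only blemish is your aside about strictly decreasing indices and the ``order-preserving partial function'' escape clause, which belongs to the doubly-indexed reds of the $\PEA_{\N^{-1},\N}$ construction and plays no role here, since in this algebra the reds $\r_i$ are singly indexed and the defeat mechanism is exactly the counting argument you state at the end.
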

\begin{proof}
\pa\ plays a coloured graph $M \in \K$ with
nodes $0, 1,\ldots, n-1$ and such that $M(i, j) = \w (i < j <
n-1), M(i, n-1) = \g_i ( i = 1,\ldots, n), M(0, n-1) =
\g^0_0$, and $ M(0, 1,\ldots, n-2) = \y_{\alpha+2}$. This is a $0$-cone
with base $\{0,\ldots , n-2\}$. In the following moves, \pa\
repeatedly chooses the face $(0, 1,\ldots, n-2)$
and demands a node
$t<\alpha+2$ with $\Phi(i,\alpha) = \g_i (i = 1,\ldots,  n-2)$ and $\Phi(0, t) = \g^t_0$,
in the graph notation -- i.e., a $t$ -cone on the same base.
\pe\, among other things, has to colour all the edges
connecting nodes. By the rules of the game
the only permissible colours would be red. Using this, \pa\ can force a
win in $\alpha+2$ rounds eventually using her enough supply of greens,
which \pe\ cannot match using his $<$ number of reds. The conclusion now follows since $\B$ is generated by elements whose dimension sets
are $<n$.
\end{proof}

\begin{theorem} The algebra $\A\in {\sf RPEA_n}$
\end{theorem}

\begin{proof} If \pa\ plays like before, now \pe\ has more  reds, so \pa\ cannot force a win. In fact \pa\
can only force a red clique of size $\alpha+2$, not bigger. So \pe\ s
strategy within red cliques is to choose a label for each edge using
a red colour and to ensure that each edge within the clique has a label unique to this edge (within the clique).
Since there are $\beta$ many reds she can do that.

Let $M$ be a coloured graph built at some stage, and let \pa\ choose the graph $\Phi$, $|\Phi|=n$, then $\Phi=F\cup \{\delta\}$,
where  $F\subseteq M$ and $\delta\notin M$.
So we may view \pa\ s move as building a coloured graph $M^*$ extending $M$
whose nodes are those of $\Gamma$ together with $\delta$ and whose edges are edges of $\Gamma$ together with edges
from $\delta$ to every node of $F$.

Colours of edges and $n-1$ tuples in $M^*$ but not
in $M$ are determined by \pa\ moves.
No $n-1$ tuple containing both $\delta$ and elements of $M\sim F$
has a colour in $M^*$

Now \pe\ must extend $M^*$ to a complete the graph on the same nodes and
complete the colouring giving  a graph $M$ in $\K$.
In particular, she has to define $M^+(\beta, \delta)$
for all nodes  $\beta\in M\sim F$.
\begin{enumarab}
\item  if $\beta$ and $\delta$ are both apexes of cones on $F$, that induces the same linear ordering on $F$, the
\pe\ has no choice but to pick a  red atom, and as we described above, she can do that

\item Other wise, this is not the case, so for some $i<n-1$ there is no $f\in F$ such
that $M^*(\beta, f), M^* (f,\delta)$ are both coloured $\g_i$ or if $i=0$, they are coloured
$\g_0^l$ and $\g_0^{l'}$ for some $l$ and $l'$.
\end{enumarab}
In the second case \pe\ uses the normal strategy in rainbow constructions.
She chooses $\w_0$, for $M^+(\beta,\delta)$.

Now we turn to coluring of $n-1$ tuples. For each tuple $\bar{a}=a_0,\ldots a_{n-2}\in M^{n-1}$ with no edge
$(a_i, a_j)$ coloured green, then  \pe\ colours $\bar{a}$ by $\y_S$, where
$$S=\{i\in \alpha+2: \text { there is an $i$ cone in $M^*$ with base $\bar{a}$}\}.$$

Clearly the choice of whites avoids all forbidden triples, and the choice of reds works.
We need to check that the the labelling of $n-1$ types works.
\end{proof}

\begin{theorem} A coloured graph is red, if at least one of its edges are labelled red
\end{theorem}
We write $\r$ for $a:n\to \Gamma$, where $\Gamma$ is a red graph, and we call it a red atom.
(Here we identify an atom with its representative, but no harm will follow).

\begin{athm}{Conjecture}For any $n$ variable equation the two algebras $\A$ and $\B$,  falsify it together or satisfy it together.
\end{athm}
\begin{demo}{Idea}
Let $\R$ be the set of red graphs of $\A$, and $\R'$ be the set of red graphs in $\B$. Then $|\R|\geq |\R'|\geq 3.2^n$.
Assume that the equation $s=t$, using $n$ variables does not hold in $\A$.
Then there is an assignment $h:\{x_0,\ldots x_{n-1}\}\to \A$, such that $\A, h\models s\neq t$.
We construct an assignment $h'$ into $\B$ that also falsifies $s=t$.
Now $\A$ has more red atoms, but $\A$ and $\B$ have identical non-red atoms. So for any non red atom $a$ of $\B$, and for any
$i<n$, let $a\leq h'(x_i)$ iff $a\leq h(x_i)$.
The assignment $h$ induces a partition of $\R$ into $2^n$ parts $\R_S$, $S\subseteq \{0,1\ldots n-1\}$, by
$$\R_S=\{\r: \r\leq h(x_i) , i\in S, \r.h(x_i)=0, i\in n\sim S\}.$$
Partition  $\R'$ into $2^n$ parts: $S\subseteq n$ such that $|\R'_S|=|\R_S|$ if $|\R_S|<3$, and $|\R_S'|\geq 3$ iff $|\R_S|\geq 3$.
This possible because $|\R|\geq 3.2^n$.
Now for each $i<n$ and each red atom $r'$ in $\R'$, we complete the definition of
$h'(x_i)\in \B$  by
$r'\leq h'(x_i)$ iff $r'\in \R'_S$ for some $S$ such that $i\in S$.
We show inductively that for any term $\tau$ using only the first $n$ variables
and any $S\subseteq n$, we have

$$\R_S\subset h(\tau)\Longleftrightarrow  \R'_S\subset h'(\tau)$$
$$h(\tau)\sim \R = h'(\tau)\sim \R'$$
$$|h(\tau)\cap \R| =|h'(\tau) \cap \R'| \text {  iff   } |h(\tau)\cap \R|<3$$
$$|h(\tau) \cap \R| \geq 3\Longleftrightarrow |h'(\tau)\cap \R'|\geq 3.$$
Hence $\B$ does not model $s=t$.
The converse is entirely analogous.
\end{demo}

Modulo the above conjecture, and \ref{d} we readily obtain:

\begin{corollary} For $n\geq 3$, the class of representable algebras in any $\K$
between ${\sf Df}_n$ and ${\sf PEA_n}$
does not have a prenex universal axiomatization using $n$  variables.
\end{corollary}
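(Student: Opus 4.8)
The plan is to assemble the three ingredients that are already in place: the general meta-result Theorem~\ref{d} (the discriminator-variety criterion), the two finite rainbow polyadic equality algebras $\A=\A_{\alpha+2,\beta}$ and $\B=\A_{\alpha+2,\alpha}$ together with the fact that $\A\in {\sf RPEA}_n$ while $\Rd_{df}\B\notin{\sf RDf}_n$, and the Conjecture asserting that no $n$-variable (diagonal-free) equation separates $\A$ from $\B$. First I would fix $\K$ between ${\sf Df}_n$ and ${\sf PEA}_n$ and recall that for each such $\K$, the class ${\sf RK}_n$ of representable algebras is a discriminator variety: it is a variety closed under the discriminator term (the operation $x\mapsto {\sf c}_0{\sf c}_1\ldots{\sf c}_{n-1}x$ acts as a unary discriminator on simple algebras of set-algebra type, as in \cite{HMT2}), so prenex universal sentences reduce, on subdirectly irreducible (equivalently simple) members, to equations using the same number of variables. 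Both $\A$ and $\B$ are simple — being finite rainbow algebras built on a single connected atom structure — so they fall under the hypothesis of Theorem~\ref{d}.

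Next I would verify the two ``membership'' hypotheses of Theorem~\ref{d} for the chosen signature. The algebra $\A$ lies in ${\sf RPEA}_n$ by the theorem above (\pe\ wins the $\omega$-rounded graph game on $\K$ because she has $\beta$ reds, enough to give every edge in a forced red clique of size $\le\alpha+2$ a distinct red label); taking the $\K$-reduct keeps it representable, so $\Rd_\K\A\in{\sf RK}_n$. For $\B$, \pa\ has a winning strategy in $\alpha+2$ rounds using his excess of greens $\{\g_0^i:i\le\alpha+2\}$, forcing a red clique larger than \pe\ can consistently label with only $\alpha$ reds; since $\B$ is generated by elements of dimension $<n$, a representation of $\Rd_{df}\B$ would lift to one of $\B$, so $\Rd_{df}\B\notin{\sf RDf}_n$, and a fortiori $\Rd_\K\B\notin{\sf RK}_n$ for any $\K$ between ${\sf Df}_n$ and ${\sf PEA}_n$. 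Thus one algebra is in the variety ${\sf RK}_n$ and the other is not, while by the Conjecture they agree on all $n$-variable equations (the Conjecture is stated for diagonal-free equations, which is exactly what is needed since the weakest signature ${\sf Df}_n$ has no diagonals; for the stronger signatures the $n$-variable prenex universal sentences still reduce to equations that, after the discriminator reduction, involve only the non-diagonal generating structure — here one passes through the fact that $\A,\B$ are generated by $<n$-dimensional elements, so diagonals contribute nothing new). Feeding this into Theorem~\ref{d} gives that ${\sf RK}_n$ has no axiomatization by prenex universal sentences in $n$ variables.

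The main obstacle is of course the Conjecture itself — establishing that the partition-and-transfer argument sketched after it (splitting the red atoms $\R$ of $\A$ and $\R'$ of $\B$ into the $2^n$ blocks $\R_S$ indexed by $S\subseteq n$, matching blocks of size $<3$ exactly and blocks of size $\ge3$ up to the ``$\ge 3$'' threshold, which is possible because $|\R|,|\R'|\ge 3\cdot 2^n$, and then proving by induction on terms $\tau$ in the first $n$ variables that $\R_S\subseteq h(\tau)\iff\R'_S\subseteq h'(\tau)$, that $h(\tau)\setminus\R=h'(\tau)\setminus\R'$, and that red-intersection cardinalities agree modulo the cutoff $3$) actually goes through for \emph{all} the polyadic operations, including the substitutions ${\sf s}_\tau$ and cylindrifiers, not just Booleans. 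The delicate point in the induction is cylindrification: one must check that passing from $h(\tau)$ to ${\sf c}_i h(\tau)$ cannot push a block across the $3$-threshold in $\A$ without doing the same in $\B$, which is where the constant $3\cdot 2^n$ (and the particular choices $\alpha=3\cdot 2^n$, $\beta=(\alpha+1)(\alpha+2)/2$) is calibrated. Granting the Conjecture, the corollary is then immediate as described; I would present the corollary's proof as the two-line deduction ``$\A,\B$ simple, one in ${\sf RK}_n$ and one not, agree on $n$-variable equations by the Conjecture, hence Theorem~\ref{d} applies,'' flagging that it is conditional on the Conjecture exactly as the surrounding text already does.
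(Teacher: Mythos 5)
Your proposal is correct and follows essentially the same route as the paper, which likewise obtains the corollary as an immediate consequence of the conjecture together with the discriminator-variety criterion of Theorem~\ref{d}, using that $\A\in{\sf RPEA}_n$ while $\Rd_{df}\B\notin{\sf RDf}_n$ (so the $\K$-reduct of $\B$ is non-representable for every $\K$ between ${\sf Df}_n$ and ${\sf PEA}_n$). Your explicit checks that the representable classes are discriminator varieties and that both finite rainbow algebras are simple merely spell out what the paper leaves implicit; note only that the conjecture as stated applies to $n$-variable equations in the full polyadic equality signature, which automatically covers all reduct signatures, so no separate diagonal-free reading is needed.
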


\subsection{Neat atom structures}

Before dealing with neat atom structures; we devise {\it neat games} played on $\lambda$ neat hypergraphs.
We devise a game between \pe\ and \pa\ on its rainbow atom structure.
Recall that $F^m$  is like the ordinary $\omega$ rounded atomic game, except that the number of pebbles
used by \pa\ are limited to $m$; however he has the
option to reuse them. \pe\ will play this game on the $\Sc$ reduct of $\PEA_{\Z,\N}$, namely $\Sc_{\Z, \N}$.
This is a game  played on atomic networks, but they translate to equivalent games played
on coloured graphs, as indicated above, which we continue use.

We have another game $H$ with $\omega$ rounds that is stronger for it aims at {\it gripping} ${\omega}$ extra dimension.

$H$ is much more complicated, because it is played on hypernetworks,
which is a network endowed with a set of labelled hyperedges. Furthermore $H_k$ the game $H$ truncated to $k$ rounds will be played
on $\PEA_{\Z, \N}$, for each $k\geq n$. To describe such hypernetworks we need
some preparing to do:
For  $x,y\in \nodes(N)$, we set  $x\sim y$ if there exists $\bar{z}$ such that $N(x,y,\bar{z})\leq {\sf d}_{01}$.
The equivalence relation $\sim$ over the set of all finite sequences over $\nodes(N)$ is defined by
$\bar x\sim\bar y$ iff $|\bar x|=|\bar y|$ and $x_i\sim y_i$ for all
$i<|\bar x|$.(It can be checked that this indeed an equivalence relation.)

A \emph{ hypernetwork} $N=(N^a, N^h)$ over an atomic polyadic equality algebra $\C$
consists of a network $N^a$
together with a labelling function for hyperlabels $N^h:\;\;^{<
\omega}\!\nodes(N)\to\Lambda$ (some arbitrary set of hyperlabels $\Lambda$)
such that for $\bar x, \bar y\in\; ^{< \omega}\!\nodes(N)$
\begin{enumerate}
\renewcommand{\theenumi}{\Roman{enumi}}
\setcounter{enumi}3
\item\label{net:hyper} $\bar x\sim\bar y \Rightarrow N^h(\bar x)=N^h(\bar y)$.
\end{enumerate}
If $|\bar x|=k\in \N$ and $N^h(\bar x)=\lambda$ then we say that $\lambda$ is
a $k$-ary hyperlabel. $(\bar x)$ is referred to a a $k$-ary hyperedge, or simply a hyperedge.
(Note that we have atomic hyperedges and hyperedges)
When there is no risk of ambiguity we may drop the superscripts $a,
h$.
There are {\it short} hyperedges and {\it long} hyperedges (to be defined in a while). The short hyperedges are constantly labelled,
so that the atoms in the neat reduct are no smaller than the atoms
in the dilation. (When $\A=\Nr_n\B,$ it is common to call $\B$ a dilation of $\A$.)
We know that there is a one to one correspondence between networks and coloured graphs.
If $\Gamma$ is a coloured graph, then by $N_{\Gamma}$
we mean the corresponding network defined on $n-1$ tuples of the nodes of $\Gamma$ to
to coloured graphs of size $\leq n$.
\begin{definition}

\begin{enumarab}
\item A hyperedge $\bar{x}\in {}^{<\omega}\nodes (\Gamma)$ of length $m$ is {\it short}, if there are $y_0,\ldots y_{n-1}\in \nodes(N)$, such that
$N_{\Gamma}(x_i, y_0, \bar{z})\leq d_{01}$, or $N(_{\Gamma}(x_i, y_1, \bar{z})\ldots$ or $N(x_i, y_{n-1},\bar{z})\leq d_{01}$ for all $i<|x|$,
for some (equivalently for all)
$\bar{z}.$ Otherwise, it is called {\it long.}
\item A hypergraph $(\Gamma, l)$
is called {\it $\lambda$ neat} if $N_{\Gamma}(\bar{x})=\lambda$ for all short hyper edges.
\end{enumarab}
\end{definition}

In $H$  \pa\ has three moves.

The first is the normal cylindrifier move. There is no polyadic move.
The next two are amalgamation moves.
But the games are not played on hypernetworks, they are played on coloured hypergraphs, whose graph part
can be viewed as an $L_{\omega_1, \omega}$ model for the rainbow signature, and the part are the part dealing with hyperedges with a
labelling function.
The amalgamation moves reflect the fact, in case \pe\ wins, then for every $k\geq n$ there is a $k$ dimensional hyperbasis,
so that the small algebra embeds into cylindric algebras of arbitrary large dimensions.
The game is played on $\lambda$ neat hypernetworks,  translated to $\lambda$ neat hypergraphs,
where $\lambda$ is a label for the short hyperedges.

A piece of notation: Let $N$ be a network and let $\theta$ be any function.  The network
$N\theta$ is a complete labelled graph with nodes
$\theta^{-1}(\nodes(N))=\set{x\in\dom(\theta):\theta(x)\in\nodes(N)}$,
and labelling defined by
$(N\theta)(i_0,\ldots i_{\mu-1}) = N(\theta(i_0), \theta(i_1), \theta(i_{\mu-1}))$,
for $i_0, \ldots i_{\mu-1}\in\theta^{-1}(\nodes(N))$.  Similarly, for a hypernetwork
$N=(N^a, N^h)$, we define $N\theta$ to be the hypernetwork
$(N^a\theta, N^h\theta)$ with hyperlabelling defined by
$N^h\theta(x_0, x_1, \ldots) = N^h(\theta(x_0), \theta(x_1), \ldots)$
for $(x_0, x_1,\ldots) \in \;^{<\omega}\!\theta^{-1}(\nodes(N))$.

\pa\ can play a \emph{transformation move} by picking a
previously played hypernetwork $N$ and a partial, finite surjection
$\theta:\omega\to\nodes(N)$, this move is denoted $(N, \theta)$.  \pe\
must respond with $N\theta$.  Finally, \pa\ can play an
\emph{amalgamation move} by picking previously played hypernetworks
$M, N$ such that $M\equiv^{\nodes(M)\cap\nodes(N)}N$ and
$\nodes(M)\cap\nodes(N)\neq \emptyset$.
This move is denoted $(M,
N)$.  To make a legal response, \pe\ must play a $\lambda_0$-neat
hypernetwork $L$ extending $M$ and $N$, where
$\nodes(L)=\nodes(M)\cup\nodes(N)$.

\begin{lemma}Let $\A$ be a countable atomic algebra with atom structure $\alpha$.
If \pe\ can win the $\omega$ rounded game $H$ on $\alpha$,
then there exists a locally finite $\PEA_{\omega}$ such that
$\At\A\cong \At\Nr_n\C$. Furthermore, $\C$ can be chosen to be complete, and $\Cm\At\A=\Nr_n\C$.
The game $H$ can be strengthened so that a \ws\ for \pe\  guarantees that
$\A\cong \Nr_n\C$ (This is much stronger). Conversely, regardless of cardinalities if $\A\in \Nr_n\CA_{\omega}$ is atomic,
then $\A$ is strongly representable, and  \pe\ can win $H(\At\A)$.
\end{lemma}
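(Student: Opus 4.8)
The statement has four parts, and the plan is to break it into the construction of a dilation from a winning strategy, a complete-subalgebra refinement, a strengthening of the game, and the converse. For the first part, I would adapt the standard Hirsch--Hodkinson machinery (the hyperbasis/saturated-mosaic argument, cf. \cite{HHbook} \S13--15) to the polyadic equality setting. Suppose \pe\ has a winning strategy in the $\omega$-rounded game $H$ on $\alpha = \At\A$. Playing $H$ with \pa\ scheduling every legal transformation and amalgamation move (this is possible since $\A$ is countable, so there are only countably many moves to schedule) yields a countable saturated set $M$ of $\lambda$-neat hypernetworks closed under the moves of the game. From $M$ I would build an $\omega$-dimensional locally finite $\PEA_\omega$, call it $\C$, whose elements are suitable subsets of (equivalence classes of) hypernetworks: a typical element is $\{N \in M : N(\bar x) \leq a\}$ for $\bar x$ a tuple of nodes and $a \in \A$. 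The Boolean operations are set-theoretic; cylindrifiers are read off from the cylindrifier move; substitutions/polyadic operations from the transformation moves; diagonals from the $\sim$-relation on nodes. The amalgamation moves ensure $\C$ has a $k$-dimensional hyperbasis for every $k$, hence is genuinely $\omega$-dimensional and locally finite. One then checks, using the $\lambda$-neatness of short hyperedges, that the $n$-dimensional hypernetworks give exactly $\At\A$: the constant labelling on short hyperedges prevents atoms of $\Nr_n\C$ from splitting below atoms of $\A$. This gives $\At\A \cong \At\Nr_n\C$.

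\textbf{Completeness and the strengthening.} For the second sentence, I would take $\C$ to be a \emph{complete} $\PEA_\omega$ by working with the complex algebra of the atom structure generated by $M$, or equivalently by applying a completion (the relevant varieties are completely additive, so $\Cm\At\A$ makes sense and embeds into a complete $\C'$). Since $\At\A \cong \At\Nr_n\C$ and both $\Cm\At\A$ and $\Nr_n\C$ are complete with this common atom structure, and $\Nr_n$ of a complete algebra is complete, we get $\Cm\At\A = \Nr_n\C$. For the third sentence --- the strengthening to $\A \cong \Nr_n\C$ (not just equality of atom structures) --- the idea is exactly the ``$\lambda$-neat'' and short/long hyperedge device: by fixing the hyperlabel on short hyperedges to a single constant $\lambda_0$, every element of the dilation that lives ``$n$-dimensionally'' is forced to be a join of atoms of $\A$, so the neat reduct cannot acquire extra elements beyond $\A$ itself. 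One augments $H$ with extra rounds (in the spirit of the amalgamation moves in \cite{HHbook}) so that \pe's winning strategy certifies that the dilation is generated, in the appropriate dimension-restricted sense, by $\A$; then $\Nr_n\C = \Sg^{\Nr_n\C}\A = \A$.

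\textbf{The converse.} For the last sentence, assume $\A \in \Nr_n\CA_\omega$ is atomic, say $\A = \Nr_n\C$ with $\C \in \CA_\omega$ (one may pass to $\Sg^\C A$ and assume $\C$ is locally finite, as in the argument used for theorem~\ref{Robinsexample}(3)). That $\A$ is strongly representable is essentially theorem~\ref{Robinsexample}(4): any algebra in $\Nr_n\CA_\omega$ satisfies all Lyndon conditions, hence $\Cm\At\A$ is representable. To see \pe\ can win $H(\At\A)$, I would have \pe\ maintain hypernetworks $N$ together with an embedding of $N$ (viewed as a partial clique-labelled structure) into a representation / into $\C$ via the map $\widehat{\,\cdot\,}$ of definition~\ref{def:hat}: the cylindrifier move is answered using lemma~\ref{lem:atoms2} exactly as in the proof of theorem~\ref{thm:n}; the transformation move is answered by composing with $\theta$ (legal because substitutions are available in the $\omega$-dilation); and the amalgamation move is answered because $\C$ being $\omega$-dimensional and atomic, any two hypernetworks agreeing on their common nodes have a common extension inside $\C$ --- this is precisely where all $\omega$ extra dimensions are used. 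The short-hyperedge constancy is automatic since in $\A = \Nr_n\C$ the relevant labels are determined.

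\textbf{Main obstacle.} The delicate point is the strengthening to $\A \cong \Nr_n\C$ rather than merely $\At\A \cong \At\Nr_n\C$: one must design the additional rounds of $H$ so that a winning strategy really does pin down the neat reduct on the nose, and then verify that the constructed $\C$ has no ``stray'' $n$-dimensional elements. This is where the short/long hyperedge distinction and the $\lambda$-neatness condition have to be used with care, and where the bookkeeping (scheduling all amalgamation and transformation moves, verifying closure, checking the $\PEA_\omega$ axioms including commutativity of cylindrifiers in the limit) is heaviest; the rest is a fairly mechanical, if lengthy, transcription of the Hirsch--Hodkinson saturated-mosaic construction into the polyadic equality signature.
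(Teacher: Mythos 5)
Your overall architecture differs from the paper's: the paper does not form a complex algebra over the saturated set of played hypernetworks. Instead, for each atom $a$ it takes the limit $N_a$ of a single scheduled play, introduces a signature with an $n$-ary predicate for each atom and a $k$-ary predicate for each $k$-ary hyperlabel, and lets $\D_a$ be the weak set algebra of all first-order definable sets of $\omega$-assignments into $N_a$; then $\C=\prod_{a}\D_a$, and the atoms of $\Nr_n\C$ are computed directly from the predicates $b(x_0,\ldots,x_{n-1})$. Local finiteness of each $\D_a$ is automatic because its elements are defined by first-order formulas (finitely many free variables), and completeness is obtained by re-running the same construction in $L_{\infty,\omega}$, after which density of $\Cm\At\A$ in the complete algebra $\Nr_n\C$ gives $\Cm\At\A=\Nr_n\C$. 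Your route has two concrete problems at exactly the points this construction is designed to handle: (i) the hypernetworks played in $H$ are finite, so your sets $\{N\in M: N(\bar x)\leq a\}$ do not obviously carry $\omega$ many cylindrifiers; you would first have to convert to genuinely $\omega$-dimensional objects (e.g.\ via maps $v:\omega\to\nodes(N)$ as in the proof of theorem \ref{longer}, or via assignments into a limit structure as the paper does); and (ii) ``has a $k$-dimensional hyperbasis for every $k$, hence locally finite'' is a non sequitur --- existence of hyperbases says nothing about dimension sets of elements; local finiteness comes from restricting to finitely supported (first-order definable) elements, which you never do explicitly.

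The more serious gap is in the strengthening to $\A\cong\Nr_n\C$. Your argument is that $\lambda$-neatness of short hyperedges forces every $n$-dimensional element to be a join of atoms of $\A$, so the neat reduct ``cannot acquire extra elements beyond $\A$''. That inference is false: joins of atoms of $\A$ computed inside (a complete) $\C$ need not belong to $\A$; indeed in the complete case the lemma itself yields $\Nr_n\C=\Cm\At\A$, which in general properly contains $\A$ --- this is precisely the phenomenon exploited throughout the paper in the weakly-versus-strongly representable results. Likewise $\Sg^{\Nr_n\C}\A=\A$ is trivially true for any subalgebra and does not yield $\Nr_n\C=\A$; the genuine issue is ruling out stray $n$-dimensional elements of the dilation, which generation by $A$ does not do. The paper offers two devices: either take an $n$-ary predicate for \emph{every} element of $\A$ (not just the atoms), using simplicity of $\A$ to see that the clause $N_a,f\models r(\bar x)\Leftrightarrow N_a(f(l_0),\ldots,f(l_{n-1}))\leq r$ is well defined and that the induced map is onto; or pass to the strictly stronger game $J$, in which \pa\ may additionally play an arbitrary $\A$-labelled $n$-network $N$ (no consistency assumed), \pe\ must respond with an element $a\in A$ that the constraints of $N$ define, and \pa\ may later pick any tuple on the main board labelled below $a$ and demand an embedding of $N$ over it. These moves are of a different character from amalgamation moves, and a \ws\ for \pe\ in $J$ is what guarantees that every $n$-dimensional element generated in the dilation already lies in $A$. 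Finally, in the converse your treatment of the amalgamation move assumes $\C$ is atomic, which is not part of the hypothesis $\A=\Nr_n\C$ with $\C\in\CA_\omega$; the paper instead obtains the converse from the fact that algebras in $\Nr_n\CA_\omega$ satisfy the Lyndon conditions (theorem \ref{Robinsexample}), so some repair (e.g.\ passing to a suitable extension of $\C$) is needed before your $\widehat{N}$-based strategy answers amalgamation moves.
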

\begin{proof}
For the first part. Fix some $a\in\alpha$. Using \pe\ s \ws\ in the game of neat hypernetworks, one defines a
nested sequence $N_0\subseteq N_1\ldots$ of neat hypernetworks
where $N_0$ is \pe's response to the initial \pa-move $a$, such that
\begin{enumerate}
\item If $N_r$ is in the sequence and
and $b\leq {\sf c}_lN_r(f_0, ldots, x, \ldots  f_{n-2})$,
then there is $s\geq r$ and $d\in\nodes(N_s)$ such
that $N_s(f_0, f_{l-1}, d, f_{l+1},\ldots f_{n-2})=b$.
\item If $N_r$ is in the sequence and $\theta$ is any partial
isomorphism of $N_r$ then there is $s\geq r$ and a
partial isomorphism $\theta^+$ of $N_s$ extending $\theta$ such that
$\rng(\theta^+)\supseteq\nodes(N_r)$.
\end{enumerate}
Now let $N_a$ be the limit of this sequence, that is $N_a=\bigcup N_i$, the labelling of $n-1$ tuples of nodes
by atoms, and the hyperedges by hyperlabels done in the obvious way.
This limit is well-defined since the hypernetworks are nested.
We shall show that $N_a$ is the base of a weak set algebra having unit  $V={}^{\omega}N_a^{(p)}$,
for some fixed sequence $p\in {}^{\omega}N_a$.

Let $\theta$ be any finite partial isomorphism of $N_a$ and let $X$ be
any finite subset of $\nodes(N_a)$.  Since $\theta, X$ are finite, there is
$i<\omega$ such that $\nodes(N_i)\supseteq X\cup\dom(\theta)$. There
is a bijection $\theta^+\supseteq\theta$ onto $\nodes(N_i)$ and $\geq
i$ such that $N_j\supseteq N_i, N_i\theta^+$.  Then $\theta^+$ is a
partial isomorphism of $N_j$ and $\rng(\theta^+)=\nodes(N_i)\supseteq
X$.  Hence, if $\theta$ is any finite partial isomorphism of $N_a$ and
$X$ is any finite subset of $\nodes(N_a)$ then
\begin{equation}\label{eq:theta}
\exists \mbox{ a partial isomorphism $\theta^+\supseteq \theta$ of $N_a$
 where $\rng(\theta^+)\supseteq X$}
\end{equation}
and by considering its inverse we can extend a partial isomorphism so
as to include an arbitrary finite subset of $\nodes(N_a)$ within its
domain.
Let $L$ be the signature with one $n$ -ary predicate symbol ($b$) for
each $b\in\alpha$, and one $k$-ary predicate symbol ($\lambda$) for
each $k$-ary hyperlabel $\lambda$. We work in $L_{\infty, \omega}$.

Here we have a sequence of variables of order type $\omega$, and two 'sorts' of formulas,
the $n$ predicate symbols uses only $n$ variables, and roughly
the $n$ variable formulas built up out of the first $n$ variables will determine the atoms of
neat reduct, the $k$ ary predicate symbols
will determine the atoms of algebras of higher dimensions as $k$ gets larger;
the atoms in the neat reduct will be no smaller than the atoms in the dilations.
This process will be interpreted in an infinite weak set algebra with base $N_a$, whose elements are
those  assignments satisfying such formulas.

For fixed $f_a\in\;^\omega\!\nodes(N_a)$, let
$U_a=\set{f\in\;^\omega\!\nodes(N_a):\set{i<\omega:g(i)\neq
f_a(i)}\mbox{ is finite}}$.
Notice that $U_a$ is weak unit (a set of sequences agreeing cofinitely with a fixed one.)


We can make $U_a$ into the universe an $L$ relativized structure ${\cal N}_a$;
here relativized means that we are only taking those assignments agreeing cofinitely with $f_a$,
we are not taking the standard square model.
However, satisfiability  for $L$ formulas at assignments $f\in U_a$ is defined the usual Tarskian way, except
that we use the modal notation, with restricted assignments on the left:
For $r\in \A
l_0, \ldots l_{n-1}, i_0 \ldots, i_{k-1}<\omega$, \/ $k$-ary hyperlabels $\lambda$,
and all $L$-formulas $\phi, \psi$, let
We can make $U_a$ into the base of an $L$-structure ${\cal N}_a$ and
evaluate $L$-formulas at $f\in U_a$ as follow.  For $b\in\alpha,\;
l_0, \ldots l_{\mu-1}, i_0 \ldots, i_{k-1}<\omega$, \/ $k$-ary hyperlabels $\lambda$,
and all $L$-formulas $\phi, \psi$, let
\begin{eqnarray*}
{\cal N}_a, f\models b(x_{l_0}\ldots  x_{n-1})&\iff&N_a(f(l_0),\ldots  f(l_{n-1}))=b\\
{\cal N}_a, f\models\lambda(x_{i_0}, \ldots,x_{i_{k-1}})&\iff&  N_a(f(i_0), \ldots,f(i_{k-1}))=\lambda\\
{\cal N}_a, f\models\neg\phi&\iff&{\cal N}_a, f\not\models\phi\\
{\cal N}_a, f\models (\phi\vee\psi)&\iff&{\cal N}_a,  f\models\phi\mbox{ or }{\cal N}_a, f\models\psi\\
{\cal N}_a, f\models\exists x_i\phi&\iff& {\cal N}_a, f[i/m]\models\phi, \mbox{ some }m\in\nodes(N_a)
\end{eqnarray*}
For any $L$-formula $\phi$, write $\phi^{{\cal N}_a}$ for the set of all $n$ ary assignments satisfying it; that is
$\set{f\in\;^\omega\!\nodes(N_a): {\cal N}_a, f\models\phi}$.  Let
$D_a = \set{\phi^{{\cal N}_a}:\phi\mbox{ is an $L$-formula}}.$
Then this is the universe of the following weak set algebra
\[\D_a=(D_a,  \cup, \sim, {\sf D}_{ij}, {\sf C}_i)_{ i, j<\omega}\]
then  $\D_a\in\RCA_\omega$. (Weak set algebras are representable).

Now we consider formulas in more than $n$ variables, corresponding to the $k$ hyperlabels.
Let $\phi(x_{i_0}, x_{i_1}, \ldots, x_{i_k})$ be an arbitrary
$L$-formula using only variables belonging to $\set{x_{i_0}, \ldots,
x_{i_k}}$.  Let $f, g\in U_a$ (some $a\in \alpha$) and suppose that $\{(f(i_j), g(i_j): j\leq k\}$
is a partial isomorphism of ${\cal N}_a$ (viewed as a weak mode), then one can easily prove by induction over the
quantifier depth of $\phi$ and using (\ref{eq:theta}), that
\begin{equation}
{\cal N}_a, f\models\phi\iff {\cal N}_a,
g\models\phi\label{eq:bf}\end{equation}

For any $L$-formula $\phi$, write $\phi^{{\cal N}_a}$ for
$\set{f\in\;^\omega\!\nodes(N_a): {\cal N}_a, f\models\phi}$.  Let
$Form^{{\cal N}_a} = \set{\phi^{{\cal N}_a}:\phi\mbox{ is an $L$-formula}}$
and define a cylindric algebra
\[\D_a=(Form^{{\cal N}_a},  \cup, \sim, {\sf D}_{ij}, {\sf C}_i, i, j<\omega)\]
where ${\sf D}_{ij}=(x_i= x_j)^{{\cal N}_a},\; {\sf C}_i(\phi^{{\cal N}_a})=(\exists
x_i\phi)^{{\cal N}_a}$.  Observe that $\top^{{\cal N}_a}=U_a,\; (\phi\vee\psi)^{{\cal N}_a}=\phi^{\c
N_a}\cup\psi^{{\cal N}_a}$, etc. Note also that $\D$ is a subalgebra of the
$\omega$-dimensional cylindric set algebra on the base $\nodes(N_a)$,
hence $\D_a\in {\sf Lf}_{\omega}\cap {\sf Ws}_\omega$, for each atom $a\in \alpha$, and is clearly complete.

Let $\C=\prod_{a\in \alpha} \D_a$. (This is not necessarily locally finite).
Then  $\C\in\RCA_\omega$, and $\C$ is also complete, will be shown to be is the desired generalized weak set algebra,
that is the desired dilation.
Note that unit of $\C$ is the disjoint union of the weak spaces.

An element $x$ of $\C$ has the form
$(x_a:a\in\alpha)$, where $x_a\in\D_a$.  For $b\in\alpha$ let
$\pi_b:\C\to \D_b$ be the projection map defined by
$\pi_b(x_a:a\in\alpha) = x_b$.  Conversely, let $\iota_a:\D_a\to \c
C$ be the embedding defined by $\iota_a(y)=(x_b:b\in\alpha)$, where
$x_a=y$ and $x_b=0$ for $b\neq a$.  Evidently $\pi_b(\iota_b(y))=y$
for $y\in\D_b$ and $\pi_b(\iota_a(y))=0$ if $a\neq b$.

Suppose $x\in\Nr_n\C\setminus\set0$.  Since $x\neq 0$,
it must have a non-zero component  $\pi_a(x)\in\D_a$, for some $a\in \alpha$.
Assume that $\emptyset\neq\phi(x_{i_0}, \ldots, x_{i_k})^{\D_a}= \pi_a(x)$ for some $L$-formula $\phi(x_{i_0},\ldots, x_{i_k})$.  We
have $\phi(x_{i_0},\ldots, x_{i_k})^{\D_a}\in\Nr_{n}\D_a$.  Let
$f\in \phi(x_{i_0},\ldots, x_{i_k})^{\D_a}$ and let
$$b=N_a(f(0), f(1), \ldots f(n-1))\in\alpha.$$
We first show that
$b(x_0, x_1, \ldots x_{n-1})^{\D_a}\subseteq
 \phi(x_{i_0},\ldots, x_{i_k})^{\D_a}$.  Take any $g\in
b(x_0, x_1\ldots x_{n-1})^{\D_a}$,
so $N_a(g(0), g(1)\ldots g(n-1))=b$.  The map $$\set{(f(0),
g(0)), (f(1), g(1))\ldots (f(n-1), g(n-1))}$$ is
a partial isomorphism of $N_a$.  By
 (\ref{eq:theta}) this extends to a finite partial isomorphism
 $\theta$ of $N_a$ whose domain includes $f(i_0), \ldots, f(i_k)$. Let
 $g'\in U_a$ be defined by
\[ g'(i) =\left\{\begin{array}{ll}\theta(i)&\mbox{if }i\in\dom(\theta)\\
g(i)&\mbox{otherwise}\end{array}\right.\] By (\ref{eq:bf}), ${\cal N}_a,
g'\models\phi(x_{i_0}, \ldots, x_{i_k})$. Observe that
$g'(0)=\theta(0)=g(0)$ and similarly $g'(n-1)=g(n-1)$, so $g$ is identical
to $g'$ over $n$ and it differs from $g'$ on only a finite
set of coordinates.  Since $\phi(x_{i_0}, \ldots, x_{i_k})^{\c
\ D_a}\in\Nr_{n}(\C)$ we deduce ${\cal N}_a, g \models \phi(x_{i_0}, \ldots,
x_{i_k})$, so $g\in\phi(x_{i_0}, \ldots, x_{i_k})^{\D_a}$.  This
proves that $b(x_0, x_1\ldots x_{n-1})^{\D_a}\subseteq\phi(x_{i_0},\ldots,
x_{i_k})^{\D_a}=\pi_a(x)$, and so
$$\iota_a(b(x_0, x_1,\ldots x_{n-1})^{\c \ D_a})\leq
\iota_a(\phi(x_{i_0},\ldots, x_{i_k})^{\D_a})\leq x\in\c
C\setminus\set0.$$
Hence every non-zero element $x$ of $\Nr_{n}\C$
is above a non-zero element
$$\iota_a(b(x_0, x_1\ldots n_1)^{\D_a})$$
(some $a, b\in \alpha$) and these latter elements are the atoms of $\Nr_{n}\C$.  So
$\Nr_{n}\C$ is atomic and $\alpha\cong\At\Nr_{n}\C$ --- the isomorphism
is $b \mapsto (b(x_0, x_1,\dots x_{n-1})^{\D_a}:a\in A)$; denote this map by $i$.

The second part follows from the fact, that \pe\ can win the game $F^m$ for every $m\geq n$.
The second part follows work in $L_{\infty,\omega}$ so that $\C$ is complete
by changing the defining clause (now allowing infinite disjunctions)  to
$$N_a, f\models (\bigvee_{i\in I} \phi_i) \text { iff } (\exists i\in I)(N_a,  f\models\phi_i)$$

By working in $L_{\infty, \omega},$ we assume that arbitrary joins hence meets exist,
so $\C_a$ is complete, hence so is $C$. But $\Cm\At\A\subseteq \Nr_n\C$ is dense and complete, so
$\Cm\At\A=\Nr_n\C$.

For the third required there are two ways. For the first part we use all elements of $\A$ as
$n$ predicate elements and assume that the algebra is simple.
This guarantees that the map is surjective. To define the first clause, one sets
$$N_a, f\models r(x_{l_0}\ldots  x_{n-1})\Longleftrightarrow N_a(f(l_0),\ldots  f(l_{n-1}))\leq r$$
The simplicity of $\A$ guarantees that this is well defined.

The second way is to use a stronger game, in which \pa\ has even more moves and this allows is to remove $\At$
from both sides of the above equation, obtaining a much stronger result
The main play of the stronger game $J(\A)$ is a play of the game $H(\A).$

The base of the main board at a certain point will be the atomic network $X$ and we write
$X(\bar{x})$ for the atom that labels the edge $\bar{x}$ on the main board.
But \pa\ can make other moves too, which makes it harder for \pe\ to win and so a \ws\ for \pe\ will give a stronger result.
An $n$  network is a finite complete graph with nodes including $n$
with all edges labelled by elements of $\A$. No consistency properties are assumed.

\pa\ can play an arbitrary $n$ network $N$, \pe\ must replace $N(n)$  by
some element $a\in A$. The idea, is that the constraints represented by $N$ correspond to an element of the $\RCA_\omega$ being constructed on $X$,
generated by $A$. This network is placed on the side of the main board. $N$ asserts that whenever it appears in $X$
you can never have an atom not below  holding between the embedded images of $n$.
But it also asserts that whenever an atom below $a$ holds in $X$,  there are also points in $X$ witnessing all the nodes of
$N$. The final move is that \pa\ can pick a previously played $n$ network $N$ and pick any  tuple $\bar{x}$
on the main board whose atomic label is below $N(\bar{n})$.

\pe\ must respond by extending the main board from $X$ to $X'$ such that there is an embedding $\theta$ of $N$ into $X'$
 such that $\theta(0)=x_0\ldots , \theta(n-1)=y_{n-1}$ and for all $\bar{i} \in N$ we have
$X(\theta(i_0)\ldots, \theta(i_{n-1}))\leq N(\bar{i})$. This ensures that in the limit, the constraints in
$N$ really define $a$.
If \pe\ has a \ws\ in $J(A)$ then the extra moves mean that every $n$ dimensional element generated by
$A$ in the $\RCA_\omega$ constructed in the play is an element of $A$.

\end{proof}

Next  we introduce several definitions an atom structures concerning neat embeddings:

\begin{definition}
\begin{enumarab}
\item Let $1\leq k\leq \omega$. Call an atom structure $\alpha$ {\it weakly $k$ neat representable},
if the term algebra is in $\RCA_n\cap \Nr_n\CA_{n+k}$, but the complex algebra is not representable.

\item Call an atom structure {\it $m$ weakly representable} if it is weakly representable, but $\Cm\alpha\notin S\Nr_n\CA_{n+m}$.

\item Call an atom structure {\it $m$ weakly $k$ neat representable}, if it is simultaneously weakly $k$ neat and $m$ weakly representable.

\item Call an atom structure $\alpha$  {\it neat gripping} if there exists $\A$ such that $\At\A=\alpha$,
and $\A\in \Nr_n\CA_{\omega}$  and any $\B$ based on this atom structure is in $\Nr_n\CA_{n+k}$ for $k\geq 1$.

\item Call an atom structure $\alpha$ {\it  weakly neat},
 if there is $\A\notin S_c\Nr_n\CA_{n+k}$, $\At\A=\alpha$,
and there exists $\B\equiv \A$, and $\At(\B)\cong \At\Nr_n\C$, for some locally finite $\CA_{\omega}.$

\item Call it strongly neat if, satisfies the above, with the additional condition that we can omit
$\At$ from both sides
so that that is $\B\cong \Nr_n\C$, for some locally finite $\CA_{\omega}$.
\end{enumarab}
\end{definition}

We have:

\begin{enumarab}

\item  In the blow up and blur construction in \cite{ANT} a $k$ weakly neat atom structure is given.
$k$ cannot be infinite, for else the term algebra will be countable atomic and in $\Nr_n\CA_{\omega}$; this
implies, see \cite{Sayedneat}, that it is  completely
representable and so strongly representable. More basically, a complete representation of an atomic algebra induces
a representation of its completion.

\item  There exists a $n$, $n+4$  weak atom structure, this was proved in theorem \ref{smooth}.
\item $m$ weakly $k$ neat atom structures will be dealt with below, see theorem \ref{blurs}

\item There exists an atom structure that is not neat gripping, in fact there exists $\alpha$, and atomic algebras
$\A$, $\B$  having the atom structure $\alpha$, such that $\B$ is countable $\A$ is uncountable $\A\in \Nr_n\CA_{\omega}$,
but $\B\notin {\sf UpUr}\Nr_n\CA_{n+1}.$

\item In the rainbow rainbow algebra $\PEA_{\Z,\N}$,
let  \pa\ has two more  amalgamation moves (together with the usual cylindrifier move), but now the game
is played on {\it hypergraphs}, generalizing the game played by Hirsch on {\it hypernetworks}.

Recall that a hypergraph has two parts, a coloured graph,
and hyperedges of arbitrarily long lengths and these are labelled. The game aims to capture a two sorted structure,
namely, a neat reduct.
Some of the hyperedges are called short, the others are called long.
The short hyperedges are called $\lambda$ neat, and they are constantly labelled to ensure that the
atoms in the small algebra are no bigger than those in the dilation (the big algebra in which  the small algebra neatly embeds).
Games are played on $\lambda$ neat hypergraphs, that is those hypergraphs in which the short
edges are labelled by the constant label $\lambda$.

In this new game \pe\ has two more amalgamation moves to respond to.
The first cylindrifier move is as before, except that now \pe\ has to provide labels for short hyperedges and long ones.
Roughly, in the former case she has no choice but to choose the label $\lambda$ and in the second case her strategy is straightforward, naturally
dictated by \pa\ s previous move.
In the first amalgamation move \pa\ is forced a response. In the second she has to amalgamate two given hypergraphs, by labelling edges and
$n-1$ hyperedges of the graphs and giving labels to the hyperedges labelling the short ones with $\lambda$.
The last strategy is easy. The response to the remaining amalgamation moves are very similar to
cylindrifier moves; this can be done by contracting the nodes, via the notion of {\it envelope} \cite{r}, to labelling edges done exactly
like the cylindrifier move.

Let us give more details. We show how \pe\ can win the $k$ rounded games of $H$, for every finite $k$ as follows.
We have already dealt with the graph part.
We now have to extend his strategy dealing with $\Lambda$ neat hypernetworks.

In a play, \pe\ is required to play $\lambda_0$ neat hypernetworks, so she has no choice about the
hyperedges for short edges, these are labelled by $\lambda_0$. In response to a cylindrifier move by \pa\
extending the current hypergraph providing a new node $k$,
and a previously played coloured graph $M$
all long hyperedges not incident with $k$ necessarily keep the hyperlabel they had in $M$.
All long hyperedges incident with $k$ in $M$
are given unique hyperlabels not occurring as the hyperlabel of any other hyperedge in $M$.
(We can assume, without loss of generality, that we have infinite supply of hyperlabels of all finite arities so this is possible.)
In response to an amalgamation move, which involves two hypergraphs required to be amalgamated, say $(M,N)$
all long hyperedges whose range is contained in $\nodes(M)$
have hyperlabel determined by $M$, and those whose range is contained in nodes $N$ have hyperlabel determined
by $N$. If $\bar{x}$ is a long hyperedge of \pe\ s response $L$ where
$\rng(\bar{x})\nsubseteq \nodes(M)$, $\nodes(N)$ then $\bar{x}$
is given
a new hyperlabel, not used in any previously played hypernetwork and not used within $L$ as the label of any hyperedge other than $\bar{x}$.
This completes her strategy for labelling hyperedges.

Now we turn to amalgamation moves. We need some notation and terminology taken from \cite{r}; they are very useful to economize on proofs.
Every edge of any hypernetwork has an owner \pa\ or \pe\ , namely, the one who coloured this edge.
We call such edges \pa\ edges or \pe\ edges. Each long hyperedge $\bar{x}$ in a hypernetwork $N$
occurring in the play has an envelope $v_N(\bar{x})$ to be defined shortly.
In the initial round of \pa\ plays $a\in \alpha$ and \pe\ plays $N_0$
then all irreflexive edges of $N_0$ belongs to \pa\ There are no long hyperedges in $N_0$. If in a later move,
\pa\ plays the transformation move $(N,\theta)$
and \pe\ responds with $N\theta$ then owners and envelopes are inherited in the obvious way.
If \pa\ plays a cylindrifier move requiring a new node $k$ and \pe\ responds with $M$ then the owner
in $M$ of an edge not incident with $k$ is the same as it was in $N$
and the envelope in $M$ of a long hyperedge not incident with $k$ is the same as that it was in $N$.
The edges $(f,k)$ belong to \pa\ in $M$, all edges $(l,k)$
for $l\in \nodes(N)\sim \phi\sim \{k\}$ belong to \pe\ in $M$.
if $\bar{x}$ is any long hyperedge of $M$ with $k\in \rng(\bar{x}$, then $v_M(\bar{x})=\nodes(M)$.
If \pa\ plays the amalgamation move $(M,N)$ and \pe\ responds with $L$
then for $m\neq n\in \nodes(L)$ the owner in $L$ of a edge $(m,n)$ is \pa\ if it belongs to
\pa\ in either $M$ or $N$, in all other cases it belongs to \pe\ in $L$.

If $\bar{x}$ is a long hyperedge of $L$
then $v_L(\bar{x})=v_M(x)$ if $\rng(x)\subseteq \nodes(M)$, $v_L(x)=v_N(x)$ and  $v_L(x)=\nodes(M)$ otherwise.
This completes the definition of owners and envelopes.

The next claim, basically, reduces amalgamation moves to cylindrifiers moves.
By induction on the number of rounds one can show:

\begin{athm}{Claim}\label{r} Let $M, N$ occur in a play of $H_k(\alpha)$ in which \pe\ uses the above labelling
for hyperedges. Let $\bar{x}$ be a long hyperedge of $M$ and let $\bar{y}$ be a long hyperedge of $N$.
\begin{enumarab}
\item For any hyperedge $\bar{x}'$ with $\rng(\bar{x}')\subseteq v_M(\bar{x})$, if $M(\bar{x}')=M(\bar{x})$
then $\bar{x}'=\bar{x}$.
\item if $\bar{x}$ is a long hyperedge of $M$ and $\bar{y}$ is a long hyperedge of $N$, and $M(\bar{x})=N(\bar{y})$
then there is a local isomorphism $\theta: v_M(\bar{x})\to v_N(\bar{y})$ such that
$\theta(x_i)=y_i$ for all $i<|x|$.
\item For any $x\in \nodes(M)\sim v_M(\bar{x})$ and $S\subseteq v_M(\bar{x})$, if $(x,s)$ belong to \pa\ in $M$
for all $s\in S$, then $|S|\leq 2$.
\end{enumarab}
\end{athm}

Again we proceed inductive, with the inductive hypothesis exactly as before except that now each $N_r$ is a
$\lambda$ neat hypergraph. All other inductive conditions are the same (modulo this replacement). Now,
we have already dealt with hyperlabels for long and short
hyperedges, we dealt with the graph part of the first hypergraph move.
All what remains is the amalgamation move. With the above claim at hand,
this turns out an easy task to implement guided by \pe\ s
\ws\ in the graph part.

We consider an amalgamation move $(N_s,N_t)$ chosen by \pa\ in round $r+1$.
We finish off with edge labelling first. \pe\ has to choose a colour for each edge $(i,j)$
where $i\in \nodes(N_s)\sim \nodes(N_t)$ and $j\in \nodes(N_t)\sim \nodes(N_s)$.
Let $\bar{x}$ enumerate $\nodes(N_s)\cap \nodes(N_t)$
If $\bar{x}$ is short, then there are at most two nodes in the intersection
and this case is similar to the cylindrifier move, she uses $\rho_s$ for the suffixes of the red.

if not, that is if $\bar{x}$ is long in $N_s$, then by the claim
there is a partial isomorphism $\theta: v_{N_s}(\bar{x})\to v_{N_t}(\bar{x})$ fixing
$\bar{x}$. We can assume that
$$v_{N_s}(\bar{x})=\nodes(N_s)\cap \nodes (N_t)=\rng(\bar{x})=v_{N_t}(\bar{x}).$$
It remains to label the edges $(i,j)\in N_{r+1}$ where $i\in \nodes(N_s)\sim \nodes (N_t)$ and $j\in \nodes(N_t)\sim \nodes(N_s)$.
Her strategy is similar to the cylindrifier move. If $i$ and $j$ are tints of the same cone she choose a red using $\rho_s$,
If not she  chooses   a white.
She never chooses a green.
Then she lets $\rho_{r+1}=\rho_r$ and the properties $II$, $III$, $IV$ remain true
in round $r+1$.

Concerning the last property to be maintained, and that is
colouring $n-1$ types. Let $M^+=N_s\cup M_s$, which is the graph whose edges are labelled according to the rules of the game,
we need to label $n-1$ hyperedges by shades of yellow.
For each tuple $\bar{a}=a_0,\ldots a_{n-2}\in {M^+}^{n-1}$, $\bar{a}\notin N_s^{n-1}\cup M_s^{n-1}$,  with no edge
$(a_i, a_j)$ coloured green (we have already labelled edges), then  \pe\ colours $\bar{a}$ by $\y_S$, where
$$S=\{i\in \Z: \text { there is an $i$ cone in $M^*$ with base $\bar{a}$}\}.$$
As before this can be checked to be $OK$.

A \ws\ for all finite rounded game of \pa\, implies that $\At\A'\cong \At\Nr_n\C$ for some locally finite algebra $\C$
This $\C$ can be also chosen to
be complete and $\Cm\At\A\cong \Nr_n\C$; this is done by working instead in $L_{\infty, \omega}$
(rather than in first order logic as done in \cite{r}, theorem 39).
and changing the defining clause by allowing infinite disjunction.
This, in turn,  will imply  that $\C$ is complete and $\Cm\At=\Nr_n\C$
since the former is complete and dense in $\Nr_n\C$ (which is also complete).

\item Now we give a sufficient condition for a countable atom structure to be neat.
We define a game that is strictly stronger than the game $H$, played on hypergraphs,
which are coloured graphs endowed with arbitrary long labelled hyperedges)
in which \pa\ has even more moves and this allows is to remove $\At$
from both sides of the above equation, obtaining a much stronger result
The main play of the stronger game $J(\A)$ is a play of the game $H(\A).$
The base of the main board at a certain point will be the atomic network $X$ and we write
$X(\bar{x})$ for the atom that labels the edge $\bar{x}$ on the main board.
But \pa\ can make other moves too, which makes it harder for \pe\ to win and so a \ws\ for \pe\ will give a stronger result.
An $n$  network is a finite complete graph with nodes including $n$
with all edges labelled by elements of $\A$. No consistency properties are assumed.

\pa\ can play an arbitrary $n$ network $N$, \pe\ must replace $N(n)$  by
some element $a\in A$. The idea, is that the constraints represented by $N$ correspond to an element of the $\RCA_\omega$ being constructed on $X$,
generated by $A$. This network is placed on the side of the main board. $N$ asserts that whenever it appears in $X$
you can never have an atom not below  holding between the embedded images of $n$.
But it also asserts that whenever an atom below $a$ holds in $X$,  there are also points in $X$ witnessing all the nodes of
$N$. The final move is that \pa\ can pick a previously played $n$ network $N$ and pick any  tuple $\bar{x}$
on the main board whose atomic label is below $N(\bar{n})$.

\pe\ must respond by extending the main board from $X$ to $X'$ such that there is an embedding $\theta$ of $N$ into $X'$
 such that $\theta(0)=x_0\ldots , \theta(n-1)=y_{n-1}$ and for all $\bar{i} \in N$ we have
$X(\theta(i_0)\ldots, \theta(i_{n-1}))\leq N(\bar{i})$. This ensures that in the limit, the constraints in
$N$ really define $a$.
If \pe\ has a \ws\ in $J(A)$ then the extra moves mean that every $n$ dimensional element generated by
$A$ in the $\RCA_\omega$ constructed in the play is an element of $A$.
\end{enumarab}

\begin{definition} Let $\K\subseteq \CA_n$, and $\L$ be an extension of first order logic.
$\K$ is {\it detectable} in $\L$, if for any $\A\in \K$, $\A$ atomic, and for any atom structure
$\beta$ such that $\At\A\equiv_{\L}\beta$,
if $\B$ is an atomic algebra such that $\At\B=\beta$, then $\B\in \K.$
\end{definition}

The class $\Nr_n\CA_m$ is not detectable in $L_{\infty, \omega}$. Namely,
there exists $\A$ and $\B$ such that $\At\A\equiv_{\infty, \omega}\At\B$, $\A\in \Nr_n\CA_{\omega}$,
$\B\in {\sf UpUr}\Nr_n\CA_{\omega}\sim \Nr_n\CA_{n+1}.$

We have also proved that the class of completely representable atom structures is not
detectable in first order logic. The class of strongly representable atom structure is also not detectable in first order logic
\cite{HHbook2}. Actually this reference includes the last two cases; the former
is proved, like we did, using a rainbow construction, while the latter uses {\it an anti ultraproduct of Monk's algebras} based
on Erdos probabilistic graphs. (We sketched the idea in the introduction under the subtitle 'the good and the bad').
By an anti ultraproduct we mean an ultraproduct of a
sequence of weakly representable algebras based on graphs
with finite chromatic number to one based on a graph with infinite chromatic number,
which is an inverse to Monk's algebras, an ultraproduct of bad Monk like (non representable ones)
converging to a good
(representable) one.

We have already proved (or at least provided references where proofs can be found) of
the following, except for item $(7)$ which we prove below
in theorem \ref{blurs}.

\begin{corollary}
\begin{enumarab}
\item There exists $\A$ and $\B$ such that $\At\A\equiv_{\infty, \omega}\At\B$, $\A\in \Nr_n\CA_{\omega}$,
$\B\in {\sf UpUr}\Nr_n\CA_{\omega}\sim \Nr_n\CA_{n+1}$
\item There exists $\A$ and $\B$ such that $\At\A=\At\B$,
$\A\in \Nr_n\CA_{\omega}$ and $\B\notin {\sf Up Ur}\Nr_n\CA_{n+1}$.
\item There exists $\A\notin S_c\Nr_n\CA_{n+3}$, hence not completely representable,  and $\B\equiv \A$, such that $\B$ is
completely representable, $\At\B\equiv \At\Nr_n\C$,
for some $\C\in {\sf Lf}_{\omega}$, furthermore $\C$ can be chosen to be complete, in which case $\Cm\At\B\cong \Nr_n\C$.
\item If there exists a countable atom structure $\alpha$ for which \pe\ can win the $k$ rounded $J$ game for every finite $k$, then in the above we
would have the stronger $\B\equiv \Nr_n\C$, where $\A$ is any algebra between $\Tm\alpha$ and $\Cm\alpha$.
\item Let $n$ be finite $n\geq 3$. Then there exists a countable weakly $k$ neat atom structure of dimension $n$ if and only if $k<\omega.$
\item There exists a $4$ weakly representable atom structure.
\item There exists a $n+k+1$ weakly neat if there exists a relation algebra with an adequate set of blurs, having an $n$ dimensional cylindric basis.
\item There exists an atom structure of a representable atom algebra that is not neat, this works for all dimensions $>1$.
\item The class of completely representable algebras, and strongly representable
ones of dimension $>2$, is not detectable in $L_{\omega,\omega}$, while the class
$\Nr_n\CA_m$ for any ordinals $1<n<m<\omega$, is not detectable
even in $L_{\infty,\omega}.$
\item For every $\alpha>2$ (infinite included), $r\in \omega$, $k\in \omega$, $k\geq 1$,  there exists
$\B_r\in S\Nr_{\alpha}\QEA_{\alpha+k}$, such that $\prod \B^r/F\in \sf RQEA_{\alpha}$
but $\Rd_{sc}\B_r\notin  S\Nr_{\alpha}\Sc_{\alpha+k}$.
In particular $S\Nr_{\alpha}\K_{\alpha+k+1}$ is not axiomatizable by a finite schema over
$S\Nr_{\alpha}\K_{\alpha+k}$ and $\sf RK_{\alpha}$ is also
not axiomatizable by a finite schema over $S\Nr_{\alpha}\K_{\alpha+k}$ for any $\K$ between
$\Sc$ and $\QEA$, and any finite $k$.
\end{enumarab}
\end{corollary}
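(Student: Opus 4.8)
The statement is the ``Corollary'' collecting ten items. Almost all of these have been proved or referenced earlier in the excerpt, so the plan is to assemble the pieces rather than re-prove them; the one genuinely new ingredient is item (7), announced as Theorem \ref{blurs}, which is a blow-up-and-blur construction applied to a relation algebra possessing an adequate set of blurs \emph{and} an $n$-dimensional cylindric basis. I will organise the proof as a sequence of short paragraphs, one per item, each pointing to the relevant earlier result and, where needed, extracting the precise formulation of the Corollary from it.

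\textbf{Step-by-step.} First, items (1), (2), (3), (4): these follow directly from Theorem \ref{ef}, Theorem \ref{Robinsexample}, and the long Lemma on neat games played on $\lambda$-neat hypergraphs. For (1) I take $\A=\sf PEA_{\N^{-1},\N}$ (or rather a countable elementary subalgebra realising a winning strategy for \pe\ in all $G_m$'s, obtained by the ultrapower-plus-elementary-chain argument) and recall that its $\Sc$ reduct is not in $S_c\Nr_n\Sc_{n+3}$ while it is elementary equivalent to a completely representable — hence, by the neat games lemma, to a genuine neat reduct. For (2) I invoke the uncountable neat reduct constructed at the end of Theorem \ref{Robinsexample}(3) (the relation-algebra-style algebra with $2^\kappa$ greens), whose $\CA$-reduct $\Nr_n\C$ is atomic but not completely representable, hence not in $\sf UpUr\Nr_n\CA_{n+1}$ — using that completely representable algebras are closed under $\sf UpUr$ in the countable-atom case, or more simply that a complete representation of any ultraroot would induce one of the algebra. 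Items (3) and (4) are literally restatements of the neat-games Lemma (the weaker $H$-version and the stronger $J$-version). Second, items (5) and (6): (5) is Theorem \ref{blurs}/the ``weakly $k$ neat'' discussion — the forward direction is the blow-up-and-blur of \cite{ANT} giving a $k$-weakly-neat atom structure for each finite $k$, and the reverse direction is the observation, credited to \cite{Sayedneat}, that a countable atomic algebra in $\Nr_n\CA_\omega$ is completely representable, so $k=\omega$ is impossible. Item (6) is Theorem \ref{smooth}/Corollary \ref{can}: the rainbow algebra $\C$ there is weakly representable (its term subalgebra is representable) but $\Cm\At\A=\C\notin S\Nr_n\CA_{n+4}$, i.e.\ a $4$-weakly-representable atom structure.

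\textbf{The main work — item (7).} Here the plan is to run the Andr\'eka--N\'emeti blow-up-and-blur machine, as sketched in the proof of Theorem \ref{OTT} and reused in \S1, but starting from a \emph{finite relation algebra $\bold M$ having an adequate set of blurs $J$ \emph{and} an $n$-dimensional cylindric basis}, and then track the two partitions $E,P$ of the blown-up atom structure $\At=\omega\times\At\bold M\times J$ through to the cylindric level. The term algebra $\Tm\At$ is representable via the blurs (the $|J|$ non-principal ultrafilters), so $\Tm(\Mat_n\At)$ is a representable, hence weakly representable, $\CA_n$; this is the ``$\B\equiv\A$ completely representable'' side. On the other side, since $\bold M$ embeds into $\Cm\At$ by sending each split atom to the (infinite) join of its copies, and $\bold M$ was chosen with an uncontrollable Ramsey obstruction forcing non-representability already at $n+k+1$ extra dimensions, one shows $\Cm(\Mat_n\At)\notin S\Nr_n\CA_{n+k+1}$, hence its $\Sc$-reduct is not even in $S_c\Nr_n\Sc_{n+k+1}$; making $\bold M$ one-generated via Maddux's combinatorial technique (infinitely many ternary relations) gives the $\sf RPEA_n\cap\Nr_n\PEA_{n+k}$-membership of the term algebra. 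The delicate point — and the place I expect the real difficulty — is verifying that the set of blurs is ``adequate'' in the precise sense of \cite{ANT} for \emph{this} particular $\bold M$ (the one with an $n$-dimensional cylindric basis, not the Maddux algebra $\cal F(2,1)$ used in \cite{ANT}), and that adequacy is exactly what is needed for the $n$-basic-matrices atom structure to still be weakly representable; this is why the statement is conditional on the \emph{existence} of such a relation algebra. Finally, items (8), (9), (10) are bookkeeping: (8) is the non-neat atom structure from Theorem \ref{ef}/\ref{term} (the algebra $\B$ whose $\Sc$ reduct is not a neat reduct, available in all dimensions $>1$ by the infinite-dimensional version in \cite{IGPL}); (9) restates the non-detectability results, the completely/strongly representable cases from \cite{HHbook2} and the $\Nr_n\CA_m$ case from the rainbow algebra of (1); and (10) is exactly Theorem \ref{2.12} together with Hirsch's $\C(m,n,r)$ construction, which gives the $\Sc$-versus-$\QEA$ gap.

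\textbf{Summary of obstacles.} The routine part is the citation-chasing for (1)--(4), (6), (8)--(10); the mildly technical part is the reverse direction of (5) (invoking that countable neat reducts are completely representable, which itself rests on the omitting types argument run on co-atoms in all $n$-neat-reduct levels); and the genuine obstacle is (7), where one must exhibit — or at least assume the existence of — a finite relation algebra simultaneously carrying an adequate set of blurs and an $n$-dimensional cylindric basis, and then push the blow-up-and-blur through the basic-matrix construction while keeping control of which class $S\Nr_n\CA_{n+k+1}$ the complex algebra falls outside of, using that rainbow/Monk obstructions transfer along $\Mat_n$ and that $S\Nr_n\CA_{n+k+1}$ is a variety (hence closed under $S$) so that a counterexample on the complex algebra is inherited by any algebra between the term algebra and its completion.
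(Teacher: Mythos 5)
Your overall architecture is the same as the paper's: every item except (7) is read off from results already established earlier in the paper, and (7) is exactly Theorem \ref{blurs}, the blow-up-and-blur applied to a finite relation algebra satisfying the stated blur conditions; your treatment of items (3)--(7) and (10) matches the intended proof. However, two of your specific instantiations fail. For item (1) you offer the rainbow algebra $\PEA_{\N^{-1},\N}$ together with its elementary equivalent completely representable companion. That cannot deliver (1): the rainbow argument yields only first order equivalence of the two \emph{algebras}, not $\At\A\equiv_{\infty,\omega}\At\B$, and neither algebra is shown to be a full neat reduct --- the rainbow algebra is not even in $S_c\Nr_n\Sc_{n+3}$, and its companion is only placed in $S_c\Nr_n\CA_{\omega}$. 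The witness the paper intends is Theorem \ref{ef}: the split-twice model-theoretic construction gives $\A\in \Nr_n\CA_{\omega}$ (quantifier elimination makes it the \emph{full} neat reduct), $\At\A\equiv_{\infty,\omega}\At\B$ via the back-and-forth/Boolean-valued argument, and $\Rd_{\Sc}\B\notin \Nr_n\Sc_{n+1}$, so $\B\in {\sf UpUr}\Nr_n\CA_{\omega}\sim \Nr_n\CA_{n+1}$. The same misattribution propagates to the $L_{\infty,\omega}$ half of item (9), which again rests on Theorem \ref{ef}, not on the rainbow construction.

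For item (2) your argument is false as stated. You take the uncountable algebra of Theorem \ref{Robinsexample}(3) and argue ``atomic but not completely representable, hence not in ${\sf UpUr}\Nr_n\CA_{n+1}$''. That implication is refuted by the very example you cite: that algebra lies in $\Nr_n\CA_{\omega}\subseteq {\sf UpUr}\Nr_n\CA_{n+1}$ and is nevertheless not completely representable (countability is essential for the implication you want, which is the whole point of \ref{Robinsexample}(3)). Moreover item (2) requires two algebras sharing one atom structure, which a single algebra cannot supply. The intended witness is Theorem \ref{Robinsexample}(2), i.e.\ the construction of \cite{SL}: $\wp(V)\in \Nr_{\alpha}\CA_{\alpha+\omega}$ and the subalgebra $\A$ generated by $y$ and the singletons have the same atoms (all singletons of $V$), while the displayed first order sentence valid in all neat reducts, hence in their elementary closure, fails in $\A$, so $\A\notin {\sf UpUr}\Nr_n\CA_{n+1}$. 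With items (1), (2) and the $L_{\infty,\omega}$ clause of (9) repaired in this way --- and with the reverse direction of (5) and item (7) argued as you do --- your proof coincides with the paper's.
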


Lifting from atom structures, we define several classes of atomic representable algebras.
${\sf CRA_n}$ denotes the class of completely representable algebras of dimension $n$.
Let ${\sf SRCA_n}$ be the class of strongly representable atomic algebras of dimension $n$,
$\A\in {\sf SRCA_n}$ iff $\Cm\At\A\in \RCA_n$.  ${\sf WRCA_n}$ denotes the class of weakly representable algebras of dimension $n$,
and this is just $\RCA_n\cap \At$.  We  have the following strict inclusions lifting them up from atom structures \cite{HHbook2}:
$${\sf CRA}_n\subset {\sf LCA_n}\subset {\sf SRCA_n}\subset {\sf WCRA}_n$$

The second  and fourth classes are elementary but not finitely axiomatizable, bad Monk  algebras converging to a good one,
can witness this, while ${\sf SRCA_n}$ is not closed under both ultraroots and ultraproducts,
good Monks algebras converging to a bad one witnesses
this. Rainbow algebras witness that ${\sf CRA_n}$ is not elementary.

For a cylindric algebra atom structure $\F$ the first order algebra over $\F$ is the subalgebra
of $\Cm\F$ consisting of all sets of atoms that are first order
definable with parameters from $S$. ${\sf FOCA_n}$ denotes the class of atomic such algebras of dimension $n$.

Let $\K$ be the class of atomic representable algebras having $NS$, and ${\sf L}$ be the class of atomic
representable algebras having  $NS$ the unique neat embedding property; these are defined in \cite{Sayed}.
Obviously, the latter is contained in the former, and both are contained in $\Nr_n\CA_{\omega}.$
The next theorem shows that there are a plethora of very interesting classes between the
atomic algebras in the amalgamation base of ${\sf RCA_n}$ and atomic algebras in
${\sf RCA_n}.$ Some are elementary, some are not.
Some can be explicitly defined in terms of the (strength of) neat embeddings, some are not, at least its not obvious
how they can.

For a class $\K$ with a Boolean reduct, $\K\cap \At$ denotes the class of atomic algebras in $\K$; the former is elementary iff the
latter is. Call an atomic representable algebra strongly Lyndon if it atomic and \pe\ can win
the $\lambda$ neat hypernetwork game $H_k$ for every $k$, and let ${\sf SLCA_n}$ denote the elementary closure of this  class.
\begin{theorem}
We have the following inclusions (note that $\At$ commutes with ${\sf UpUr})$:
$${\sf L}\cap \At\subseteq \K\cap \At\subseteq \Nr_n\CA_{\omega}\cap \At\subset {\sf UpUr}\Nr_n\CA_{\omega}\cap \At$$
$$\subseteq {\sf Up Ur}S_c\Nr_n\CA_{\omega}\cap \At={\sf UpUr}{\sf CRA_n}$$
$$\subseteq {\sf SLCA_n} \subseteq {\sf LCA}_n\subset {\sf SRCA_n}\subseteq {\sf UpUr}{\sf SRCA}_n\subseteq {\sf FOCA_n}$$
$$\subseteq S\Nr_n\CA_{\omega}\cap \At={\sf WRCA_n}= \RCA_n\cap \At.$$
\end{theorem}
\begin{proof}
Items (5) and (7) uses ideas of Ian Hodkinson and Robin Hirsch.
\begin{enumarab}
\item  The first inclusion is witnessed by an atomic algebra that lies in the amalgamation base of $\RCA_n$,
but not in the super amalgamation base of  $\RCA_n$. The second is witnessed by an atomic
algebra in $\Nr_n\CA_{\omega}$, that is not in the strong amalgamation base of $\RCA_n$.

\item The third inclusion is witnessed by the algebra $\B$ constructed in \cite{IGPL}. $\B$ is also completely representable, hence strongly
representable,
so it witnesses the strictness of fourth inclusion {\it without} the elementary closure operator (with it we do not know whether the inclusion is strict).

\item The fifth  inclusion is witnessed by $\PEA_{\Z, \N}$.

\item The sixth; we do not know whether  the game $H_k$ coded by $\rho_k$ is strictly harder than that coded by
$\sigma_k$, as far as \pa\ is concerned, but it probably is due to the hyperedges part.
A \ws\ for \pe\ in $H_k$ for every $k\geq n$, forces the algebra to be in
${\sf UpUr}\Nr_n\CA_{\omega}$ while a \ws\ for $\sigma_k$ for every $k\geq n$, forces the algebra to
be in ${\sf Up Ur }S_c\Nr_n\CA_{\omega}$. We know that they are unrelated both ways.
This is not telling much.

\item We now provide a concrete example of algebra that there is a strongly representable algebra that fails infinitely many Lyndon conditions.
(We know that one exists because ${\sf LCA_n}$ is elementary, by definition,  and it is contained in ${\sf SRCA_n}$ which is not elementary.)
Any atomic algebra satisfying the Lyndon conditions, will have their complex algebra of its atom structure also
satisfying the Lyndon conditions, hence will be strongly representable, \ref{Robinsexample}.
\item Let $\A$ be the $\bold G$ Monk algebra constructed in theorem \ref{hodkinson} or the algebra based
or the rainbow term construction obtained by blowing up and blurring a finite rainbow algebra, proving that
$S\Nr_{n}\CA_{n+4}$ is not atom canonical.
Recall that such algebras were defined using first order formulas, the first in a Monk's signature, the second in the rainbow signature
(the latter is first order since we had only
finitely many greens).  Though the usual semantics was altered, first order logic did not see the relativization, only infinitary formulas saw it,
and thats why the complex algebras could not be represented.
Another  distinction worthwhile highlighting here, is that the first algebra is based on an infinite graph
with finite chromatic number (disjoint cliques), and that is why the complex algebra is not
representable. The second rainbow algebra is based on a complete infinite irreflexive graph,
the graph of reds. Then $\A\in {\sf FOCA_n}$ but not in ${\sf SRCA_n}.$

\item We now show that ${\sf FOCA_n}\subset  {\sf WCA_n}$. This is not at all obvious because they are both elementary.
Take an $\omega$ copy of the an $3$ element graph with nodes $\{1,2,3\}$ and edges
$1\to 2\to\ 3$. Then of course $\chi(\Gamma)<\infty$. Now an $\Gamma$  has a three first order definable colouring.
Since $\M(\Gamma)$ is not representable, then the algebra of first order definable sets is not representable because $\Gamma$ is interpretable in
$\rho(\Gamma)$, the atom structure constructed from $\Gamma$ as defined in \cite{HHbook}.
However, the term algebra is representable. (This is not so easy to prove).

\end{enumarab}
\end{proof}
\begin{theorem}
Let $n\geq 3$ be finite. Then ${\sf LCA}_n={\sf Up Ur CRA_n}$. In particular, ${\sf LCA_n}={\sf Up Ur}S_c\Nr_n\CA_{\omega}\cap \At$.
\end{theorem}
\begin{proof}
Any algebra satisfying the Lyndon conditions has an ultrapower in which
\pe\ has a winning strategy in the $\omega$ rounded game, which in turn has a countable elementary subalgebra $\B$ in which \pe\
wins the $\omega$ rounded
game. Hence $\B$ is completely representable, and $\A\in {\sf Up Ur}\{\B\}$.
\end{proof}
For an atomic algebra $\A$, by an atomic subalgebra we mean a subalgebra of $\A$ containing all its atoms, we write
$S_{at}$ to denote this operation applied to an algebra or to a
class of algebras.

\begin{theorem} Assume that for every atomic representable algebra that is not strongly representable, there exists a graph $\Gamma$ with finite
chromatic number such that $\A\subseteq \M(\Gamma)$ and $\At\A=\rho(\M(\Gamma))$.
Assume also that for every graph $\Gamma$ with $\chi(\Gamma)<\infty$, there exists
$\Gamma_i$ with  $i\in \omega$, such that $\prod_{i\in F}\Gamma_i=\Gamma$, for some non principal ultrafilter $F$.
Then $S_{at}{\sf Up}{\sf SRCA}_n={\sf WRCA_n}.$
\end{theorem}
\begin{proof}
Assume that $\A$ is atomic, representable but not strongly representable. Let $\Gamma$ be a graph with $\chi(\Gamma)<\infty$ such that
$\A\subseteq \M(\Gamma)$ and $\At\A=\rho(\M(\Gamma))$.
Let $\Gamma_i$ be a sequence of graphs each with infinite chromatic number converging to $\Gamma$, that is, their ultraproduct
is $\Gamma$
Let $\A_i=\M(\Gamma_i)$. Then $\A_i\in {\sf SRSA_n}$, and we have:
$$\prod_{i\in \omega}\M(\Gamma_i)=\M(\prod_{i\in \omega} \Gamma_i)=\M(\Gamma).$$
And so $\A\subseteq_{at} \prod_{i\in \omega}\A_i$, and we are done.
\end{proof}

\begin{theorem}\label{r} Let $3\leq n<\omega$. Then the following hold:
\begin{enumroman}
\item Any $\K$ such that $S_c\Nr_n\CA_{\omega}\subseteq \K\subseteq S_c\Nr_n\CA_{n+3}$ is not elementary. 
\item Any $\K$ such that $S_c\Ra\CA_{\omega}\subseteq \K\subseteq S_c\Ra\CA_5$ is not elementary
\item If $n\geq 2$ (infinite included) and $\beta>n$, $\Nr_n\CA_{\beta}$ is not elementary
\item For $n\geq 2$, the inclusions $\Nr_n\CA_{k}\subseteq S_c\Nr_n\CA_{k}$ is proper
\item If there is a $5$ witness, then the inclusion $\Ra\CA_k\subseteq S_c\Ra\CA_k$ is proper for all $k\geq 5$.
\end{enumroman}
\end{theorem}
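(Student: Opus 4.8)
\textbf{Proof proposal for Theorem \ref{r}.}

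The plan is to harvest all five items from constructions already developed in the excerpt, since each is a repackaging of a neat-embedding game together with a rainbow (or Maddux/relation-algebra) witness. For item (i): take $\A=\sf PEA_{\N^{-1},\N}$ from the construction just before Theorem \ref{Robinsexample}. We showed there that \pa\ wins $F^{n+3}(\At\Rd_{sc}\A)$, so by Theorem \ref{thm:n} (contrapositive) $\Rd_{sc}\A\notin S_c\Nr_n\Sc_{n+3}$, hence $\A\notin S_c\Nr_n\CA_{n+3}$; yet \pe\ wins all finite-round games on $\At\A$, so (after an ultrapower followed by an elementary chain argument, exactly as in the proof of the theorem producing a countable algebra in $\Nr_n\PEA_\omega$ elementarily equivalent to $\A$) there is $\A'\equiv\A$ with $\A'\in S_c\Nr_n\CA_\omega$. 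Thus any class $\K$ with $S_c\Nr_n\CA_\omega\subseteq\K\subseteq S_c\Nr_n\CA_{n+3}$ contains $\A'$ but not $\A$, so $\K$ is not closed under $\equiv$, hence not elementary. First I would spell out which ambient signature $\K$ lives in (it suffices to do $\CA$, the others following since $\Rd_{sc}$ of the rainbow algebra already detects the failure, exactly as in the corollary after Theorem \ref{Robinsexample}).

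Item (ii) is the relation-algebra analogue. Here I would invoke Theorem \ref{witness}: assuming a $5$-witness exists (the hypothesis carried by the theorem, via item (v)), there is $\B\in S_c\Ra\CA_\omega$ with $\B\notin {\sf UpUr}\Ra\CA_k$ for $k\geq5$; in particular $\B\notin S_c\Ra\CA_5$ since that class sits inside ${\sf UpUr}\Ra\CA_5$. On the other hand, running the same ultrapower/elementary-chain machinery on the Hirsch--Hodkinson relation rainbow $\R$ of the excerpt (the one for which \pe\ wins all finite-round games but \pa\ wins $F^{6}$, i.e. $\R\notin S_c\Ra\CA_5$) produces a companion $\R'\equiv\R$ with $\R'\in S_c\Ra\CA_\omega$. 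Either of these two algebras, played against the other endpoint, witnesses non-elementarity of any $\K$ between $S_c\Ra\CA_\omega$ and $S_c\Ra\CA_5$. (Strictly, one needs that the rainbow relation algebra with $\N$-greens and $\N$-reds has \pa\ winning $F^6$ on its atom structure and \pe\ winning all finite games; this is the content of lemmas 17,32--36 of \cite{HHbook} cited above, specialised.)

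Items (iii)--(v) are about properness of the neat-reduct versus $S_c$-neat-reduct inclusion. For (iii) and (iv), the key input is Theorem \ref{ef}: it gives an atomic $\A\in\Nr_n\sf QEA_\omega$ elementarily equivalent to an (uncountable, cylindric) strongly representable $\B$ whose $\Sc$ reduct is \emph{not} in $\Nr_n\Sc_4$, with $\At\A\equiv_{\infty,\omega}\At\B$. Since $\Nr_n\CA_\beta$ is a subclass of $S_c\Nr_n\CA_\beta$ and the latter is (by the $L_{\infty,\omega}$-equivalence of atom structures and Theorem \ref{thm:n}) closed under the relevant passage while the former is not --- more precisely $\B\in S_c\Nr_n\CA_\omega$ but $\B\notin\Nr_n\CA_{n+1}$, an argument identical to item (2) of Corollary after \ref{blurs}, or directly from the $\tau_4$-witness forcing a component uncountable --- we get both that $\Nr_n\CA_\beta$ is not elementary (it omits $\B$ but contains $\A\equiv\B$) and that the inclusion $\Nr_n\CA_k\subsetneq S_c\Nr_n\CA_k$ is proper. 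For infinite $n$ one uses instead the uncountable-dimensional splitting result referenced from \cite{IGPL}. Item (v) is then the $\Ra$-reduct mirror of (iv), obtained by applying Theorem \ref{witness} with the $5$-witness hypothesis: the algebra $\B$ there lies in $S_c\Ra\CA_\omega\subseteq S_c\Ra\CA_k$ for all $k$, but is not in $\Ra\CA_k$ for $k\geq5$ (indeed not even in ${\sf UpUr}\Ra\CA_k$), giving proper inclusion. I expect the main obstacle to be item (ii)/(v): one must carefully check that the $5$-witness hypothesis is exactly what makes the relation-algebra interpretation in $\P$ go through (so that the twisted countable component argument of Theorem \ref{witness} applies), and that the ultraproduct/chain construction genuinely lands the companion algebra inside $S_c\Ra\CA_\omega$ rather than merely in some larger approximating class --- the bookkeeping of \pe's strategy surviving in cofinitely many ultrapower components, and then descending to a countable elementary subalgebra, is where the real care is needed; the $\CA$ cases (i), (iii), (iv) are essentially already in hand from the quoted theorems.
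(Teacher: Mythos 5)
Your treatment of items (i), (iii), (iv) and (v) follows essentially the same route as the paper: (i) is the rainbow algebra $\PEA_{\N^{-1},\N}$ together with the ultrapower/elementary-chain argument, (iii) is the splitting construction (with \cite{IGPL} for the infinite-dimensional case), (iv) is witnessed by the algebra $\B$ of theorem \ref{ef} (complete subalgebra of $\A$, hence in $S_c\Nr_n\CA_{\omega}$, but not in $\Nr_n\CA_{n+1}$), and (v) is exactly the paper's ``$\B$ as in \ref{witness} modulo a witness''. These parts are fine.

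The genuine gap is in item (ii). Your primary argument there claims that the algebra $\B$ of theorem \ref{witness} satisfies $\B\notin S_c\Ra\CA_5$ ``since that class sits inside ${\sf UpUr}\Ra\CA_5$''. No such containment holds, and the claim contradicts the very statement of \ref{witness}: that theorem puts $\B\in S_c\Ra\CA_{\omega}\subseteq S_c\Ra\CA_5$, so $\B$ lies \emph{inside} every class $\K$ with $S_c\Ra\CA_{\omega}\subseteq\K\subseteq S_c\Ra\CA_5$ and cannot serve as the algebra outside $\K$; what \ref{witness} excludes is membership in ${\sf UpUr}\Ra\CA_k$, which is incomparable with $S_c\Ra\CA_5$ (indeed the whole point of item (v) is that $\Ra\CA_5\subsetneq S_c\Ra\CA_5$). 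A second problem is that you make (ii) conditional on the existence of a $5$-witness, whereas the statement carries no such hypothesis: only item (v) does. The correct (and unconditional) proof of (ii) is the one you mention only as a fallback: a relation-algebra rainbow on which \pa\ wins the restricted game with few nodes, so the algebra is outside $S_c\Ra\CA_5$, while \pe\ wins all finite-round games, so an ultrapower followed by an elementary chain yields an elementarily equivalent countable completely representable algebra, which lies in $S_c\Ra\CA_{\omega}$. This is precisely the main result of Hirsch \cite{r}, which is what the paper cites for (ii); if you drop the appeal to \ref{witness} and rely solely on that construction, item (ii) goes through without any witness hypothesis.
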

\begin{proof}
\begin{enumarab}
\item From our main theorem
\item The main result in \cite{r}
\item The main result in \cite{IGPL}
\item This inclusion is witnessed by $\B$ in \ref{ef}, since $\B$ is a complete subalgebra of $\A$,
hence is in $S_c\Nr_n\CA_{\omega},$ hence in $S_c\Nr_n\CA_k$ for any finite $k>n$, but it is not
in $\Nr_n\CA_{n+1}$.
\item Witnessed by $\B$ as in \ref{witness} modulo a witness.

\end{enumarab}
\end{proof}

\section{Metalogical application: Omitting types, yet again,  in clique guarded semantics}

One more algebraic result, then we formulate and prove our metalogical results, concerning
two new  omitting types theorems for $L_n$, first order logic restricted to the first $n$ variables, when $n>2$ is
finite. The reader is referred to \cite{ANT} for the definition of $n$ complex blurs for a relation algebra.

\begin{definition}  Let $\R$ be a finite relation algebra with non identity atoms $I$.
Let $U, V, W$ be subsets of $I$. We say that $(U,V,W)$ is safe, in symbols,
${\sf safe}(U,V,W)$ if $a\leq b;c$, for al $a\in U$, $b\in V$, and $c\in W$. Otherwise, the triple is not safe.
\end{definition}

\begin{theorem}\label{blurs}
Let $k\geq 1$ and $m\geq 3$. Let $n=m+k$.
Assume that there exists a finite relation algebra $\R$ with finitely many non identity atoms $I$ with $|I|\geq n^n$,
and let $l\leq 2n-2$, such that the set $J$ of $l$
blurs satisfy the following two properties
\begin{enumarab}
\item $(\forall P\in I)(\forall W\in J)(I\subseteq P;W)$
\item $(\forall P_2,\ldots P_n, q_2,\ldots Q_n\in I)(\forall W\in J)W\cap P_2;Q_n\cap \ldots P_n;Q_n\neq \emptyset$.
\end{enumarab}
Assume that $\R\notin S\Ra\CA_{n+1}$.
Then  there exists an atomic countable polyadic $\A\in \sf \RPEA_m\cap \Nr_m\PEA_{n}$
such that $\Rd_{ca}\Cm\At\A\notin S\Nr_m\CA_{n+1}$.
\end{theorem}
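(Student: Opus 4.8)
The plan is to follow the now-familiar \emph{blow up and blur} methodology of Andr\'eka--N\'emeti \cite{ANT}, but starting from the finite relation algebra $\R$ of the hypothesis rather than a Maddux algebra, and keeping track of exactly how far up the neat reduct hierarchy the argument can be pushed. First I would form the infinite atom structure $\At=\omega\times\At\R\times J$, where $J$ is the set of $l$ blurs, with composition induced by the composition of $\R$ together with a ternary relation $e$ on $\omega$ (definable in $(\omega,<)$) synchronizing which three $\omega$-indexed copies compose like $\R$ does. Conditions (1) and (2) on the blurs are precisely what is needed to define two partitions $P$ and $E$ of $\At$: the first witnesses that $\R$ embeds into $\Cm\At$ (mapping each atom of $\R$ to the join of its $\omega$-many copies), so $\Cm\At$ can only be represented on infinite sets; the second divides $\At$ into finitely many $\omega$-sided rectangles so that the term algebra $\Tm\At$ consists of the sets meeting each rectangle co-finitely, and on the level of $\Tm\At$ the embedding of $\R$ disappears (only finite and cofinite joins survive). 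Since $|I|\ge n^n$ and $l\le 2n-2$, the blurs form an adequate set, and $\Tm\At$ is representable as a relation algebra using exactly the finitely many blurs as the non-principal ultrafilters needed in the representation.

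Next I would pass from relation algebras to $m$-dimensional polyadic equality algebras. The key structural input is that $\At$ (with the blurs available) has an $m$-dimensional hyperbasis --- in fact, since we have $l\le 2n-2\le 2m-2$ blurs in the relevant range and the safety/blur conditions (1),(2) hold, one gets an $m$-dimensional \emph{polyadic} basis, i.e. a basis closed under the substitution operations. Let $\A=\Tm({\sf Mat}_m\At\R)$ be the term algebra over the atom structure of $m$-by-$m$ basic matrices; this is a countable atomic algebra. Using the representability of $\Tm\At$ together with the finitely many blurs, one represents $\A$ as a polyadic equality algebra, so $\A\in\RPEA_m$. To get $\A\in\Nr_m\PEA_n$ one argues exactly as in \cite{ANT} (item (vi) of Theorem \ref{OTT} and the discussion following it): the hyperbasis of $\At\R$ extends to an $n=m+k$ dimensional hyperbasis, and the $m$-dimensional matrices embed as the full neat $m$-reduct of the complex algebra built on the $n$-dimensional hyperbasis; since we only need $k$ extra dimensions and the blur conditions are stated for indices up to $n$, this goes through. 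Here Maddux's combinatorial techniques are invoked to keep $\A$ generated by a single element, though that refinement is not needed for the statement.

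Finally, for the negative half: $\Cm\At\A=\Cm({\sf Mat}_m\At\R)$ has the complex algebra $\Cm\At\R$ coded inside its $\Ra$-reduct (via basic matrices, the relation algebra $\R$, and hence $\Cm\At\R$, sits inside $\Ra\Cm({\sf Mat}_m\At\R)$, cf. the $n$-dimensional basis argument used throughout the paper). Now suppose for contradiction that $\Rd_{ca}\Cm\At\A\in S\Nr_m\CA_{n+1}$, say $\Rd_{ca}\Cm\At\A\subseteq\Nr_m\D$ with $\D\in\CA_{n+1}$. Then its relation algebra reduct embeds into $\Ra$ of a $\CA_{n+1}$, and tracing through the coding, $\R$ itself --- being a subalgebra of $\Cm\At\R$ which embeds into the $\Ra$-reduct --- would land in $S\Ra\CA_{n+1}$, contradicting the hypothesis $\R\notin S\Ra\CA_{n+1}$. (The argument that taking $\Ra$ of a $\CA_{n+1}$ and then passing to $m$-dimensional matrices and back is compatible with $S$ is the standard Hirsch--Hodkinson bookkeeping with $\Ra$ reducts and $n$-dimensional bases, as in \cite{HHbook} \S13; since $\R$ is finite the embeddability of $\R$ into $S\Ra\CA_{n+1}$ is not disturbed.) I expect the main obstacle to be this last step: making precise that a neat embedding of $\Cm\At\A$ into $n+1$ dimensions forces $\R\in S\Ra\CA_{n+1}$, i.e. correctly threading the interplay between the $m$-dimensional cylindric/polyadic basis, the $\Ra$-reduct, and the one extra dimension $k+1=(n+1)-m$ --- one must be careful that the hyperbasis of $\At\R$ only stretches $m$ to $n$, so there is genuinely no room to absorb the $(n+1)$-st dimension, which is exactly what delivers the contradiction.
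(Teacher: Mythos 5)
Your proposal is correct and follows essentially the same route as the paper: blow up and blur $\R$ into an infinite atom structure with the two partitions, represent the term algebra using the finitely many blurs as non-principal ultrafilters, use the blur conditions to obtain an $n$-dimensional symmetric cylindric basis so that, via the Andr\'eka--N\'emeti diagram algebras $\Bb_t(\R,J,E)$, the countable matrix algebra $\A$ is a full neat $m$-reduct of a representable $n$-dimensional algebra, and finally derive $\Rd_{ca}\Cm\At\A\notin S\Nr_m\CA_{n+1}$ from the embedding of $\R$ into the complex algebra of the blown-up atom structure sitting inside the $\Ra$-reduct, contradicting $\R\notin S\Ra\CA_{n+1}$. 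The only caveat is that the $n$-dimensional dilation must be the term-level diagram algebra $\Bb_n(\R,J,E)$ rather than the complex algebra of the hyperbasis, whose neat $m$-reduct would be the (non-representable) completion rather than $\A$ — which is exactly what the argument of \cite{ANT} that you invoke does.
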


\begin{proof}
For the first part let $n=m+k$.
Then we have, by assumption, that $\R\notin S\Ra\CA_{m+k+1}$ \cite{HHbook}.
We proceed like above but more technically. We will blow up and blur $\R$.  Let $I$ be the set of  non identity atoms, then  $|I|\geq 6$.
Let $l\geq 2$ be finite and $J$ be the set of all $l$ element subsets of $I$.
That is $W\in J$ iff $|W|=l$ and $W\subseteq I$.
We assume that $l\geq 2n-1$,  and that  $|\At\A(n)|\geq {}^nn\geq (2n-1)l$.
Let
$$H=\{a_i^{P,W}: i\in \omega, P\in I, W\in J, P\in W\}.$$
For $P\in I$, and $W\in J$, set
$$H^P=\{a_i^{P,W}: i\in \omega, W\in J, P\in W\}.$$
$$E^W=\{a_i^{P,W}: i\in \omega, P\in W\};$$the two partitions.
Let
$$R=\{X: X\cap E^W \text { is finite or cofinite}, \forall W\in J\}.$$
For $i,j,k\in \omega$ $E(i,j,k)$ abbreviates that $i,j,k$ are {\it evenly distributed}, i.e.
$$E(i,j,k)\text { iff } (\exists p,q,r)\{p,q,r\}=\{i,j,k\}, r-q=q-p$$
We sometimes denote this ternary relation by $E$.
We define a relation atom structure on ${\cal F}=H\cup \{Id\}$.
All atoms are self-converse. We define the consistent triples as follows
(Involving identity are as usual $(a, b, Id): a\neq b).$
Let $i,j,k\in \omega$, $P,Q,R\in I$ and $S,Z,W\in J$ such that
$P\in S$, $Q\in Z$ and $R\in W$. Then the triple
$(a_i^{P,S},a_j^{Q,Z}, a_k^{R,W})$ is consistent iff
either
\begin{enumroman}
\item ${\sf safe}(S,Z,W)$
or
\item $E(i,j,k)\&P\leq Q;R.$
\end{enumroman}
Now we have
\begin{enumarab}
\item  $\Cm{\cal F}$ is not in $S\Ra\CA_{m+k+1}$.
\item ${\cal R}$ the term algebra over $\cal F$, has universe $R$, and is representable.
\end{enumarab}
It uses the first partition of $H$ showing
that is $\R$ embeds into $\Cm\cal F$ and we are done because $\R\notin S\Ra\CA_{m+k+1}$, and the latter is a variety, hence
closed under forming subalgebras.

The representability of the term algebra uses coloured graphs whose labels are the et $\Uf$ of all
ultrafilters of $\cal R$  and these actually provided a complete representation of $\Uf^+$ the canonical extension of  $\cal R$.

We have represented the term algebra  using all the non principal ultrafilters as labels,
and indeed we have completely represented its ultrafilter extension whose
set of atom is $\Uf$.

By $l\geq 2n-1$ and $|\At\R|\geq 2(n-1)l$, we have enough $n$ blurs that is,
$(J,E)$, or simple $J$, because we have only one $E$,  is a complex blur for $\R$ \cite{ANT}, in the following sense:
\begin{enumarab}
\item Each element of $J$ is non empty
\item $\bigcup J=I$
\item $(\forall P\in I)(\forall W\in J)(I\subseteq P;W)$
\item $(\forall V_1,\ldots V_n, w_2,\ldots W_n\in J)(\exists T\in J)(\forall 2\leq i\leq n)
{\sf safe}(V_i,W_i,T)$
\item $(\forall P_2,\ldots P_n, q_2,\ldots Q_n\in I)(\forall W\in J)W\cap P_2;Q_n\cap \ldots P_n;Q_n\neq \emptyset$.
\end{enumarab}

(1) and (2) are easy. (3) and (5) are given. We check (4).
We start by $(4)$. Let $V_1,\ldots V_n, W_2,\ldots W_n\in J$.
Then $U=\bigcup \{V_i\cup W_i: 2\leq i\leq n\}$ has cardinality at
most $(2n-2)l$, hence the cardinality of $I-U$ is $\geq k-(2n-2)l$, and ${\sf safe}(V_i,W_i,T)$
because $V_i\cap W_i\cap T=\emptyset$ for all $2\leq i\leq n$.

Now we check the final condition. Let $P_2,\ldots P_n, Q_2,\ldots Q_n\in I$
be arbitrary, then $U=\{P_2,\ldots Q_n\}$ has cardinality $\leq 2n-2$, and
so each $W\in J$ contains an $S\notin U$, by $l\geq 2n-1$, and $S\leq P_2;Q_2\ldots P_n;Q_n$ by the
definition of composition.

Now by these properties we can infer exactly like \cite{ANT}, that
$\cal R$ has a cylindric basis of dimension $n$; that is also symmetric.
Let $\B_n$ be the set of basic matrices of our blown up and blurred ${\cal R}$, and consider it as a polyadic equality algebra atom structure
defining accessibility relations corresponding to transpositions the obvious way.

In the first order language $L$ of $(\omega, <)$, which has quantifier elimination,
we define diagrams for each $K\subseteq n$ and $\phi\in L$ as in \cite{ANT}, via maps  $\bold e:K\times K\to {\cal R}$.
For an atom let $v(a)$ be its $i$ th co-ordinate.
The pair $\bold e$ and $\phi$ defines an element in the polyadic complex algebra $\Cm\B_n$,
called a diagram, that is a set of matrices, defined by
$$M(\bold e, \phi)=\{m\in B_n, i,j\in K, m_{ij}\leq \phi(\bold e_{ij},v(m_{ij}))\}.$$
A normal diagram is one whose entries are either atoms or finitely many blurs, that is elements of the form $E^W$,
in addition to the condition that $\phi$ implies $\phi_E$, where the latter is the first order formula defining
$e$. Any diagram can be approximated by normal ones; and actually it is a finite union of normal diagrams \cite{ANT}.
The  polyadic equality algebra, denoted by $\Bb_n(\R, J, E)$, consists of those diagrams,
is representable, and have for $t<n$
$$\Nr_t \Bb_n(\R, J, e)\cong \Bb_t(\R, J, e),$$
via $\Psi$ say.
In other words, $\cal R$ has an $n$ dimensional hyperbasis, and the above algebra contains the term algebra.
So we have
$\Tm{\sf Mat}_{n} {\cal R}\in \RCA_{n}$ and $\Tm{\sf Mat}_m{\cal R}\cong \Nr_m\Tm{\sf Mat}_{m+k}{\cal R}$, via the restriction of $\Psi$.
Finally, we have  $\Cm{\sf Mat_m{\cal R}}$ is not in
$S\Nr_m\PEA_{m+k+1}$, for if it were then ${\cal R}$ embeds into $\Ra\Cm{\sf Mat_m{\cal R}}$ which
will imply that  ${\cal R}\in S\Ra\CA_{m+k+1}$, and this is not possible.

By varying the relation $E$ like in \cite{ANT}, making it depending on the blurs,
this complex algebra, witnessing that $S\Nr_n\PEA_{m+k+1}$ is
not closed under completions,
can be chosen to be generated by a single element.

\end{proof}
\begin{definition}

We refer the reader to \cite{HHbook} for the notion of a relativized representation.
$M$ is a relativized representation of an abstract algebra $\A$, if there exists $V\subseteq {}^nM$, and a an injective
homomorphism $f:\A\to \wp(V)$. We write $M\models 1(\bar{s})$, if $\bar{s}\in V$.

\begin{enumarab}
\item  Let $M$ be a relativized representation of a $\CA_m$.
A clique in $M$ is a subset $C$ of $M$ such that $M\models 1(\bar{s})$ for all $\bar{s}\in {}^mC$.
For $n>m$, let $C^{n}(M)=\{\bar{a}\in {}^nM: \text { $\rng(\bar{a})$ is a clique in $M$}\}.$
\item Let $\A\in \CA_m$, and $M$ be a relativized representation of $\A$. $M$ is said to be $n$ square, $n>m$,
if whenever $\bar{s}\in C^n(M)$, $a\in A$, and $M\models {\sf c}_ia(\bar{s})$,
then there is a $t\in C^n(M)$ with $\bar{t}\equiv _i \bar{s}$,
and $M\models a(\bar{t})$.
\item  $M$ is infinitary $n$ flat if  for all $\phi\in L(A)_{\infty, \omega}^n$, for all $\bar{a}\in C^n(M)$, for all $i,j<n$, we have
$$M\models \exists x_i\exists x_j\phi\longleftarrow \exists x_j\exists x_i\phi(\bar{a}).$$
$M$ is just $n$ flat, if the above holds for first order formulas using $n$ variables,
namely, for $\phi\in L(A)^n$.
\item $M$ is said to be $n$ smooth if it is $n$ square, and there is are equivalence relations $E^t$, for $t=1,\ldots, n$
on $C^{t}(M)$ such that  $\Theta=\{(\bar{x}\to \bar{y}): (\bar{x}, \bar{y})\in \bigcup_{1\leq t\leq n} E^t\}$
(here   $\bar{x}\to \bar{y}$ is the map $x_i\mapsto y_i$)
is an $n$ back and for system of partial isomorphisms on $M$.
\end{enumarab}
\end{definition}

Let $3\leq m<n$. Let $\sf CRA_{m,n}$ be the class of algebras that have $n$ smooth complete representations.
It can be proved that for $m < k<l$, both $>m$, that  $\sf CRA_{m,l}\neq \sf CRA_{m,k}$,
and that they are all non elementary. The latter can be partially obtained from our first algebraic result in one go,
by observing that for $n\geq m+3$, $\sf CRA_{n,m}\subseteq S_c\Nr_n\CA_m$.
(This is proved exactly like the $\Ra$ case, \cite{HHbook}).

\begin{corollary} The classes $\sf CRA_{m,n}$ for $n\geq m+3$ are not elementary
\end{corollary}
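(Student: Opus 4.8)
The plan is to reduce the corollary to the algebraic non-elementarity statement in item (i) of Theorem~\ref{r}, namely that any class $\K$ with $S_c\Nr_n\CA_{\omega}\subseteq \K\subseteq S_c\Nr_n\CA_{n+3}$ is not elementary, and to the non-elementarity of $\sf CRA_n$ witnessed by $\PEA_{\N^{-1},\N}$. First I would make precise the bridge between the semantic notion of an $n$ smooth complete representation and the algebraic neat-embedding hierarchy: for $3\le m<n$ with $n\ge m+3$, I claim $\sf CRA_{m,n}\subseteq S_c\Nr_m\CA_n$. The argument mirrors the $\Ra$-reduct case in \cite{HHbook}: given an $n$ smooth complete representation $M$ of $\A\in\CA_m$, one builds from the clique-guarded fragment the algebra $\D$ with universe $\{\phi^M:\phi\in L(A)^n_{\infty,\omega}\}$ with the $n$-ary cylindric-like operations (exactly as done in the proof of Corollary~\ref{can}), shows $\D\in\CA_n$ using $n$ flatness, and checks that $r\mapsto r(\bar x)$ gives a \emph{complete} embedding of $\A$ into $\Nr_m\D$ because $M$ is a \emph{complete} relativized representation; the $n$ back-and-forth system coming from the smoothness relations $E^t$ is what guarantees the clique-guarded semantics is well behaved enough for this to go through.

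Next I would invoke the witness algebra. Take $\A=\PEA_{\N^{-1},\N}$ (or its $\CA$ reduct); by the results above, \pe\ can win every finite-round atomic game on $\At\A$, so passing to a non-principal ultrapower followed by an elementary chain argument produces a countable $\A'\equiv\A$ that is completely representable, hence lies in $S_c\Nr_m\CA_{\omega}\subseteq S_c\Nr_m\CA_n$, and moreover $\A'$ has an $\omega$ smooth (indeed genuine) complete representation, so $\A'\in\sf CRA_{m,n}$ for all $n>m$. On the other hand \pa\ wins $F^{m+3}(\At\Rd_{sc}\A)$, so $\Rd_{sc}\A\notin S_c\Nr_m\Sc_{m+3}$, and a fortiori $\A\notin S_c\Nr_m\CA_n$ for $n\ge m+3$; by the inclusion $\sf CRA_{m,n}\subseteq S_c\Nr_m\CA_n$ just established this gives $\A\notin\sf CRA_{m,n}$. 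Since $\A\equiv\A'$ with $\A'\in\sf CRA_{m,n}$ and $\A\notin\sf CRA_{m,n}$, the class $\sf CRA_{m,n}$ is not closed under elementary equivalence, hence not elementary, for every $n\ge m+3$.

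The main obstacle I expect is the careful verification of the inclusion $\sf CRA_{m,n}\subseteq S_c\Nr_m\CA_n$ in the \emph{clique-guarded} (relativized) setting: one must confirm that $n$ squareness plus $n$ flatness make $\D$ a genuine $\CA_n$ (commutativity of cylindrifiers is exactly where $n$ flatness is used), that the short/long distinction is irrelevant here since we are only asking for $S_c$ and not a full neat reduct, and — most delicately — that the embedding $\A\to\Nr_m\D$ is \emph{complete}, which needs the completeness of the representation $M$ together with the fact, used in Corollary~\ref{can}, that for a complete $M$ and any clique $\bar a$ there is an atom $\alpha$ with $M\models\alpha(\bar a)$. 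One also has to double-check that the smoothness equivalence relations are preserved under the constructions so that the back-and-forth system survives; this is routine but technical. Everything else — the ultrapower/elementary-chain construction, the \ws\ for \pa\ in $F^{m+3}$, and the transfer of non-closure under $\equiv$ to non-elementarity — is already available from the earlier sections, so the corollary follows once the bridge lemma is in place.
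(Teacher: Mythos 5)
Your proposal is correct and follows essentially the paper's own route: the paper proves the corollary in one line by sandwiching $S_c\Nr_m\CA_{\omega}\subseteq {\sf CRA}_{m,n}\subseteq S_c\Nr_m\CA_{m+3}$ (the right-hand inclusion "proved exactly like the $\Ra$ case" via the clique-guarded algebra $\D$) and citing the rainbow result that any class so sandwiched is non-elementary, which is precisely the $\PEA_{\N^{-1},\N}$ ultrapower/elementary-chain argument you spell out. You have merely unfolded the cited bridge lemma and witness construction rather than taking a different path.
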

\begin{proof} Since $S_c\Nr_m\CA_{\omega}\subseteq {\sf CRA_{m,n}}\subseteq S_c\Nr_m\CA_{m+3}$.
\end{proof}

We now give what we think is an interesting metalogical consequence of our algebraic result concerning omitting types.
The omitting types theorem has been proved
to fail for finite variable fragments, in a quite strong sense \cite{ANT}, when we consider classical semantics.

Here we show that it also fails for $L_n$, first order logic restricted to $n$ variables when $n>2$,
when we consider relativized semantics, so it fails in also a strong sense but in another way. For a countable
atomic \cite{ANT} theory, there may not be an $n+3$ relativized flat atomic model.
In other words, we prove the results in the abstract.

\begin{corollary} The omitting types theorem fails in even in the clique guarded semantics of first order logic.
In more detail, there is a countable theory $T$
and non principal $\Gamma$, such that no $n+3$ relativized flat model omits it.
\end{corollary}
\begin{proof} Let $\A=\sf PEA_{\Z,\N}$. Then $\A$ is atomic. Assume that $\A=\Fm_T$. Let $\Gamma$ be the set of co-atoms, then
$\prod \Gamma=0$. Assume, for contradiction,  that $T$ has an $n+3$ model $\M$ omitting $\Gamma$.
Let $L(A)$ denotes the signature that contains
an $n$ ary predicate for every $a\in A$. For $\phi\in L(A)_{\omega,\infty}^n$,
let $\phi^{\M}=\{\bar{a}\in C^{n+3}(\M): \M\models \phi(\bar{a})\}$, and let $\D$ be the algebra with universe $\{\phi^{\M}: \phi\in L\}$ with usual
Boolean operations, cylindrifiers and diagonal elements, cf. theorem 13.20 in \cite{HHbook}. The polyadic operations are defined
by swapping variables.
This is a $\sf PEA_n$; here semantics is defined as expected in the clique guarded fragment of first order logic.
Define $\D_0$ be the algebra consisting of those $\phi^{\M}$ where $\phi$ comes from $L$.
Then $\D_0$ is also a $\sf PEA_n$ and $\A$ embeds into the $n$ neat reduct of both \cite{HHbook}.
If $\M$ is complete, then the embedding is also complete, but this is impossible for it will imply that $\A\in S_c\Nr_n\PEA_{n+3}$.
\end{proof}

The classical Orey-Henkin omitting types theorem, or rather the contrapositive thereof, says that if
$\Gamma$ is a type that is realized in every model of a
countable theory $T$, then it is necessarily isolated by a formula.
We call this isolating formula an {\it $m$ witness},
if $m$ is the number of variables occurring in this formula.

\begin{theorem}
There is a countable theory $T$ a type realized in every $n+k+1$ smooth model, but there is no $n+k$ witness.
\end{theorem}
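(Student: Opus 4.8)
The plan is to derive this metalogical statement directly from the algebraic result in Theorem~\ref{blurs}, exactly in the spirit of the corollary immediately preceding. First I would invoke Theorem~\ref{blurs}: fix finite $n\geq 3$ and $k\geq 0$, and (under the hypothesis on the existence of an appropriate finite relation algebra $\R$ with an adequate $l$-element set of blurs, $l\leq 2n-2$, with $\R\notin S\Ra\CA_{n+k+1}$) obtain a countable atomic polyadic equality algebra $\A\in \sf RPEA_n\cap \Nr_n\PEA_{n+k}$ such that $\Rd_{ca}\Cm\At\A\notin S\Nr_n\CA_{n+k+1}$. Since $\A$ is representable and countable, the standard translation from algebras to theories (as used in the preceding corollary and in Theorem~\ref{finite}) gives a countable $L_n$ theory $T$ with $\Fm_T\cong\A$. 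Let $\Gamma$ be the set of co-atoms of $\A$, viewed as a set of $L_n$-formulas; because $\A$ is atomic, $\prod\Gamma=0$, so $\Gamma$ is a non-isolated (non-principal) type over $T$.

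Next I would show $\Gamma$ is realized in every $n+k+1$ smooth model of $T$. Suppose $\M$ is an $n+k+1$ smooth model of $T$ that omits $\Gamma$. Form the clique-guarded set algebra $\D$ with universe $\{\phi^{\M}:\phi\in L(A)^n_{\infty,\omega}\}$ over $C^{n+k+1}(\M)$, and its subalgebra $\D_0$ built from finitary $L(A)^n$-formulas, exactly as in theorem~13.20 of \cite{HHbook} and as carried out in Corollary~\ref{can} above. Then $\A$ embeds into $\Nr_n\D_0$ and into $\Nr_n\D$, and since smoothness gives the required back-and-forth system, $\D$ (hence $\D_0$) is a genuine $\sf CA_{n+k+1}$ (resp. $\sf PEA_{n+k+1}$) with commuting cylindrifiers; here I use that an $n+k+1$ smooth representation is in particular $n+k+1$ flat. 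The crucial point is that omitting $\Gamma$ forces the embedding $\A\hookrightarrow\Nr_n\D$ to be \emph{complete}: if $\phi^{\M}\neq 0$, pick a clique $\bar a\in\phi^{\M}$; since $\M$ omits $\Gamma$, $\bar a$ fails some co-atom, i.e. lies below some atom $\alpha$ of $\A$, so $\alpha\cdot\phi^{\M}\neq0$. Thus $\Cm\At\A\subseteq_c\Nr_n\D$, giving $\Cm\At\A\in S_c\Nr_n\CA_{n+k+1}\subseteq S\Nr_n\CA_{n+k+1}$ — contradicting Theorem~\ref{blurs}. Hence no $n+k+1$ smooth model omits $\Gamma$, so $\Gamma$ is realized in all of them.

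Finally, the non-existence of an $n+k$ witness: if some $L_n$-formula $\psi$ using only $n+k$ variables isolated $\Gamma$ over $T$ — in the sense appropriate to the translation, where $\psi$ corresponds to a nonzero element $p$ of a $k$-dilation $\B\in\PEA_{n+k}$ with $\A=\Nr_n\B$ and $p$ lies below every atom of $\A$ — then $p$ would be an atom of $\B$ below $\sum\At\A=1$, forcing $\A$ (equivalently $\Cm\At\A$, which by atomicity and $\A\in\Nr_n\PEA_{n+k}$ coincides with the relevant neat reduct here) to be completely representable via a representation of $\B$ restricted to $n$ dimensions, again contradicting $\Rd_{ca}\Cm\At\A\notin S\Nr_n\CA_{n+k+1}$. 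I expect the main obstacle to be the bookkeeping in this last step: carefully matching the syntactic notion of ``an $m$ witness'' in an $L_n$-theory to the algebraic statement about elements of dilations $\PEA_{n+k}$ and ensuring that an $n+k$-variable witness really does produce a \emph{complete} representation in $n$ dimensions (the reasoning that a cylindrifier-free ``rectangular'' witness, as in Theorem~\ref{Robinsexample}, yields a suprema-preserving representation). Everything else is a routine assembly of Theorem~\ref{blurs}, the clique-guarded semantics of \cite{HHbook}, and the algebra-to-logic dictionary already used repeatedly in the paper.
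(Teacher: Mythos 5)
Your first half is essentially the paper's argument, and in fact you state it more carefully than the paper does: omitting the co-atom type in an $n+k+1$ smooth model yields a complete embedding of $\A$ into $\Nr_n\D$ for a complete $\D\in \CA_{n+k+1}$ built from the clique-guarded $L_{\infty,\omega}$ semantics, whence $\Cm\At\A\in S\Nr_n\CA_{n+k+1}$, contradicting Theorem \ref{blurs}; the paper's own phrase ``$\A$ is not in $S\Nr_n\CA_{n+k+1}$'' must indeed be read as referring to $\Cm\At\A$, since $\A$ itself is representable, and your route through the completeness of the embedding is the correct way to make that step precise.

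The second half, however, has a genuine gap. First, a witness $\psi$ for the co-atom type corresponds to an element $p$ with $p\cdot\alpha=0$ for every atom $\alpha$ of $\A$ (i.e. $p$ lies below every co-atom), not to an element lying below every atom as you wrote; with at least two atoms the latter forces $p=0$ outright, so your hypothesis is not the right algebraic rendering of isolation. More seriously, the ensuing chain --- ``$p$ would be an atom of $\B$'', hence $\A$ (or $\Cm\At\A$) would be completely representable by restricting a representation of $\B$, contradicting $\Rd_{ca}\Cm\At\A\notin S\Nr_n\CA_{n+k+1}$ --- contains non sequiturs: nothing makes $p$ an atom of the dilation, a witness does not yield a complete representation, and complete representability of $\Cm\At\A$ is in any case not what Theorem \ref{blurs} refutes. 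The step you dismiss as ``bookkeeping'' is where the actual (and much simpler) argument lives, and it does not involve complete representations at all: since $\A$ is a \emph{full} neat reduct, $\A\cong\Nr_n\B$ with $\B\in\PEA_{n+k}$, any formula $\psi$ in $n+k$ variables with free variables among the first $n$ is interpreted by an $(n+k)$-dimensional term applied to elements of $\A$, so its value lies in $\Nr_n\B=\A$ (taking the representation of $\A$, which is simple, to be compatible with the dilation, e.g. the restriction of a representation of $\B$, so that $\psi^{\M}$ really is the image of an element of $\A$). If $\psi$ is consistent with $T$ this element is nonzero, and atomicity of $\A$ gives an atom $\alpha$ below it; hence $T\not\models\psi\to\neg\chi_\alpha$, so $\psi$ cannot isolate the co-atom type. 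This direct use of fullness of the neat reduct plus atomicity is the paper's proof and should replace your entire last step.
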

\begin{proof}  We use theorem \ref{blurs}. Let $\Gamma$ be the set of atoms. Then we claim
that $\Gamma$ is realized in all $n+k+1$
models. Towards proving this claim consider a model $\M$ of $T$. If  $\Gamma$ is not realized in $\M$,
then this gives an $n+k+1$ complete representation of $\A=\Fm_T$,
which is impossible,  because $\A$ is not in $S\Nr_n\CA_{n+k+1}$.

Assume that $\phi\in L_{k+n}$, we show that $\phi$ cannot be a $n+k$ witness.
Let $\A\in \RCA_n\cap \Nr_n\CA_{n+k}$ be the above algebra. Then $\A$ is simple, and so we can assume
without loss of generality, that it is set algebra with a countable
base. Let $\M=(M,R)$  be the corresponding model to this set algebra in the sense of \cite{HMT2} sec 4.3.
Then $\M\models T$ and $\phi^{\M}\in \A$.
But $T\models \exists x\phi$, hence $\phi^{\M}\neq 0,$
from which it follows that  $\phi^{\M}$ must intersect an atom $\alpha\in \A$ (recall that the latter is atomic).
Let $\psi$ be the formula, such that $\psi^{\M}=\alpha$. Then it cannot
be the case that
that $T\models \phi\to \neg \psi$,
hence $\phi$ is not a $k$ witness,
and we are done.
\end{proof}

The notion of relativized representations is important in algebraic logic, as indicated in the introduction..
$M$ is a relativized representation of an abstract algebra $\A$, if there exists $V\subseteq {}^nM$, and an injective
homomorphism $f:\A\to \wp(V)$. We write $M\models 1(\bar{s})$, if $\bar{s}\in V$.

${\sf CRA_{m,\kappa}}$, denotes the class of algebras having
$\kappa$ {\it complete} smooth relativized representations, where a $\kappa$ representation of an atomic
$\A$  is complete if it is atomic, namely, when $\bigcup_{x\in \At\A}f(x)=V,$ where $f$ is the representation.
Our next theorem says that the notion of $\omega$
complete representation is a notion strictly
weaker than complete representation, but this can be only witnessed on uncountable algebras.

\begin{theorem}\label{longer} Regardless of cardinalities, $\A\in {\sf CRA}_{m, \omega}$
iff \pe\ has a \ws\ in $G_{\omega}^{\omega}$.
\end{theorem}
\begin{proof} One side is obvious. Now assume that \pe\ has a \ws\ in the $\omega$ rounded game, using $\omega$ many pebbles.
We need to build an $\omega$ relativized complete representation.
The proof goes as follows. First the atomic networks are finite, so we need to convert them into $\omega$ dimensional
atomic networks. For a network $N$, and  a map $v:\omega\to N$,
let $Nv$ be the network  induced by $v$, that is $Nv(\bar{s})=N(v\circ \bar{s})$.
let $J$ be the set of all such $Nv$, where $N$ occurs in some play
of $G_{\omega}^{\omega}(\A)$ in which \pe\ uses his \ws\ and $v:\omega\to N$ (so via these maps we are climbing up $\omega$).

This  can be checked to be  an $\omega$ dimensional hyperbasis (extended to the cylindric case the obvious way).
So $\A\in S_c\Nr_n\CA_{\omega}$. We can use that the basis consists of $\omega$ dimensional atomic networks, such that for
each such network, there is a finite bound on the size of its strict networks.
Then a complete $\omega$ relativized representation  can be obtained in a step by step way, requiring inductively
in step $t$, there for any finite clique $C$ of $M_t$, $|C|<\omega$, there is
a network in the base, and an embedding $v:N\to M_t$ such that $\rng v\subseteq C$.
Here we consider finite sequences of arbitrarily large length, rather than fixed length $n$ tuples.
This  is because an $\omega$ relativized representation only requires cylindrifier witnesses over
finite sized cliques, not necessarily cliques that are uniformly bounded.
\end{proof}

\begin{theorem}\label{rel}${\sf CRA_m}\subset {\sf CRA}_{m,\omega}$, the strict inclusion can be only witnessed on
uncountable algebras. Furthermore, ${\sf CRA}_{m,\omega}$ is not elementary.
The classes $S_c\Nr_m\CA_{\omega}$, ${\sf CRA}_m$, and ${\sf CRA}_{m, \omega}$,
coincide on atomic countable
algebras. Uncountable algebras can be characterized by transfinite games \cite{HHbook2} theorem 3.3.3.
\end{theorem}
\begin{proof}That ${\sf CRAS}_{m,\omega}$ is not elementary is witnessed by the rainbow algebra
$\PEA_{K_{\omega}, K}$, where the latter
is a disjoint union of $\K_n$, $n\in \omega$. \pe\ has a \ws for all finite length games, but \pa\ can win the infinite rounded game.
Hence \pe\ can win the transfinite game on an uncountable non-trivial ultrapower of $\A$,
and using elementary chains one can find an elementary countable
subalgebra $\B$ of this ultrapower such that \pe\ has a \ws\ in the $\omega$ rounded game.
This $\B$ will have an $\omega$ square representation
hence will be in $\CRA_{m,\omega}$, and $\A$ is not in the latter class.

Now consider $\A=\PEA_{\omega_1, \omega}$. Then clearly \pe\ has a \ws\ in $G_{\omega}^{\omega}$, the usual atomic game
with $\omega$ rounds and $\omega$ nodes, cf. \cite{HHbook2},
and so $\A\in {\sf \CRA}_{m,\omega}$.  Assume for contradiction that  $M$ is a complete representation of $\Rd_{df}\A$.
Then in $\A$ there will be an uncountable red clique forced by the tints, that is the greens,
$\{\g_0^i: i<\omega_1\}$ of cones inducing the same order on a face,
and any edge within the clique is labelled by $\r_{ij}$ for some distinct
$i,j$ so the red indices must match which is impossible, because the reds are countable and the greens are uncountable.
Notice that this proof can show
that ${\sf CRA}_{m, \lambda}\subset {\sf CRA}_{m,\kappa}$ by $\CA_{\kappa, \lambda}$. (We will show in a while that
$\CA_{\omega_1, \omega}\in \Nr_n\CA_{\omega}$.)
The last statement follows from \cite{HHbook2}.
It is is known that the completely representable algebras coincide with atomic algebras in $S_c\Nr_n\CA_{\omega}$ on atomic algebras.
But is also clear that for a countable atomic algebra an $\omega$ complete representation
is equivalent to an ordinary complete representation because \pe\ can schedule all possible moves
by \pa\ , and the $\omega$ complete representations guides her to win the
game.
\end{proof}

\begin{theorem}  If $\A\in \CA_m$ is finite, and has an $n$ square relativized representation, then it has a finite $n$ square relativized
representation. If $n\geq m+3$,
this is not true for $n$ smooth relativized representations. Same for ${\sf PEA_m}$.
\end{theorem}
\begin{proof} The first follows from the fact that square representations can be coded in the clique guarded fragment of first order logic,
and indeed in the loosely guarded fragment of first order logic which
has the finite base property.
The second follows from the fact that  the problem of deciding whether a finite $\CA_m$ is in
$S\Nr_m\CA_n$, when $n\geq m+3$, is undecidable,
from which one can conclude that there are finite algebras in $S\Nr_m\CA_n$
that do not have a finite $n$ dimensional hyperbasis, and
these cannot possibly have finite representations.
\end{proof}

For any cardinal $\kappa$, $K_{\kappa}$ will denote the complete irreflexive graph with $\kappa$ nodes.
Let $p<\omega$, and $I$ a linearly irreflexive ordered set, viewed as model to a signature containg a binary relation $<$.
$M[p,I]$ is the disjoint union of $I$ and the complete graph $K_p$ with $p$ nodes.
$<$ is interpreted in this structure as follows $<^{I}\cup <{}^{K_p}\cup I\times K_p)\cup (K_p\times I)$
where the order on $K_p$ is the edge relation. Here dimensions are denoted by $m$ and we reserve $n$ for nodes.

In our next remark, rainbows and Monk like algebras can do the same thing again.
\begin{remark}

\item One can show that ${\sf CRA_{m,n}}$ is not elementary
by using $A=M[n-4, \Z]$ and $B=M[n-4, \N]$. \pe\ has a \ws\ for all finite rounded graph  (with $n$ nodes) on $\A=\PEA_{A,B}$,
hence she has a \ws\ in the $\omega$ rounded game
on the ultrapower, from which an elementary countable subalgebra can be extracted in which \pe\
still has a \ws\ and this algebra has an $n$ smooth relativized representation.

But it can also be shown that \pa\ can  the $\omega$ rounded game, also with $n$ nodes,
hence $\A$ does not have an $n$ complete relativized representation,
but it is elementary equivalent to one that does.

\item Alternatively one can prove non elementarity by using Monk-like algebras.
Let $\A(n)$ be an infinite atomic atomic relation algebra;
the atoms of $\A(n)$ are $Id$ and $a^k(i,j)$ for each $i<n-2$, $j<\omega$ $k<\omega_1$.
Then for any $r\geq 1$, $\A(n-1,r)$ embeds completely in $\A(n-1)$ the obvious way,
hence, the latter has no $n$ dimensional hyperbasis.
One then proves that $\A(n)$ has an $m$ dimensional hyperbasis for each $m<n-1$, by proving that \pe\ has
\ws\ in the hyperbasis game $G_r^{m,n}(\A(n), \omega)$, for any $r<\omega$, that is, for any finite rounded game.

Let $\C=\Ca(H_m^n(\A(n),\omega)$. Write $T$ for the game $G_r^{m,n}$
and consider $\M=\M(\A(n), m, n, \omega, \C)$ as a
$5$ sorted structure with sorts $\A(n), \omega, H_m^n(\A, \omega)$ and $\C$.
Then one can show that \pe\ has a \ws\ in $G(T\upharpoonright 1+2r, \M)$.

\pe\ has a \ws\ in $G(T|r, \M)$ for all finite $r>0$. So \pe\ has a \ws\ in $G(T, \prod_D \M)$.
Hence there is a countable elementary subalgebra $\E$ of $\prod_D\M$ such
that \pe\ has a \ws\ in $G(T,\E)$ Hence $\E$ has the form $\M(\B, m, n, \Lambda, \A)$  for some atomic $\B\in \RA$ countable set
$\Lambda$ and countable atomic $m$ dimensional $\A$ such
that $\At\A\cong \At\Ca(H_m^n(\B, \Lambda))$.
Furthermore, we have  $\B\prec \prod _D\A(n)$  and $\A\prec \prod_D\C$.
Thus \pe\ has a \ws\ in $G(T, \M, m, n, \Lambda \C)$
and she also has a \ws\ in $G_{\omega}^{m,n}(\B,\Lambda)$.

So $\B\in S\Ra\CA_n$ and $\A$ embeds into $\Ca(H_m^n(\B, \Lambda)\in \Nr_m\CA_n$. For
the latter let, let $H=H_{n-1}^n(\B, r)$ , then $H$ is a wide $n-1$
$\omega$ hyperbasis and $\Ca(H)\in \CA_n$, and
$\C=\Ca(H_m^n(\A, \omega)=\Ca(H|m^{n})\cong \Nr_m\Ca H$.
But $\A$ embeds into $\Ca(H_m^n(\B, \Lambda)$
and we are done.
In fact, one can show that both $\B$ and $\D$ are actually representable, by finding a representation of
$\prod \A(n)/F$ (or an elementary countable subalgebra of it it) embedding every
$m$ hypernetwork in such a way so that
the embedding respects $\equiv_i$ for every $i<m$, but we do not need that much.
\end{remark}

We end the paper with the following two corollaries. It is true for $k\geq 4$,
and conditionally true for $k=1,2,3$,  given that certain finite relation algebras (depending on $k$)
exist.
More precisely:
\begin{corollary} Let $k\geq 1$. Let $\K\in \{ S\Nr_n{\sf L}_{n+k}\}$ with $\sf L\in\{\Sc, \CA, \PA, \PEA\}$.
Assume the existence of a finite relation algebra $\R$ as in the hypothesis of theorem \ref{blurs}.
Then the following hold:
\begin{enumarab}
\item $\K$ is not atom canonical; even more there exists a countable atomic $\A\PEA_n$, such that $\Rd_{df}\Cm\At\A$ is not representable
and  $\Rd_{Sc}\A\notin  S\Nr_n\Sc_{n+k}.$
\item $\K$ is not closed under \d\ completions. There exists $\A\in \PEA_n$ such that the $\Sc$ reduct of its completion is not in $S\Nr_n\Sc_{n+k}$
\item There exists an algebra not in $\K$ with a dense representable algebra.
\item There exists an atomic countable representable a
algebra with no $n+k$ smooth flat representation.
\end{enumarab}
\end{corollary}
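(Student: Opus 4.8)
The plan is to derive all four items uniformly from the algebraic construction of Theorem~\ref{blurs}, exactly as the earlier sections derive non-atom-canonicity and failure of omitting types from a single blow-up-and-blur machine. First I would fix $k\geq 1$ and, by hypothesis, a finite relation algebra $\R$ with $|I|\geq n^n$ non-identity atoms carrying an adequate set of $l\leq 2n-2$ blurs $J$ (with $\R\notin S\Ra\CA_{n+1}$, replacing $n$ there by $n+k$ as in the proof of Theorem~\ref{blurs}). Applying Theorem~\ref{blurs} with $m=n$ and dimension $n+k$ yields a countable atomic $\A\in\sf{RPEA}_n\cap\Nr_n\PEA_{n+k}$ whose completion satisfies $\Rd_{ca}\Cm\At\A\notin S\Nr_n\CA_{n+1}\supseteq S\Nr_n\CA_{n+k}$. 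Item~(2) is then immediate: $\Cm\At\A$ is the Dedekind--MacNeille completion of $\A$ (since $\A$, being a polyadic equality algebra, is completely additive, cf. the discussion after Theorem~\ref{ef}), and $\A\in\sf{RCA}_n=\bigcap_j S\Nr_n\CA_{n+j}\subseteq\K$ while $\Rd_{sc}\Cm\At\A\notin S\Nr_n\Sc_{n+k}$ because $S\Nr_n\Sc_{n+k}\supseteq\Rd_{sc}S\Nr_n\CA_{n+k}$ and the latter would give a neat embedding contradicting $\R\notin S\Ra\CA_{n+k+1}$. So the $\Sc$-reduct of the completion escapes $\K$, giving (2); item~(3) follows since $\A$ is dense in its completion $\Cm\At\A\notin\K$, exhibiting a non-$\K$ algebra with a (representable, even) dense subalgebra.

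For item~(1), I would note that the very same $\Cm\At\A$ witnesses non-atom-canonicity: $\At\A$ is a representable (indeed weakly representable) atom structure since $\Tm\At\A\subseteq\A$ is representable, yet $\Cm\At\A$ has a non-representable $\Sc$-reduct, and $a$ fortiori its $\sf{Df}$-reduct is non-representable by the standard argument used in Corollary~\ref{can} and Theorem~\ref{smooth}: a representation of $\Rd_{df}\Cm\At\A$ would, because $\Cm\At\A$ is generated by elements of dimension $<n$, lift to a representation of $\Cm\At\A$, hence of its $\Sc$-reduct, contradiction. Thus the variety $S\Nr_n\Sc_{n+k}$ (and each of the richer varieties $S\Nr_n\K_{n+k}$) is not atom-canonical. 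Item~(4) is the relativized-representation reading: by Theorem~\ref{blurs}, $\Rd_{ca}\Cm\At\A\notin S\Nr_n\CA_{n+k}$, and by the hyperbasis/smooth-representation correspondence recalled before Corollary~\ref{can} (and in Theorem~\ref{smooth}), any atomic algebra possessing an $n+k$ smooth flat complete representation would lie in $S_c\Nr_n\CA_{n+k}$, hence its completion would lie in $S\Nr_n\CA_{n+k}$; so $\A$ is a countable atomic representable algebra with no $n+k$ smooth flat representation. Here I would follow closely the argument in the proof of Corollary~\ref{can}, building the $n+k$ dimensional $\Lambda$-hyperbasis $\{N_x:x\in\At\A\}$ on $\D^+$ and then a smooth representation from it, and observing that Theorem~\ref{smooth}-style forcing of an inconsistent red triple is exactly what fails.

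The main obstacle, and the place where care is needed, is the bookkeeping that makes $\K$ range over all of $\Sc,\CA,\PA,\PEA$ simultaneously while keeping the index $n+k$ sharp. The chain of inclusions $\Rd_{sc}S\Nr_n\PEA_{n+k}\subseteq S\Nr_n\Sc_{n+k}$ etc. must be used in the right direction so that the single witness $\Cm\At\A$ (a $\PEA_n$) defeats the \emph{weakest} variety $S\Nr_n\Sc_{n+k}$, which then gives the result for all stronger ones; dually, $\A$ being representable places it in $\bigcap_j S\Nr_n\K_{n+k+j}\subseteq\K$ for every $\K$ in the list. The other delicate point is item~(4): one must verify that ``$n+k$ smooth flat complete representation'' is genuinely equivalent (for atomic algebras) to membership in $S_c\Nr_n\CA_{n+k}$ up to completions, which is where the $\Lambda$-hyperbasis construction of Corollary~\ref{can} and \cite{HHbook} section~13.4 is invoked verbatim; I expect the only real work is re-checking that the algebras produced by Theorem~\ref{blurs} do carry an $(n+k)$-dimensional symmetric hyperbasis (which is built into the proof of Theorem~\ref{blurs}) but \emph{not} an $(n+k+1)$-dimensional one, the latter being precisely $\R\notin S\Ra\CA_{n+k+1}$.
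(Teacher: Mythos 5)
Your proposal follows essentially the same route as the paper's own proof: take the algebra $\A$ of Theorem \ref{blurs}; representability places $\A$ in every $S\Nr_n{\sf L}_{n+j}$; complete additivity identifies $\Cm\At\A$ with the \d\ completion (giving (2) and (3) together); the $\sf Df$/$\Sc$ reducts of the completion fail because the algebra is generated by elements of dimension $<n$; and an $n+k$ smooth \emph{complete} representation would yield a complete neat embedding $\A\subseteq_c\Nr_n\D$ with $\D$ complete and atomic, forcing $\Cm\At\A\in S\Nr_n\CA_{n+k}$, a contradiction --- which is exactly the paper's argument for (4), with your hyperbasis detour being the same construction as in Corollary \ref{can}.

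Two bookkeeping points you should fix. First, Theorem \ref{blurs} applied with its $m$ equal to the present $n$ and its big dimension equal to $n+k$ (so $\R\notin S\Ra\CA_{n+k+1}$) yields only $\Rd_{ca}\Cm\At\A\notin S\Nr_n\CA_{n+k+1}$, not $\notin S\Nr_n\CA_{n+1}$ as you write; since the classes $S\Nr_n\CA_{n+j}$ decrease with $j$, your chain ``$\notin S\Nr_n\CA_{n+1}\supseteq S\Nr_n\CA_{n+k}$'' does not follow from the theorem as invoked. To defeat $S\Nr_n{\sf L}_{n+k}$ exactly as stated you must run the theorem with its $k$ equal to $k-1$ (equivalently assume $\R\notin S\Ra\CA_{n+k}$); the paper itself only proves the $n+k+1$ version (as in the abstract), so this off-by-one is inherited, but your justification of (2), which derives a contradiction with $\R\notin S\Ra\CA_{n+k+1}$ from a putative $\Sc$ neat embedding in dimension $n+k$, carries the same shift and the $\Sc$-to-$\Ra$ transfer it invokes is asserted rather than proved (as in the paper). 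Second, in (4) your argument, like the paper's, genuinely uses a \emph{complete} (atomic) smooth representation to obtain the complete neat embedding; a smooth representation that is not complete only gives $\A\in S\Nr_n\CA_{n+k}$, which is no contradiction, so the conclusion should be phrased for complete $n+k$ smooth representations.
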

\begin{proof} We use the term cylindric algebra, proved to exist conditionally, namely,
$\A$ in \ref{blurs}.
\begin{enumarab}
\item We have $\Cm\At\A\notin S\Nr_n\CA_{n+k+1}$,
while $\A\in \bigcap_{k\in \omega}S\Nr_n\CA_{n+k}$ since it is representable.
\item $\Cm\At\A$ is the \d\ completion of $\A$ (even in the $\PA$ and $\Sc$ cases, because $\A$,
hence its $\Sc$ and $\PA$ reducts are completely additive)
\item $\A$ is dense in $\Cm\At\A$.
\item  Assume that $\A$ has an $n$ smooth complete representation $\M$. $L(A)$ denotes the signature that contains
an $n$ ary predicate for every $a\in A$.
For $\phi\in L(A)_{\omega,\infty}^n$,
let $\phi^{M}=\{\bar{a}\in C^n(M):M\models_C \phi(\bar{a})\}$,
and let $\D$ be the algebra with universe $\{\phi^{M}: \phi\in L_{\infty,\omega}^n\}$ with usual
Boolean operations, cylindrifiers and diagonal elements, cf. theorem 13.20 in \cite{HHbook}. The polyadic operations are defined
by swapping variables.
Define $\D_0$ be the algebra consisting of those $\phi^{M}$ where $\phi$ comes from $L^n$.
Assume that $M$ is $n$ square, then certainly $\D_0$ is a subalgebra of the $\sf Crs_n$
with domain $\wp(C^n(M))$ so $\D_0\in {\sf Crs_n}$. The unit $C^n(M)$ of $\D_0$ is symmetric,
closed under substitutions, so
$\D_0\in \sf G_n$.  Since $M$ is $n,$ flat we
have that cylindrifiers commute by definition,
hence $\D_0\in \CA_n$.

Now since $M$ is infinitary $n+k$ smooth then it is infinitary $n+k$ flat.
Then one prove that  $\D\in \CA_n$ in exactly the same way.
Clearly $\D$ is complete. We claim that $\D$ is atomic.
Let $\phi^M$ be a non zero element.
Choose $\bar{a}\in \phi^M$, and consider the infinitary conjunction
$\tau=\bigwedge \{\psi\in L_{\infty}: M\models_C \psi(\bar{a})\}.$
Then $\tau\in L_{\infty}$, and $\tau^{M}$ is an atom, as required

Now defined the neat embedding by $\theta(r)=r(\bar{x})^{M}$.
Preservation of operations is straightforward.  We show that $\theta$ is injective.
Let $r\in A$ be non-zero. But $M$ is a relativized representation, so there $\bar{a}\in M$
with $r(\bar{a})$ hence $\bar{a}$ is a clique in $M$,
and so $M\models r(\bar{x})(\bar{a})$, and $\bar{a}\in \theta(r)$. proving the required.

We check that it is a complete embedding under the assumption that
$M$ is a complete relativized representation.
Recall that $\A$ is atomic. Let $\phi\in L_{\infty}$ be such that $\phi^M\neq 0$.
Let $\bar{a}\in \phi^M$. Since
$M$ is complete and $\bar{a}\in C^n(M$) there is $\alpha\in \At\A$, such
that $M\models \alpha(\bar{a})$, then $\theta(\alpha).\phi^C\neq 0.$
and we are done.
Now $\A\in S_c\Nr_n\CA_{n+k}$; it embeds completely into $\Nr_n\D$, $\D$ is complete, then
so is $\Nr_n\D$, and consequently $\Cm\At\A\subseteq \Nr_n\D$, which is impossible, because we know that
$\Cm\At\A\notin S\Nr_n\CA_{n+k}.$
\end{enumarab}
\end{proof}

The following (relativized completeness theorem) is proved in
\cite{t}.
\begin{corollary} Let $\A\in \sf PEA_m$ be countable and atomic and assume that $m<n$. Then the following conditions are
equivalent.
\begin{enumarab}
\item $\A$ has an $n$ dimensional hyperbasis.
\item $\A$ has an infinitary $n$ flat complete representation.
\item $\A\in S_c\Nr_m(\PEA_n\cap \At)$
\item \pe has a \ws\ in $F_{\omega}^n$.
\item \pe\ has a \ws\ in $G_{\omega}^n$
\item $\A$ has $n$ smooth relativized complete representation.
\end{enumarab}
\end{corollary}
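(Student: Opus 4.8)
The plan is to prove the six conditions equivalent by exhibiting a cycle of implications, with the ``square'' version established en route, following the pattern of the relation algebra analogue in \cite{HHbook}, Chapter 13, adapted to the $\sf PEA_m$ setting. The natural order is $(1)\Rightarrow(2)\Rightarrow(6)\Rightarrow(3)\Rightarrow(4)\Leftrightarrow(5)\Rightarrow(1)$, though I would reorganise slightly so that the game-theoretic equivalences $(4)\Leftrightarrow(5)$ are dealt with first as a self-contained lemma, since $F_\omega^n$ and $G_\omega^n$ differ only in whether \pe\ is forced to accept atomic labels or is allowed the accept/reject option, and — exactly as in the proof of Corollary after Theorem~\ref{thm:n} and in the argument that atomic games test complete representability — for a \emph{countable atomic} algebra \pe\ can schedule all of \pa's possible moves so a \ws\ in one transfers to a \ws\ in the other. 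The countability hypothesis is used precisely here (and again in the step building the representation), so I would flag it explicitly.

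For $(1)\Rightarrow(2)$: given an $n$-dimensional hyperbasis $H$ for $\A$, form $\Ca(H)\in\PEA_n$ (the hyperbasis being wide and symmetric in the polyadic case), note $\A\subseteq_c\Nr_m\Ca(H)$, and then run a step-by-step construction — or equivalently the saturated-mosaic argument sketched in the proof of Corollary~\ref{can} — to build a complete infinitary $n$-flat relativized representation $M$ of $\A$ whose cliques are filled from members of $H$; completeness of the representation comes from atomicity of $\A$ together with the fact that we use \emph{all} the hypernetworks. For $(2)\Rightarrow(6)$ one observes an infinitary $n$-flat complete representation is in particular $n$-smooth, using the back-and-forth system induced by the equivalence relations on cliques of length $\le n$, exactly as in Definition after Corollary~\ref{rel}; here I would reuse verbatim the construction of the $E^t$'s from the hyperbasis. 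For $(6)\Rightarrow(3)$: from a complete $n$-smooth (hence infinitary $n$-flat) representation $M$, build the clique-guarded algebra $\D$ with universe $\{\phi^M:\phi\in L(A)_{\infty,\omega}^n\}$ on $C^n(M)$, check $\D\in\PEA_n$ (commutativity of cylindrifiers from $n$-flatness, closure under substitutions from squareness/symmetry of the unit), check $\D$ is complete and atomic, and show $\theta(r)=r(\bar x)^M$ is a \emph{complete} embedding of $\A$ into $\Nr_m\D$ — this is line-for-line the computation already carried out in item (4) of the last Corollary and in Corollary~\ref{can}. Then $(3)\Rightarrow(4)$ is Theorem~\ref{thm:n} (if $\A\in S_c\Nr_m\CA_n$ then \pe\ wins $F^n$; one checks the polyadic moves cause no extra trouble since substitutions are completely additive), and $(4)\Leftrightarrow(5)$ is the lemma above. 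Finally $(5)\Rightarrow(1)$: from \pe's \ws\ in the $\omega$-rounded game $G_\omega^n$ one manufactures an $n$-dimensional hyperbasis by taking the set of all $n$-dimensional hypernetworks (with hyperlabels the equivalence classes of finite sequences of nodes, as in the proof of the neat-games Lemma in the ``Neat atom structures'' subsection) arising in plays where \pe\ follows her strategy, closing under the hyperbasis operations; the cylindrifier/amalgamation closure conditions are met because \pe's strategy responds to exactly those demands.

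I expect the main obstacle to be the careful verification in $(6)\Rightarrow(3)$ and $(1)\Rightarrow(2)$ that the \emph{polyadic equality} structure — the substitution operators ${\sf p}_{ij}$ and the diagonals — is genuinely preserved, both in the clique-guarded algebra $\D$ and in the relativized representation; the cylindric skeleton of all these arguments is standard (it is the relation-algebra proof of \cite{HHbook} pushed up, plus the material already in this paper), but one must ensure the unit $C^n(M)$ is closed under the relevant transpositions and that the hyperbasis is symmetric, so that swapping variables is well-defined on $\D$ and matches ${\sf p}_{ij}$ on the nose. A secondary delicate point, worth isolating as a remark, is that ``infinitary $n$-flat'' rather than merely ``$n$-flat'' is what is needed to make $\D$ (built from $L_{\infty,\omega}^n$ formulas) a genuine $\PEA_n$ and to force atomicity of $\D$ via the infinitary conjunction $\tau=\bigwedge\{\psi\in L_{\infty,\omega}^n: M\models_C\psi(\bar a)\}$; the finitary version would only give an algebra in $\sf G_n$. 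Everything else — $(2)\Rightarrow(6)$, the game equivalences, $(3)\Rightarrow(4)$ — should be routine given the results already proved in the excerpt, so the write-up can afford to cite them and concentrate the detailed work on the preservation-of-polyadic-operations bookkeeping.
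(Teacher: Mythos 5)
A preliminary remark for calibration: the paper does not itself prove this corollary — it is quoted as "proved in \cite{t}" — so you are really being measured against the machinery scattered through the paper (theorem \ref{thm:n}, the hyperbasis-to-smooth-representation step in corollary \ref{can}, the clique-guarded algebra $\D$ of $L_{\infty,\omega}^n$ formulas built from a flat representation, and the strategy-scheduling idea of theorem \ref{longer}). Your cycle $(1)\Rightarrow(2)\Rightarrow(6)\Rightarrow(3)\Rightarrow(4)\Rightarrow(5)\Rightarrow(1)$ is essentially that intended route, and most arrows are attached to exactly the right lemmas; but two arrows, as you justify them, do not go through. For $(2)\Rightarrow(6)$ you propose to "reuse verbatim the construction of the $E^t$'s from the hyperbasis", but under hypothesis $(2)$ alone no hyperbasis is available — that is condition $(1)$ — so the cycle does not close at $(2)$. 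Passing from flatness to smoothness is not a direct observation: the back-and-forth system has to be manufactured, and the standard way is flat $\Rightarrow$ complete neat embedding (the $\D$ construction, which needs only infinitary flatness, exactly as in item (4) of the last corollary of the paper) $\Rightarrow$ hyperbasis $\Rightarrow$ smooth. The cheap repair is to reroute: prove $(2)\Rightarrow(3)$ directly via $\D$, and let $(6)$ be reached from $(1)$ or $(3)$ on the next pass around the cycle.

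The second genuine gap is $(5)\Rightarrow(1)$: you claim the amalgamation closure of the extracted set of hypernetworks holds "because \pe's strategy responds to exactly those demands", but $G_{\omega}^n$ is the plain atomic game, whose only substantive move is the cylindrifier move; amalgamation moves occur only in the hyperbasis games $G^{m,n}_r$ and in the $\lambda$-neat hypernetwork game $H$ used elsewhere in the paper. So the amalgamation property of your candidate hyperbasis — the hardest clause in its definition — is precisely what the strategy does not deliver. The repair the paper gestures at (and which uses countability, as in theorem \ref{longer}) is to schedule all of \pa's possible moves, build from the \ws\ a complete infinitary $n$ flat (or $n$ smooth) relativized representation first, and then read the hyperbasis off the $n$-cliques of that representation, the amalgamation condition coming from commutativity of cylindrifiers over cliques. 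Relatedly, your description of the difference between $F_{\omega}^n$ and $G_{\omega}^n$ (accept/reject of atomic labels) is not the paper's: $F^n$ differs in that \pa\ has only $n$ pebbles but may re-use them; this slip is harmless only because the direction needed inside the cycle, $(4)\Rightarrow(5)$, is the trivial one, and the converse should be left to fall out of the cycle rather than be proved up front by a scheduling lemma. Your closing points — that the polyadic operations must be checked on $\D$ and on the symmetric hyperbasis, and that infinitary flatness (not mere flatness) is what makes $\D$ a complete atomic $\PEA_n$ — are correct and well placed.
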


\begin{theorem}\label{first} For $n\geq m+3$,
the class $S_c\Nr_m(\CA_n\cap \At)$, hence all of the above are not elementary
\end{theorem}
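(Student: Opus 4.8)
\textbf{Proof proposal for Theorem \ref{first}.}

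The plan is to reduce non-elementarity of $S_c\Nr_m(\CA_n\cap\At)$ for $n\ge m+3$ to the non-elementarity result already established for completely representable algebras, together with the rainbow algebra $\PEA_{\N^{-1},\N}$ (equivalently $\sf PEA_{\Z,\N}$) whose $\Sc$-reduct sits outside $S_c\Nr_m\Sc_{m+3}$. The key observation is that, exactly as in the $\Ra$-case treated in \cite{HHbook}, the classes $S_c\Nr_m(\CA_n\cap\At)$ interpolate between complete representability and $S_c\Nr_m\CA_{m+3}$: one has $S_c\Nr_m\CA_{\omega}\cap\At\subseteq S_c\Nr_m(\CA_n\cap\At)\subseteq S_c\Nr_m\CA_{m+3}$ whenever $n\ge m+3$, the last inclusion because a complete neat embedding into an $n$-dimensional algebra restricts to one into its $(m+3)$-neat reduct. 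So it suffices to exhibit, for this fixed $n$, a single atomic algebra $\A$ that lies in $S_c\Nr_m\CA_{\omega}$ (a fortiori in $S_c\Nr_m(\CA_n\cap\At)$) together with an elementarily equivalent $\B$ whose $\Sc$-reduct is not even in $S_c\Nr_m\Sc_{m+3}$, hence not in $S_c\Nr_m(\CA_n\cap\At)$.

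First I would take $\C=\sf PEA_{\N^{-1},\N}$ (the polyadic rainbow algebra on this atom structure) and invoke the two facts proved above: \pa\ has a winning strategy in $F^{m+3}(\At\Rd_{sc}\C)$, so $\Rd_{sc}\C\notin S_c\Nr_m\Sc_{m+3}$ by Theorem \ref{thm:n}; and for every finite $k$, \pe\ has a winning strategy in $G_k(\C)$, so $\C$ satisfies all Lyndon conditions. Second, I would run the standard ultrapower-plus-elementary-chain argument: pass to a non-principal ultrapower $\C^*$ of $\C$ in which \pe\ wins the $\omega$-rounded game $G_{\omega}(\C^*)$ (playing the $k$-th strategy in the $k$-th component), then build an elementary chain of countable elementary subalgebras converging to a countable $\A'\preceq\C^*$ on which \pe\ still wins $G_{\omega}(\A')$, indeed wins the $\omega$-rounded neat-hypernetwork game $H(\At\A')$ by closing the chain under the elements her $H$-strategy selects. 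By the neat-hypergraph lemma (the $\lambda$-neat analogue of Theorem \ref{thm:n}), this gives $\A'\cong\Nr_m\D$ for some locally finite $\D\in\CA_{\omega}$, in particular $\A'\in S_c\Nr_m\CA_{\omega}$ and $\A'$ is completely representable. Since $\A'\equiv\C$ (both elementary sub/extensions in the ultrapower tower) and $\Rd_{sc}\C\notin S_c\Nr_m\Sc_{m+3}$, the pair $(\A',\C)$ (or, to keep everything inside the $\CA$-signature, their $\CA$-reducts) witnesses that $S_c\Nr_m(\CA_n\cap\At)$ is not closed under elementary equivalence; the same pair handles $S_c\Nr_m(\PEA_n\cap\At)$, and taking reducts handles $\Sc$ and $\PA$ as in the corollary following Theorem \ref{thm:n}.

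The routine points to verify are: that $G_k$-wins in each ultrapower component amalgamate into an $\omega$-game win (a cofinitely-many-components argument, standard); that the elementary chain construction can simultaneously absorb the $H$-strategy responses (countably many per round, so a countable union suffices); and that the neat-hypergraph limit construction genuinely yields a \emph{locally finite} $\D$ with $\At\Nr_m\D\cong\At\A'$ — this is the content of the neat-games lemma above, which I would cite rather than reprove. The main obstacle is making sure the two classes bookended above really do sandwich $S_c\Nr_m(\CA_n\cap\At)$ for \emph{all} $n\ge m+3$ and not merely $n=m+3$: the upper inclusion $S_c\Nr_m(\CA_n\cap\At)\subseteq S_c\Nr_m\CA_{m+3}$ is the delicate one, and it needs the observation that if $\A\subseteq_c\Nr_m\E$ with $\E\in\CA_n\cap\At$ then $\Nr_{m+3}\E\in\CA_{m+3}\cap\At$ (atomicity of lower neat reducts of atomic algebras, shown via $\cyl{}\ldots\cyl{}$ of an atom being an atom, cf. the argument in Corollary \ref{can}) and $\A\subseteq_c\Nr_m(\Nr_{m+3}\E)$. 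Once that monotonicity is in hand, the non-elementarity of the sandwiched classes is immediate from the non-elementarity of either endpoint, and the explicit witness $\sf PEA_{\N^{-1},\N}$ makes the statement concrete.
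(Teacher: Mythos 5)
Your argument is correct, and it coincides with one of the two routes the paper itself takes: the sandwich $S_c\Nr_m\CA_{\omega}\subseteq \K\subseteq S_c\Nr_m\CA_{m+3}$ together with the rainbow witness $\PEA_{\N^{-1},\N}$ (\pa\ winning $F^{m+3}$ via Theorem \ref{thm:n}, \pe\ winning all finite games, ultrapower plus elementary chain) is exactly how the paper disposes of every class trapped between $S_c\Nr_m\CA_{\omega}$ and $S_c\Nr_m\CA_{m+3}$, e.g.\ in the corollary on ${\sf CRA}_{m,n}$. The proof the paper actually attaches to this particular theorem is, however, different: it constructs a Monk-like relation algebra $\A(n)$ with atoms $Id$ and $a^k(i,j)$ ($i<n-2$, $j<\omega$, $k<\omega_1$), shows it admits no $n$-dimensional hyperbasis while \pe\ wins every finite-round hyperbasis game $G^{m,n}_r(\A(n),\omega)$, and then runs an ultraproduct and elementary-subalgebra argument on a many-sorted structure $\M(\A(n),m,n,\omega,\C)$ to extract a countable atomic algebra inside $\Nr_m\CA_n$ elementarily equivalent to one outside $S_c\Nr_m(\CA_n\cap\At)$. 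The trade-off is that the Monk-style proof attacks the class at the level $n$ directly, with a witness tailored to $n$, whereas your rainbow/sandwich route uses one fixed witness for all $n\geq m+3$ and only distinguishes at the $m+3$ threshold — but in return it yields the sharper conclusion that non-elementarity is already witnessed against $S_c\Nr_m\Sc_{m+3}$, hence simultaneously for all signatures between $\Sc$ and $\PEA$. Two small points to keep straight in your write-up: the ``a fortiori'' lower inclusion is cleanest for your concrete witness (a countable completely representable $\A'$ embeds completely into $\Nr_m\wp({}^nU)$, and the full set algebra is atomic, so membership in $S_c\Nr_m(\CA_n\cap\At)$ is immediate) rather than as a general inclusion of classes; and for the upper bound you do not need atomicity of $\Nr_{m+3}\E$ at all, since the contradiction only requires landing in $S_c\Nr_m\CA_{m+3}$ without the $\At$ restriction, via $\Nr_m\E=\Nr_m(\Nr_{m+3}\E)$.
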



\begin{thebibliography}{}

\bibitem{Amer} M. Amer, T. Sayed Ahmed {\it Polyadic and cylindric algebras of Sentences} Math Logic Quarterly {\bf 52} (2006), p. 44-449


\bibitem{Andreka} H. Andr\'eka, {\it Complexity of equations valid in algebras of relations}, Ann Pure and App Logic {\bf 89}(1997) p. 149-209


\bibitem{1} H. Andr\'eka, M. Ferenczi, I. N\'emeti(Editors), {\bf Cylindric-like Algebras and Algebraic Logic},
 Bolyai Society Mathematical Studies, Vol. 22 (2013).

\bibitem{AGMNS} H. Andr\'eka, S. Givant, S. Mikulas, I. N\'emeti,A. Simon,
{\it Notions of density that imply representability in algebraic logic,}Annals of Pure and Applied logic, {\bf 91}(1998), p. 93 -190.


\bibitem{ANT} H. Andr\'eka, I. N\'emeti, T. Sayed Ahmed, {\it Omitting types for finite variable fragments and complete representations},
Journal of Symbolic Logic {\bf 73} (2008) p. 65-89 (2008)

\bibitem{ANS} H. Andr\'eka, I. N\'emeti, T. Sayed Ahmed, {\it A non representable quasi polyadic equality algebra with a representable cylindric reduct
}Journal of Symbolic Logic {\bf 73} (2008) p. 65-89 (2008)

\bibitem{AHN} H. Andr\'eka I. Hodkinson I. N\'emeti {\it Finite algebras of relations representable on finite sets} Journal of Symbolic Logic {\bf 64} (1999), 242-267SL

\bibitem{Biro} B. Bir\'o, {\it Non finite axiomatizability results in cylindric algebras}, Journal of Symbolic Logic, {\bf 57}(1992) pp. 832-843


\bibitem{v} J. van Benthem {\it {\sf Crs} and guarded logic, a fruitful contact} in \cite{1}

\bibitem{CF} E. Casanovas, R. Farre {\it Omitting types in incomplete theories} Journal of Symbolic Logic 61(1) 1996 236-245

\bibitem{Fer} M. Ferenczi {\it A new representation theory. Representing cylindric-like algebras on relativized set algebras} in \cite{1}.
\bibitem{HMT1} L. Henkin, J.D. Monk and  A.Tarski, {\it Cylindric Algebras Part I}.
North Holland, 1971.

\bibitem{HMT2} L. Henkin, J.D. Monk and  A.Tarski, {\it Cylindric Algebras Part II}.
North Holland, 1985.

\bibitem{r} R. Hirsch,
{\it Relation algebra reducts of cylindric algebras and complete representations}.  Journal of Symbolic Logic {\bf 72}(2)p. p.673-703 (2007).

\bibitem{HH} R. Hirsch and I. Hodkinson, \emph{Complete representations in algebraic logic}, Journal of Symbolic Logic, {\bf 62}(3)(1997)p. 816-847.

\bibitem{HHM} R. Hirsch, I. Hodkinson and R. Maddux, {\it Relation algebra reducts of cylindric algebras and an application to proof theory}.
Journal of Symbolic Logic, {\bf 67} p. 197-213 (2002).


\bibitem{HHM2} R. Hirsch, I. Hodkinson and R. Maddux,
 {\it On provability with finitely many variables}.
Bulletin of Symbolic Logic, {\bf 8}(3), pp.329-347 (2002).


\bibitem{HHbook2} R. Hirsch and I. Hodkinson, {\it Relation algebras by games.}
Studies in Logic and the Foundations of Mathematics, volume {\bf 147} (2002)

\bibitem {HHbook} R. Hirsch and I. Hodkinson, {\it Completions and complete representations in algebraic logic .} In \cite{1}
\bibitem{Hodkinson} I. Hodkinson, {\it Atom structures of relation and cylindric algebras}, Annals of pure and applied logic,
{\bf 89}(1997)p. 117-148.


\bibitem {AU} I. Hodkinson, \emph{A construction of cylindric and polyadic algebras from atomic relation algebras}, Algebra Universalis, Vol. 68 (2012), pp. 257-285.

\bibitem{OTT} M. Khaled and T. Sayed Ahmed,
{\it Omitting types algebraically via cylindric algebras}, Int. Journal of Algebra, {\bf 3} (2009)(8) p. 377 - 390.

\bibitem{Khaled} M. Khaled and T. Sayed Ahmed,  \emph{Vaughts theorem holds for}
$L_2$ \emph{but fails for} $L_n$ \emph{when} $n>2$, Bulletin of the Section of Logic,
{\bf 39}9:3/4 (2010), pp. 107-122.

\bibitem{Maddux} R.D. Maddux, {\it Non finite axiomatizability results for cylindric and relation algebras},
Journal of Symbolic Logic {\bf 54}(1989) p. 951-974

\bibitem{Monk} J.D. Monk, {\it Non finitizability of
classes of representable cylindric algebras}, Journal of Symbolic Logic, {\bf 34} (1969) p. 331-343.


\bibitem{Fm} T. Sayed Ahmed {\it The class of $2$ dimensional polyadic algebras is not elementary.} Fundamenta Mathematica {\bf 172}
(2002) p. 61-81

\bibitem{IGPL}T. Sayed Ahmed {\it The class of neat reducts is not elementary} Logic Journal of $IGPL$, {\bf 9}(2001)p. 593-628

\bibitem{t} T Sayed Ahmed Robin Hirsch {\it The neat embeding problem for other algebras and complete representations}
Submitted to JSL

\bibitem{con} T. Sayed Ahmed {\it Neat embedings as adjoint situations} Submitted to the proceeding of the Nemeti 70 conference.
\bibitem{IGPL} T. Sayed Ahmed {\it The class of neat reducts is not elementary} Logic Journal of IGPL (2002)
\bibitem{Studia} T. Sayed Ahmed, {\it Martin's axiom, Omitting types and Complete representations in algebraic logic}
Studia Logica {\bf 72} (2002), pp. 285-309.

\bibitem{Fm} T. Sayed Ahmed, {\it  The class of neat reducts of polyadic algebras  is not elementary},
Fundamenta Mathematica, {\bf 172} (2002), pp. 61-81

\bibitem{MLQ} T. Sayed Ahmed, {\it A model-theoretic solution to a problem of Tarski}, Math Logic Quarterly, Vol. 48 (2002), pp. 343-355.

\bibitem{quasi}T. Sayed Ahmed {\it The class of infinite dimensional quasi polyadic algebras is not axiomatizable}
Mathematical Logic Quarterly {\bf 52} (2006) p. 106-112.

\bibitem{Note} T. Sayed Ahmed {\it  A note on neat reducts} Studia logica {\bf  85}(2007) p.  139-151

\bibitem{can}


\bibitem{BSL} T. Sayed Ahmed, \emph{Neat embedding is not sufficient for complete representability},
Bulletin of the Section of Logic, Volume {\bf 36}:1/2 (2007), p. 21-27.

\bibitem {Sayed2} T. Sayed Ahmed, {\it An interpolation Theorem for first order logic with infinitary predicates},
Logic Journal of the IGPL {\bf 15}(1)(2007), pp. 21-32.


\bibitem{Studialogica}T. Sayed Ahmed, {\it A note on neat reducts}, Studia Logica, Vol. 85 (2007), pp. 139-151.

\bibitem{basim}T. Sayed Ahmed and Basim Samir
{\it A neat embedding theorem for expansions of cylindric algebras} Logic Journal of IGPL {\bf 15}(2007)
p. 41-51


\bibitem{weak} T. Sayed Ahmed, {\it Weakly representable atom structures that are not strongly representable,
with an application to first order logic}, Mathematical Logic Quarterly, {\bf 54}(3)(2008) p. 294-306 (2008).

\bibitem{Vaught} T. Sayed Ahmed, {\it On a theorem of Vaught for first order logic with finitely many variables.}
Journal of Applied Non-Classical Logics {\bf 19}(1) (2009) p. 97-112.

\bibitem{Sayed} T. Sayed Ahmed, {\it Completions, Complete representations and Omitting types}, in \cite{1}.



\bibitem{ST} I. Sain and R. Thompson {\it Srictly finite  schema axiomatization of  quasi polyadic algebras}
In {\bf Algebraic Logic} North Holland (1990), Editors H. Andr\'eka, J. D. Monk,
and I. N\'emeti.



\bibitem{Sayedneat} T. Sayed Ahmed,{\it Neat reducts and neat embedings in cylindric algebras}, in \cite{1}.

\bibitem{weak}T. Sayed Ahmed
{\it Weakly but not strongly representable atom structures, with an application to first order logic}
Math Logic Quarterly {\bf 3}(2008) p.294-396


\bibitem{univ} T. Sayed Ahmed, {\it On amalgamation in universal algebraic logic},
Stud Math. Hungarica.

\bibitem{amal} {\it Classes of algebras without the amalgmation property} Logic Journal of $IGPL$ {\bf 19} (2011) 87-164

\bibitem{SL} T. Sayed Ahmed and I. N\'emeti, {\it On neat reducts of algebras of logic}, Studia Logica, {\bf 68}(2) (2001)p. 229-262.


\bibitem{Shelah} S. Shelah, {\it Classification theory: and the number of non-isomorphic models}
(Studies in Logic and the Foundations of Mathematics). Elsivier (1990).

\end{thebibliography}
\end{document}